\RequirePackage{ifpdf}          
\ifpdf 
\documentclass[10pt,pdftex]{article}
\usepackage{hyperref}
\hypersetup{colorlinks, citecolor=MyDarkRed, filecolor=MyDarkBlue,
	linkcolor=MyDarkBlue, urlcolor=MyDarkBlue}
\else 
\documentclass[10pt,dvips]{article}
\usepackage{hyperref}
\hypersetup{colorlinks, citecolor=MyDarkRed, filecolor=MyDarkBlue,
	linkcolor=MyDarkRed, urlcolor=MyDarkBlue}
\usepackage{breakurl}     
\fi

\usepackage[in]{fullpage}       
\usepackage{amsmath,amssymb}	
\usepackage{pslatex}		 
\usepackage[usenames]{color}   
\definecolor{MyDarkBlue}{rgb}{0, 0.0, 0.45} 
\definecolor{MyDarkRed}{rgb}{0.45, 0.0, 0} 
\definecolor{MyDarkGreen}{rgb}{0, 0.45, 0} 
\definecolor{MyLightGray}{gray}{.90}
\definecolor{MyLightGreen}{rgb}{0.5, 0.99, 0.5}
\DefineNamedColor{named}{Peach}         {cmyk}{0,0.50,0.70,0}
\DefineNamedColor{named}{Cyan}          {cmyk}{1,0,0,0}

\usepackage{graphicx} 
\usepackage{floatflt}
\usepackage{wrapfig}
\usepackage[parfill]{parskip}    
\usepackage{amsfonts, amscd, amssymb, amsthm, amsmath}
\usepackage{mathrsfs}
\usepackage{urwchancal}
\usepackage{xcolor}
\usepackage{MnSymbol}
\usepackage{pdfsync}
\usepackage{enumerate}
\usepackage{enumitem} 
\pagestyle{plain}                                                      
\setlength{\textwidth}{6.5in}     
\setlength{\oddsidemargin}{0in}   
\setlength{\evensidemargin}{0in}  
\setlength{\textheight}{8.5in}    
\setlength{\topmargin}{0in}       
\setlength{\headheight}{0in}      
\setlength{\headsep}{0in}         
\setlength{\footskip}{.5in}       
\bibliographystyle{plain}                                              

\theoremstyle{plain}
\newtheorem{thm}{Theorem}[section]
\newtheorem*{theorem-non}{Theorem}
\newtheorem{lem}[thm]{Lemma}
\newtheorem{prop}[thm]{Proposition}
\newtheorem{cor}[thm]{Corollary}
\newtheorem{conj}[thm]{Conjecture}

\theoremstyle{definition}

\newtheorem{rem}{Remark}

\usepackage{color}
\definecolor{dred}{rgb}{.65, 0, 0.15}

\let\svthefootnote\thefootnote


      \def\fg{\mathfrak{g}} \def\fh{\mathfrak{h}}          \def\fs{\mathfrak{s}}

\def\<{\langle} \def\>{\rangle}

\def\dim{\mathrm{dim}}

%

\usepackage{listings}           
\lstset{
  language=Java,                  
  basicstyle=\footnotesize,       
  numbers=left,                   
  numberstyle=\footnotesize,      
  stepnumber=0,                   
  backgroundcolor=\color{white},  
  frame=single, 
  framerule=0pt,
  captionpos=b,                   
  showstringspaces=false,         
  showspaces=false,               
  showtabs=false,                 
  breaklines=true                 
}
\lstset{escapeinside={(*@}{@*)}} 

\pagecolor{white}		


\begin{document}

\title{Secondary terms in the asymptotics of moments of
$L$-functions}
\author{Adrian Diaconu\footnote{{School of Mathematics,
      University of Minnesota, Minneapolis, MN 55455,
      Email: \!cad@umn.edu}} \,
  and Henry Twiss\footnote{{School of Mathematics,
      University of Minnesota, Minneapolis, MN 55455,
      Email: \!twiss013@umn.edu}}         
}

\date{}	                
\maketitle

\begin{abstract} 
\noindent
We propose a refined version of the existing conjectural
asymptotic formula for the moments of the family of
quadratic {D}irichlet $L$-functions over rational function
fields. Our prediction is motivated by two natural
conjectures that provide\linebreak sufficient information
to determine the analytic properties (meromorphic
continuation, location of poles, and the residue at each pole)
of a certain generating function of moments of quadratic
$L$-functions. The number field analogue of
our asymptotic formula can be obtained by a similar
procedure, the only difference being the contributions
coming from the archimedean and even places, which
require a separate analysis. To avoid this additional technical
issue, we present, for simplicity, the asymptotic formula
only in the rational function field setting. This has also the
advantage of being much easier to test.
\end{abstract}

\tableofcontents
\vskip-15pt
\let\thefootnote\relax\footnote{2010 {\it Mathematics Subject Classification.} Primary 11M06; Secondary 11M32, \!11R58.}
\let\thefootnote\relax\footnote{A. Diaconu was partially supported by a Simons Fellowship in Mathematics. H. Twiss was partially supported by a Barry Goldwater Scholarship.}

\section{Introduction} 
Over the past two decades, a great deal of work in analytic number theory focused on understanding various aspects of\linebreak moments of families of automorphic $L$-functions. In particular, the problem of obtaining asymptotics for the moments of central values of $L$-functions in families received considerable attention, beginning with the work of Hardy and Lit-\linebreak tlewood \cite{Ha-Li}, who obtained the asymptotic formula for the Riemann zeta-function on the critical line, 
\[
\int_{0}^{T}|\zeta(\tfrac{1}{2} + it)|^{2}\, dt \sim T\log T    \quad \text{(as $T\to \infty$)}.     
\] 
About eight years later, Ingham \cite{Ing} obtained the asymptotic formula for the fourth moment 
\[
\int_{0}^{T}|\zeta(\tfrac{1}{2} + it)|^{4}\, dt \sim \frac{1}{2\pi^2}T \log^4 T  \quad \text{(as $T\to \infty$)}.
\] 
No other asymptotic formula for higher moments of the Riemann zeta-function is known, but it is conjectured that\linebreak 
\[ 
\int_{0}^{T}|\zeta(\tfrac{1}{2} + it)|^{2k}\, dt \sim C_{k}T\log^{k^2} T \quad \text{(for $k\ge 3$)}.
\] 
The precise value of the constant $C_{k}$ was conjectured by Keating and Snaith \cite{Ke-Sn} using random matrix theory; see also \cite{CFKRS} for a more accurate conjecture, where the full main terms in the asymptotic formula are predicted.

Another classical example of interest is furnished by the moments of the 
family of quadratic {D}irichlet $L$-functions 
\[
M_{r}(D) \, : = \sum_{0 < d < D} L(\tfrac{1}{2}, \chi_{d})^{r}
\] 
the sum being over real {\it primitive} Dirichlet characters. In 1981, Jutila \cite{Jut} computed the first and second moment, and\linebreak Soundararajan \cite{Sound}, \cite{DGH}, and Young \cite{Young} obtained asymptotics for the third moment with different error terms. \!Under the generalized Riemann hypothesis, \!Shen \cite{Sh} obtained an asymptotic formula for the fourth moment (using the method of Soundararajan and Young \cite{SY}). 
It was conjectured \cite{CFKRS} that 
\begin{equation} \label{eq: incomplete-asympt}
M_{r}(D) \sim DQ(\log D) \quad \text{(as $D\to \infty$)} 
\end{equation}
for some polynomial $Q$ of degree $r (r + 1)\slash 2.$ An explicit description of this polynomial is given in \cite{GHRR}.

An important feature of the family of quadratic Dirichlet
$L$-functions is that its moments admit a perfect function field
analogue. \!In this context, asymptotics are known only for the first
four moments by the work of Florea \cite{Florea1, Florea2, Florea3},
and\linebreak the corresponding conjectural asymptotic formula
for all moments was given in \cite{AK}; see also \cite{RW}.

In \cite{DGH}, the first-named author, Goldfeld and Hoffstein conjectured the existence of additional terms in \eqref{eq: incomplete-asympt}, when $r \ge 3;$\linebreak if $r = 3$ there is one additional term of size $D^{\frac{3}{4}},$ and if $r \ge 4,$ we have the following:
\vskip5pt
\begin{conj} --- Let $N \ge 1$ be an integer, and take 
$(N + 1)^{-1} < \Theta < N^{-1}\!.$ Then, \!as $D\to \infty,$ \!we have
\begin{equation} \label{eq: Moment-conjecture-over-rationals}
M_{r}(D) = \sum_{n = 1}^{N} D^{\scriptscriptstyle \left(\frac{1}{2} + \frac{1}{2n}\right)}Q_{n, r}(\log D)
+ O\big(D^{\scriptscriptstyle  (1 + \Theta)\slash 2}\big)
\end{equation}
for some polynomials $Q_{n, r}(x).$ 
\end{conj}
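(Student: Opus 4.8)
\emph{Sketch of a conditional derivation.}\
The plan is to read off the asymptotics from the analytic behaviour of a generating function, in the spirit of the multiple Dirichlet series method. To the family $\{L(\tfrac12,\chi_d)\}$ I would attach the $(r+1)$-variable series
\[
Z(s_1,\dots,s_r;w)\;=\;\sum_{d}\frac{L(s_1,\chi_d)\cdots L(s_r,\chi_d)}{|d|^{w}}\,a(s_1,\dots,s_r;d),
\]
where the factors $a(\,\cdot\,;d)$ are the (essentially multiplicative) local corrections at the ramified places that make $Z$ invariant under the group $W$ generated by the $r$ reflections $s_i\mapsto 1-s_i$ (the functional equations of the individual $L$-functions) together with the ``swap'' functional equation in $w$ obtained by interchanging the roles of $d$ and the summation variables inside the $L(s_i,\chi_d)$. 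Specializing $s_1=\cdots=s_r=\tfrac12$ produces the generating Dirichlet series $Z_r(w)=\sum_{d}L(\tfrac12,\chi_d)^{r}|d|^{-w}$ of the central moments. The two conjectures of the paper are designed to supply exactly the analytic facts needed here: that $Z_r(w)$ continues meromorphically, that its poles in $\operatorname{Re}(w)>\tfrac12$ are confined to $w=1$ and to $w=\tfrac12+\tfrac1{2n}$ for $n\ge2$, and that the principal part at each such pole is explicitly computable. The accumulation of the secondary poles at $w=\tfrac12$ is what rules out an error exponent below $\tfrac12$ in any truncated formula, and the hypothesis $(N+1)^{-1}<\Theta<N^{-1}$ is exactly what places the line $\operatorname{Re}(w)=(1+\Theta)/2$ strictly between the poles at $w=\tfrac12+\tfrac1{2(N+1)}$ and $w=\tfrac12+\tfrac1{2N}$.

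Granting these inputs, I would deduce the formula by a Perron/Tauberian argument. Schematically, for $c>1$,
\[
\sum_{0<d<D}L(\tfrac12,\chi_d)^{r}\;=\;\frac{1}{2\pi i}\int_{(c)}Z_r(w)\,\frac{D^{w}}{w}\,dw\;+\;(\text{error}),
\]
and one moves the contour left to the line $\operatorname{Re}(w)=(1+\Theta)/2$. Crossing the pole at $w=1$ --- of order $1+r(r+1)/2$, coming from the diagonal (perfect-square) terms in $L(\tfrac12,\chi_d)^{r}$ --- yields the main term $D\,Q_{1,r}(\log D)$ with $\deg Q_{1,r}=r(r+1)/2$, thereby recovering \eqref{eq: incomplete-asympt}; crossing each pole at $w=\tfrac12+\tfrac1{2n}$ with $2\le n\le N$ yields a term $D^{\,1/2+1/(2n)}Q_{n,r}(\log D)$, the degree of $Q_{n,r}$ being one less than the order of that pole. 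What remains is the integral along $\operatorname{Re}(w)=(1+\Theta)/2$, which is $O\!\big(D^{(1+\Theta)/2}\big)$ as soon as one knows that $Z_r(w)$ grows at most polynomially on vertical lines.

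The main difficulty is twofold. First, pinning down the polynomials $Q_{n,r}$ --- which is the whole point of refining \eqref{eq: Moment-conjecture-over-rationals} --- means computing the order and the leading Laurent coefficients of $Z_r(w)$ at each $w=\tfrac12+\tfrac1{2n}$; this is a combinatorial problem governed by how the intersections of the polar divisors of $Z(s_1,\dots,s_r;w)$ proliferate under $W$, which for $r\ge4$ is an infinite Kac--Moody Weyl group --- its infiniteness being precisely why one gets the whole sequence of poles accumulating at $\tfrac12$, rather than the single secondary term present when $r=3$ --- and it is here that the arithmetic of the local factors $a(\,\cdot\,;d)$ enters. Second, and this is why the paper is framed over rational function fields rather than over $\QQ$: the growth control needed for the error term is delicate over number fields, requiring subconvexity-type bounds for $Z_r$ in vertical strips together with a separate treatment of the archimedean and even local factors, whereas over $\FF_q(t)$ the series $Z_r(w)$ is a rational function of $q^{-w}$, so on each vertical line it has only finitely many poles, its growth is automatic, and the Tauberian step is elementary. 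I would therefore carry the argument out first over $\FF_q(t)$, obtaining the analogue of \eqref{eq: Moment-conjecture-over-rationals} with a suitable power of $q$ in place of $D$, and only afterwards isolate the additional local analysis needed to transport it to $\QQ$.
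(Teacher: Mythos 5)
The statement you are addressing is the DGH conjecture over $\QQ$, which the paper itself never proves: it only establishes the rational function field analogue conditionally, and your sketch follows essentially that same route --- the Weyl-group (Kac--Moody) multiple Dirichlet series with its conjectured meromorphic continuation (Conjectures \ref{Conjecture-Meromorphic continuation} and \ref{Conjecture-Meromorphic continuation-Z0}, together with the bound \eqref{eq: Estimate-H1}), poles at $w=\tfrac12+\tfrac1{2n}$ corresponding to positive real roots with $k_{r+1}=n$, and a contour shift to $\Re(w)=(1+\Theta)/2$ exactly as in the proof of Theorem \ref{Main Theorem: Full-moment-asymptotics}. The one step you pass over lightly, and where the paper does most of its actual work, is the specialization $s_1=\cdots=s_r=\tfrac12$: there the simple poles coalesce and the residue Euler products $\prod_p S_p^{w_\alpha}$ diverge, which the paper handles via Weierstrass division (Proposition \ref{Analytic-continuation-frakSn}) and a regularization argument (Section \ref{computations-introduction}), and which over $\QQ$ would additionally require the archimedean and $2$-adic local analysis you defer.
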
 

\vskip3pt
Evidence in favor of this conjecture, both theoretical and numerical, was provided by Zhang \cite{Zha1} and the survey \cite{Zha2}, and by Alderson and Rubinstein \cite{AR}. When $r = 3,$ the existence of the additional term was proved very recently in the function field case by the first-named author \cite{Dia}, and by the first-named author and Whitehead \cite{Di-Wh}, for a smoothed version of $M_{3}(D).$ 

Our aim in this paper is to develop a method to determine the polynomials $Q_{n, r}(x)$ occurring in the asymptotic formula \eqref{eq: Moment-conjecture-over-rationals}. However, when dealing with this problem over the rationals (or its extension to number fields), one is invariably facing the unpleasant task of computing the contributions to $Q_{n, r}(x)$ corresponding to the archimedean and even places. To avoid this technical issue, we shall discuss in this paper only the (rational) function field analogue of \eqref{eq: Moment-conjecture-over-rationals}.

To state the function field version of \eqref{eq: Moment-conjecture-over-rationals}, let $\mathbb{F}_{\! q}$ be a finite field of odd characteristic; for simplicity, we assume that the cardinality $q$ of 
$\mathbb{F}_{\! q}$ is congruent to $1$ modulo $4.$ For $d \in \mathbb{F}_{\! q}[x]$ monic and square-free, let $L(s, \chi_{d})$ denote the {D}irichlet {$L$}-function associated to the quadratic symbol 
$\chi_{d}(m) = (d \slash m).$

\vskip5pt
\begin{conj} \label{Main Conjecture: Moment-asymptotics-central-point-ff} --- Let $D, N \ge 1$ and $r \ge 4$ be integers. Then, for any $(N + 1)^{\scriptscriptstyle -1} \! < \Theta  < N^{\scriptscriptstyle -1}\!,$ we have 
\[ 
\sum_{\substack{d - \mathrm{monic \; \& \; sq. \; free} \\ \deg \, d\, = \, D}}\;\,
L\big(\tfrac{1}{2}, \chi_{d}\!\big)^{\scalebox{1.25}{$\scriptscriptstyle r$}} 
= \sum_{n \, \le \, N} Q_{r,\, n}(D, q)q^{\left(\!\scalebox{.9}{$\scriptscriptstyle \frac{1}{2}$} 
+ \scalebox{.9}{$\scriptscriptstyle \frac{1}{2n}$}\!\right)\scalebox{1.2}{$\scriptscriptstyle D$}} + \, 
O_{\scalebox{.85}{$\scriptscriptstyle \Theta$}\scriptscriptstyle, \,  q, \, r}
\!\left(q^{\scalebox{.9}{$\scriptscriptstyle D$} \scalebox{.9}{$\scriptscriptstyle (1 + \Theta)\slash 2$}}\right)
\] 
for computable $Q_{r,\, n}(D, q).$
\end{conj}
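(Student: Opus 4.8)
Since the statement is a conjecture, the goal is to deduce it from the two more basic conjectures announced in the introduction; the plan is to package the whole family of central moments into one Dirichlet series in an extra variable $w$, to use those conjectures to obtain its meromorphic continuation and exact polar data, and then to read off the asymptotic by a contour shift. Introduce
\[
Z(s_1,\dots,s_r;w)\;=\;\sum_{d}\;\frac{L(s_1,\chi_d)\cdots L(s_r,\chi_d)}{|d|^{w}},\qquad |d|:=q^{\deg d},
\]
the sum over monic square-free $d\in\FF_q[x]$; this converges for $\Re(s_j)$ and $\Re(w)$ large, and the diagonal specialization
$\mathcal Z(w):=Z(\tfrac12,\dots,\tfrac12;w)=\sum_{D\ge 1}\big(\sum_{\deg d=D}L(\tfrac12,\chi_d)^{r}\big)q^{-Dw}$
is precisely the generating function of the moments to be estimated. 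For each fixed $d$ the polynomial $L(s,\chi_d)\in\CC[q^{-s}]$ satisfies the Weil functional equation $s\mapsto 1-s$; applying this in each variable $s_j$ and re-summing over $d$ by quadratic reciprocity in $\FF_q[x]$ (a Poisson-type identity) should yield a finite system of functional equations for $Z$ — an involution in each $s_j$ with a compensating shift of $w$, plus one relation coupling $w$ to the $s_j$ — generating a group $W$ acting on $\CC^{r+1}$. For $r\le 3$ only finitely many secondary terms occur; for $r\ge 4$ the relevant $W$ is of indefinite Kac--Moody type, and this is the source of the infinitely many secondary terms.

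The elementary functional equations continue $Z$ only across the walls they reflect, and when $W$ is of indefinite type their orbit does not fill $\CC^{r+1}$; here the two conjectures supply the missing input. The first is an Eisenstein-type conjecture: it identifies $Z$, up to explicit correction terms, with a multiple Dirichlet series / Eisenstein series attached to $W$, thereby giving meromorphic continuation past the walls to a tube domain containing a neighbourhood of $s_1=\dots=s_r=\tfrac12$ for all $w$ with $\Re(w)>\tfrac12$, and describing the polar divisor there as a $W$-translated arrangement of first-moment hyperplanes — with the functional equations read off from constant terms and the poles from the associated $c$-functions (intertwining operators). The second conjecture controls residues: it asserts that the correction terms are holomorphic in the needed half-plane and that the residue of $Z$ along each polar hyperplane is given, explicitly and recursively, by the analogous series in fewer variables (in effect, by lower moments), so that the polar part of $\mathcal Z(w)$ can be written in closed form.

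Granting these, specialize $s_j\to\tfrac12$: the $W$-orbit of the polar arrangement meets the $w$-line precisely in the points $w_n:=\tfrac12+\tfrac1{2n}$, $n\ge 1$. The point $w_1=1$ carries a pole of order $1+\tfrac{r(r+1)}2$, each $w_n$ with $n\ge 2$ a pole of bounded order depending on $r$ and $n$, and $\mathcal Z(w)$ is otherwise holomorphic for $\Re(w)>\tfrac12$. For $\Theta$ in the admissible range, $\tfrac{N+2}{2(N+1)}<\tfrac{1+\Theta}{2}<\tfrac{N+1}{2N}$, so exactly the poles $w_1,\dots,w_N$ lie to the right of the line $\Re(w)=\tfrac{1+\Theta}2$, on which $\mathcal Z$ is holomorphic and, being a function of $q^{-w}$ on a compact circle, bounded. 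Shifting the contour from a vertical line far to the right down to $\Re(w)=\tfrac{1+\Theta}2$ gives
\[
\sum_{\deg d=D}L(\tfrac12,\chi_d)^{r}
\;=\;\frac{\log q}{2\pi i}\oint_{(c)}\mathcal Z(w)\,q^{Dw}\,dw
\;=\;\sum_{n\le N}Q_{r,n}(D,q)\,q^{(\frac12+\frac1{2n})D}\;+\;O_{\Theta,q,r}\!\big(q^{D(1+\Theta)/2}\big),
\]
where $(c)$ runs over one period of a vertical line: the residue at the order-bounded pole $w_n$ contributes $q^{w_nD}=q^{(\frac12+\frac1{2n})D}$ times a polynomial $Q_{r,n}(D,q)$ in $D$, while the integral on $\Re(w)=\tfrac{1+\Theta}2$ is at once $O_{\Theta,q,r}(q^{D(1+\Theta)/2})$; the $Q_{r,n}$ are computable from the residue formulas of the second conjecture.

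The decisive obstacle is the second paragraph. The Weil functional equations and quadratic reciprocity (first paragraph) and the specialization and contour shift (third paragraph), which in the function field setting amount to bookkeeping, are routine; the real content is the analytic continuation of $Z$ across the natural boundary that the elementary functional equations cannot cross when $W$ is of indefinite type. Formulating the Eisenstein-series description precisely, verifying that its correction terms are holomorphic where needed, and — above all — controlling residues sharply enough that the infinitely many secondary contributions $Q_{r,n}(D,q)q^{(\frac12+\frac1{2n})D}$ are accounted for without swamping the error term: that is the technical heart, and it is exactly what the two conjectures are designed to deliver.
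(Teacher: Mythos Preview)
Your overall architecture matches the paper's: form the generating series in an extra variable, obtain meromorphic continuation from conjectural input, then contour-shift (in $\xi=q^{-s_{r+1}}$, over an annulus) and collect residues---this is exactly Theorems~6.1 and~6.3. But you misidentify what the two conjectures actually say, and you miss the structural device that makes the argument run.

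The paper's Conjecture~4.2 gives meromorphic continuation, not of your $Z$, but of a family of \emph{twisted} Weyl group multiple Dirichlet series $\widetilde{Z}^{(c)}(\mathbf{s};\chi_{a_2c_2},\chi_{a_1c_1})$ to the interior of the complexified Tits cone, with simple poles only along hyperplanes indexed by positive \emph{real} roots. Conjecture~4.3 is not about residues at all: it asserts absolute convergence of the M\"obius sum $\sum_h\mu(h)Z(\mathbf{s},\chi_{a_2};h)$ in the relevant half-space. (There is also a third input you omit, Assumption~(H1), a Ramanujan-type bound on the $p$-part coefficients.) The key point you skip is that the paper does \emph{not} work directly with $Z_0$ (your $\mathcal Z$)---it says explicitly that this ``does not seem to provide sufficient information''---and instead decomposes $Z_0=\sum_h\mu(h)Z(\cdot;h)$, where each $Z(\cdot;h)$ is a finite combination of the twisted series $\widetilde{Z}^{(h)}$. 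The residues are then \emph{computed} in Section~5 from the functional equations via the cocycle $\mathrm{M}_w$ (Proposition~5.1), yielding explicit Euler products; they are not assumed as a separate conjecture, nor do they arise ``recursively from lower moments'' as you propose. Finally, on specialization the poles are not single points $w_n$: for each $n$ there is a cluster at $\xi=q^{-(n+1)/(2n)}\zeta$ over all $2n$-th roots of unity $\zeta$, indexed by the finite set $\Phi_n$ of real roots with $k_{r+1}=n$ (Lemma~2.2), and their confluence as $s_j\to\tfrac12$ is handled by the Weierstrass-division argument of Proposition~6.2.
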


For notational convenience, we shall suppress the dependence of $Q_{r,\,  n}(D, q)$ upon $r.$ To determine 
these coefficients, we\linebreak will investigate the {\it conjectural} analytic properties (meromorphic continuation, location of poles and the residue at each pole) of the generating function 
\[
\mathscr{W}\!(s_{\scriptscriptstyle 1}, \ldots, s_{r}, \xi)  = \, \sum_{D \ge 0}
\; \Bigg(\, \sum_{\substack{d - \mathrm{monic \; \& \; sq. \; free} \\ \deg \, d\, = \, D}} \;\,
\prod_{k = 1}^{r}L(s_{\scriptscriptstyle k}, \chi_{d}) 
\Bigg)\, \xi^{\scriptscriptstyle D}
\] 
for fixed $s_{\scriptscriptstyle 1}, \ldots, s_{r}\in \mathbb{C}$ with 
$\Re(s_{\scriptscriptstyle k}) = \frac{1}{2}.$ Rather than approaching this function directly (as it does not seem to provide suf-\linebreak ficient information), we proceed as in \cite{Dia}, and write 
\begin{equation} \label{eq: intro-MDS-congruence-condition}
\mathscr{W}\!(s_{\scriptscriptstyle 1}, \ldots, s_{r}, q^{- s_{r + {\scriptscriptstyle 1}}}) \; = 
\sum_{h - \mathrm{monic}}\; \underbrace{\mu(h)}_{\textrm{\textcolor{MyDarkBlue}
		{${\scriptstyle{\text{M\"obius function on $\mathbb{F}_{\! q}[x]$}}}$}}} \hskip-15pt 
\!Z(s_{\scriptscriptstyle 1}, \ldots, s_{r + {\scriptscriptstyle 1}}, 1\!; h) 
\qquad \text{(for $\Re(s_{r + {\scriptscriptstyle 1}}) > 1$)}
\end{equation} 
where $Z(s_{\scriptscriptstyle 1}, \ldots, s_{r + {\scriptscriptstyle 1}}, 1\!; h)$ is a finite sum whose terms involve the (suitably normalized) twisted {W}eyl group multiple {D}irichlet series introduced in \cite{DV}, see \ref{introref-generating-function-moments}. These multiple {D}irichlet series satisfy a group of functional equations iso-\linebreak morphic to the {W}eyl group $W$ of the Kac-Moody Lie algebra $\fg(A)$ associated to the generalized Cartan matrix $A = (A_{ij})_{1 \le i, j \le r + 1}$ whose entries are given by 
\[
A_{ij} = 
\begin{cases} 
2 &\mbox{if $i = \!j$} \\
-1 &\mbox{if $i \ne \!j$ and either $i = r + 1$ or $j = r + 1$} \\
0 & \mbox{otherwise.}
\end{cases} 
\] 
We note that, unlike the finite-dimensional case, when $r \ge 4$ ($W$ is infinite in this case), these series are not determined by the group of functional equations alone. In fact, the complexity grows with $r,$ which can be seen by inspecting some\linebreak low-index coefficients of the multiple {D}irichlet series that can be computed using the explicit comparison between the Grothendieck-Lefschetz and Arthur-Selberg trace formulas established by Weissauer \cite{Weiss1} and \cite{Weiss2}; see also \cite{Peter}.\linebreak 

\vskip-5pt
We conjecture that these twisted {W}eyl group multiple {D}irichlet series admit meromorphic continuation to the interior of a complexified convex cone $X_{\scalebox{1.1}{$\scriptscriptstyle 0$}}^{\ast}$ -- a transformation of the interior of the complexified Tits cone of $W.$ More precisely, we conjecture that these functions are holomorphic on $X_{\scalebox{1.1}{$\scriptscriptstyle 0$}}^{\ast}\!,$ except for possible simple poles corresponding (via a simple change of variables) to the positive real roots of $\fg(A).$ The twisted {W}eyl group multiple {D}irichlet series occurring in the\linebreak expression of 
$
Z(s_{\scriptscriptstyle 1}, \ldots, s_{r + {\scriptscriptstyle 1}}, 1\!; h)
$ 
are twists of the multiple {D}irichlet series constructed in \cite{DV}, normalized by a certain $W$-invariant Euler product with the local factors at the primes dividing $h$ removed, and whose local factors are supported on squares. The effect of this normalization (which we introduced in order to remove potential singularities cor-\linebreak responding to the imaginary roots of $\fg(A)$) is completely annihilated in \eqref{eq: intro-MDS-congruence-condition}. We also conjecture that the series in \eqref{eq: intro-MDS-congruence-condition}\linebreak is absolutely convergent for 
$
\Re(s_{\scriptscriptstyle i}) \ge \frac{1}{2},
$ 
$i = 1, \ldots, r,$ and $\Re(s_{r + {\scriptscriptstyle 1}}) > \frac{1}{2},$ away from the poles of 
$ 
Z(s_{\scriptscriptstyle 1}, \ldots, s_{r + {\scriptscriptstyle 1}}, 1\!; h),
$ 
which combined with the previous conjecture, would provide the information we need to obtain the relevant analytic properties of the function 
$
\mathscr{W}\!(s_{\scriptscriptstyle 1}, \ldots, s_{r}, \xi).
$ 
In particular, all singularities of this function (in the variable $\xi$) should be simple poles at points corresponding to positive real roots of $\fg(A);$ if 
$
\alpha =  \!\sum k_{\scriptscriptstyle i}\alpha_{\scriptscriptstyle i}
$ 
is a positive real root, for a choice of positive simple roots $\alpha_{\scriptscriptstyle i}$ ($i = 1, \ldots, r+1$), the residue at the corresponding (simple) pole can be computed explicitly, and we shall do so in Section \ref{Poles-residues-ref-section}.

Our first conjecture is supported, for example, by an extension of the
so-called Eisenstein Conjecture ({\bf EC}) \cite{BBF1, BBF2, BBF3, CO,
  McN, P-P1} to Kac-Moody groups. \!This extension of {\bf EC}
predicts that twisted {W}eyl group multiple {D}irichlet
series\linebreak occur as Whittaker coefficients of a minimal
parabolic Eisenstein series on a metaplectic cover of a (root datum
related) Kac-Moody group; in this respect, a preliminary result has
been recently obtained by Patnaik and Pusk\'as \cite{P-P2}, who proved
a Casselman-Shalika type formula for Whittaker functions on
metaplectic covers of Kac-Moody groups over non-archimedean local
fields. It is conceivable that the constant term of this minimal
parabolic Eisenstein series can be\linebreak expressed via a
generalized Gindikin-Karpelevich formula in terms of a ratio of an
{\it infinite} product of zeta and $L$-functions. Presumably, this
infinite product is structurally similar to the denominator of the
character of an irreducible highest-weight $\fg(A)$-module, which
converges on the interior of the complexified Tits cone of $W.$ Thus
the constant term would be meromorphic on
$X_{\scalebox{1.1}{$\scriptscriptstyle 0$}}^{\ast},$ and by a
well-known principle, the minimal parabolic Eisenstein series should
have meromorphic continuation to the same domain. \!However,
developing the Langlands-Shahidi method in the con-\linebreak text of
Kac-Moody groups is quite problematic at the moment, a serious
obstacle in doing so being the lack of an adequate integration theory
over the relevant unipotent radicals allowing us to transfer the
``meromorphy property" from an Eisenstein series to its Whittaker
coefficients; see \cite{GMP} and the reference therein for specific
information in the\linebreak special case of Eisenstein series on loop
groups. Nonetheless, in some special cases, it is possible to obtain
the mero-\linebreak morphic continuation of the corresponding
{W}eyl group multiple {D}irichlet series more directly
(see, for example, \cite{BD}, \cite{White1}, and especially the
forthcoming manuscripts \cite{DPP1}, \cite{DPP2}, where the conjecture
is proved in the important case of {\it untwisted} {W}eyl group
multiple {D}irichlet series of type
$
D_{\scriptscriptstyle 4}^{\scriptscriptstyle (1)}
$
over rational function fields).

Our conjecture on the absolute convergence of \eqref{eq:
  intro-MDS-congruence-condition} can be motivated by an analogue of
the Lindel\"of hypothesis for the twisted {W}eyl group multiple
{D}irichlet series in the twisting parameters combined with a
Ramanujan type bound for the coefficients of the $p$-parts of the
{W}eyl group multiple {D}irichlet series (see Assumption \eqref{eq:
  Estimate-H1}).

By assuming the two conjectures (and, for consistency, also the
Ramanujan bound for the coefficients of the $p$-parts) to\linebreak be
true, we show that Conjecture
\ref{Main Conjecture: Moment-asymptotics-central-point-ff} holds with
$Q_{n}(D, q)$ given by a limit of a sum of contributions coming
from the singularities of the generating function
$
\mathscr{W}\!(s_{\scriptscriptstyle 1}, \ldots, s_{r}, \xi)
$ 
corresponding to the set of positive real roots of $\fg(A)$ with 
$k_{r + {\scriptscriptstyle 1}}=n.$ 
For example, when $n = 1,$ the set of roots contributing to
$Q_{\scriptscriptstyle 1}(D, q)$ is
\[
\left\{\sum_{i = 1}^{r} k_{\scriptscriptstyle i}\alpha_{\scriptscriptstyle i} \, +\, 
  \alpha_{r + {\scriptscriptstyle 1}}:
 \text{$k_{\scriptscriptstyle i} =0$ or $1$} \right\}
\]
(see \cite{AK} for the computation of $Q_{\scriptscriptstyle 1}(D, q)$),
and when $n = 2,$ the set of corresponding roots is
\[
\Bigg\{\sum_{\substack{j = 1 \\ j \neq
    j_{\scalebox{.71}{$\scriptscriptstyle 1$}}, \,
    j_{\scalebox{.71}{$\scriptscriptstyle 2$}}, \,
    j_{\scalebox{.71}{$\scriptscriptstyle 3$}}}}^{r}
k_{\!\scriptscriptstyle j}\alpha_{\!\scriptscriptstyle j} 
+ \alpha_{\!\scriptscriptstyle j_{\scalebox{.62}{$\scriptscriptstyle 1$}}}  \!+
\alpha_{\!\scriptscriptstyle j_{\scalebox{.62}{$\scriptscriptstyle 2$}}}  
\! +  \alpha_{\!\scriptscriptstyle j_{\scalebox{.62}{$\scriptscriptstyle 3$}}}  
\! +  2\alpha_{r +  {\scriptscriptstyle 1}} :
\text{$1 \le j_{\scriptscriptstyle 1} < j_{\scriptscriptstyle 2} <
  j_{\scriptscriptstyle 3} \le r,$ and $k_{\!\scriptscriptstyle j} \in \{0, 2\}$
for 
$
j \neq \! j_{\scriptscriptstyle 1}, j_{\scriptscriptstyle 2},
j_{\scriptscriptstyle 3}
$}
\Bigg\}
\]
and $Q_{\scriptscriptstyle 2}(D, q)$ is a polynomial of degree
$(r - 3) (r + 10)\slash 2$ in $D$ with leading coefficient given by
\begin{equation*}
\begin{split}
2^{{\scriptscriptstyle 19} - {\scriptscriptstyle 7}r}
\frac{1!\, 2!\, \cdots \, (r - 4)!}
{7!\,  9!\,  \cdots \, (2r - 1)!}
\Bigg\{&\big(1 \, +  \,
q^{\,\scriptscriptstyle 1\slash 4} \, + \, 10\, q^{\,\scriptscriptstyle 1\slash 2} 
\, + \, 7 q^{\,\scriptscriptstyle 3\slash 4} \, + \, 20\, q \, + \, 7 q^{\,\scriptscriptstyle 5\slash 4} \, + \, 10\, q^{\,\scriptscriptstyle 3\slash 2} \, + \, q^{\,\scriptscriptstyle 7\slash 4} \, + \, q^{\scriptscriptstyle 2}\big)
\zeta\big(\tfrac{1}{2}\big)^{\!\scriptscriptstyle 7}
\!\prod_{p} P_{r}\left(\frac{1}{\sqrt{|p|}}\right) \\
& + (-1)^{\scriptscriptstyle D} \big(1 \, -  \,
q^{\,\scriptscriptstyle 1\slash 4} \, + \, 10\,
q^{\,\scriptscriptstyle 1\slash 2} \, - \, 7 q^{\,\scriptscriptstyle
  3\slash 4} \, + \, 20\, q \, - \, 7 q^{\,\scriptscriptstyle 5\slash 4} \,
+ \, 10\, q^{\,\scriptscriptstyle 3\slash 2} \, - \,
q^{\,\scriptscriptstyle 7\slash 4}\, + \, 
q^{\scriptscriptstyle 2}\big)\zeta\big(\tfrac{1}{2}\big)^{\!\scriptscriptstyle 7}
\!\prod_{p} P_{r}\left(\frac{1}{\sqrt{|p|}}\right) \\
& + i^{\scriptscriptstyle D}\big(1 \, -  \, i
q^{\,\scriptscriptstyle 1\slash 4} \, - \, 4\, q^{\,\scriptscriptstyle
  1\slash 2} \, + \, 7i q^{\,\scriptscriptstyle 3\slash 4}
\, + \, 6\, q \, - \, 7i q^{\,\scriptscriptstyle 5\slash 4} 
\, - \, 4\, q^{\,\scriptscriptstyle 3\slash 2}
\, + \, i q^{\,\scriptscriptstyle 7\slash 4}
\, + \, q^{\scriptscriptstyle 2}\big)
L\big(\tfrac{1}{2}\big)^{\!\scriptscriptstyle 7}
\!\prod_{p} P_{r}\left(\frac{(-1)^{\scriptscriptstyle \deg\,  p}}{\sqrt{|p|}}\right)\\
& + (-i)^{\scriptscriptstyle D}\big(1 \, +  \, i
q^{\,\scriptscriptstyle 1\slash 4}
\, - \, 4\, q^{\,\scriptscriptstyle 1\slash 2} 
\, - \, 7i q^{\,\scriptscriptstyle 3\slash 4} \, + \, 6\, q
\, + \, 7i q^{\,\scriptscriptstyle 5\slash 4} 
\, - \, 4\, q^{\,\scriptscriptstyle 3\slash 2} \, - \, i
q^{\,\scriptscriptstyle 7\slash 4} \, + \, 
q^{\scriptscriptstyle 2}\big)
L\big(\tfrac{1}{2}\big)^{\!\scriptscriptstyle 7}
\!\prod_{p} P_{r}\left(\frac{(-1)^{\scriptscriptstyle \deg\, p}}{\sqrt{|p|}}\right)
\!\Bigg\}.
\end{split}
\end{equation*} 
Here
$
\zeta\big(\tfrac{1}{2}\big) = (1 - \sqrt{q})^{\scriptscriptstyle -1}\!,
$
$
L\big(\tfrac{1}{2}\big) = (1 + \sqrt{q})^{\scriptscriptstyle -1}\!, 
$
and
\begin{equation*} 
\begin{split} 
P_{r}(t) = &\, \left(1 -  t\right)^{\!(r^{\scalebox{.85}{$\scriptscriptstyle 2$}} + 7r - 14)\slash 2}    
\!\left(1 +  t \right)^{\!(r^{\scalebox{.85}{$\scriptscriptstyle 2$}}  + 7r - 28)\slash 2}\\
&\cdot \, \left[\left(t^{} +  t^{2} \right)
\left(t^{} + 6 t^{2} + t^{3}\right) \, + \, 
\tfrac{1}{2}\left(1 +  t^{} \right)^{4 - r}  \! + \, 
\tfrac{1}{2}\left(1  -  t^{}\right)^{- r} 
\left(1 + 10\, t^{} + 20\, t^{2} + 10\, t^{3} + t^{4} \right) \right].\end{split}
\end{equation*}
In general, the explicit form of the leading coefficient of
$Q_{n}(D, q)$ involves $2n$ contributions, each
corresponding to a $2n$-th root of $1.$ The computation of
these contributions is similar, albeit increasingly involved as
$n$ grows. By compari-\linebreak son, the coefficient
$Q_{\scriptscriptstyle 2}(x)$ in
\eqref{eq: Moment-conjecture-over-rationals} is a
polynomial of degree
$
(r - 3) (r + 10)\slash 2,
$
and we have
\[
Q_{\scriptscriptstyle 2}(x) \, \sim \, 
2^{{\scriptscriptstyle 19} - {\scriptscriptstyle 7}r}
\frac{0!\, 1!\, 2!\, \cdots \, (r - 4)!}
{7!\,  9!\, 11!\,  \cdots \, (2r - 1)!}
c_{\scriptscriptstyle \infty}c_{\scriptscriptstyle 2}
\zeta^{\scriptscriptstyle (2)}\big(\tfrac{1}{2}\big)^{\!\scriptscriptstyle 7}
\!\prod_{p \ne 2} P_{r}\Big(p^{\scriptscriptstyle -\frac{1}{2}}\Big) 
\,\cdot \, x^{(r - 3) (r + 10)\slash 2} \quad \text{(as $x\to \infty$)}
\]
where $c_{\scriptscriptstyle 2}$ and $c_{\scriptscriptstyle \infty}$
are contributions at $2$ and the archimedean place,
respectively, and
$
\zeta^{\scriptscriptstyle (2)}\big(\tfrac{1}{2}\big)
$
is the Riemann zeta-function with the local factor at $2$ removed.

\subsection{Structure of the paper}
In Section \ref{Prelim}, we recall the necessary background from
the theory of Kac-Moody Lie algebras and quadratic {D}irichlet
$L$-\linebreak functions. In Section \ref{General-MDS},
we introduce a class of formal power series in several variables
satisfying certain properties. In\linebreak Section
\ref{WMDS-assoc-moments}, we introduce the relevant multiple
{D}irichlet series associated to moments of quadratic {D}irichlet
$L$-functions, and discuss their general properties. In particular,
the group of functional equations satisfied by these multiple
{D}irichlet series is a consequence of the fact that the local parts
(or $p$-parts) of these series satisfy the properties of the power
series introduced in Section \ref{General-MDS}. We then proceed
by formulating two conjectures, giving in particular,
the meromorphic continuation of the generating function of
moments of quadratic $L$-series. In Section
\ref{Poles-residues-ref-section}, we study the poles and residues
of the functions introduced in $\S$\ref{WMDS-assoc-moments},
and in Section \ref{Asympt-moments-introduction}, we combine
the bound \eqref{eq: Estimate-H1} and Conjectures
\ref{Conjecture-Meromorphic continuation}, \ref{Conjecture-Meromorphic
  continuation-Z0} with the infor-\linebreak mation obtained in
Section \ref{Poles-residues-ref-section} to deduce asymptotics for moments
of quadratic $L$-functions. Finally, in Section
\ref{computations-introduction}, we study in detail the first two
terms of the asymptotics obtained in Section
\ref{Asympt-moments-introduction}.

\vskip5pt
{\bf Acknowledgements.} The authors would like to thank Vicen\c tiu Pa\c sol and Alex Popa for some very helpful discussions
and comments.

\section{Preliminaries} \label{Prelim}

\subsection{ Kac-Moody Lie algebras}  \label{KM-alg} 
We first recall some basic facts about root systems of Kac-Moody Lie
algebras; for a detailed exposition of the theory of Kac-Moody
algebras, the reader may consult the standard reference \cite{Kac}.

Let $A = (A_{ij})_{1 \le i, j \le n}$ be a {\it generalized Cartan matrix,} that is, the entries $A_{ij}$ satisfy the following conditions: 
\begin{enumerate}[label=(\roman*)]
	\item $A_{ii} = 2$ for $i = 1, \ldots, n;$ 
	
	\item $A_{ij}$ are non-positive integers for $i\ne j;$
	
	\item $A_{ij} = 0$ implies $A_{\! ji} = 0.$
\end{enumerate} 
For simplicity, let us assume throughout that $A \! = \!DS,$ where $D$ is an $n \times n$ invertible diagonal matrix, and $S$ is a symmetric matrix. \!Set $I =\{1, \ldots, n\},$ \!and let $(\fh,\, \Pi,\, \Pi^{\scriptscriptstyle \vee}\!)$ be a {\it realization} of $A.$ Thus $\fh$ is a complex vector space of dimension $2n - \mathrm{rank}\; A,$ and 
$
\Pi = \{\alpha_{\scriptscriptstyle i} : i \in I\} \subset \fh^{*}\!,
$ 
$
\Pi^{\scriptscriptstyle \vee} = \{\alpha_{\scriptscriptstyle i}^{\scriptscriptstyle \vee} : i \in I\} 
\subset \fh
$ 
are linearly independent sets satisfying the condition 
\[
\alpha_{\scriptscriptstyle j}(\alpha_{\scriptscriptstyle i}^{\scriptscriptstyle \vee}) = A_{ij} \qquad 
\text{(for all $i, j \in I$).}
\]

The Kac-Moody algebra associated to the matrix $A$ (see \cite[p. \!159]{Kac}) is then the Lie algebra $\fg(A)$ on generators $e_{\scriptscriptstyle i}, f_{\scriptscriptstyle i}$ ($i \in I$), $\fh,$ and the defining relations: 

\begin{itemize}

\item $[e_{\scriptscriptstyle i}, f_{\! \scriptscriptstyle j}] 
= \delta_{\scriptscriptstyle ij}\alpha_{\scriptscriptstyle i}^{\scriptscriptstyle \vee},$ where 
$\delta_{\scriptscriptstyle ij}$ is the Kronecker delta; 

\item $[h, e_{\scriptscriptstyle i}] = \alpha_{\scriptscriptstyle i}(h)e_{\scriptscriptstyle i}$ \, and \, $[h, f_{\scriptscriptstyle i}] = -\, \alpha_{\scriptscriptstyle i}(h)f_{\scriptscriptstyle i}$ \, (for $h\in \fh$);

\item $[h, h'] = 0$ \, (for $h, h' \in \fh$); 

\item $\mathrm{ad}(e_{\scriptscriptstyle i})^{\scriptscriptstyle 1 - A_{ij}}(e_{\! \scriptscriptstyle j}) = 0$ \, and \, $\mathrm{ad}(f_{\scriptscriptstyle i})^{\scriptscriptstyle 1 - A_{ij}}(f_{\! \scriptscriptstyle j}) = 0$ \, (for $i, j \in I$ with $i\ne j$).

\end{itemize} 
One of the fundamental results of the theory of Kac-Moody algebras is the direct sum decomposition 
\begin{equation*} 
\fg(A) = \fh \, \oplus \, \Big(\bigoplus_{\alpha \in \Delta} \, \fg_{\alpha}\!\Big)
\end{equation*} 
where 
$
\Delta \subset \sum_{i \in I} \mathbb{Z}\alpha_{\scriptscriptstyle i}
\setminus \{ \bf{0} \}
$ 
is the set of all {\it roots} of $\fg(A),$ and for each $\alpha \in \Delta,$ 
$
\fg_{\alpha} : = \{x \in \fg(A) : \text{$[h, x] = \alpha(h)x$ (for all $h\in \fh$)}\}
$ 
is the {\it root space} attached to $\alpha.$ Letting 
$
\Delta_{+} = \Delta \, \cap \sum_{i \in I} \mathbb{N}\alpha_{\scriptscriptstyle i}
$ 
(resp. \!$\Delta_{-}$) denote the set of all {\it positive} (resp. \!all {\it negative}) roots, we have 
\begin{equation} \label{eq: Roots1} 
\Delta_{-} = - \Delta_{+} \;\;\; \mathrm{and} \;\;\; \Delta = \Delta_{+} \cup \Delta_{-}.
\end{equation} 
Moreover, if $m_{\alpha} : = \dim\; \fg_{\alpha} \ge 1$ is the multiplicity of $\alpha,$ we have 
$m_{\alpha} = m_{-\alpha}$ for all $\alpha \in \Delta.$ For each $i\in I,$ 
$\alpha_{\scriptscriptstyle i} \in \Delta_{+},$ $m_{\alpha_{\scriptscriptstyle i}} \! = 1$ and $\mathbb{Z}\alpha_{\scriptscriptstyle i}\cap\Delta=\{\alpha_{\scriptscriptstyle i}, 
- \alpha_{\scriptscriptstyle i}\}.$ The elements $\alpha_{\scriptscriptstyle i}$ ($i\in I$) are called the {\it simple} roots of $\fg(A).$

We also have the following important fact, see \cite[Lemma~1.3]{Kac}: {\it If 
$
\beta \in \Delta_{+}\setminus \{\alpha_{\scriptscriptstyle i}\}
$ then 
$
(\beta + \mathbb{Z}\alpha_{\scriptscriptstyle i}) \cap \Delta \subset \Delta_{+}. 
$}

For each $i \in I,$ we define the {\it fundamental reflection} $w_{\scriptscriptstyle i}$ of $\fh^{*}$ by 
$$
w_{\scriptscriptstyle i}(\lambda) = \lambda \, - \, \lambda(\alpha_{\scriptscriptstyle i}^{\scriptscriptstyle \vee})\alpha_{\scriptscriptstyle i} \qquad \text{(for $\lambda \in \fh^{*}$).}
$$ 
The subgroup $W : = \langle w_{\scriptscriptstyle i} : i \in I \rangle$ of $\mathrm{GL}(\fh^{*})$ is the {\it {W}eyl group} of $\fg(A).$ The set of roots $\Delta$ is stabilized by $W,$ and $m_{w(\alpha)} = m_{\alpha}$ for $\alpha \in \Delta$ and $w \in W.$ The elements of the subset $\Delta^{\mathrm{re}} : = W\Pi$ of $\Delta$ are called {\it real roots}, and the elements of $\Delta^{\mathrm{im}} : = \Delta \setminus \Delta^{\mathrm{re}}$ are called {\it imaginary roots}. Both sets $\Delta^{\mathrm{re}}$ and $\Delta^{\mathrm{im}}$ split according to \eqref{eq: Roots1} as 
$ 
\Delta^{\mathrm{re}} = \Delta^{\mathrm{re}}_{+} \cup \Delta^{\mathrm{re}}_{-},
$ 
$
\Delta^{\mathrm{im}} = \Delta^{\mathrm{im}}_{+} \cup \Delta^{\mathrm{im}}_{-},
$ 
and $\Delta^{\mathrm{re}}_{-} = - \Delta^{\mathrm{re}}_{+},$ $\Delta^{\mathrm{im}}_{-} = - \Delta^{\mathrm{im}}_{+}.$

Recall that the Lie algebra $\fg(A)$ is assumed to be symmetrizable. \!Thus, by \cite[Theorem~2.2]{Kac}, $\fg(A)$ admits a non-degenerate symmetric bilinear $\mathbb{C}$-valued form $(\cdot \, \vert \, \cdot)$ -- playing the role of a ``Killing-Cartan form" when $\fg(A)$ is infinite-dimensional. Using this bilinear form, we have the following characterization of the imaginary roots, see \cite[Prop.~5.2, c]{Kac}: $\alpha \in \Delta$ is an imaginary root if and only if $(\alpha \, \vert \, \alpha) \le 0.$ Thus $(\alpha \, \vert \, \alpha) > 0$ if 
$\alpha \in \Delta^{\mathrm{re}}.$

The real roots also enjoy the following important property, see \cite[Prop.~5.1, c]{Kac}. For 
$\alpha \in  \Delta^{\mathrm{re}}$ and $\beta \in \Delta,$ let 
$S(\alpha, \beta) = \{\beta + k\alpha : k \in \mathbb{Z}\} \cap \Delta$ denote the $\alpha$-string through 
$\beta.$ Then there exist $u, v \in \mathbb{N}$ such that 
\begin{equation} \label{eq: Root-strings} 
S(\alpha, \beta) = \{\beta - u\alpha, \ldots,\, \beta - \alpha,\, \beta,\, \beta + \alpha, 
\ldots,\, \beta + v\alpha \}
\end{equation} 
and $u - v = \beta(\alpha^{\scriptscriptstyle \vee}).$ Moreover, the reflection 
$
w_{\scriptscriptstyle \alpha}(\lambda) = \lambda  -  \lambda(\alpha^{\scriptscriptstyle \vee})\alpha, 
$ 
$\lambda \in \fh^{*}\!,$ reverses the sequence of elements in $S(\alpha, \beta).$

For 
$
\alpha =  \!\sum_{i \in I} k_{\scriptscriptstyle i}\alpha_{\scriptscriptstyle i}
$ 
with $k_{\scriptscriptstyle i} \in \mathbb{Z},$ the integer 
$d(\alpha) : =  \sum_{i \in I} k_{\scriptscriptstyle i}$ is called the \addtocounter{footnote}{-2}\let\thefootnote\svthefootnote{\it height}\footnote{In \cite{Kac} the height of an element $\alpha$ in the root lattice is denoted by $\mathrm{ht} \; \alpha$ instead of $d(\alpha).$} \!of $\alpha.$ The elements of $\Delta$ 
of height one are $\alpha_{\scriptscriptstyle i}$ ($i \in I$). The following proposition can be used to construct the set 
$\Delta_{+}$ inductively.

\vskip10pt
\begin{prop}[{\bf Moody} \cite{Moo}] \label{properties-roots} --- Let 
$
\alpha =  \! \sum_{i \in I} k_{\scriptscriptstyle i}\alpha_{\scriptscriptstyle i}
$ 
with $k_{\scriptscriptstyle i} \in \mathbb{N}$ be of height $d(\alpha) > 1.$ Then $\alpha \in \Delta_{+}$ if and only if 
either there exists a fundamental reflection $w_{\scriptscriptstyle i}$ ($i \in I$) such that 
$w_{\scriptscriptstyle i}(\alpha)$ is a root and $d(w_{\scriptscriptstyle i}(\alpha)) < d(\alpha),$ or for all $i,$ 
$d(w_{\scriptscriptstyle i}(\alpha)) \ge d(\alpha)$ and there exists $i \in I$ such that $\alpha - \alpha_{\scriptscriptstyle i}$ 
is a root. Furthermore, if $\alpha \in \Delta_{+}$ then $\alpha \in \Delta^{\mathrm{re}}_{+}$ if and only if 
$w_{\scriptscriptstyle i}(\alpha)$ is a real root and $d(w_{\scriptscriptstyle i}(\alpha)) < d(\alpha)$ for some 
fundamental reflection $w_{\scriptscriptstyle i}.$
\end{prop}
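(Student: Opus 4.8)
The plan is to prove both biconditionals by tracking how the height $d(\cdot)$ changes under the fundamental reflections. Since $w_i(\alpha)=\alpha-\alpha(\alpha_i^{\scriptscriptstyle\vee})\alpha_i$ and $d(\alpha_i)=1$, we have $d(w_i(\alpha))=d(\alpha)-\alpha(\alpha_i^{\scriptscriptstyle\vee})$; hence ``$d(w_i(\alpha))<d(\alpha)$'' is exactly the condition $\alpha(\alpha_i^{\scriptscriptstyle\vee})>0$, and ``$d(w_i(\alpha))\ge d(\alpha)$ for all $i$'' means $\alpha(\alpha_i^{\scriptscriptstyle\vee})\le 0$ for all $i$. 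I will use repeatedly that $W$ stabilizes both $\Delta$ and $\Delta^{\mathrm{re}}=W\Pi$, and that, by the dichotomy $\Delta=\Delta_{+}\cup\Delta_{-}$ of \eqref{eq: Roots1}, the element $\alpha$ of the Proposition --- which has all coordinates $k_i\ge 0$ and $d(\alpha)>1$, so $\alpha\neq 0$ --- automatically lies in $\Delta_{+}$ once it is known to lie in $\Delta$. Thus the real content of the first biconditional is the production of the root $\alpha$ from the given data.

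For the forward implication of the first biconditional, let $\alpha\in\Delta_{+}$ with $d(\alpha)>1$. If $\alpha(\alpha_i^{\scriptscriptstyle\vee})>0$ for some $i$, then $w_i(\alpha)\in\Delta$ has smaller height and the first alternative holds. Otherwise $\alpha(\alpha_i^{\scriptscriptstyle\vee})\le 0$ for all $i$, and I would produce an index $i$ with $\alpha-\alpha_i\in\Delta$ by invoking that $\fn_{+}=\bigoplus_{\gamma\in\Delta_{+}}\fg_{\gamma}$ is generated by $e_1,\dots,e_n$: for $d(\alpha)>1$ this forces $\fg_{\alpha}=\sum_i[e_i,\fg_{\alpha-\alpha_i}]$ (with the convention $\fg_{\gamma}=0$ for $\gamma\notin\Delta\cup\{0\}$), and since $\fg_{\alpha}\neq 0$ while $\alpha-\alpha_i\neq 0$, some $\alpha-\alpha_i$ is a root --- the second alternative. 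For the reverse implication: in the first case $\alpha=w_i(w_i(\alpha))\in\Delta$, so $\alpha\in\Delta_{+}$; in the second case we are given $\alpha-\alpha_i\in\Delta$ with $\alpha(\alpha_i^{\scriptscriptstyle\vee})\le 0$, a coordinate-sign check (a root has no coordinates of mixed sign, and $\alpha\neq 0$) forces $k_i\ge 1$, so $\beta:=\alpha-\alpha_i\in\Delta_{+}$ and $\beta(\alpha_i^{\scriptscriptstyle\vee})=\alpha(\alpha_i^{\scriptscriptstyle\vee})-2\le -2$; applying the root-string description \eqref{eq: Root-strings} to the real root $\alpha_i$ and to $\beta$, the $\alpha_i$-string through $\beta$ equals $\{\beta-u\alpha_i,\dots,\beta+v\alpha_i\}$ with $u-v=\beta(\alpha_i^{\scriptscriptstyle\vee})\le -2$, so $v\ge u+2\ge 1$, whence $\alpha=\beta+\alpha_i$ lies in this string, i.e.\ $\alpha\in\Delta$ and so $\alpha\in\Delta_{+}$.

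For the ``furthermore'' statement, the backward direction is immediate: if $w_i(\alpha)\in\Delta^{\mathrm{re}}$ for some $i$, then $\alpha=w_i(w_i(\alpha))\in W\Pi=\Delta^{\mathrm{re}}$, so $\alpha\in\Delta^{\mathrm{re}}_{+}$ (the height hypothesis plays no role here). For the forward direction, suppose $\alpha\in\Delta^{\mathrm{re}}_{+}$ and, for contradiction, that $\alpha(\alpha_i^{\scriptscriptstyle\vee})\le 0$ for every $i$. With the standard normalization of the invariant form \cite{Kac}, $(\alpha\,\vert\,\alpha_i)=\tfrac12(\alpha_i\,\vert\,\alpha_i)\,\alpha(\alpha_i^{\scriptscriptstyle\vee})$, and $(\alpha_i\,\vert\,\alpha_i)>0$ because $\alpha_i$ is a real root; hence $(\alpha\,\vert\,\alpha_i)\le 0$ for all $i$, so $(\alpha\,\vert\,\alpha)=\sum_i k_i(\alpha\,\vert\,\alpha_i)\le 0$, contradicting $(\alpha\,\vert\,\alpha)>0$ for real roots. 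Therefore $\alpha(\alpha_i^{\scriptscriptstyle\vee})>0$ for some $i$, and then $w_i(\alpha)$ is a real root with $d(w_i(\alpha))<d(\alpha)$, as required.

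The two steps I expect to carry the real weight are: first, the reverse direction of the second alternative --- upgrading ``$\alpha-\alpha_i\in\Delta$ with $\alpha(\alpha_i^{\scriptscriptstyle\vee})\le 0$'' to ``$\alpha\in\Delta$'' --- which genuinely relies on the real-root-string structure \eqref{eq: Root-strings} and would fail for imaginary $\alpha_i$; and second, the identity $\fg_{\alpha}=\sum_i[e_i,\fg_{\alpha-\alpha_i}]$ for $d(\alpha)>1$, resting on $\fn_{+}$ being generated by the Chevalley generators $e_i$ --- the one structural input beyond the purely combinatorial facts recalled in Section~\ref{KM-alg}. Everything else amounts to bookkeeping with heights, the sign dichotomy $\Delta=\Delta_{+}\cup\Delta_{-}$, and the $W$-invariance of $\Delta$, of $\Delta^{\mathrm{re}}$, and of the bilinear form.
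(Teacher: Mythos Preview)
The paper does not supply its own proof of this proposition; it is quoted from Moody~\cite{Moo} and used as a black box, so there is nothing to compare against on the paper's side. Your argument is correct and is essentially the standard proof: the height bookkeeping $d(w_i(\alpha))=d(\alpha)-\alpha(\alpha_i^{\scriptscriptstyle\vee})$ reduces both biconditionals to sign information on the $\alpha(\alpha_i^{\scriptscriptstyle\vee})$; the root-string property \eqref{eq: Root-strings} for the simple (hence real) root $\alpha_i$ is exactly what pushes $\beta=\alpha-\alpha_i$ back up to $\alpha$ in the second alternative; the fact that $\fn_{+}$ is generated by the $e_i$ yields some $\alpha-\alpha_i\in\Delta$ when $d(\alpha)>1$; and the inequality $(\alpha\,\vert\,\alpha)\le 0$ under the hypothesis $\alpha(\alpha_i^{\scriptscriptstyle\vee})\le 0$ for all $i$ contradicts reality of $\alpha$. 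One small cosmetic point: in the root-string step you may as well note $v\ge u+2\ge 2$, not merely $v\ge 1$, though $v\ge 1$ is all you need.
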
 

From now on, we shall assume that 
\[
A \,= \begin{pmatrix} 2 & & & & -1 \cr  & 2 & & & -1 \cr  & & & &\vdots \cr 
& & & 2 & -1 \cr  -1 &-1 &\cdots & -1 & 2\end{pmatrix}
\] 
where we take $n = r + 1$ with $r \ge 1.$ Thus $A = (A_{ij})_{1 \le i, j \le r + 1}$ is the symmetric matrix whose entries are: 

\[
A_{ij} = 
\begin{cases} 
2 &\mbox{if $i = \!j$} \\
-1 &\mbox{if $i \ne \!j$ and either $i = r + 1$ or $j = r + 1$} \\
0 & \mbox{otherwise.}
\end{cases} 
\] 
We have that $\det \, A = - 2^{r - {\scriptscriptstyle 1}}(r - 4).$ By \cite[Theorem~4.3]{Kac}, one finds at once that 
$A$ is of finite, affine, or indefinite type when $r \le 3,$ $r = 4,$ or $r \ge 5,$ respectively. For example, the Dynkin diagram of $A$ is $D_{4}$ when $r = 3$ and $\tilde{D}_{4}$ when $r = 4;$ for $r \ge 5,$ the Dynkin diagram of $A$ has a star-like shape.

With notation as above, we have:

\vskip10pt
\begin{lem}\label{Finiteness-roots-level-n} --- Let 
$
\alpha =  \!\sum k_{\scriptscriptstyle i}\alpha_{\scriptscriptstyle i} \in \Delta_{+} 
$ 
with $k_{r + \scriptscriptstyle 1} \ge 1.$ Then $k_{\scriptscriptstyle i} \le k_{r + \scriptscriptstyle 1}$ for all $i = 1, \ldots, r.$
\end{lem}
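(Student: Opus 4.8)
The plan is to induct on the height $d(\alpha) = \sum_i k_i$, using Moody's proposition (Proposition \ref{properties-roots}) to decompose positive roots. For the base case, the only positive roots of height one are the simple roots $\alpha_i$, for which the statement is trivial (if $k_{r+1} \ge 1$ then $\alpha = \alpha_{r+1}$ and all other $k_i = 0 \le 1$). For the inductive step, suppose the claim holds for all positive roots of height less than $d(\alpha)$, and let $\alpha = \sum k_i \alpha_i \in \Delta_+$ with $k_{r+1} \ge 1$ and $d(\alpha) > 1$.

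First I would compute the effect of the fundamental reflections on the coordinates. For $i \le r$, since $A_{ij} = 0$ for $j \le r$, $j \ne i$, and $A_{i,r+1} = -1$, one has $\alpha(\alpha_i^\vee) = \sum_j k_j A_{ij} = 2k_i - k_{r+1}$, so $w_i(\alpha) = \alpha - (2k_i - k_{r+1})\alpha_i$, which changes only the $i$-th coordinate, replacing $k_i$ by $k_{r+1} - k_i$ and leaving $k_{r+1}$ fixed. For $i = r+1$, $w_{r+1}(\alpha) = \alpha - (2k_{r+1} - \sum_{j\le r} k_j)\alpha_{r+1}$, changing only the $(r+1)$-st coordinate. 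Now apply Moody's proposition: either some $w_i(\alpha)$ is a root of strictly smaller height, or $\alpha - \alpha_i$ is a root for some $i$. In the first case, if the height-decreasing reflection is $w_i$ with $i \le r$, then $w_i(\alpha)$ is a positive root (smaller positive roots stay positive) with the same $(r+1)$-coordinate $k_{r+1} \ge 1$, so by induction all its coordinates are $\le k_{r+1}$; in particular $k_{r+1} - k_i \le k_{r+1}$, i.e. $k_i \ge 0$ (automatic), but more usefully, applying induction to $w_i(\alpha)$ bounds every coordinate $k_j$ with $j \ne i$ by $k_{r+1}$, and the $i$-th coordinate of $w_i(\alpha)$, namely $k_{r+1} - k_i$, satisfies $k_{r+1}-k_i \le k_{r+1}$ as well — wait, that only gives $k_i \ge 0$. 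So I need to also use that $w_i(w_i(\alpha)) = \alpha$ together with the bound on the $i$-th coordinate: since $\alpha$ is a positive root, $k_i \ge 0$, and applying the inductive bound to the other coordinates of $w_i(\alpha)$ handles all $j \ne i$; for the $i$-th coordinate itself I argue that if $k_i > k_{r+1}$ then $k_{r+1} - k_i < 0$, contradicting that $w_i(\alpha)$ is a positive root. That closes this subcase. If the height-decreasing reflection is $w_{r+1}$, then only the last coordinate drops, so $w_{r+1}(\alpha)$ has coordinates $k_1, \dots, k_r, k'_{r+1}$ with $k'_{r+1} < k_{r+1}$; I must handle $k'_{r+1} = 0$ separately (then $w_{r+1}(\alpha)$ is supported on $\alpha_1, \dots, \alpha_r$, which are mutually orthogonal, forcing it to be a single $\alpha_j$, so $\alpha$ has at most two nonzero coordinates and the bound is easy to check directly), and if $k'_{r+1} \ge 1$ apply induction to get $k_i \le k'_{r+1} < k_{r+1}$ for $i \le r$.

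In the remaining case of Moody's proposition, $\alpha - \alpha_i$ is a root for some $i$; since $d(\alpha - \alpha_i) < d(\alpha)$ and $\alpha - \alpha_i$ has nonnegative coordinates (it equals $\alpha$ with $k_i$ decreased by one, and it is a root of smaller height hence positive), I apply induction. If $i = r+1$, then $\alpha - \alpha_{r+1}$ has last coordinate $k_{r+1} - 1$; if this is $\ge 1$, induction gives $k_j \le k_{r+1} - 1 < k_{r+1}$ for $j \le r$, and if it is $0$ then again $\alpha - \alpha_{r+1}$ is supported on the orthogonal simple roots $\alpha_1, \dots, \alpha_r$, hence is a single $\alpha_j$, and $\alpha = \alpha_j + \alpha_{r+1}$ satisfies the bound. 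If $i \le r$, then $\alpha - \alpha_i$ still has last coordinate $k_{r+1} \ge 1$, so induction yields $k_j \le k_{r+1}$ for all $j \ne i$ and $k_i - 1 \le k_{r+1}$; the only thing to rule out is $k_i = k_{r+1} + 1$. In that edge case I would use the root-string information \eqref{eq: Root-strings}: since $\alpha_i$ is a real root and $\alpha$ is a root, the $\alpha_i$-string through $\alpha$ has the form $\{\alpha - u\alpha_i, \dots, \alpha + v\alpha_i\}$ with $u - v = \alpha(\alpha_i^\vee) = 2k_i - k_{r+1} = 2(k_{r+1}+1) - k_{r+1} = k_{r+1} + 2 \ge 3$, so $u \ge 3$, meaning $\alpha - 3\alpha_i$ is a root with $i$-th coordinate $k_{r+1} - 2$; iterating this reflection/string argument drives the $i$-th coordinate down to a value $\le k_{r+1}$ while keeping it a positive root with last coordinate $k_{r+1}$, giving a contradiction with the supposed minimality or directly with the inductive hypothesis applied at a strictly smaller instance. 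I expect the main obstacle to be exactly this last bookkeeping: ensuring that whenever a reflection $w_i$ ($i \le r$) is applied, one genuinely lands on a \emph{positive} root of \emph{strictly smaller height} to which the inductive hypothesis applies, rather than merely on some root; the cleanest way around it is probably to run the induction not on height but on the quantity $\sum_{i \le r} k_i$ with $k_{r+1}$ fixed, reflecting in the $w_i$ ($i \le r$) to strictly decrease $\sum_{i\le r} \max(k_i, k_{r+1}-k_i)$ until all $k_i \le k_{r+1}$, and invoking the positivity-preservation of reflections together with the fact (quoted in the excerpt) that $(\beta + \mathbb{Z}\alpha_i) \cap \Delta \subset \Delta_+$ for $\beta \in \Delta_+ \setminus \{\alpha_i\}$.
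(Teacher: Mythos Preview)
Your inductive scheme is far more elaborate than needed, and the ``edge case'' in your second Moody alternative ($i \le r$, $k_i = k_{r+1}+1$) is not actually closed: from $u - v = k_{r+1}+2$ you only extract $u \ge 3$ and pass to $\alpha - 3\alpha_i$, whose $i$-th coordinate $k_{r+1}-2$ is perfectly nonnegative, so neither ``minimality'' (you never assumed any) nor the inductive hypothesis applied to that smaller root says anything about the $i$-th coordinate of $\alpha$ itself. The fix is to keep the full inequality $u \ge u - v = k_{r+1}+2 > k_i$ and go to the \emph{end} of the string: $\alpha - u\alpha_i$ then has negative $i$-th coordinate, while $k_{r+1} \ge 1$ forces the whole $\alpha_i$-string through $\alpha$ to lie in $\Delta_+$ (the fact from \cite{Kac} quoted just before Proposition~\ref{properties-roots}), a contradiction. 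Alternatively, in Moody's second alternative you also have $d(w_i(\alpha)) \ge d(\alpha)$ for all $i$, which for $i \le r$ already reads $k_i \le k_{r+1}/2$, so the edge case never arises.

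Once you make either fix you will see that the induction and Moody's proposition are doing no work. The paper's argument is the single observation you already made inside your Case~1 handling of $i \le r$: for each $i \le r$, since $k_{r+1} \ge 1$ we have $\alpha \in \Delta_+ \setminus \{\alpha_i\}$, so the $\alpha_i$-string through $\alpha$ lies in $\Delta_+$; writing $u_i - v_i = \alpha(\alpha_i^\vee) = 2k_i - k_{r+1}$, the bottom $\alpha - u_i\alpha_i \in \Delta_+$ has $i$-th coordinate $k_i - u_i = k_{r+1} - k_i - v_i \ge 0$, whence $k_i \le k_{r+1}$. Equivalently, $w_i(\alpha) \in \Delta_+$ has $i$-th coordinate $k_{r+1} - k_i \ge 0$. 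This holds for every $i \le r$ unconditionally, so all of the casework can be deleted.
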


\begin{proof} \!From the definition of the Cartan matrix $A,$ we have 
\begin{equation*}
u_{\scriptscriptstyle i} - v_{\scriptscriptstyle i} 
= \, \alpha(\alpha_{\scriptscriptstyle i}^{\scriptscriptstyle \vee}) 
= 2k_{\scriptscriptstyle i} - \, k_{r + \scriptscriptstyle 1}
\end{equation*} 
for all $1 \le i \le r.$ Since $k_{r + \scriptscriptstyle 1} > 0,$ it follows from \eqref{eq: Root-strings} that $\alpha - u_{\scriptscriptstyle i}\alpha_{\scriptscriptstyle i}\in \Delta_{+}.$ In particular, 
$
k_{r + \scriptscriptstyle 1} - \,k_{\scriptscriptstyle i} - \, v_{\scriptscriptstyle i} \ge 0,
$ 
and our assertion follows.
\end{proof}

Now let 
$
\alpha = \!\sum_{j} k_{\!\scriptscriptstyle j}\alpha_{\!\scriptscriptstyle j} 
$ 
($k_{\!\scriptscriptstyle j} \in \mathbb{Z}$) be an element of the root lattice. \!Since 
$
w_{\scriptscriptstyle i}(\alpha_{\!\scriptscriptstyle j}) 
= \alpha_{\!\scriptscriptstyle j} - A_{ij}\alpha_{\scriptscriptstyle i},
$ 
the effect of the fundamental reflection $w_{\scriptscriptstyle i}$ on the coefficients of $\alpha$ is given by 
\begin{equation} \label{eq: effect-coeff-roots} 
k_{\!\scriptscriptstyle j} \mapsto 
\begin{cases} 
k_{\!\scriptscriptstyle j} &\mbox{if $j \ne i$} \\ 
k_{r + {\scriptscriptstyle 1}} - k_{\scriptscriptstyle i}  &\mbox{if $j = i \le r$} \\
-\, k_{r + {\scriptscriptstyle 1}} \, + \,  \sum_{l = 1}^{r} k_{\scriptscriptstyle l}
& \mbox{if $j = i = r + 1.$}
\end{cases} 
\end{equation} 
(One will notice that these relations also give the conclusion of Lemma \ref{Finiteness-roots-level-n}.) 
Thus to determine the set of positive roots it suffices to find the elements 
$\alpha \in \Delta_{+}$ whose coefficients satisfy the condition 
\begin{equation} \label{eq: condition*} 
k_{\!\scriptscriptstyle j} \le k_{r + {\scriptscriptstyle 1}} \slash 2 \qquad \text{(for $j = 1, \ldots, r$).} \tag{$*$}
\end{equation} 
It follows at once from Proposition \ref{properties-roots} that, if $\alpha \notin \Pi$ is a positive real root satisfying the condition \eqref{eq: condition*}, then $d(w_{r + {\scriptscriptstyle 1}}(\alpha)) < d(\alpha),$ i.e. 
\begin{equation} \label{eq: condition**} 
\sum_{j = 1}^{r} k_{\!\scriptscriptstyle j} \le 2 k_{r + {\scriptscriptstyle 1}} - 1. \tag{$**$}
\end{equation} 
This gives a simple inductive procedure to determine the set of positive real roots. \!Another special feature of the set $\Delta^{\mathrm{re}}$ is expressed in the following lemma:

\vskip10pt
\begin{lem} --- We have $\Delta^{\mathrm{re}} = W \alpha_{r + {\scriptscriptstyle 1}}.$
\end{lem}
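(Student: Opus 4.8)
The plan is to exploit the definition $\Delta^{\mathrm{re}} = W\Pi$ and reduce the claim to showing that every simple root lies in the single $W$-orbit of $\alpha_{r+{\scriptscriptstyle 1}}.$ Since $W\alpha_{r+{\scriptscriptstyle 1}} \subseteq W\Pi = \Delta^{\mathrm{re}}$ trivially, it suffices to prove the reverse inclusion, and for that it is enough to check that $\alpha_{\scriptscriptstyle i} \in W\alpha_{r+{\scriptscriptstyle 1}}$ for each $i = 1, \ldots, r$ (the case $i = r+1$ being vacuous): indeed, if $\alpha_{\scriptscriptstyle i} = v_{\scriptscriptstyle i}(\alpha_{r+{\scriptscriptstyle 1}})$ with $v_{\scriptscriptstyle i} \in W,$ then $w(\alpha_{\scriptscriptstyle i}) = (wv_{\scriptscriptstyle i})(\alpha_{r+{\scriptscriptstyle 1}}) \in W\alpha_{r+{\scriptscriptstyle 1}}$ for every $w \in W,$ whence $W\Pi \subseteq W\alpha_{r+{\scriptscriptstyle 1}}.$

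The computation itself uses only the identity $w_{\scriptscriptstyle i}(\alpha_{\!\scriptscriptstyle j}) = \alpha_{\!\scriptscriptstyle j} - A_{ij}\alpha_{\scriptscriptstyle i}$ recorded just before \eqref{eq: effect-coeff-roots}, together with the star shape of $A$ (center node $r+1,$ with $A_{i,\, r+{\scriptscriptstyle 1}} = A_{r+{\scriptscriptstyle 1},\, i} = -1$ for $1 \le i \le r$). First, for $i \le r,$ $w_{\scriptscriptstyle i}(\alpha_{r+{\scriptscriptstyle 1}}) = \alpha_{r+{\scriptscriptstyle 1}} - A_{i,\, r+{\scriptscriptstyle 1}}\alpha_{\scriptscriptstyle i} = \alpha_{r+{\scriptscriptstyle 1}} + \alpha_{\scriptscriptstyle i}.$ Applying $w_{r+{\scriptscriptstyle 1}}$ and using $w_{r+{\scriptscriptstyle 1}}(\alpha_{r+{\scriptscriptstyle 1}}) = -\alpha_{r+{\scriptscriptstyle 1}}$ and $w_{r+{\scriptscriptstyle 1}}(\alpha_{\scriptscriptstyle i}) = \alpha_{\scriptscriptstyle i} + \alpha_{r+{\scriptscriptstyle 1}}$ gives
\[
w_{r+{\scriptscriptstyle 1}}\big(w_{\scriptscriptstyle i}(\alpha_{r+{\scriptscriptstyle 1}})\big)
= w_{r+{\scriptscriptstyle 1}}(\alpha_{r+{\scriptscriptstyle 1}} + \alpha_{\scriptscriptstyle i})
= -\alpha_{r+{\scriptscriptstyle 1}} + \alpha_{\scriptscriptstyle i} + \alpha_{r+{\scriptscriptstyle 1}}
= \alpha_{\scriptscriptstyle i}.
\]
Thus $\alpha_{\scriptscriptstyle i} = (w_{r+{\scriptscriptstyle 1}}w_{\scriptscriptstyle i})(\alpha_{r+{\scriptscriptstyle 1}}) \in W\alpha_{r+{\scriptscriptstyle 1}}$ for all $i = 1, \ldots, r,$ and together with $\alpha_{r+{\scriptscriptstyle 1}} \in W\alpha_{r+{\scriptscriptstyle 1}}$ this yields $\Pi \subseteq W\alpha_{r+{\scriptscriptstyle 1}},$ completing the proof.

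There is essentially no obstacle here; the only point to be careful about is the sign/normalization in the reflection formula and the fact that in this particular $A$ every ``leg'' node $i \le r$ is joined to the central node $r+1$ (which is exactly what makes the two-step reflection $w_{r+{\scriptscriptstyle 1}}w_{\scriptscriptstyle i}$ carry $\alpha_{r+{\scriptscriptstyle 1}}$ to $\alpha_{\scriptscriptstyle i}$). One may also remark that this is the statement that $\fg(A)$ has a single orbit of real roots under $W,$ reflecting the fact that all nodes of the Dynkin diagram are conjugate under the diagram automorphisms together with the braid relations — but the direct computation above is the shortest route.
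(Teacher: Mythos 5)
Your proof is correct and is essentially the paper's argument: the paper simply notes that $w_{\scriptscriptstyle i} w_{r + {\scriptscriptstyle 1}}(\alpha_{\scriptscriptstyle i}) = \alpha_{r + {\scriptscriptstyle 1}}$ for all $i,$ which is the inverse form of your identity $\alpha_{\scriptscriptstyle i} = (w_{r+{\scriptscriptstyle 1}}w_{\scriptscriptstyle i})(\alpha_{r+{\scriptscriptstyle 1}}),$ combined with the same trivial reduction to showing $\Pi \subseteq W\alpha_{r+{\scriptscriptstyle 1}}.$
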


\begin{proof} \!This follows at once from the fact that 
$
w_{\scriptscriptstyle i} 
w_{r + {\scriptscriptstyle 1}}(\alpha_{\scriptscriptstyle i}) = \alpha_{r + {\scriptscriptstyle 1}}
$ 
for all $1 \le i \le r + 1.$
\end{proof}

{\it Denominator formula.} Since $A$ has real entries, we can take
a realization
$
(\fh_{\scriptscriptstyle \mathbb{R}},\, \Pi,\, \Pi^{\scriptscriptstyle
  \vee}\!)
$
of $A$ over $\mathbb{R}.$ Thus $\fh_{\scriptscriptstyle \mathbb{R}}$
is a vector\linebreak space of dimension $2r + 2 - \mathrm{rank}\; A$ over
$\mathbb{R},$ so that
$
(\fh,\, \Pi,\, \Pi^{\scriptscriptstyle \vee}\!)
$
($\fh : = \fh_{\scriptscriptstyle \mathbb{R}}
\otimes_{\scriptscriptstyle \mathbb{R}}
\mathbb{C}$) is the realization of $A$ over $\mathbb{C}.$ The set
\[
  C = \{h \in \fh_{\scriptscriptstyle \mathbb{R}}
:\alpha_{\scriptscriptstyle i}(h) \ge 0 \;\; \text{for $i = 1, \ldots, r+1$}\}
\]
is called the {\it fundamental chamber,} and the union
\[
\mathbf{X} \; = \bigcup_{w \in W} w(C)
\]
is called the {\it Tits cone.} The {\it complexified Tits cone}
$\mathbf{X}_{\scriptscriptstyle \mathbb{C}}$ is deﬁned to be
\[
\mathbf{X}_{\scriptscriptstyle \mathbb{C}}
= \{x + \scalebox{1.15}{$\scriptscriptstyle \sqrt{-1}$}\, y : x \in \mathbf{X},
\; y \in \fh_{\scriptscriptstyle \mathbb{R}}\}.
\]
Let $\rho \in \fh^{*}$ be a fixed element
such that
$
\rho(\alpha_{\scriptscriptstyle i}^{\scriptscriptstyle \vee}) = 1
$
($i = 1, \ldots, r + 1$), and, for $\lambda \in \fh^{*}\!,$ define
$e^{\scriptscriptstyle \lambda}$ by setting
$
e^{\scriptscriptstyle \lambda}(h) : = e^{\scriptscriptstyle \lambda(h)}
$
($h \in \fh$). Since $A$ is a symmetric matrix, hence $\fg(A)$ admits
a standard invariant bilinear form \cite[\textsection2.3]{Kac},
we have the Weyl-Kac {\it denominator formula}
\cite[Theorem~10.4 and (10.4.4)]{Kac}:
\[
\prod_{\alpha \in \Delta_{+}}(1 - e^{\scriptscriptstyle
  -\alpha})^{m_{\scalebox{.85}{$\scriptscriptstyle \alpha$}}}
\, = \sum_{w \in W} \epsilon(w) e^{\scriptscriptstyle w(\rho) - \rho}
\]
where $\epsilon(w) = (-1)^{\scriptscriptstyle l(w)}\!,$ $l(w)$ being the
length of $w.$ The natural domain of absolute convergence of both
sides of the formula is the interior (in metric topology) of
$\mathbf{X}_{\scriptscriptstyle \mathbb{C}},$
see \cite[Proposition~10.6]{Kac}. 

The denominator formula can be used to compute the multiplicities of the imaginary roots of $\fg(A),$ see \cite{BM}.

\subsection{Quadratic $L$-series}
Let $\mathbb{F}_{\! q}$ be a finite field of odd characteristic.
For simplicity, we shall assume throughout that $q\, \equiv 1
\!\!\pmod 4.$ For $m\in$ $\mathbb{F}_{\! q}[x],$ $m \ne 0,$ put
$|m| = q^{\deg \, m},$ and, for $d, m \in \mathbb{F}_{\! q}[x]$ with
$m$ monic, let $(d \slash m)$ denote the quadratic symbol; \!we take 
$(d \slash 1) = 1.$ Since $q\equiv 1 \!\!\pmod 4,$ we have the
quadratic reciprocity law  
\begin{equation*}
\left(\frac{d}{m}\right) =  \left(\frac{m}{d}\right) \qquad 
\text{(for coprime non-constant monic polynomials
$d, m \in \mathbb{F}_{\! q}[x]$).}
\end{equation*} 
In addition, if $b \in \mathbb{F}_{\! q}^{\times}$ then
$\left(\frac{b}{m}\right) = \mathrm{sgn}(b)^{\deg \, m}$
for all non-constant 
$m \in \mathbb{F}_{\! q}[x],$ where, for
$
d(x) = b_{\scriptscriptstyle 0}\, x^{n} + b_{\scriptscriptstyle 1}\,  x^{n - 1} + \cdots +  b_{\scriptscriptstyle n}
\! \in \mathbb{F}_{\! q}[x]
$ 
($b_{\scriptscriptstyle 0} \ne 0$), $\mathrm{sgn}(d) = 1$ if 
$
b_{\scriptscriptstyle 0} \in (\mathbb{F}_{\! q}^{\times})^{\scriptscriptstyle 2}
$ 
and $\mathrm{sgn}(d) = - 1$ if 
$
b_{\scriptscriptstyle 0} \notin (\mathbb{F}_{\! q}^{\times})^{\scriptscriptstyle 2}.
$

For $d$ square-free, put 
$
\chi_{d}(m) = (d \slash m).
$ 
The $L$-function associated to $\chi_{d}$ is defined by 
\begin{equation*}
  L(s, \chi_{d}) \;\;\, = \sum_{\substack{m \in \mathbb{F}_{\! q}[x] \\
      m - \text{monic}}}
  \chi_{d}(m) |m|^{-s}
  \;\;\; =  \prod_{p - \text{monic \& irreducible}}
  (1 - \chi_{d}(p)|p|^{-s})^{\scriptscriptstyle -1}
\qquad \text{(for complex $s$ with $\Re(s) > 1$).}
\end{equation*} 
It is well-known that $L(s, \chi_{d})$ is a polynomial in $q^{-s}$ of degree $\deg \, d - 1$ 
when $d$ is non-constant; if $d \in \mathbb{F}_{\! q}^{\times},$  
\begin{equation*}
L(s, \chi_{d}) =  \zeta(s) \, = \frac{1}{1 - q^{{\scriptscriptstyle 1} - s}} 
\;\;\;\; \text{(when $\mathrm{sgn}(d) = 1$)}
\;\;\; \text{and} \;\;\;  L(s, \chi_{d}) \, = \frac{1}{1 + q^{{\scriptscriptstyle 1} - s}} 
\;\;\;\; \text{(when $\mathrm{sgn}(d) = - 1$).}
\end{equation*} 
If one defines $\gamma_{\scriptscriptstyle q}(s,\,  d)$ by 
\begin{equation*} \label{eq: gamma-s-d}
\gamma_{\scriptscriptstyle q}(s,\,  d) : = 
q^{\frac{1}{2}{\scriptscriptstyle \left(3 \, + \, (-1)^{\deg \, d} \right)} \left(s - \frac{1}{2}\right)}\, 
\big(\! 1  -  \mathrm{sgn}(d) q^{- s} \big)^{\scriptscriptstyle \left(1 \, + \, (-1)^{\deg \, d}\right)\slash 2}\, 
\big(\! 1 - \mathrm{sgn}(d) q^{s - 1} \big)^{\scriptscriptstyle - \,  \left(1 \, + \, (-1)^{\deg \, d}\right)\slash 2}
\end{equation*} 
then $L(s, \chi_{d})$ satisfies the functional equation 
\begin{equation*} \label{eq: funct-eq-L} 
L(s, \chi_{d}) =\gamma_{\scriptscriptstyle q}(s,\,  d) 
|d|^{\frac{1}{2} - s} L(1 - s, \chi_{d}).
\end{equation*}

\section{Functions attached to the Weyl group $W$} \label{General-MDS}
We start with a power series 
\begin{equation*}
f(\mathrm{z}; q) = 1 + \sum_{\mathrm{k}\ne \bf{0}} a(\mathrm{k}; q) \mathrm{z}^{\mathrm{k}}
\end{equation*} 
where the sum is over all tuples 
$
\mathrm{k} = (k_{\scriptscriptstyle 1}, \ldots, k_{r}, k_{r +
  {\scriptscriptstyle 1}})\in \mathbb{N}^{r + {\scriptscriptstyle
    1}}\setminus \{\bf{0}\}\!,
$ 
and the coefficients $a(\mathrm{k}; q)$ are functions of odd prime powers $q;$ the power series is assumed to converge absolutely and uniformly in a small open polydisk (depending upon $q$) centered at ${\bf 0}.$ We take this function such that: 
\begin{enumerate}

\item If we set $\underline{z} = (z_{\scriptscriptstyle 1}, \ldots, z_{r}),$ 
$\underline{k} = (k_{\scriptscriptstyle 1}, \ldots, k_{r})$ and 
$l = k_{r + {\scriptscriptstyle 1}},$ then we can write 
\begin{equation*}
\begin{split}
f(\mathrm{z}; q) \, & = \,  
\frac{\sum_{l - \text{even}} 
P_{\scalebox{.85}{$\scriptscriptstyle l$}}(\underline{z}; q)
z_{r + {\scriptscriptstyle 1}}^{\scriptscriptstyle l}}
{\prod_{j = 1}^{r} (1 - z_{\scriptscriptstyle j})} \; + 
\sum_{l - \text{odd}} P_{\scalebox{.85}{$\scriptscriptstyle l$}}(\underline{z}; q)
z_{r + {\scriptscriptstyle 1}}^{\scriptscriptstyle l}\\
& = \, \frac{\sum_{|\underline{k}| - \text{even}}\, 
Q_{\scriptscriptstyle \underline{k}}(z_{r + {\scriptscriptstyle 1}}; q)\underline{z}^{\scriptscriptstyle \underline{k}}}{1 - z_{r + {\scriptscriptstyle 1}}}  \;\;  + 
\sum_{|\underline{k}| - \text{odd}} 
Q_{\scriptscriptstyle \underline{k}}(z_{r + {\scriptscriptstyle 1}}; q)\underline{z}^{\scriptscriptstyle \underline{k}}
\end{split}
\end{equation*} 
where $P_{\scalebox{.85}{$\scriptscriptstyle l$}}(\underline{z}; q)$ and 
$Q_{\scriptscriptstyle \underline{k}}(z_{r + {\scriptscriptstyle 1}}; q)$ are polynomials. 
Here $|\underline{k}| : = k_{\scriptscriptstyle 1} + \cdots + k_{r}.$

\item The power series obtained by expanding 
\[
\sum_{l \, \ge \, 0} P_{\scalebox{.85}{$\scriptscriptstyle l$}}(\underline{z}; q)
z_{r + {\scriptscriptstyle 1}}^{\scriptscriptstyle l}
\] 
is absolutely convergent for arbitrary 
$\underline{z} \in \mathbb{C}^{r},$ provided 
$|z_{r + {\scriptscriptstyle 1}}|$ is sufficiently small, and the power series obtained by expanding
\[
\sum_{|\underline{k}| \, \ge \, 0} 
Q_{\scriptscriptstyle \underline{k}}(z_{r + {\scriptscriptstyle 1}}; q)\underline{z}^{\scriptscriptstyle \underline{k}}
\] 
is absolutely convergent for any $z_{r + {\scriptscriptstyle 1}} \in \mathbb{C},$ provided all 
$|z_{\scriptscriptstyle 1}|, \ldots, |z_{r}|$ are sufficiently small.

\item We have 
\[
P_{\scriptscriptstyle 0}(\underline{z}; q) = P_{\scriptscriptstyle 1}(\underline{z}; q) = Q_{\scriptscriptstyle \underline{0}}(z_{r + {\scriptscriptstyle 1}}; q) \equiv 1.
\]

\item The polynomials $P_{\scalebox{.85}{$\scriptscriptstyle l$}}(\underline{z}; q)$ are symmetric in $\underline{z},$ and if $l$ is odd then $P_{\scalebox{.85}{$\scriptscriptstyle l$}}(\underline{z}; q)$ is even, i.e., 
$
P_{\scalebox{.85}{$\scriptscriptstyle l$}}(\underline{z}; q) = P_{\scalebox{.85}{$\scriptscriptstyle l$}}(-\underline{z}; q).
$

\item We have the functional equations 
\begin{equation} \label{eq: poly-P-Q-func-eq}
P_{\scalebox{.85}{$\scriptscriptstyle l$}}(z_{\scriptscriptstyle 1}, z_{\scriptscriptstyle 2}, \ldots, z_{r}; q) 
= (\sqrt{q} \, z_{\scriptscriptstyle 1})^{l - \delta_{\scriptscriptstyle l}} 
P_{\scalebox{.85}{$\scriptscriptstyle l$}}\bigg(\frac{1}{q \, z_{\scriptscriptstyle 1}}, 
z_{\scriptscriptstyle 2}, \ldots, z_{r}; q\bigg)
\;\;\;\; \mathrm{and} \;\;\;\; 
Q_{\scriptscriptstyle \underline{k}}(z_{r + {\scriptscriptstyle 1}}; q)
= (\sqrt{q} \, z_{r + {\scriptscriptstyle 1}})^{|\underline{k}| - \delta_{\scriptscriptstyle |\underline{k}|}}
Q_{\scriptscriptstyle \underline{k}}\bigg(\frac{1}{q \, z_{r + {\scriptscriptstyle 1}}}; q\bigg)
\end{equation} 
with $\delta_{\scriptscriptstyle n} \! = 0$ or $1$ according as $n$ is even or odd.

\end{enumerate}

For our purposes, we shall need a specific power series satisfying all
these properties, see the next section for definitions.

Put 
$f_{\!\scriptscriptstyle \mathrm{even}}(\mathrm{z}; q) 
: = (f(\underline{z}, z_{r + {\scriptscriptstyle 1}}; q) + f(\underline{z}, - z_{r + {\scriptscriptstyle 1}}; q))\slash 2
$ 
and 
$
f_{\!\scriptscriptstyle \mathrm{odd}}(\mathrm{z}; q) 
: = (f(\underline{z}, z_{r + {\scriptscriptstyle 1}}; q) - f(\underline{z}, - z_{r + {\scriptscriptstyle 1}}; q))\slash 2,
$ 
and define 
\[
M(u, q) : = \begin{pmatrix}
- \frac{1 - qu}{qu(1 - u)}& & \\
& \frac{1}{\sqrt{q}u}& \\
& & \frac{1 + qu}{qu(1 + u)} \\
\end{pmatrix}
\;\;\;\; \mathrm{and} \;\;\;\; 
{\bf{f}}(\mathrm{z}; q) : = 
\begin{pmatrix}
f_{\!\scriptscriptstyle \mathrm{even}}(\underline{z}, z_{r + {\scriptscriptstyle 1}}; q) \\
f_{\!\scriptscriptstyle \mathrm{odd}}(\underline{z}, z_{r + {\scriptscriptstyle 1}}; q)\\
f_{\!\scriptscriptstyle \mathrm{even}}(- \underline{z}, z_{r + {\scriptscriptstyle 1}}; q) \\
\end{pmatrix};
\] 
also, let 
\begin{equation*}
U: = \begin{pmatrix} 
1\slash 2 & 1 & 1\slash 2\\
1\slash 2 & 0 & - 1\slash 2\\
1\slash 2 & - 1 & 1\slash 2\\
\end{pmatrix}.
\end{equation*} 
Note that $U^{\scriptscriptstyle 2} = I,$ where $I$ is the identity
$3\times 3$ matrix.

For $i = 1, \ldots, r + 1$ and
$\mathrm{z} \in \mathbb{C}^{r + {\scriptscriptstyle 1}}\!,$ define
$
w_{\scriptscriptstyle i} \mathrm{z} =
\mathrm{z}\scalebox{.95}{$\scriptscriptstyle '$} 
\in \mathbb{C}^{r + {\scriptscriptstyle 1}}
$
by
\[
z_{\scriptscriptstyle j}' = 
\begin{cases} 
z_{\scriptscriptstyle j}
&\mbox{if $i, j \le r,\,  i \ne \!j$} \\
1 \slash q z_{\scriptscriptstyle i}
& \mbox{if $i = \!j$} \\
\sqrt{q}\, z_{\scriptscriptstyle i} z_{\scriptscriptstyle j}
& \mbox{if $i \ne \!j,$ and either $i = r + 1$ or $j = r + 1.$}
\end{cases}
\]
The group generated by 
$w_{\scriptscriptstyle 1}, \ldots, w_{r + {\scriptscriptstyle 1}}$
is easily seen to be isomorphic to the Weyl group $W$ in
\ref{KM-alg}. In what follows, \!by abuse of notation, we shall still
denote the group
$
\langle w_{\scriptscriptstyle i} : i = 1, \ldots, r + 1 \rangle
$
by $W.$ From the above properties of $f(\mathrm{z}; q),$ one checks
that
\begin{equation} \label{eq: f-e-loc-prelim}
  {\bf{f}}(\mathrm{z}; q) = M(z_{\scriptscriptstyle i}, q)
  {\bf{f}}(w_{\scriptscriptstyle i}\mathrm{z}; q) \;\;\;\, \text{(for $i = 1,\ldots, r$)}
  \;\;\;\; \mathrm{and} \;\;\;\;
  U{\bf{f}}(\mathrm{z}; q) = M(z_{r + {\scriptscriptstyle 1}}, q)U
  {\bf{f}}(w_{r + {\scriptscriptstyle 1}}\mathrm{z}; q).
\end{equation} 
Let $\mathrm{M}_{w}(\mathrm{z}) = \mathrm{M}_{w}(\mathrm{z}; q),$ for 
$w \in W,$ be defined as follows. Put 
\begin{equation} \label{eq: def-cocycle1}
\mathrm{M}_{w_{\scalebox{.65}{$\scriptscriptstyle i$}}}\!(z_{\scriptscriptstyle i}) = 
\begin{cases} 
M(z_{\scriptscriptstyle i}, q)
&\mbox{if $1 \le i \le r$} \\
U M(z_{r + {\scriptscriptstyle 1}}, q) U
& \mbox{if $i = r + 1$}
\end{cases}  
\end{equation} 
and extend this function to $W$ by the $1$-cocycle relation:   
\begin{equation} \label{eq: def-cocycle2}
\mathrm{M}_{w w'}(\mathrm{z}) 
= \mathrm{M}_{w'}(\mathrm{z})\mathrm{M}_{w}(w'\!\mathrm{z}) 
\qquad \text{(for $w, w' \in W$).}
\end{equation} 
The cocycle $\mathrm{M}_{w}(\mathrm{z}; q)$ is well-defined, and by \eqref{eq: f-e-loc-prelim}, we have 
\begin{equation} \label{eq: f-e-loc}
{\bf{f}}(\mathrm{z}; q) = \mathrm{M}_{w}(\mathrm{z}; q)
{\bf{f}}(w\mathrm{z}; q)
\end{equation}	
for every $w\in W.$

\section{Multiple {D}irichlet series}  \label{WMDS-assoc-moments} 
In this section, we shall introduce multiple {D}irichlet series (MDS) naturally associated with moments of quadratic {D}irichlet $L$-functions. The construction of these series, originally given in \cite{DV} (see also \cite{Saw}, where more general MDS\linebreak 
are discussed), is based upon a set of axioms that {\it uniquely} characterize them. By simply adapting the proof of \cite[Pro-\linebreak position~2.2.1]{White2} in the case of MDS for affine {W}eyl groups, one can show that the so-called {\it $p$-parts} of our MDS 
satisfy the conditions 1--5 in the previous section. \!An almost immediate consequence is that the multiple {D}irichlet series we 
consider here satisfy a group of functional equations isomorphic to the {W}eyl group of the Kac-Moody algebra in \ref{KM-alg}.

\subsection{Generating series} \label{Formal-MDS} 
Let $\mathbb{F}_{\! q}$ be a finite field of odd characteristic. We fix throughout an algebraic closure 
$\overline{\mathbb{F}}_{\! q}$ of $\mathbb{F}_{\! q}.$ For 
$
\mathbf{s} = (s_{\scriptscriptstyle 1}, \ldots, s_{r + {\scriptscriptstyle 1}}) \in \mathbb{C}^{r + {\scriptscriptstyle 1}}\!,
$ 
consider a {\it formal} series $Z(\mathbf{s})$ of the form 
\begin{equation} \label{eq: MDS-definition-initial} 
Z(\mathbf{s})
\; = \sum_{\substack{m_{\scalebox{.8}{$\scriptscriptstyle 1$}}\!, \ldots, \, m_{r}, \, d - \mathrm{monic} \\  
		d = d_{\scriptscriptstyle 0}^{} d_{\scriptscriptstyle 1}^{2}, \; d_{\scriptscriptstyle 0}^{} \; \mathrm{square \; free}}}  
	\, \frac{\chi_{d_{\scriptscriptstyle 0}}\!(\widehat{m}_{\scriptscriptstyle 1}) \, \cdots \, 
	\chi_{d_{\scriptscriptstyle 0}}\!(\widehat{m}_{r})}
{|m_{\scriptscriptstyle 1}|^{s_{\scalebox{.8}{$\scriptscriptstyle 1$}}} \cdots \, |m_{r}|^{s_{r}} 
	|d|^{s_{r + {\scalebox{.8}{$\scriptscriptstyle 1$}}}}} \cdot A(m_{\scriptscriptstyle 1}, \ldots, m_{r}, d)
\end{equation} 
where $\widehat{m}_{\scriptscriptstyle i}$ ($i = 1, \ldots, r$) denotes the part of 
$m_{\scriptscriptstyle i}$ coprime to $d_{\scriptscriptstyle 0}.$ We assume that the coefficients 
$
A(m_{\scriptscriptstyle 1}, m_{\scriptscriptstyle 2}, m_{\scriptscriptstyle 3}, \ldots, m_{r}, d)
$ 
are multiplicative, in the sense that 
\[
A(m_{\scriptscriptstyle 1}, \ldots, m_{r}, d)
\;\, = \prod_{\substack{p^{\, k_{\scalebox{.9}{$\scriptscriptstyle i$}}} \parallel 
		\, m_{\scalebox{.9}{$\scriptscriptstyle i$}}\\ p^{\, l} \parallel \, d}} 
A\big(p^{\, k_{\scalebox{.8}{$\scriptscriptstyle 1$}}}\!, \ldots, p^{\, k_{r}}\!, p^{\, l} \big)
\] 
the product being taken over monic irreducibles $p  \in  \mathbb{F}_{\! q}[x].$ If we set 
$
z_{\scriptscriptstyle i} = q^{ - s_{\scalebox{.9}{$\scriptscriptstyle i$}}}\!,
$ 
for $1 \le i \le r + 1,$ we can also write \eqref{eq: MDS-definition-initial} as a formal power series 
\[
\sum_{k_{\scalebox{.8}{$\scriptscriptstyle 1$}}\!, \ldots,  k_{r}\!,\, l \, \ge \, 0} \,
b(k_{\scriptscriptstyle 1}, \ldots, k_{r}, l; q)
\, z_{\scriptscriptstyle 1}^{k_{\scalebox{.8}{$\scriptscriptstyle 1$}}} \cdots \, z_{r}^{k_{r}} z_{r + {\scriptscriptstyle 1}}^{l}. 
\] 
We specify the coefficients 
$ 
A\big(p^{\, k_{\scalebox{.8}{$\scriptscriptstyle 1$}}}\!, \ldots, p^{\, k_{r}}\!, p^{\, l} \big) 
$ 
and 
$
b(k_{\scriptscriptstyle 1}, \ldots, k_{r}, l; q)
$ 
by considering formal power series of the form 
\begin{equation} \label{eq: formal-ps-Weil}
\sum_{\mathrm{k}}\, c(\mathrm{k}; q) \mathrm{z}^{\mathrm{k}}
\end{equation} 
with coefficients $c(\mathrm{k}; q)$ ($\mathrm{k}\in \mathbb{N}^{r + {\scriptscriptstyle 1}}$) finite sums 
\begin{equation} \label{eq: q-Weil-integers-sums} 
c(\mathrm{k}; q) = \sum_{j}\, {\mathbf P}_{\scriptscriptstyle j}^{\scriptscriptstyle (\mathrm{k})}\!(q) 
\lambda_{\scriptscriptstyle j}
\end{equation} 
where 
$
{\mathbf P}_{\scriptscriptstyle j}^{\scriptscriptstyle (\mathrm{k})}\!(x) 
\in  \mathbb{Q}[x],
$ 
and $\lambda_{\scriptscriptstyle j}$ are \addtocounter{footnote}{0}\let\thefootnote\svthefootnote distinct\footnote{In fact we shall always take an expression \eqref{eq: q-Weil-integers-sums} in reduced form, by which we mean that $\lambda' \! = q^{n}\lambda$ for some $n\in \mathbb{N}$ if and only if $\lambda' \!= \lambda.$} $q$-Weil algebraic integers of weights $\nu_{\!\scriptscriptstyle j} \in \mathbb{N}.$ 
We are assuming that: 
\begin{enumerate}[label=(\roman*)] 
\item Every $q$-Weil integer $\lambda_{\scriptscriptstyle j}$ occurs in 
	\eqref{eq: q-Weil-integers-sums} together with all its complex conjugates. 

\item If $\lambda_{\scriptscriptstyle j}$ and $\lambda_{\scriptscriptstyle j\scalebox{.8}{$\scriptscriptstyle '$}}$ are conjugates over $\mathbb{Q},$ then 
$
{\mathbf P}_{\scriptscriptstyle j}^{\scriptscriptstyle (\mathrm{k})}\!(x) 
= 
{\mathbf P}_{\scriptscriptstyle j\scalebox{.8}{$\scriptscriptstyle '$}}^{\scriptscriptstyle (\mathrm{k})}\!(x).
$ 

\item For each $j,$ 
$ 
\deg\, {\mathbf P}_{\scriptscriptstyle j}^{\scriptscriptstyle (\mathrm{k})} 
\! + \, \nu_{\!\scriptscriptstyle j} \le |\mathrm{k}|
$ 
and 
$
{\mathbf P}_{\scriptscriptstyle j}^{\scriptscriptstyle (\mathrm{k})}\!(x)^{\scriptscriptstyle 2} 
\equiv \,  0 \pmod{x^{\scriptscriptstyle |\mathrm{k}| - \nu_{\!\scalebox{.7}{$\scriptscriptstyle j$}} + 2}} 
$ 
when $|\mathrm{k}| = k_{1} + \cdots + k_{r + {\scriptscriptstyle 1}} \ge 2.$ 
\end{enumerate} 

\vskip5pt 
It is proved in \cite{DV} that there exists a unique series \eqref{eq: MDS-definition-initial}, and a unique series 
\eqref{eq: formal-ps-Weil}, whose coefficients are of the form 
\eqref{eq: q-Weil-integers-sums}\linebreak with all properties ($\mathrm{\romannumeral 1}$) $-$ ($\mathrm{\romannumeral 3}$), 
such that the following conditions are satisfied: 
\begin{enumerate}[label=(\alph*)]
	\item The subseries
	\[
	\sum_{k_{\scalebox{.8}{$\scriptscriptstyle 1$}}\!, \ldots,  k_{r} \, \ge \, 0}
	b(k_{\scriptscriptstyle 1}, \ldots, k_{r}, 0; q)
	\, z_{\scriptscriptstyle 1}^{k_{\scalebox{.8}{$\scriptscriptstyle 1$}}} \cdots \, z_{r}^{k_{r}} 
	=\; \prod_{i = 1}^{r} \frac{1}{1 - qz_{\scriptscriptstyle i}}
	\] 
	i.e., is a product of $r$ zeta functions. In addition,
	\[
	\sum_{k_{\scalebox{.8}{$\scriptscriptstyle 1$}}\!, \ldots,  k_{r} \, \ge \, 0}
	b(k_{\scriptscriptstyle 1}, \ldots, k_{r}, 1; q)
	\, z_{\scriptscriptstyle 1}^{k_{\scalebox{.8}{$\scriptscriptstyle 1$}}} \cdots \, z_{r}^{k_{r}} =\, q 
	\] 
	and 
	\[
	\sum_{l \, \ge \, 0}\,  b(0, \ldots, 0, l; q)\, z_{r + {\scriptscriptstyle 1}}^{l}
	= \frac{1}{1 - qz_{r + {\scriptscriptstyle 1}}}.
	\] 
	In particular, $A(1, \ldots, 1, 1) = b(0, \ldots, 0, 0; q) = 1.$

	\item For every $(k_{\scriptscriptstyle 1}, \ldots, k_{r}, l) \in \mathbb{N}^{r + {\scriptscriptstyle 1}}\!,$ the coefficient 
	$b(k_{\scriptscriptstyle 1}, \ldots, k_{r}, l; q^{n})$ corresponding to the series \eqref{eq: MDS-definition-initial} over 
	any finite field extension $\mathbb{F}_{\! q^{n}} \subset \overline{\mathbb{F}}_{\! q}$ of $\mathbb{F}_{\! q}$ is given by 
	\[
	b(k_{\scriptscriptstyle 1}, \ldots, k_{r}, l; q^{n})
	= 	c(k_{\scriptscriptstyle 1}, \ldots, k_{r}, l; q^{n}) 
	= \sum_{j}\, {\mathbf P}_{\scriptscriptstyle j}^{\scriptscriptstyle (k_{\scalebox{.75}{$\scriptscriptstyle 1$}}\!, \ldots, \, k_{r}, \, l)}\!(q^{n}) 
	\lambda_{\scriptscriptstyle j}^{\! n}. 
	\] Moreover, the polynomials ${\mathbf P}_{\scriptscriptstyle j}^{\scriptscriptstyle (\mathrm{k})}$ 
	\!are independent of the prime power $q,$ for all $\mathrm{k}\in \mathbb{N}^{r + {\scriptscriptstyle 1}}$ \!and all $j.$  
	
	\item For every monic irreducible $p \in \mathbb{F}_{\! q}[x]$ of degree $e \ge 1,$ the coefficients 
	$A\big(p^{\, k_{\scalebox{.8}{$\scriptscriptstyle 1$}}}\!, \ldots, p^{\, k_{r}}\!, p^{\, l} \big)$ are given by 
	\[
	A\big(p^{\, k_{\scalebox{.8}{$\scriptscriptstyle 1$}}}\!, \ldots, p^{\, k_{r}}\!, p^{\, l} \big) 
	= 	q^{e(k_{\scalebox{.75}{$\scriptscriptstyle 1$}} + \cdots + k_{r} + l)}
	c(k_{\scriptscriptstyle 1}, \ldots, k_{r}, l; q^{-e}) 
	= q^{e(k_{\scalebox{.75}{$\scriptscriptstyle 1$}} + \cdots + k_{r} + l)}
	\!\sum_{j}\, {\mathbf P}_{\scriptscriptstyle j}^{\scriptscriptstyle (k_{\scalebox{.75}{$\scriptscriptstyle 1$}}\!, \ldots, \, k_{r}, \, l)}\!(q^{- e}) 
	\lambda_{\scriptscriptstyle j}^{\! - e}.
	\]
\end{enumerate} 
\vskip-5pt 
In what follows, we will need a twisted version of \eqref{eq: MDS-definition-initial}. To define these series, we fix 
throughout an element 
$
\theta_{\scriptscriptstyle 0} \in \mathbb{F}_{\! q}^{\times} \setminus
$ $(\mathbb{F}_{\! q}^{\times})^{2}.$ 
Let $c \in \mathbb{F}_{\! q}[x]$ be monic and square free, and fix a factorization 
$c = c_{\scriptscriptstyle 1}c_{\scriptscriptstyle 2}c_{\scriptscriptstyle 3},$ with $c_{\scriptscriptstyle i}$ monic. 
For $a_{\scriptscriptstyle 1}, \, a_{\scriptscriptstyle 2} \in \{1, \theta_{\scriptscriptstyle 0} \},$ we define 
$
Z^{(c)}(\mathbf{s};  \chi_{a_{\scriptscriptstyle 2} c_{\scriptscriptstyle 2}}, \chi_{a_{\scriptscriptstyle 1} c_{\scriptscriptstyle 1}}) 
$ 
by 
\begin{equation} \label{eq: MDS-definition-vers0} 
Z^{(c)}(\mathbf{s};  \chi_{a_{\scriptscriptstyle 2} c_{\scriptscriptstyle 2}}, \chi_{a_{\scriptscriptstyle 1} c_{\scriptscriptstyle 1}})
\;\; = \sum_{\substack{m_{\scalebox{.8}{$\scriptscriptstyle 1$}}\!, \ldots, \, m_{r}, \, d - \mathrm{monic} \\  
		d = d_{\scriptscriptstyle 0}^{} d_{\scriptscriptstyle 1}^{2}, \; d_{\scriptscriptstyle 0}^{} \; \mathrm{square \; free} \\ 
		(m_{\scalebox{.8}{$\scriptscriptstyle 1$}} \cdots \, m_{r} \, d, \, c) = 1}}  \, \frac{\chi_{a_{\scriptscriptstyle 1} c_{\scriptscriptstyle 1} d_{\scriptscriptstyle 0}}\!(\widehat{m}_{\scriptscriptstyle 1}) \, \cdots \, 
	\chi_{a_{\scriptscriptstyle 1} c_{\scriptscriptstyle 1}d_{\scriptscriptstyle 0}}\!(\widehat{m}_{r})\,
	\chi_{a_{\scriptscriptstyle 2} c_{\scriptscriptstyle 2}}\!(d_{\scriptscriptstyle 0})}
{|m_{\scriptscriptstyle 1}|^{s_{\scalebox{.8}{$\scriptscriptstyle 1$}}} \cdots \, |m_{r}|^{s_{r}} |d|^{s_{r 
+ {\scalebox{.8}{$\scriptscriptstyle 1$}}}}} \cdot 
A(m_{\scriptscriptstyle 1}, \ldots, m_{r}, d)
\end{equation} 
with the same coefficients $A(m_{\scriptscriptstyle 1}, \ldots, m_{r}, d)$ as before.
	
\vskip5pt
\begin{rem} --- The analogue of \eqref{eq: MDS-definition-vers0} over $\mathbb{Q}$ (or over any number field) is constructed similarly from the same (unique) series \eqref{eq: formal-ps-Weil}. Concretely, for 
$d\in \mathbb{Z}$ non-zero and square free, let $\chi_{d}(m)$ be the quadratic character defined by 
	\[ 
	\chi_{d}(m) = 
	\begin{cases} 
	\left(\frac{d}{m}\right) & \mbox{if $d \equiv 1 \!\!\!\!\!\pmod 4$} \\
	\left(\frac{4d}{m}\right) & \mbox{if $d \equiv 2, \, 3 \!\!\!\!\!\pmod 4.$}
	\end{cases}  
	\] 
	Fix an odd, positive, square free integer $c.$ Let $a_{\scriptscriptstyle 1}, a_{\scriptscriptstyle 2} \in \{\pm 1, \, \pm2\},$ 
	and let $c_{\scriptscriptstyle 1}, c_{\scriptscriptstyle 2}$ divide $c.$ Define 
	\[
	Z^{(c)}(\mathbf{s}; \chi_{a_{\scriptscriptstyle 2}c_{\scriptscriptstyle 2}}, \chi_{a_{\scriptscriptstyle 1}c_{\scriptscriptstyle 1}})
	\;\;\;  = \sum_{\substack{m_{\scalebox{.8}{$\scriptscriptstyle 1$}}\!, \ldots, \, m_{r}, \, d  \ge 1 \\  
			d = d_{\scriptscriptstyle 0}^{} d_{\scriptscriptstyle 1}^{2}, \; d_{\scriptscriptstyle 0}^{} \; \mathrm{square \; free} \\ 
			(m_{\scalebox{.8}{$\scriptscriptstyle 1$}} \cdots \, m_{r} \, d, \, 2c) = 1}} \frac{\chi_{a_{\scriptscriptstyle 1} c_{\scriptscriptstyle 1} d_{\scriptscriptstyle 0}}\!(\widehat{m}_{\scriptscriptstyle 1} \cdots \, \widehat{m}_{r})\,
		\chi_{a_{\scriptscriptstyle 2} c_{\scriptscriptstyle 2}}\!(d_{\scriptscriptstyle 0})
	\, A(m_{\scriptscriptstyle 1}, \ldots, m_{r}, d)}
	{m_{\scriptscriptstyle 1}^{s_{\scalebox{.8}{$\scriptscriptstyle 1$}}} \cdots \, m_{r}^{s_{r}} d_{\phantom{x}}^{s_{r + {\scalebox{.8}{$\scriptscriptstyle 1$}}}}}
	\] 
	with $A(m_{\scriptscriptstyle 1}, \ldots, m_{r}, d)$ multiplicative, and 
	$
	A\big(p^{\, k_{\scalebox{.8}{$\scriptscriptstyle 1$}}}\!, \ldots, p^{\, k_{r}}\!, p^{\, l} \big),
	$ 
	for any prime $p \ge 3,$ given by 
\[
A\big(p^{\, k_{\scalebox{.8}{$\scriptscriptstyle 1$}}}\!, \ldots, p^{\, k_{r}}\!, p^{\, l} \big) 
= p^{k_{\scalebox{.75}{$\scriptscriptstyle 1$}} + \cdots + k_{r} + l}
\cdot \sum_{j}\, {\mathbf P}_{\scriptscriptstyle j}^{\scriptscriptstyle (k_{\scalebox{.75}{$\scriptscriptstyle 1$}}\!, \ldots, \, k_{r}, \, l)}
\!(p^{\scriptscriptstyle - 1}) 
\lambda_{\scriptscriptstyle j}^{\!\scriptscriptstyle - 1}.
\] 
This function should (conjecturally) have all the analytic properties that will be shortly discussed in the function-field case. 
\end{rem}

\subsection{Functional equations} 
With the notations introduced in \ref{Formal-MDS}, let $a(\mathrm{k}; q),$ for 
$\mathrm{k} \in \mathbb{N}^{r + \scalebox{1.0}{$\scriptscriptstyle 1$}}\setminus \{\bf{0}\}\!,$ be defined by 
\begin{align*}
a(\mathrm{k}; q) & : = q^{\scriptscriptstyle |\mathrm{k}|}\sum_{j}\, 
{\mathbf P}_{\scriptscriptstyle j}^{\scriptscriptstyle (\mathrm{k})}\!(q^{\scalebox{.95}{$\scriptscriptstyle -1$}}) 
\lambda_{\scriptscriptstyle j}^{\!\scalebox{.95}{$\scriptscriptstyle -1$}}  \\
& =  \sum_{j}\, 
{\mathbf P}_{\scriptscriptstyle j}^{\scriptscriptstyle (\mathrm{k})}\!(q^{\scriptscriptstyle -1}) 
q^{\scriptscriptstyle |\mathrm{k}| - \nu_{\!\scalebox{.7}{$\scriptscriptstyle j$}}}\lambda_{\scriptscriptstyle j}
\end{align*} 
where ${\mathbf P}_{\scriptscriptstyle j}^{\scriptscriptstyle (\mathrm{k})}$ and 
$\lambda_{\scriptscriptstyle j}$ refer to the unique sets of polynomials and $q$-Weil algebraic integers, respectively, determined by 
the\linebreak conditions ($\mathrm{a}$) $-$ ($\mathrm{c}$) in \ref{Formal-MDS}. Note that by the first condition in 
($\mathrm{\romannumeral 3}$), \ref{Formal-MDS}, 
$
{\mathbf P}_{\scriptscriptstyle j}^{\scriptscriptstyle (\mathrm{k})}\!(x^{\scriptscriptstyle -1}) 
x^{\scriptscriptstyle |\mathrm{k}| - \nu_{\!\scalebox{.7}{$\scriptscriptstyle j$}}} \! \in  \mathbb{Q}[x] 
$ 
when $|\mathrm{k}| > 1,$ and by the sec-\linebreak ond condition in ($\mathrm{\romannumeral 3}$), \ref{Formal-MDS}, 
we have 
\[
\max_{j}\!\left\{\tfrac{\nu_{\! \scalebox{.7}{$\scriptscriptstyle j$}}}{2} 
+ \deg\;  {\mathbf P}_{\scriptscriptstyle j}^{\scriptscriptstyle (\mathrm{k})}\!(x^{\scriptscriptstyle -1}) 
x^{\scriptscriptstyle |\mathrm{k}| - \nu_{\!\scalebox{.7}{$\scriptscriptstyle j$}}}\right\}
\le  \tfrac{|\mathrm{k}|}{2} - 1  
\qquad \text{(if $|\mathrm{k}| > 1$).} 
\] 
This leads us to make the following:

\vskip5pt
{\bf Assumption.} For every small $\varepsilon  > 0$ and $|\mathrm{k}| \ge 2,$ we have
\begin{equation} \label{eq: Estimate-H1} 
a(\mathrm{k}; q) \ll_{\varepsilon} 
	q^{\left(\!\scalebox{1.15}{$\scriptscriptstyle \frac{1}{2}$} 
		+ \scalebox{1.35}{$\scriptscriptstyle \varepsilon$}\!\right)\scalebox{1.3}{$\scriptscriptstyle |\mathrm{k}|$} - 1}. 
	\tag{H1}
\end{equation}

\begin{rem} --- It is conceivable that the method used in \cite{BDPW} to bound the Betti numbers 
of the moduli space of hyperel-\linebreak liptic curves of any fixed genus can be adapted to prove a more natural bound of the form 
\[
|a(\mathrm{k}; q)|  <   C^{|\mathrm{k}|} q^{\scalebox{1.05}{$\scriptscriptstyle \frac{|\mathrm{k}|}{2}$} - 1}	
\;\;\;\;  \text{($|\mathrm{k}| > 1$)}
\] 
for an absolute constant $C$ depending only upon $r.$ 
\end{rem} 

\vskip5pt
We now take the function $f(\mathrm{z}; q)$ at the beginning of Section \ref{General-MDS} to be 
\[ 
f(\mathrm{z}; q) : = 1 + \sum_{\mathrm{k}\ne \bf{0}} a(\mathrm{k}; q) \mathrm{z}^{\mathrm{k}} 
= 1 + \sum_{\mathrm{k}\ne \bf{0}} \bigg\{\sum_{j}\, 
{\mathbf P}_{\scriptscriptstyle j}^{\scriptscriptstyle (\mathrm{k})}\!(q^{\scriptscriptstyle -1}) 
q^{\scriptscriptstyle - \nu_{\!\scalebox{.7}{$\scriptscriptstyle j$}}}\lambda_{\scriptscriptstyle j}\bigg\}
(q \mathrm{z})^{\scalebox{1.}{$\scriptscriptstyle \mathrm{k}$}};
\] 
by \eqref{eq: Estimate-H1}, it converges absolutely in the polydisk 
$
|z_{\scriptscriptstyle i}| < q^{\scalebox{.95}{$\scriptscriptstyle - \frac{1}{2}$}}
$ 
($i = 1, \ldots, r+1$). The same idea as in the proof of \cite[Proposi-\linebreak tion~2.2.1]{White2} shows that 
the function $f(\mathrm{z}; q)$ satisfies the conditions 1--5 in Section \ref{General-MDS}, and thus, the 
corresponding vector function ${\bf{f}}(\mathrm{z}; q)$ satisfies the functional equations \eqref{eq: f-e-loc}.

The series 
$
Z^{(c)}(\mathbf{s};  \chi_{a_{\scriptscriptstyle 2} c_{\scriptscriptstyle 2}}, \chi_{a_{\scriptscriptstyle 1} c_{\scriptscriptstyle 1}})
$ 
are constructed from $f(\mathrm{z}; q)$ exactly as in \cite[Section 3]{Dia}. By \eqref{eq: Estimate-H1}, it is clear that 
they converge absolutely for $\Re(s_{\scriptscriptstyle i}) > 1$ ($i = 1, \ldots, r+1$), and in this domain we can write, 
see loc. cit., 
\begin{equation}  \label{eq: MDS-vers1}
Z^{(c)}(\mathbf{s};  \chi_{a_{\scriptscriptstyle 2} c_{\scriptscriptstyle 2}}, \chi_{a_{\scriptscriptstyle 1} c_{\scriptscriptstyle 1}}) 
\; = \sum_{\substack{(d, \, c) = 1 \\  d = d_{\scriptscriptstyle 0}^{} d_{\scriptscriptstyle 1}^{2}}}  \, 
\frac{\prod_{i = 1}^{r}\, L^{\scriptscriptstyle (c_{\scriptscriptstyle 2}c_{\scriptscriptstyle 3})}
	(s_{\scriptscriptstyle i}, \chi_{a_{\scriptscriptstyle 1} c_{\scriptscriptstyle 1}  d_{\scriptscriptstyle 0}}) \, \cdot 
	\,\chi_{a_{\scriptscriptstyle 2} c_{\scriptscriptstyle 2}}\!(d_{\scriptscriptstyle 0})\, 
	P_{\scriptscriptstyle d}(\mathbf{s}\scalebox{1.}{$\scriptscriptstyle '$}; \chi_{a_{\scriptscriptstyle 1} c_{\scriptscriptstyle 1} d_{\scriptscriptstyle 0}})}{|d|^{s_{r + {\scalebox{.8}{$\scriptscriptstyle 1$}}}}}.
\end{equation} 
Here 
$
L^{\scriptscriptstyle (c_{\scriptscriptstyle 2}c_{\scriptscriptstyle 3})}
(s_{\scriptscriptstyle i}, \chi_{a_{\scriptscriptstyle 1} c_{\scriptscriptstyle 1}  d_{\scriptscriptstyle 0}}) = 
L(s_{\scriptscriptstyle i}, \chi_{a_{\scriptscriptstyle 1} c_{\scriptscriptstyle 1}  d_{\scriptscriptstyle 0}}) 
\cdot \prod_{p \mid c_{\scriptscriptstyle 2}c_{\scriptscriptstyle 3}} 
(1 -  \chi_{a_{\scriptscriptstyle 1} c_{\scriptscriptstyle 1}  d_{\scriptscriptstyle 0}}\!(p)  
|p|^{-s_{\scalebox{.8}{$\scriptscriptstyle i$}}}),
$ 
the product being over the monic irreducible divisors of 
$c_{\scriptscriptstyle 2}c_{\scriptscriptstyle 3},$ 
and 
$  
P_{\scriptscriptstyle d}(\mathbf{s}\scalebox{1.}{$\scriptscriptstyle '$}; \chi_{a_{\scriptscriptstyle 1} c_{\scriptscriptstyle 1} d_{\scriptscriptstyle 0}}),
$ 
where $\mathbf{s}\scalebox{1.}{$\scriptscriptstyle '$} : = (s_{\scriptscriptstyle 1}\!, \ldots, \, s_{r}),$ is the {D}irichlet polynomial 
defined by 
\begin{equation*} \label{eq: polyPd}
\begin{split}
P_{\scriptscriptstyle d}(\mathbf{s}\scalebox{1.}{$\scriptscriptstyle '$}; 
\chi_{a_{\scriptscriptstyle 1} c_{\scriptscriptstyle 1} d_{\scriptscriptstyle 0}}) \;\; = \!
& \prod_{\substack{p^{\, l} \parallel \, d \\ l \, \equiv \, 1 \!\!\!\!\!\!\!\pmod 2}} \,
P_{\scalebox{.85}{$\scriptscriptstyle l$}}\!\left(|p|^{\, - s_{\scalebox{.8}{$\scriptscriptstyle 1$}}}\!, \ldots, \, |p|^{\, - s_{r}}; \, q^{\deg \, p}\right) \\
& \cdot \prod_{\substack{p \, \mid \, d_{\scriptscriptstyle 1} \\ p^{\, l} \parallel \, d \\ l \, \equiv \, 0 \!\!\!\!\!\!\!\pmod 2}} \,
P_{\scalebox{.85}{$\scriptscriptstyle l$}}\!\left(\chi_{a_{\scriptscriptstyle 1} c_{\scriptscriptstyle 1} d_{\scriptscriptstyle 0}}\!(p) 
|p|^{\, - s_{\scalebox{.8}{$\scriptscriptstyle 1$}}}\!, \ldots, \, 
\chi_{a_{\scriptscriptstyle 1} c_{\scriptscriptstyle 1} d_{\scriptscriptstyle 0}}\!(p)|p|^{\, - s_{r}};\,  q^{\deg \, p}\right)
\end{split}
\end{equation*} 
where $P_{\scalebox{.85}{$\scriptscriptstyle l$}}(\underline{z}; q)$ are the polynomials defined by the condition 1 in Section \ref{General-MDS}. Notice that by \eqref{eq: poly-P-Q-func-eq}, 
$
P_{\scriptscriptstyle d}(\mathbf{s}\scalebox{1.}{$\scriptscriptstyle '$}; 
\chi_{a_{\scriptscriptstyle 1} c_{\scriptscriptstyle 1} d_{\scriptscriptstyle 0}})
$ 
satisfies a\linebreak functional equation in each of the variables $s_{\scriptscriptstyle 1}, \ldots, s_{r}.$ 
Moreover, it is clear that, for fixed 
$
s_{\scriptscriptstyle 1}, \ldots, s_{r} \in \mathbb{C} \setminus \{1\},
$ 
the series \eqref{eq: MDS-vers1} converges absolutely when $s_{r + {\scriptscriptstyle 1}}$ has sufficiently large real part.

We can also write 
\begin{equation} \label{eq: MDS-vers2}
Z^{(c)}(\mathbf{s};  \chi_{a_{\scriptscriptstyle 2} c_{\scriptscriptstyle 2}}, \chi_{a_{\scriptscriptstyle 1} c_{\scriptscriptstyle 1}}) 
\;\; = \sum_{\substack{(m_{\scalebox{.8}{$\scriptscriptstyle 1$}} \cdots \, m_{r}, \, c) = 1 \\ 
		m_{\scalebox{.8}{$\scriptscriptstyle 1$}} \cdots \, m_{r} \, = \, n_{\scriptscriptstyle 0}^{}  n_{\scriptscriptstyle 1}^{2}}}  \, 
\frac{L^{\scriptscriptstyle (c_{\scriptscriptstyle 1} c_{\scriptscriptstyle 3})}
	(s_{r +  {\scalebox{1.}{$\scriptscriptstyle 1$}}}, \chi_{a_{\scriptscriptstyle 2} c_{\scriptscriptstyle 2} n_{\scriptscriptstyle 0}})\, 
	\chi_{a_{\scriptscriptstyle 1}  c_{\scriptscriptstyle 1}}\!(n_{\scriptscriptstyle 0}^{})\,
	Q_{\underline{m}}(s_{r +  {\scalebox{1.}{$\scriptscriptstyle 1$}}}; 
		\chi_{a_{\scriptscriptstyle 2} c_{\scriptscriptstyle 2} n_{\scriptscriptstyle 0}})}
{|m_{\scriptscriptstyle 1}|^{s_{\scalebox{.8}{$\scriptscriptstyle 1$}}} \cdots \, |m_{r}|^{s_{r}}}
\end{equation} 
where, for 
$
\underline{m} = (m_{\scriptscriptstyle 1}, \ldots, \, m_{r}),
$ 
the {D}irichlet polynomial 
$  
Q_{\underline{m}}(s_{r +  {\scalebox{1.}{$\scriptscriptstyle 1$}}}; 
\chi_{a_{\scriptscriptstyle 2} c_{\scriptscriptstyle 2} n_{\scriptscriptstyle 0}})
$ 
is given by 
\begin{equation*} \label{eq: polyQm}
Q_{\underline{m}}(s_{r + \scriptscriptstyle 1}; 
\chi_{a_{\scriptscriptstyle 2}  c_{\scriptscriptstyle 2} n_{\scriptscriptstyle 0}}) \;\; = \!
\prod_{\substack{p^{\, k_{\scriptscriptstyle i}} \parallel \, m_{\scriptscriptstyle i}
		\\ |\underline{k}| \, \equiv \, 1 \!\!\!\!\!\!\!\pmod 2}} 
Q_{\underline{k}}\!\left(|p|^{\, - s_{r + {\scalebox{.8}{$\scriptscriptstyle 1$}}}}; \, q^{\deg \, p}\right) \;\; \cdot
\prod_{\substack{p \, \mid \, n_{\scriptscriptstyle 1} \\ p^{\, k_{\scriptscriptstyle i}} \parallel \, m_{\scriptscriptstyle i} 
		\\ |\underline{k}| \, \equiv \, 0 \!\!\!\!\!\!\pmod 2}}  
Q_{\underline{k}}\!\left(\chi_{a_{\scriptscriptstyle 2} c_{\scriptscriptstyle 2} n_{\scriptscriptstyle 0}}\!(p)
|p|^{\, - s_{r + {\scalebox{.8}{$\scriptscriptstyle 1$}}}}; \, q^{\deg \, p}\right).
\end{equation*} 
Again, by \eqref{eq: poly-P-Q-func-eq}, the polynomials 
$
Q_{\underline{m}}(s_{r + \scriptscriptstyle 1}; 
\chi_{a_{\scriptscriptstyle 2}  c_{\scriptscriptstyle 2} n_{\scriptscriptstyle 0}})
$ 
satisfy a functional equation as $s_{r + \scriptscriptstyle 1} \to 1 - s_{r + \scriptscriptstyle 1},$ and for every 
$s_{r + \scriptscriptstyle 1} \in \mathbb{C} \setminus \{1\},$ the series \eqref{eq: MDS-vers2} converges absolutely 
as long as all $s_{\scriptscriptstyle 1}, \ldots, s_{r}$ have sufficiently large real parts.

As in \cite[3.1]{Dia}, the expressions \eqref{eq: MDS-vers1} and \eqref{eq: MDS-vers2} of 
$
Z^{(c)}(\mathbf{s};  \chi_{a_{\scriptscriptstyle 2} c_{\scriptscriptstyle 2}}, \chi_{a_{\scriptscriptstyle 1} c_{\scriptscriptstyle 1}})
$ 
can be used to show that this family of multiple {D}irichlet series satisfies a group of functional equations isomorphic to 
the {W}eyl group of the Kac-Moody algebra in \ref{KM-alg}. To state explicitly the functional equations corresponding to the generators $w_{\scriptscriptstyle 1}, \ldots, w_{r + {\scriptscriptstyle 1}}$ of $W,$ define $U_{\scriptscriptstyle m}(s_{r + {\scriptscriptstyle 1}}) = 1$ if $m = 1,$ and 
\[
U_{\scriptscriptstyle m}(s_{r + \scriptscriptstyle 1}) = \prod_{p \, \mid \, m} 
\frac{|p|^{s_{r + {\scalebox{.8}{$\scriptscriptstyle 1$}}} - 1}
	\big(1 - |p|^{1 - 2 s_{r + {\scalebox{.8}{$\scriptscriptstyle 1$}}}} \big)}
{1 - |p|^{-  \scriptscriptstyle 1}}
\] 
for (non-constant) monic square free $m \in \mathbb{F}_{\! q}[x],$ the product being over the monic irreducible divisors of $m.$ If we set\linebreak 
\[ 
^{{\scriptscriptstyle w_{\scalebox{.7}{$\scriptscriptstyle i$}}}}\mathbf{s} :
= \big(s_{\scriptscriptstyle 1}, \ldots, 1 - s_{\scriptscriptstyle i}, \ldots, s_{r}, s_{i} + s_{r + \scriptscriptstyle 1} - \tfrac{1}{2}\big)\;\; 
\; \text{for $i\le r,$ and} \;\;\,  
^{{\scriptscriptstyle w_{\scalebox{.85}{$\scriptscriptstyle r$} {\scriptscriptstyle +}  
\scalebox{.70}{$\scriptscriptstyle 1$}}}}\mathbf{s} 
: = \big(s_{\scriptscriptstyle 1} + s_{r + \scriptscriptstyle 1} - \tfrac{1}{2}, \ldots, s_{r} + s_{r + \scriptscriptstyle 1} - \tfrac{1}{2}, 
1 - s_{r + \scriptscriptstyle 1}\big) 
\] 
then the functional equations of $L(s, \chi_{d}),$ 
$P_{\scriptscriptstyle d}(\mathbf{s}\scalebox{1.}{$\scriptscriptstyle '$}; \chi_{a_{\scriptscriptstyle 1} c_{\scriptscriptstyle 1} d_{\scriptscriptstyle 0}}),$ and 
$Q_{\underline{m}}(s_{r +  {\scalebox{1.}{$\scriptscriptstyle 1$}}}; 
\chi_{a_{\scriptscriptstyle 2} c_{\scriptscriptstyle 2} n_{\scriptscriptstyle 0}})$ imply that 
\begin{equation} \label{eq: functional-eq-Z-sigma_r+1}
\begin{split}
& Z^{(c)}(\mathbf{s};  \chi_{a_{\scriptscriptstyle 2} c_{\scriptscriptstyle 2}}, 
\chi_{a_{\scriptscriptstyle 1} c_{\scriptscriptstyle 1}})\,  
= \,  \tfrac{1}{2}\,  |c_{\scriptscriptstyle 2}|^{\frac{1}{2} - s_{r +  {\scalebox{.8}{$\scriptscriptstyle 1$}}}} 
\frac{\varphi(c_{\scriptscriptstyle 1} c_{\scriptscriptstyle 3})}{|c_{\scriptscriptstyle 1} c_{\scriptscriptstyle 3}|}
\prod_{p \, \mid \, c_{\scriptscriptstyle 1}  c_{\scriptscriptstyle 3}} \big(1  -  
|p|^{2 s_{r + {\scalebox{.8}{$\scriptscriptstyle 1$}}} - 2}\big)^{\scriptscriptstyle -1}\\
&\cdot \sum_{\vartheta \in \{1, \, \theta_{\scriptscriptstyle 0} \}}                
\chi_{a_{\scriptscriptstyle 1} \!{\scriptscriptstyle \vartheta}}(c_{\scriptscriptstyle 2})
\!\left\{\gamma_{\scriptscriptstyle q}^{+}(s_{r + {\scriptscriptstyle 1}}; \, a_{\scriptscriptstyle 2})  +  
\mathrm{sgn}(a_{\scriptscriptstyle 1} \!\vartheta)\, \gamma_{\scriptscriptstyle q}^{-}(s_{r + {\scriptscriptstyle 1}})\right\} 
\!\!\!\!\sum_{\substack{m \, \mid \, c_{\scriptscriptstyle 1} c_{\scriptscriptstyle 3} \\ (c_{\scriptscriptstyle 1}\!, \, m) = e}}
\chi_{a_{\scriptscriptstyle 2} c_{\scriptscriptstyle 2}}\!(m) \, 
U_{\scriptscriptstyle m}(s_{r + {\scriptscriptstyle 1}})\,  
Z^{(c)}(^{{\scriptscriptstyle w_{\scalebox{.85}{$\scriptscriptstyle r$} {\scriptscriptstyle +}  
\scalebox{.70}{$\scriptscriptstyle 1$}}}}\mathbf{s}; 
\chi_{a_{\scriptscriptstyle 2} c_{\scriptscriptstyle 2}}, 
\chi_{{\scriptscriptstyle \vartheta} c_{\scriptscriptstyle 1} m \slash e^{\scriptscriptstyle 2}})
\end{split}
\end{equation} 
and 
\begin{equation} \label{eq: functional-eq-Z-sigma1}
\begin{split}
& Z^{(c)}(\mathbf{s};  \chi_{a_{\scriptscriptstyle 2} c_{\scriptscriptstyle 2}}, 
\chi_{a_{\scriptscriptstyle 1} c_{\scriptscriptstyle 1}})\, = \, \tfrac{1}{2}\,  
|c_{\scriptscriptstyle 1}|^{\frac{1}{2} - s_{\scalebox{.8}{$\scriptscriptstyle 1$}}}
\frac{\varphi(c_{\scriptscriptstyle 2} c_{\scriptscriptstyle 3})}{|c_{\scriptscriptstyle 2} c_{\scriptscriptstyle 3}|}
\prod_{p \, \mid \, c_{\scriptscriptstyle 2}  c_{\scriptscriptstyle 3}} 
\big(1  -  |p|^{2 s_{\scalebox{.8}{$\scriptscriptstyle 1$}} - 2}\big)^{\scriptscriptstyle -1}\\ 
& \cdot \sum_{\vartheta' \in \{1, \, \theta_{\scriptscriptstyle 0} \}} 
\chi_{a_{\scriptscriptstyle 2} {\scriptscriptstyle \vartheta'}}(c_{\scriptscriptstyle 1})
\!\left\{\gamma_{\scriptscriptstyle q}^{+}(s_{\scriptscriptstyle 1}; \, a_{\scriptscriptstyle 1}) +  
\mathrm{sgn}(a_{\scriptscriptstyle 2} \vartheta')\, 
\gamma_{\scriptscriptstyle q}^{-}(s_{\scriptscriptstyle 1})\right\} 
\!\!\!\!\sum_{\substack{\ell \, \mid \, c_{\scriptscriptstyle 2} c_{\scriptscriptstyle 3} \\ (c_{\scriptscriptstyle 2}, \, \ell) = b}}
\chi_{a_{\scriptscriptstyle 1} c_{\scriptscriptstyle 1}}\!(\ell) \, 
U_{ \scriptscriptstyle \ell}(s_{\scriptscriptstyle 1}) 
Z^{(c)}(^{{\scriptscriptstyle w_{\scalebox{.70}{$\scriptscriptstyle 1$}}}}\mathbf{s}; 
\chi_{{\scriptscriptstyle \vartheta'} \!c_{\scriptscriptstyle 2} \ell \slash b^{\scriptscriptstyle 2}}, 
\chi_{a_{\scriptscriptstyle 1} c_{\scriptscriptstyle 1}})
\end{split}
\end{equation} 
where $\varphi$ is Euler's totient function over $\mathbb{F}_{\! q}[x].$ Since 
$
Z^{(c)}(\mathbf{s};  \chi_{a_{\scriptscriptstyle 2} c_{\scriptscriptstyle 2}}, 
\chi_{a_{\scriptscriptstyle 1} c_{\scriptscriptstyle 1}})
$ 
is symmetric in the variables $s_{\scriptscriptstyle 1}, \ldots, s_{r},$ we have similar functional equations 
$w_{\scriptscriptstyle 2}, \ldots, w_{r}$ in the variables $s_{\scriptscriptstyle 2}, \ldots, s_{r},$ respectively.

\subsection{Analytic continuation} \label{Conjecture-MDS-completed-imag-roots}
For the purpose of the present discussion, it will be convenient to substitute 
$
s_{\scriptscriptstyle i} \! = \fs_{\scriptscriptstyle i} +  \frac{1}{2}, 
$ 
for $i = 1, \ldots, r+1.$

We recall from Section \ref{KM-alg} that the Weyl-Kac denominator 
$
\prod_{\alpha \in \Delta_{+}}(1 - e^{\scriptscriptstyle -\alpha})^{m_{\scalebox{.85}{$\scriptscriptstyle \alpha$}}}
$ 
(where $e^{\scriptscriptstyle \lambda}(h) = e^{\scriptscriptstyle \lambda(h)}\!,$ for $\lambda \in \fh^{*}$ \!and $h \in \fh$), is a holomorphic function on the interior of the complexified Tits cone $\mathbf{X}_{\scriptscriptstyle \mathbb{C}}.$ Substituting 
$
e^{\scriptscriptstyle - \alpha_{\scalebox{.7}{$\scriptscriptstyle i$}}(h)} 
\mapsto q^{{\scriptscriptstyle -2} \fs_{\scalebox{.7}{$\scriptscriptstyle i$}}}\!,
$ 
for $i = 1,\ldots,  r + 1,$\linebreak 
the Weyl-Kac denominator becomes 
$
\prod_{\alpha \in \Delta_{+}}\!\left(1 - q^{\scriptscriptstyle - 2\alpha(\fs)}\right)^{m_{\scalebox{.85}{$\scriptscriptstyle \alpha$}}}\!,
$ 
where, for 
$
\alpha \, = \sum k_{\scriptscriptstyle i}\alpha_{\scriptscriptstyle i} \in \Delta
$ 
and 
$ 
\fs = (\fs_{\scriptscriptstyle 1}, \ldots, \fs_{r + {\scriptscriptstyle 1}}) 
\in \mathbb{C}^{r + {\scriptscriptstyle 1}}\!,
$ 
we set 
$
\alpha(\fs) : = \sum k_{\scriptscriptstyle i}\fs_{\scriptscriptstyle i};$ 
it is holomorphic on $X_{\scalebox{1.1}{$\scriptscriptstyle 0$}}^{\ast}:$ the interior of the corresponding complexified convex cone. 
Note that, for $\epsilon > 0,$\linebreak $(0, \ldots, 0, \epsilon) \in X_{\scalebox{1.1}{$\scriptscriptstyle 0$}}^{\ast}\!.$

Let 
$
\mathrm{Q}^{\scalebox{.95}{$\scriptscriptstyle \mathrm{ev}$}}_{+} 
\subset \sum \mathbb{N}\alpha_{\scriptscriptstyle i}
$ 
consist of the elements $\alpha = \sum k_{\scriptscriptstyle i}\alpha_{\scriptscriptstyle i}$ with both 
$
k_{\scriptscriptstyle 1} + \cdots + k_{r}
$ 
and $k_{r + {\scriptscriptstyle 1}}$ {\it non-zero even} natural numbers. \!For a real 
parameter $t > 1,$ let $\mathscr{S}_{\scriptscriptstyle W}$ consist of the absolutely convergent series on $X_{\scalebox{1.1}{$\scriptscriptstyle 0$}}^{\ast}\!,$ 
\[
1 \;  + \sum_{\alpha \in \mathrm{Q}^{\scalebox{.95}{$\scriptscriptstyle \mathrm{ev}$}}_{+} \setminus \{ \bf{0} \}}  
\! f_{\scriptscriptstyle \alpha}(t)t^{\scalebox{1.2}{$\scriptscriptstyle - \alpha(\scalebox{1.15}{$\scriptscriptstyle \fs$})$}}
\] 
with
$
t^{\scriptscriptstyle d(\alpha)\slash 2}\!f_{\scriptscriptstyle
  \alpha}(t) \in \mathbb{Q}[t]
$
of degree $\le (d(\alpha) - 2)\slash 2,$
$
f_{\scriptscriptstyle \alpha}(t) \ll_{\varepsilon} 
t^{{\scriptscriptstyle - 1} + \varepsilon \scalebox{1.2}{$\scriptscriptstyle d(\alpha)$}}
$ 
for every small $\varepsilon  > 0,$ which are $W$-invariant. \!Here 
$
\alpha(\fs) = \sum k_{\scriptscriptstyle i}\fs_{\scriptscriptstyle i}, 
$
for $\alpha \, = \sum k_{\scriptscriptstyle i}\alpha_{\scriptscriptstyle i}.$
Finally, let $\mathrm{D}^{\scriptscriptstyle \mathrm{re}}\!(\fs)$ be
defined by
\[ 
\mathrm{D}^{\scriptscriptstyle \mathrm{re}}\!(\fs) 
\, = \prod_{\alpha \in \Delta^{\mathrm{re}}_{+}}
\left(1 - q^{\scalebox{1.2}{$\scriptscriptstyle 1 - 2\alpha(\scalebox{1.15}{$\scriptscriptstyle \mathbf{\fs}$})$}}\right).
\] 
\begin{conj} \label{Conjecture-Meromorphic continuation} --- 
	Let 
	$
	a_{\scriptscriptstyle 1}, \, a_{\scriptscriptstyle 2} \in \{1, \theta_{\scriptscriptstyle 0} \},
	$ 
	$c \in \mathbb{F}_{\! q}[x]$ monic and square free, and write $c = c_{1}c_{2}c_{3},$ with $c_{\scriptscriptstyle i},$ 
	$i = 1, 2, 3,$ monic. 
	Then there exists 
	$
	\mathbf{I}(\fs, t) \in \mathscr{S}_{\scriptscriptstyle W},
	$ 
	symmetric in $\fs_{\scriptscriptstyle 1}, \ldots, \fs_{r},$ and 
	non-vanishing on $X_{\scalebox{1.1}{$\scriptscriptstyle 0$}}^{\ast}$ for all $t > 1,$ such that the function 
	\[
	\prod_{p \nmid c} \mathbf{I}\left(\fs, |p|\right)
	\, \cdot \, \mathrm{D}^{\scriptscriptstyle \mathrm{re}}\!(\fs) 
	\, Z^{(c)}\left(\fs + \mathbf{\tfrac{1}{2}};  \chi_{a_{\scriptscriptstyle 2} c_{\scriptscriptstyle 2}}, \chi_{a_{\scriptscriptstyle 1} c_{\scriptscriptstyle 1}}\!\right), \quad \text{$\mathbf{\tfrac{1}{2}} : = (\tfrac{1}{2}, \ldots, \tfrac{1}{2}) \in 
	\mathbb{C}^{r + {\scriptscriptstyle 1}}$}
	\] 
	is holomorphic on $X_{\scalebox{1.1}{$\scriptscriptstyle 0$}}^{\ast}\!;$ the product $\prod_{p} \mathbf{I}\left(\fs, |p|\right)$ 
	converges absolutely to a holomorphic function on $X_{\scalebox{1.1}{$\scriptscriptstyle 0$}}^{\ast}\!.$ 
\end{conj}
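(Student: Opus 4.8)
The natural route is to combine the group of functional equations \eqref{eq: functional-eq-Z-sigma1}--\eqref{eq: functional-eq-Z-sigma_r+1} with a tube-domain (Bochner) argument, exploiting that $X_0^{\ast}$ is, by construction, the tube over the interior of the real convex cone obtained from the Tits cone $\mathbf{X}$ under the substitution $e^{-\alpha_i(h)}\mapsto q^{-2\fs_i}$. Write $\fs=\sigma+i\tau$ with $\sigma,\tau\in\RR^{r+1}$. In the region $\Omega_0=\{\sigma:\sigma_i>\tfrac12,\ i=1,\dots,r+1\}$ the series $Z^{(c)}(\fs+\tfrac12;\chi_{a_2c_2},\chi_{a_1c_1})$ converges absolutely and is holomorphic (the polar divisor of the constituent zeta factors lies on the wall $\sigma_i=\tfrac12$); $\mathrm{D}^{\mathrm{re}}(\fs)$ is entire; and, granting the convergence of the Euler product on the tube $T_0$ over $\Omega_0$ (treated below), the normalized function
\[
\mathscr{Z}(\fs)\;:=\;\prod_{p\nmid c}\mathbf{I}(\fs,|p|)\cdot\mathrm{D}^{\mathrm{re}}(\fs)\,Z^{(c)}\!\left(\fs+\tfrac12;\chi_{a_2c_2},\chi_{a_1c_1}\right)
\]
is holomorphic on $T_0$.

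Next I would propagate $\mathscr{Z}$ across the walls using \eqref{eq: functional-eq-Z-sigma1}, its analogues for $w_2,\dots,w_r$, and \eqref{eq: functional-eq-Z-sigma_r+1}. The point to verify is that, once one has multiplied the (finite-dimensional) vector of twisted series by $\mathrm{D}^{\mathrm{re}}$, the $\gamma$-factors $\gamma_q^{\pm}$, the ramification factors $U_m$, and the Euler prefactors $\varphi(c_ic_3)|c_ic_3|^{-1}\prod_{p\mid c_ic_3}(1-|p|^{2s-2})^{-1}$ on the right-hand sides collapse to holomorphic, non-vanishing transition factors between $T_0$ and $w_iT_0$: this is the same bookkeeping as in the finite-dimensional case, since $w_i$ fixes $\Delta^{\mathrm{re}}_+\setminus\{\alpha_i\}$ setwise and sends $\alpha_i\mapsto-\alpha_i$, so the quotient $\mathrm{D}^{\mathrm{re}}({}^{w_i}\fs)/\mathrm{D}^{\mathrm{re}}(\fs)$ is precisely the single rational factor cancelling the pole produced by the $w_i$-equation, while the $W$-invariant factor $\prod_{p\nmid c}\mathbf{I}(\fs,|p|)$ is left unchanged up to the finitely many local factors at $p\mid c$, which are holomorphic and non-vanishing on $X_0^{\ast}$. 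The cocycle relation \eqref{eq: def-cocycle2} (equivalently, consistency of the functional equations) then makes the continuation single-valued on $\bigcup_{w\in W}wT_0$; since $\bigcup_{w\in W}w\Omega_0$ has convex hull the interior of the aforementioned real cone (the standard description $\mathbf{X}=\bigcup_w w(C)$, together with $(0,\dots,0,\epsilon)$ lying in $X_0^{\ast}$), one has $\mathrm{conv}\big(\bigcup_w wT_0\big)=X_0^{\ast}$, and Bochner's tube theorem yields the holomorphic extension of $\mathscr{Z}$ to $X_0^{\ast}$.

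It remains to construct $\mathbf{I}$ and to establish its stated properties. I would take $\mathbf{I}(\fs,t)$ to be the $W$-invariantization of the factor that clears the imaginary-root contributions from the $p$-part $f(\mathrm{z};q^{\deg p})$ of the multiple Dirichlet series, so that $\mathbf{I}(\fs,t)=1+\sum_{\alpha\in\mathrm{Q}^{\mathrm{ev}}_+\setminus\{\mathbf 0\}}f_\alpha(t)t^{-\alpha(\fs)}$ with $t^{d(\alpha)/2}f_\alpha(t)\in\QQ[t]$ of degree $\le(d(\alpha)-2)/2$ and $f_\alpha(t)\ll_\varepsilon t^{-1+\varepsilon d(\alpha)}$; the symmetry in $\fs_1,\dots,\fs_r$ and membership in $\mathscr{S}_{W}$ are then built in. Absolute convergence of $\prod_p\mathbf{I}(\fs,|p|)$ on $X_0^{\ast}$ follows by taking logarithms: up to an absolutely convergent remainder, $\sum_p\log\mathbf{I}(\fs,|p|)=\sum_p\sum_\alpha f_\alpha(|p|)|p|^{-\alpha(\fs)}$, and since $\#\{p:\deg p=e\}\ll q^e/e$ and $\#\{\alpha\in\Delta_+:d(\alpha)=m\}$ is at most polynomial in $m$ while $\Re\,\alpha(\fs)$ is bounded below linearly in $d(\alpha)$ on $X_0^{\ast}$, this double sum converges absolutely and locally uniformly there; its limit is holomorphic, and, each factor $\mathbf{I}(\fs,|p|)$ being non-vanishing on $X_0^{\ast}$, so is the product.

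\textbf{Main obstacle.} The serious difficulty is the construction of $\mathbf{I}$ itself, i.e.\ proving that, inside $X_0^{\ast}$, the poles of $Z^{(c)}$ are confined to the real-root hyperplanes and are exactly accounted for by a $W$-invariant Euler product of the shape above, so that the imaginary-root data never obstructs the continuation. For infinite $W$ ($r\ge4$) the multiple Dirichlet series is not determined by its functional equations, so the functional-equation/Bochner mechanism above only delivers the continuation \emph{once the correct normalizing factor is known}; isolating it requires input of a different nature — either the metaplectic Kac--Moody Eisenstein series picture, in which $\prod_p\mathbf{I}$ should arise from a generalized Gindikin--Karpelevich formula and be structurally the denominator of an integrable highest-weight $\fg(A)$-module character (hence holomorphic precisely on the interior of the Tits cone), or a direct inductive analysis of the $p$-parts along the lines of the $D_4^{(1)}$ case in \cite{DPP1, DPP2}. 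I expect this step to be the crux, at present within reach only in low rank.
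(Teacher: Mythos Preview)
The statement you are addressing is labeled a \emph{Conjecture} in the paper, and the paper does not attempt to prove it; immediately after stating it the authors write ``From now on we shall assume that Conjecture~\ref{Conjecture-Meromorphic continuation} holds,'' and all subsequent results (in particular the asymptotic formulas of Section~\ref{Asympt-moments-introduction}) are explicitly conditional on it. The accompanying remark even notes that for $r\ge 5$ there is no prediction for the series $\mathbf{I}(\fs,t)$, and that for $r=4$ the expected choice $\mathbf{I}\equiv 1$ is established only in the untwisted case $c=1$, in forthcoming work.

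Your outline is therefore not being compared against a proof in the paper --- there is none. As a strategy, the functional-equations-plus-Bochner mechanism you describe is indeed what one would invoke \emph{given} the correct normalizing factor $\mathbf{I}$, and you are right that the cocycle relation together with the simple-reflection root combinatorics makes the propagation step across chamber walls essentially routine. But your own ``Main obstacle'' paragraph names the gap accurately: the entire content of the conjecture is the existence of $\mathbf{I}$ with the stated properties, and your proposed construction (``take $\mathbf{I}$ to be the $W$-invariantization of the factor that clears the imaginary-root contributions'') is circular --- it presupposes that such a factor exists, is supported on $\mathrm{Q}^{\mathrm{ev}}_+$, satisfies the growth bound, and is non-vanishing on $X_0^{\ast}$, which is precisely what is being conjectured. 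For infinite $W$ (i.e.\ $r\ge 4$) the $p$-parts are not determined by the functional equations alone, so there is no a~priori reason the imaginary-root contributions should organize into a $W$-invariant Euler product of the required shape; this is why the paper invokes the (speculative) metaplectic Kac--Moody Eisenstein picture and a generalized Gindikin--Karpelevich formula as \emph{motivation} rather than proof.

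In short: your proposal is a reasonable heuristic and you have correctly isolated the genuine obstruction, but it is not a proof --- nor does the paper claim one.
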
 

\vskip10pt
 \begin{rem} --- When $r \ge 5,$ we do not
   have any prediction for the
   series $\mathbf{I}(\fs, t)$ in the conjecture.
   If $r = 4$ and $c = 1,$ one can take $\mathbf{I}(\fs, t) \equiv 1$
   (see \cite{DPP2}), and we conjecture that the same should be true for
   arbitrary $c.$ 
\end{rem} 
From now on we shall assume that Conjecture \ref{Conjecture-Meromorphic continuation} holds. Since 
$
\prod_{p \nmid c} \mathbf{I}\left(\fs, |p|\right)
$ 
is a {D}irichlet series supported on tuples $(m_{\scriptscriptstyle 1}, \ldots, m_{r}, d)$ 
of monic polynomials with $m_{\scriptscriptstyle 1} \cdots \, m_{r}$ and $d$ squares, 
we can express 
$ 
\prod_{p \nmid c} \mathbf{I}\left(\mathbf{s} - \mathbf{\tfrac{1}{2}}, |p|\right)
 \cdot Z^{(c)}(\mathbf{s};  \chi_{a_{\scriptscriptstyle 2} c_{\scriptscriptstyle 2}}, \chi_{a_{\scriptscriptstyle 1} c_{\scriptscriptstyle 1}})
$ 
as a series of the form \eqref{eq: MDS-definition-vers0}, for some multiplicative coefficients 
$
\tilde{A}(m_{\scriptscriptstyle 1}, \ldots, m_{r}, d)
$ 
satisfying 
\[
\tilde{A}\big(p^{\, k_{\scalebox{.8}{$\scriptscriptstyle 1$}}}\!, \ldots, p^{\, k_{r}}\!, p^{\, l} \big) 
= A\big(p^{\, k_{\scalebox{.8}{$\scriptscriptstyle 1$}}}\!, \ldots, p^{\, k_{r}}\!, p^{\, l} \big) = 1 
\] 
for all monic irreducibles $p,$ when either $k_{\scriptscriptstyle 1}
= \cdots = k_{r} \! = 0$ or $l = 0.$

In what follows, we shall denote
$
\prod_{p \nmid c} \mathbf{I}\left(\mathbf{s} - \mathbf{\tfrac{1}{2}}, |p|\right)
\cdot Z^{(c)}\!\left(\mathbf{s};
 \chi_{a_{\scriptscriptstyle 2} c_{\scriptscriptstyle 2}},
 \chi_{a_{\scriptscriptstyle 1} c_{\scriptscriptstyle 1}}\!\right)
$
by
$
\widetilde{Z}^{(c)}\!\left(\mathbf{s};
 \chi_{a_{\scriptscriptstyle 2} c_{\scriptscriptstyle 2}},
 \chi_{a_{\scriptscriptstyle 1} c_{\scriptscriptstyle 1}}\!\right).
$
Since $\mathbf{I}(\fs, t)$ is $W$-invariant, the functions
$
Z^{(c)}\!\left(\mathbf{s};
 \chi_{a_{\scriptscriptstyle 2} c_{\scriptscriptstyle 2}},
 \chi_{a_{\scriptscriptstyle 1} c_{\scriptscriptstyle 1}}\!\right)
$
and
$
\widetilde{Z}^{(c)}\!\left(\mathbf{s};
 \chi_{a_{\scriptscriptstyle 2} c_{\scriptscriptstyle 2}},
 \chi_{a_{\scriptscriptstyle 1} c_{\scriptscriptstyle 1}}\!\right)
$
satisfy the same functional equations.

\subsection{Generating functions for moments of $L$-functions} \label{introref-generating-function-moments} 
For
$
\mathbf{s} = (s_{\scriptscriptstyle 1}, \ldots, s_{r + {\scriptscriptstyle 1}})
\in \mathbb{C}^{r + {\scriptscriptstyle 1}}
$
with $\Re(s_{\scriptscriptstyle k}) > 1,$ and
$
a_{\scriptscriptstyle 2} \in \{1, \theta_{\scriptscriptstyle 0}\},
$
define
\[
 Z_{\scriptscriptstyle 0}(\mathbf{s}, \chi_{a_{\scalebox{.7}{$\scriptscriptstyle 2$}}})
 \;\; = 
 \sum_{d - \mathrm{monic \; \& \; sq. \; free}}\;\,
 \prod_{k = 1}^{r} L(s_{\scriptscriptstyle k}, \chi_{d})
 \, \cdot \, \chi_{a_{\scalebox{.7}{$\scriptscriptstyle 2$}}}\!(d)\,
|d|^{-s_{r + {\scalebox{.8}{$\scriptscriptstyle 1$}}}}
\]
and, for $h$ square-free monic, put
\[
Z(\mathbf{s}, \chi_{a_{\scalebox{.7}{$\scriptscriptstyle 2$}}}\!; h)\;\;  = 
\sum_{\substack{m_{\scalebox{.8}{$\scriptscriptstyle 1$}}\!, \ldots, \, m_{r}, \, d - \mathrm{monic} \\  
d = d_{\scriptscriptstyle 0}^{} d_{\scriptscriptstyle 1}^{2}, \;
d_{\scriptscriptstyle 0}^{} \; \mathrm{square \; free} \\ d_{\scriptscriptstyle 1}^{} \equiv \, 0 \!\!\!\!\pmod h}} \, 
\frac{\chi_{d_{\scriptscriptstyle 0}}\!(\widehat{m}_{\scriptscriptstyle 1})\, \cdots \, 
  \chi_{d_{\scriptscriptstyle 0}}\! (\widehat{m}_{r})\,
\chi_{a_{\scalebox{.7}{$\scriptscriptstyle 2$}}}\!(d_{\scriptscriptstyle 0})}
{|m_{\scriptscriptstyle 1}|^{s_{\scalebox{.8}{$\scriptscriptstyle 1$}}}
 \cdots \, |m_{r}|^{s_{r}} |d|^{s_{r + {\scalebox{.8}{$\scriptscriptstyle 1$}}}}}
\cdot \tilde{A}(m_{\scriptscriptstyle 1}, \ldots, m_{r}, d).
\]
Recalling that the coefficients
$
\tilde{A}(m_{\scriptscriptstyle 1}, \ldots, m_{r}, d)
$
are multiplicative,
$
\tilde{A}\big(p^{\, k_{\scalebox{.8}{$\scriptscriptstyle 1$}}}\!,
\ldots, p^{\, k_{r}}\!, 1\big) = 1, 
$
and
\begin{equation*}
 \tilde{A}\big(p^{\, k_{\scalebox{.8}{$\scriptscriptstyle 1$}}}\!,
\ldots, p^{\, k_{r}}\!, p \big) = 
\begin{cases}
1 & \mbox{if}  \;\,     k_{\scriptscriptstyle 1} = \cdots = k_{r} = 0\\ 
0 & \mbox{otherwise}
\end{cases}
\end{equation*}
for every monic irreducible $p,$ we can write (see
\cite[Lemma~5.1]{Dia})
\begin{equation} \label{eq: sum-sq-free-vs-MDS}
 Z_{\scriptscriptstyle 0}(\mathbf{s}, \chi_{a_{\scalebox{.7}{$\scriptscriptstyle 2$}}})\;\, = 
 \sum_{h - \mathrm{monic}}\,  \mu(h)
 Z(\mathbf{s}, \chi_{a_{\scalebox{.7}{$\scriptscriptstyle 2$}}}\!; h).
\end{equation}
Here $\mu(h)$ is the M\"obius function defined for non-zero
polynomials over $\mathbb{F}_{\! q}$ by $\mu(h) = (-
1)^{\scriptscriptstyle \omega(h)}$ if $h$ is square-free, and $h$ is a
constant times a product of $\omega(h)$ distinct monic irreducibles,
and $\mu(h) = 0$ if $h$ is {\it not} square-free; it is understood
that $\mu(h) = 1$ if $h \in \mathbb{F}_{\! q}^{\times}.$

We can express
$
Z(\mathbf{s}, \chi_{a_{\scalebox{.7}{$\scriptscriptstyle 2$}}}\!; h)
$
in terms of the functions
$
\widetilde{Z}^{(c)}\!\left(\mathbf{s};
 \chi_{a_{\scalebox{.7}{$\scriptscriptstyle 2$}} c_{\scalebox{.7}{$\scriptscriptstyle 2$}}},
 \chi_{a_{\scalebox{.7}{$\scriptscriptstyle 1$}} c_{\scalebox{.7}{$\scriptscriptstyle 1$}}}\!\right)
$
as follows. \!Set
$ 
\tilde{f}(\mathrm{z}; q) : =
\mathbf{I}(\sqrt{q}\, \mathrm{z}, q) f(\mathrm{z}; q),
$
where
$ 
\mathbf{I}(\sqrt{q}\, \mathrm{z}, q)
$
denotes the function
$
\mathbf{I}\left(\mathbf{s} - \mathbf{\tfrac{1}{2}}, q\right),
$
with $q^{\, - s_{\scalebox{.7}{$\scriptscriptstyle i$}}}$
replaced by $z_{\scriptscriptstyle i},$ $i = 1, \ldots, r + 1,$ and
$f(\mathrm{z}; q)$ is the power series defined in 3.2. Then
$\tilde{f}(\mathrm{z}; q)$ satisfies the conditions 1--5 in Section
\ref{General-MDS}, and if we write
\[
\tilde{f}(\mathrm{z}; q) = 1 + \sum_{\mathrm{k}\ne \bf{0}}
\tilde{a}(\mathrm{k}; q)\mathrm{z}^{\mathrm{k}}
\]
the coefficients $\tilde{a}(\mathrm{k}; q)$ satisfy the estimate
\eqref{eq: Estimate-H1}. Set
$
\tilde{f}_{\!\scriptscriptstyle \mathrm{even}}(\mathrm{z}; q) 
= \mathbf{I}(\sqrt{q}\, \mathrm{z}, q)
f_{\!\scriptscriptstyle \mathrm{even}}(\mathrm{z}; q),
$
$
\tilde{f}_{\!\scriptscriptstyle \mathrm{odd}}(\mathrm{z}; q) 
= \mathbf{I}(\sqrt{q}\, \mathrm{z}, q)
f_{\!\scriptscriptstyle \mathrm{odd}}(\mathrm{z}; q),
$
and let
$ 
\tilde{{\bf{f}}}(\mathrm{z}; q)
\, = \, ^{t}\!\big(\tilde{f}_{\!\scriptscriptstyle \mathrm{even}}(\underline{z}, z_{r + {\scriptscriptstyle 1}}; q),
\tilde{f}_{\!\scriptscriptstyle \mathrm{odd}}(\underline{z}, z_{r + {\scriptscriptstyle 1}}; q),
\tilde{f}_{\!\scriptscriptstyle \mathrm{even}}(- \underline{z}, z_{r +
  {\scriptscriptstyle 1}}; q)\big).
$ 

We have the following: 

\vskip5pt
\begin{lem}\label{Local-vect-estimate} --- For
  $\sigma > \frac{1}{2},$ and sufficiently small
  $\varepsilon > 0,$ we have
\[ 
\tilde{{\bf{f}}}(\mathrm{z}; q)
\; =\,
^{^{^{t}}}\!\!\bigg(
\prod_{j = 1}^{r}\left(1 - z_{\!\scriptscriptstyle j}\right)^{\scriptscriptstyle -1}\!, \, 
z_{r + {\scriptscriptstyle 1}}, 
\prod_{j = 1}^{r}\left(1 + z_{\!\scriptscriptstyle
    j}\right)^{\scriptscriptstyle -1}\bigg)
\; + \; O_{\varepsilon, \, \sigma}
\left(q^{{\scriptscriptstyle -
    \scalebox{.9}{$\scriptscriptstyle 2$}\sigma} +
  \scalebox{.9}{$\scriptscriptstyle (4$}r +
\scalebox{.9}{$\scriptscriptstyle 2)$}
{\scriptscriptstyle
  \varepsilon}}\right)
\]
in the polydisk
$
|z_{\scriptscriptstyle i}| \le q^{- \frac{1}{2} + \varepsilon},
$
$i = 1, \ldots, r,$ and $|z_{r + {\scriptscriptstyle 1}}| \le
q^{-\sigma}.$
\end{lem}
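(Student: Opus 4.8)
\emph{Proof idea.} The plan is to work directly with the power series $\tilde{f}(\mathrm{z};q)=\mathbf{I}(\sqrt{q}\,\mathrm{z},q)f(\mathrm{z};q)$, which by hypothesis satisfies conditions 1--5 of Section~\ref{General-MDS} and whose coefficients $\tilde{a}(\mathrm{k};q)$ satisfy \eqref{eq: Estimate-H1}. By condition~1 for $\tilde{f}$ there are polynomials $\tilde{P}_{l}(\underline{z};q)$ with $\tilde{P}_{0}=\tilde{P}_{1}\equiv 1$ (condition~3) such that
\[
\tilde{f}(\mathrm{z};q)=\frac{\sum_{l\ \mathrm{even}}\tilde{P}_{l}(\underline{z};q)\,z_{r+1}^{l}}{\prod_{j=1}^{r}(1-z_{j})}\;+\;\sum_{l\ \mathrm{odd}}\tilde{P}_{l}(\underline{z};q)\,z_{r+1}^{l}.
\]
Since $\mathbf{I}(\sqrt{q}\,\mathrm{z},q)$ involves only even powers of $z_{r+1}$ and only even total degree in $\underline{z}$, it is invariant under $z_{r+1}\mapsto-z_{r+1}$ and under $\underline{z}\mapsto-\underline{z}$; hence the $\tilde{f}_{\mathrm{even}}$, $\tilde{f}_{\mathrm{odd}}$ defined in the text are exactly the even and odd parts (in $z_{r+1}$) of $\tilde{f}(\underline{z},z_{r+1};q)$ --- namely the first and second summands above --- and $\tilde{f}_{\mathrm{even}}(-\underline{z},z_{r+1};q)$ is the first summand with $\underline{z}$ replaced by $-\underline{z}$ (so $\prod(1-z_{j})$ becomes $\prod(1+z_{j})$). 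Using $\tilde{P}_{0}=\tilde{P}_{1}\equiv 1$, the three entries of $\tilde{\mathbf{f}}(\mathrm{z};q)-{}^{t}\!\big(\prod_{j}(1-z_{j})^{-1},\,z_{r+1},\,\prod_{j}(1+z_{j})^{-1}\big)$ become, respectively, $\prod_{j}(1-z_{j})^{-1}\sum_{l\ge 2,\ \mathrm{even}}\tilde{P}_{l}(\underline{z};q)z_{r+1}^{l}$, then $\sum_{l\ge 3,\ \mathrm{odd}}\tilde{P}_{l}(\underline{z};q)z_{r+1}^{l}$, then $\prod_{j}(1+z_{j})^{-1}\sum_{l\ge 2,\ \mathrm{even}}\tilde{P}_{l}(-\underline{z};q)z_{r+1}^{l}$. (In the polydisk of the lemma the power series for $\tilde{f}$ itself need not converge, but these decompositions do, by the estimates below.)

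The heart of the matter is a uniform bound on $\tilde{P}_{l}$. From the functional equation \eqref{eq: poly-P-Q-func-eq} for $\tilde{P}_{l}$, together with $\tilde{P}_{l}$ being a polynomial, one reads off $\deg_{z_{j}}\tilde{P}_{l}\le l$ for every $j$. Extracting the coefficient of $z_{r+1}^{l}$ in $\tilde{f}$ and multiplying by $\prod_{j}(1-z_{j})$ writes each coefficient of $\tilde{P}_{l}$ as an alternating sum of at most $2^{r}$ values $\tilde{a}\big((\underline{k}',l);q\big)$ with $|\underline{k}'|+l\ge l\ge 2$, so \eqref{eq: Estimate-H1} bounds each coefficient by $\ll_{r,\varepsilon}q^{(\frac12+\varepsilon)(|\underline{k}|+l)-1}$. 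Summing $|z_{1}|^{k_{1}}\cdots|z_{r}|^{k_{r}}$ over $0\le k_{j}\le l$ in the polydisk $|z_{j}|\le q^{-1/2+\varepsilon}$, the factors $q^{(\frac12+\varepsilon)|\underline{k}|}$ and $q^{(-\frac12+\varepsilon)|\underline{k}|}$ collapse to $q^{2\varepsilon|\underline{k}|}$, which yields
\[
\big|\tilde{P}_{l}(\underline{z};q)\big|,\ \big|\tilde{P}_{l}(-\underline{z};q)\big|\ \ll_{r,\varepsilon}\ (l+1)^{r}\,q^{\,l/2-1+(2r+1)\varepsilon l}\qquad(l\ge 2),
\]
uniformly for $|z_{j}|\le q^{-1/2+\varepsilon}$, $j=1,\dots,r$.

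Finally, with $|z_{r+1}|\le q^{-\sigma}$, $\sigma>\tfrac12$, and $\varepsilon$ small enough that $\delta:=\sigma-\tfrac12-(2r+1)\varepsilon>0$, the tail sums converge and are dominated by their leading terms:
\[
\sum_{l\ge 2}\big|\tilde{P}_{l}(\pm\underline{z};q)\big|\,|z_{r+1}|^{l}\ \ll_{r,\varepsilon,\sigma}\ q^{-1}\!\!\sum_{l\ge 2}(l+1)^{r}q^{-\delta l}\ \ll_{r,\varepsilon,\sigma}\ q^{-1-2\delta}=q^{-2\sigma+(4r+2)\varepsilon},
\]
and likewise $\sum_{l\ge 3}\big|\tilde{P}_{l}(\underline{z};q)\big|\,|z_{r+1}|^{l}\ll q^{-1-3\delta}=q^{\,1/2-3\sigma+(6r+3)\varepsilon}\le q^{-2\sigma+(4r+2)\varepsilon}$, the last step since $q^{-\delta}\le 1$. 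As $\prod_{j}|1\pm z_{j}|\ge(1-q^{-1/2+\varepsilon})^{r}\gg_{r,\varepsilon}1$, dividing by $\prod_{j}(1\pm z_{j})$ costs only a bounded factor, so all three entries are $O_{\varepsilon,\sigma}(q^{-2\sigma+(4r+2)\varepsilon})$, as claimed.

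I expect the bound on $\tilde{P}_{l}$ to be the main obstacle: it combines the degree bound extracted from \eqref{eq: poly-P-Q-func-eq} with \eqref{eq: Estimate-H1}, and one must verify that the $\varepsilon$-losses coming from the collapsed exponents and from the geometric tail really combine into the claimed $(4r+2)\varepsilon$ --- which for the odd entry works out only after using $\sigma>\tfrac12$ to absorb an apparent $(6r+3)\varepsilon$.
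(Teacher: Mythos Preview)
Your proof is correct and follows essentially the same strategy as the paper's. Both arguments isolate the tail (the terms with $k_{r+1}\ge 2$ for the even part, $k_{r+1}\ge 3$ for the odd part), use the bound \eqref{eq: Estimate-H1} on the coefficients, and crucially exploit the fact that for fixed $l=k_{r+1}$ only finitely many $\underline{k}$ with $k_i\le l$ contribute --- you obtain this as the degree bound $\deg_{z_j}\tilde{P}_l\le l-\delta_l$ from \eqref{eq: poly-P-Q-func-eq}, while the paper phrases it as vanishing of the coefficients $\tilde{b}(\mathrm{k};q)$ via the $w_i$-invariance of $\prod_j(1-z_j)\tilde{f}_{\mathrm{even}}$; these are the same fact. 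The only cosmetic difference is that you bound $\sum_{0\le k_j\le l}q^{2\varepsilon|\underline{k}|}$ by $(l+1)^r q^{2r\varepsilon l}$ and then absorb the polynomial factor into the $O_{\varepsilon,\sigma}$-constant, whereas the paper sums the geometric series to get $(1-q^{-2\varepsilon})^{-r}q^{2r\varepsilon l}$; both lead to the same final exponent.
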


\begin{proof} \!Write
\[
\prod_{j = 1}^{r} (1 - z_{\scriptscriptstyle j})
\, \cdot \,
\tilde{f}_{\!\scriptscriptstyle \mathrm{even}}(\mathrm{z}; q)
= 1 \;\; + \!\sum_{\substack{\mathrm{k} \\
    k_{r +  \scalebox{.8}{$\scriptscriptstyle 1$}} \ge 2 \\
    k_{r +  \scalebox{.8}{$\scriptscriptstyle 1$}}
    - \, \text{even}}}
\tilde{b}(\mathrm{k}; q)\mathrm{z}^{\mathrm{k}}
= 1 \; + \; \prod_{j = 1}^{r} (1 - z_{\scriptscriptstyle j})
\;\; \cdot \! \sum_{\substack{\mathrm{k} \\
    k_{r +  \scalebox{.8}{$\scriptscriptstyle 1$}} \ge 2
    \\ k_{r +  \scalebox{.8}{$\scriptscriptstyle 1$}}
- \, \text{even}}}
\tilde{a}(\mathrm{k}; q)\mathrm{z}^{\mathrm{k}}
\]
and
\[
\tilde{f}_{\!\scriptscriptstyle \mathrm{odd}}(\mathrm{z}; q)
= z_{r + {\scriptscriptstyle 1}} \;\,  +
\!\sum_{\substack{\mathrm{k}\\
k_{r +  \scalebox{.8}{$\scriptscriptstyle 1$}} \ge 3 \\ k_{r +  \scalebox{.8}{$\scriptscriptstyle 1$}}
  - \, \text{odd}}}
\tilde{a}(\mathrm{k}; q)\mathrm{z}^{\mathrm{k}}.
\]
Since the coefficients $\tilde{a}(\mathrm{k}; q)$
satisfy the estimate \eqref{eq: Estimate-H1},
$ 
\tilde{b}(\mathrm{k}; q) \ll_{\varepsilon} 
q^{\left(\!\scalebox{1.15}{$\scriptscriptstyle \frac{1}{2}$} +
    \scalebox{1.35}{$\scriptscriptstyle
      \varepsilon$}\!\right)\scalebox{1.3}{$\scriptscriptstyle
    |\mathrm{k}|$} - 1}.
$
Moreover, since
$
\prod_{j = 1}^{r} (1 - z_{\scriptscriptstyle j})
\, \cdot \,
\tilde{f}_{\!\scriptscriptstyle \mathrm{even}}(\mathrm{z}; q)
$
is invariant under $w_{\scriptscriptstyle i},$ for
$1\le i \le r,$ the coefficients
$
\tilde{b}(\mathrm{k}; q)
$
vanish unless
$
\mathrm{k} = (k_{\scriptscriptstyle 1}, \ldots, k_{r},
k_{r + {\scriptscriptstyle 1}}) 
$
is such that
$
k_{\scriptscriptstyle i}
\le k_{r + {\scriptscriptstyle 1}} 
$
for all $i.$ Similarly, from the functional equations
$
\tilde{f}_{\!\scriptscriptstyle \mathrm{odd}}(\mathrm{z}; q)= (\sqrt{q}\,z_{\scriptscriptstyle i})^{\scriptscriptstyle - 1}
\!\tilde{f}_{\!\scriptscriptstyle \mathrm{odd}}
(w_{\scriptscriptstyle i}\mathrm{z}; q),
$
for $1\le i \le r,$ we deduce that the coefficients of
$
\tilde{f}_{\!\scriptscriptstyle \mathrm{odd}}(\mathrm{z}; q)
$
vanish unless
$
\mathrm{k} = (k_{\scriptscriptstyle 1}, \ldots, k_{r},
k_{r + {\scriptscriptstyle 1}}) 
$
satisfies the condition
$
k_{\scriptscriptstyle i}
\le k_{r + {\scriptscriptstyle 1}} - 1
$
for all $i = 1, \ldots, r.$ Accordingly, for
$
|z_{\scriptscriptstyle i}| \le q^{- \frac{1}{2} + \varepsilon}
$
($i = 1, \ldots, r$), $|z_{r + {\scriptscriptstyle 1}}| \le
q^{-\sigma},$ and $\varepsilon >0$ such that
$
1 - 2\sigma + (4r +2)\varepsilon < 0,
$
\begin{equation*}
\begin{split}
\left|
\prod_{j = 1}^{r} (1 - z_{\scriptscriptstyle j})
\, \cdot \,
\tilde{f}_{\!\scriptscriptstyle \mathrm{even}}(\mathrm{z}; q)
\, - \, 1\right| \, &\ll_{\varepsilon}\,  
q^{\scriptscriptstyle -1} \, \cdot \sum_{\substack{
    k_{r +  \scalebox{.8}{$\scriptscriptstyle 1$}} \ge 2 \\
    k_{r +  \scalebox{.8}{$\scriptscriptstyle 1$}}
    - \, \text{even}}}
q^{\left(\!{\scriptscriptstyle \frac{1}{2} - \sigma
    + \varepsilon}\!\right)\scalebox{1.2}{$\scriptscriptstyle k_{r +  \scalebox{.8}{$\scriptscriptstyle 1$}}$}}
\sum_{\underline{k}\,:\,
k_{\scalebox{.83}{$\scriptscriptstyle i$}}
\le k_{r +  \scalebox{.8}{$\scriptscriptstyle 1$}}}
q^{{\scriptscriptstyle \scalebox{.9}{$\scriptscriptstyle 2$}\varepsilon}
  \scalebox{1.15}{$\scriptscriptstyle
 |\underline{k}|$}}\\
&\ll_{\varepsilon}\,  q^{\scriptscriptstyle - 1}
(1 - q^{{\scriptscriptstyle - \scalebox{.9}{$\scriptscriptstyle 2$}\varepsilon}})^{- \scalebox{1.1}{$\scriptscriptstyle r$}}\, \cdot \sum_{\substack{
    k_{r +  \scalebox{.8}{$\scriptscriptstyle 1$}} \ge 2 \\
    k_{r +  \scalebox{.8}{$\scriptscriptstyle 1$}}
    - \, \text{even}}}
q^{\left[{\scriptscriptstyle \frac{1}{2} -
\sigma} + \scalebox{.9}{$\scriptscriptstyle (2$}r +
\scalebox{.9}{$\scriptscriptstyle 1)$}
{\scriptscriptstyle
  \varepsilon}\right]\scalebox{1.2}{$\scriptscriptstyle k_{r +
  \scalebox{.8}{$\scriptscriptstyle 1$}}$}}\\
&\ll_{\varepsilon}\,
q^{{\scriptscriptstyle -
    \scalebox{.9}{$\scriptscriptstyle 2$}\sigma} +
  \scalebox{.9}{$\scriptscriptstyle (4$}r +
\scalebox{.9}{$\scriptscriptstyle 2)$}
{\scriptscriptstyle
  \varepsilon}}
(1 - q^{{\scriptscriptstyle - \scalebox{.9}{$\scriptscriptstyle
      2$}\varepsilon}})^{- \scalebox{1.1}{$\scriptscriptstyle r$}}
(1 - q^{{\scriptscriptstyle \scalebox{.9}{$\scriptscriptstyle 1$} -
\scalebox{.9}{$\scriptscriptstyle 2$}\sigma} + \scalebox{.9}{$\scriptscriptstyle (4$}r +
\scalebox{.9}{$\scriptscriptstyle 2)$}
{\scriptscriptstyle
  \varepsilon}})^{\scriptscriptstyle - 1}\\
&\ll_{\varepsilon,\,  \sigma}\,
q^{{\scriptscriptstyle -
    \scalebox{.9}{$\scriptscriptstyle 2$}\sigma} +
  \scalebox{.9}{$\scriptscriptstyle (4$}r +
\scalebox{.9}{$\scriptscriptstyle 2)$}
{\scriptscriptstyle
  \varepsilon}}.
\end{split}
\end{equation*}
Similarly,
\[
\left|
\tilde{f}_{\!\scriptscriptstyle \mathrm{odd}}(\mathrm{z}; q)
\, - \, z_{r + {\scriptscriptstyle 1}}\right| \, \ll_{\varepsilon}\, q^{\scriptscriptstyle -1} \, \cdot \sum_{\substack{
    k_{r +  \scalebox{.8}{$\scriptscriptstyle 1$}} \ge 3 \\
    k_{r +  \scalebox{.8}{$\scriptscriptstyle 1$}}
    - \, \text{odd}}}
q^{\left(\!{\scriptscriptstyle \frac{1}{2} - \sigma
    + \varepsilon}\!\right)\scalebox{1.2}{$\scriptscriptstyle k_{r +  \scalebox{.8}{$\scriptscriptstyle 1$}}$}}
\sum_{\underline{k}\,:\,
k_{\scalebox{.83}{$\scriptscriptstyle i$}}
\le k_{r +  \scalebox{.8}{$\scriptscriptstyle 1$}} - 1}
q^{{\scriptscriptstyle \scalebox{.9}{$\scriptscriptstyle 2$}\varepsilon}
  \scalebox{1.15}{$\scriptscriptstyle
    |\underline{k}|$}}
\, \ll_{\varepsilon,\,  \sigma}\,
q^{{\scriptscriptstyle \frac{1}{2} -
    \scalebox{.9}{$\scriptscriptstyle 3$}\sigma} +
  \scalebox{.9}{$\scriptscriptstyle (4$}r +
\scalebox{.9}{$\scriptscriptstyle 3)$}
{\scriptscriptstyle
  \varepsilon}}
\]
which completes the proof.
\end{proof}

If we now define,
\[ 
F\!(\mathrm{z}; q) : = 
z_{r +  {\scriptscriptstyle 1}}^{\scriptscriptstyle - 3}\,
\tilde{f}_{\scriptscriptstyle \mathrm{odd}}(\underline{z}, z_{r +  {\scriptscriptstyle 1}}; q) 
\, - \, z_{r +  {\scriptscriptstyle 1}}^{\scriptscriptstyle - 2}
\]
and, for $a\in \{0, 1\},$
\[
G^{\scriptscriptstyle (a)}(\mathrm{z}; q) : = 
\frac{1}{2}
\left\{\tilde{f}_{\scriptscriptstyle \mathrm{even}}(\underline{z}, 
z_{r +  {\scriptscriptstyle 1}}; q) 
\, - \, \prod_{k = 1}^{r}(1 - z_{\scriptscriptstyle k})^{\scriptscriptstyle -1} \right\}
z_{r +  {\scriptscriptstyle 1}}^{\scriptscriptstyle - 2} 
 \, + \, \frac{(- 1)^{a + {\scriptscriptstyle 2}}}{2}
\left\{\tilde{f}_{\scriptscriptstyle \mathrm{even}}(- \, \underline{z}, z_{r +  {\scriptscriptstyle 1}}; q) 
\, - \, \prod_{k = 1}^{r}(1 + z_{\scriptscriptstyle k})^{\scriptscriptstyle -1} \right\}
z_{r +  {\scriptscriptstyle 1}}^{\scriptscriptstyle - 2}
\]
then, as in \cite[Section~5]{Dia}, we can write
\begin{equation} \label{eq: fundamental-eq-h}
\begin{split}
Z(\mathbf{s}, \chi_{a_{\scalebox{.7}{$\scriptscriptstyle 2$}}}\!; h) = \,
|h|^{- 2s_{r + {\scalebox{.8}{$\scriptscriptstyle 1$}}}} \!\!
\sum_{h =
  c_{\scalebox{.7}{$\scriptscriptstyle 1$}}
  c_{\scalebox{.7}{$\scriptscriptstyle 2$}}
  c_{\scalebox{.7}{$\scriptscriptstyle 3$}}} \;
&\chi_{a_{\scalebox{.7}{$\scriptscriptstyle 2$}}
  c_{\scalebox{.7}{$\scriptscriptstyle 2$}}}
\!(c_{\scriptscriptstyle 1})
\widetilde{Z}^{(h)}(\mathbf{s};  \chi_{a_{\scalebox{.7}{$\scriptscriptstyle 2$}}
  c_{\scalebox{.7}{$\scriptscriptstyle 2$}}},
\chi_{c_{\scalebox{.7}{$\scriptscriptstyle 1$}}}) 
\prod_{p \, \mid \, c_{\scalebox{.7}{$\scriptscriptstyle 1$}}}
F(|p|^{\, - s_{\scalebox{.8}{$\scriptscriptstyle 1$}}}\!, \ldots, \, |p|^{\, - s_{r + {\scalebox{.8}{$\scriptscriptstyle 1$}}}}; \, 
|p|)\,  |p|^{\, - s_{r + {\scalebox{.8}{$\scriptscriptstyle 1$}}}}\\
& \cdot \, \prod_{p \, \mid \, c_{\scalebox{.7}{$\scriptscriptstyle
      2$}}}
G^{\scriptscriptstyle (1)}(|p|^{\, - s_{\scalebox{.8}{$\scriptscriptstyle 1$}}}\!,
\ldots, \, |p|^{\, - s_{r + {\scalebox{.8}{$\scriptscriptstyle 1$}}}};
\, |p|)
\prod_{p \, \mid \, c_{\scalebox{.7}{$\scriptscriptstyle 3$}}}
G^{\scriptscriptstyle (0)}(|p|^{\, - s_{\scalebox{.8}{$\scriptscriptstyle 1$}}}\!,
\ldots, \, |p|^{\, - s_{r + {\scalebox{.8}{$\scriptscriptstyle 1$}}}}; \, |p|).
\end{split}
\end{equation}

Conjecture \ref{Conjecture-Meromorphic continuation}
combined with the following conjecture gives the meromorphic
continuation of the function
$
Z_{\scriptscriptstyle 0}(\mathbf{s},
\chi_{a_{\scalebox{.7}{$\scriptscriptstyle 2$}}}).
$ 

\vskip10pt
\begin{conj} \label{Conjecture-Meromorphic continuation-Z0} --- 
  The series
  \[ 
 \sum_{h - \mathrm{monic}}\,  \mu(h)
 Z(\mathbf{s}, \chi_{a_{\scalebox{.7}{$\scriptscriptstyle 2$}}}\!; h) 
\]
is absolutely convergent for
$
 \Re(s_{\scriptscriptstyle i}) \ge \frac{1}{2},
$
$i = 1, \ldots, r,$ and
$\Re(s_{r + {\scriptscriptstyle 1}}) > \frac{1}{2},$ away from the
zero-set of
$
\mathrm{D}^{\scriptscriptstyle \mathrm{re}}\!\left(\mathbf{s} -
  \mathbf{\tfrac{1}{2}}\right).
$
\end{conj}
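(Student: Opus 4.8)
The plan is to reduce the statement --- via the decomposition \eqref{eq: fundamental-eq-h} and Conjecture~\ref{Conjecture-Meromorphic continuation} --- to a Lindel\"of-type bound, in the conductor aspect, for the twisted multiple Dirichlet series of \cite{DV}. First I would substitute \eqref{eq: fundamental-eq-h} into each summand $\mu(h)\,Z(\mathbf{s},\chi_{a_2}\!;h)$ and interchange the outer sum over monic $h$ with the inner sum over factorizations $h=c_1c_2c_3$. Since $\mu$ is multiplicative and supported on square-free $h$, this rewrites the whole series as a sum over pairwise coprime square-free triples $(c_1,c_2,c_3)$ of monic polynomials, with general term
\[
\mu(c_1)\mu(c_2)\mu(c_3)\,|c_1c_2c_3|^{-2s_{r+1}}\,\chi_{a_2c_2}(c_1)\,\widetilde{Z}^{(c_1c_2c_3)}\!\big(\mathbf{s};\chi_{a_2c_2},\chi_{c_1}\big)\prod_{p\mid c_1}\! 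F(\cdots)\,|p|^{-s_{r+1}}\prod_{p\mid c_2}\! G^{(1)}(\cdots)\prod_{p\mid c_3}\! G^{(0)}(\cdots),
\]
each omitted argument being $\big(|p|^{-s_1},\dots,|p|^{-s_{r+1}};\,|p|\big)$. Every factor here is holomorphic on the region $\Re(s_i)\ge\tfrac{1}{2}$ $(i\le r)$, $\Re(s_{r+1})>\tfrac{1}{2}$ away from the zero set of $\mathrm{D}^{\mathrm{re}}(\mathbf{s}-\mathbf{\tfrac{1}{2}})$ (for $\widetilde{Z}^{(c_1c_2c_3)}$ this is Conjecture~\ref{Conjecture-Meromorphic continuation}, the others being given by absolutely convergent power series in the $|p|^{-s_i}$), so it suffices to show that the series of absolute values of these terms converges locally uniformly there.

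The second step bounds the local factors. For $\mathbf{s}$ in the region the point $\mathrm{z}=\big(|p|^{-s_1},\dots,|p|^{-s_{r+1}}\big)$ satisfies $|z_i|\le|p|^{-1/2}$ $(i\le r)$ and $|z_{r+1}|=|p|^{-\Re(s_{r+1})}$, so Lemma~\ref{Local-vect-estimate} applies with $q$ replaced by $|p|$ and its parameter $\sigma$ taken to be $\Re(s_{r+1})>\tfrac{1}{2}$: it gives that each of $\tilde{f}_{\mathrm{even}}(\underline{z},z_{r+1};|p|)-\prod_j(1-z_j)^{-1}$, $\tilde{f}_{\mathrm{even}}(-\underline{z},z_{r+1};|p|)-\prod_j(1+z_j)^{-1}$ and $\tilde{f}_{\mathrm{odd}}(\underline{z},z_{r+1};|p|)-z_{r+1}$ is $\ll_{\varepsilon}|p|^{-2\Re(s_{r+1})+(4r+2)\varepsilon}$ for $\varepsilon$ small enough. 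Inserting these into the definitions of $F$ and $G^{(a)}$ (and using $|z_{r+1}|^{-2}=|p|^{2\Re(s_{r+1})}$, $|z_{r+1}|^{-3}=|p|^{3\Re(s_{r+1})}$), the subtracted constant and linear terms cancel and one is left with $\big|F(\mathrm{z};|p|)\big|\,|p|^{-\Re(s_{r+1})}\ll_{\varepsilon}|p|^{C\varepsilon}$ and $\big|G^{(a)}(\mathrm{z};|p|)\big|\ll_{\varepsilon}|p|^{C\varepsilon}$ $(a=0,1)$, with $C=C(r)$. As the $c_i$ are square-free, the products over $p\mid c_i$ contribute at most $|c_1|^{C\varepsilon}$, $|c_2|^{C\varepsilon}$, $|c_3|^{C\varepsilon}$, while the quadratic characters have modulus $\le1$.

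The third and decisive step is a uniform estimate $\big|\widetilde{Z}^{(h)}(\mathbf{s};\chi_{a_2c_2},\chi_{c_1})\big|\ll_{\varepsilon,K}|h|^{\varepsilon}$ for $\mathbf{s}$ in any compact subset $K$ of the region bounded away from the zero set of $\mathrm{D}^{\mathrm{re}}(\mathbf{s}-\mathbf{\tfrac{1}{2}})$. After the substitution $s_i=\fs_i+\tfrac{1}{2}$ the region corresponds to the part of the fundamental chamber with $\alpha_{r+1}(\fs)=\Re(\fs_{r+1})>0$; there the $W$-stabilizer of a point is generated by the $w_i$ $(i\le r)$ with $\Re(\fs_i)=0$, hence is finite (the corresponding principal submatrix of $A$ equals $2I$), so the point lies in the interior of $\mathbf{X}_{\mathbb{C}}$ and thus in $X_0^{\ast}$; by Conjecture~\ref{Conjecture-Meromorphic continuation}, $\mathrm{D}^{\mathrm{re}}(\fs)\,\widetilde{Z}^{(h)}(\fs+\mathbf{\tfrac{1}{2}};\,\cdot\,)$ is holomorphic on $X_0^{\ast}$, so $|\widetilde{Z}^{(h)}|\le|\mathrm{D}^{\mathrm{re}}|^{-1}\sup_K|\mathrm{D}^{\mathrm{re}}\widetilde{Z}^{(h)}|$ on $K$. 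The $|h|^{\varepsilon}$-size of the last supremum is exactly the conjectural Lindel\"of bound, in the twisting parameters, for the series of \cite{DV} (the moduli of the twists divide $h$), consistent with the Ramanujan-type hypothesis \eqref{eq: Estimate-H1} for the $p$-parts. Granting it, the general term above is $\ll_{\varepsilon,K}|c_1|^{-2\Re(s_{r+1})+C'\varepsilon}|c_2|^{-2\Re(s_{r+1})+C'\varepsilon}|c_3|^{-2\Re(s_{r+1})+C'\varepsilon}$, and since $\Re(s_{r+1})>\tfrac{1}{2}$ each exponent is $<-1$ once $\varepsilon$ is small enough (depending only on $K$ and $r$). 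Because $\sum_{c\,\mathrm{monic}}|c|^{-\beta}$ over $\mathbb{F}_{\!q}[x]$ converges whenever $\beta>1$, the triple sum is dominated by the cube of a convergent series, giving the asserted absolute, locally uniform convergence, hence the conclusion.

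The genuinely hard ingredient is Step~3: no Lindel\"of estimate in the conductor/twisting aspect is presently available for the twisted Weyl group multiple Dirichlet series attached to the infinite Kac--Moody Weyl group of \S\ref{KM-alg}. When $r=4$ (affine type $\tilde{D}_4$, where $\mathbf{I}\equiv1$), the forthcoming construction of the untwisted $D_4^{(1)}$-series in \cite{DPP1,DPP2} should plausibly also yield a bound of this strength, which one would then have to propagate to the twisted series; when $r\ge5$ the Weyl group is of indefinite type and even the meromorphic continuation (Conjecture~\ref{Conjecture-Meromorphic continuation}) is open, so Step~3 must for now be assumed --- which is exactly why the statement is recorded here as a conjecture rather than a theorem.
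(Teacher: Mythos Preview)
The statement is recorded in the paper as a \emph{conjecture}, and the paper gives no proof of it; there is nothing in the paper to compare your argument against. What the paper does offer is precisely the heuristic you have reconstructed: in the introduction it says that Conjecture~\ref{Conjecture-Meromorphic continuation-Z0} ``can be motivated by an analogue of the Lindel\"of hypothesis for the twisted Weyl group multiple Dirichlet series in the twisting parameters combined with a Ramanujan type bound for the coefficients of the $p$-parts'' (i.e., \eqref{eq: Estimate-H1}). Your three steps make this motivation explicit---Step~1 is the decomposition \eqref{eq: fundamental-eq-h}, Step~2 is exactly the content of Lemma~\ref{Local-vect-estimate}, and Step~3 is the Lindel\"of input---and you correctly flag Step~3 as the unproven ingredient. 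So your write-up is not a proof but an accurate diagnosis of what a proof would require, matching the paper's own stance; your final paragraph says as much.

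One small technical remark on your Step~3: your argument that the region $\Re(\fs_i)\ge 0$ $(i\le r)$, $\Re(\fs_{r+1})>0$ lies inside $X_0^{\ast}$ is correct (the relevant stabilizers are finite abelian $2$-groups), but note that being holomorphic on $X_0^{\ast}$ gives you no \emph{a priori} uniformity in $h$; the bound $|\mathrm{D}^{\mathrm{re}}\widetilde{Z}^{(h)}|\ll_{\varepsilon,K}|h|^{\varepsilon}$ is an independent hypothesis on top of Conjecture~\ref{Conjecture-Meromorphic continuation}, not a consequence of it. That is the whole point, and you say so---just be careful not to phrase the paragraph as if holomorphy alone were doing any of the work.
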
 

\vskip10pt
In the next sections, we shall see how the asymptotics for
moments of L-functions, summed over monic square-free
polynomials of fixed degree, can be obtained from
Assumption \eqref{eq: Estimate-H1}, and Conjectures
\ref{Conjecture-Meromorphic continuation},
\ref{Conjecture-Meromorphic continuation-Z0}.

\section{Poles and residues}  \label{Poles-residues-ref-section}
Our first goal in this section is to compute the residues at the poles
of the functions
$
\widetilde{Z}^{(c)}\!\left(\mathbf{s};
 \chi_{a_{\scriptscriptstyle 2} c_{\scriptscriptstyle 2}},
 \chi_{a_{\scriptscriptstyle 1} c_{\scriptscriptstyle 1}}\!\right)
$
introduced at the end of \ref{Conjecture-MDS-completed-imag-roots}. By
Conjecture \ref{Conjecture-Meromorphic continuation}, the poles of
$
\widetilde{Z}^{(c)}\!\left(\mathbf{s};
 \chi_{a_{\scriptscriptstyle 2} c_{\scriptscriptstyle 2}},
 \chi_{a_{\scriptscriptstyle 1} c_{\scriptscriptstyle 1}}\!\right)
$
are among the zeros of the infinite product
$
\mathrm{D}^{\scriptscriptstyle \mathrm{re}}\!\left(\mathbf{s} -
  \mathbf{\tfrac{1}{2}}\right).
$\linebreak
In particular, each pole is simple, and corresponds to a positive real
root of the Kac-Moody Lie algebra with the gener-\linebreak
alized Cartan matrix in \ref{KM-alg}.

Taking $s_{\scriptscriptstyle 1}, \ldots, s_{r}$ 
with sufficiently large real parts, we see that 
$
\widetilde{Z}^{(c)}(\mathbf{s}; \chi_{a}, \chi_{a_{\scriptscriptstyle 1} c_{\scriptscriptstyle 1}}\!) 
$ 
(i.e., if $a_{\scriptscriptstyle 2} = a$ and $c_{\scriptscriptstyle 2} = 1$) has a simple pole\linebreak when 
$
q^{- s_{r + \scalebox{.8}{$\scriptscriptstyle 1$}}} \! 
= \mathrm{sgn}(a)q^{\scriptscriptstyle  -1}.
$ 
The part of 
$
\widetilde{Z}^{(c)}(\mathbf{s}; \chi_{a}, \chi_{a_{\scriptscriptstyle 1} c_{\scriptscriptstyle 1}}\!) 
$ 
that contributes to this pole is 
\begin{equation} \label{eq: Residue-principal}
L^{\scriptscriptstyle (c)}(s_{r + \scriptscriptstyle 1}, \chi_{a})
\; \cdot \sum_{\substack{(m_{\scalebox{.8}{$\scriptscriptstyle 1$}} \cdots \, m_{r}, \, c) = 1 \\ 
m_{\scalebox{.8}{$\scriptscriptstyle 1$}} \cdots \, m_{r} \, = \, \square}}  \, 
\frac{\tilde{Q}_{\underline{m}}(s_{r + \scriptscriptstyle 1}; \chi_{a})}
{|m_{\scriptscriptstyle 1}|^{s_{\scalebox{.8}{$\scriptscriptstyle 1$}}} \cdots \, |m_{r}|^{s_{r}}}
\, = \, L^{\scriptscriptstyle (c)}(s_{r + \scriptscriptstyle 1}, \chi_{a})
\prod_{p \nmid c} R_{\! p}(\mathbf{s}, \chi_{a})
\end{equation} 
where $R_{\! p}(\mathbf{s}, \chi_{a})$ is given by 
\begin{equation} \label{eq: Local-factor-residue-principal}
\begin{split}
& R_{\! p}(\mathbf{s}, \chi_{a}) \;  = \sum_{|\underline{k}| - \text{even}} 
\frac{\tilde{Q}_{\scriptscriptstyle \underline{k}}(\chi_{a}(p)
|p|^{\, - s_{r + \scalebox{.8}{$\scriptscriptstyle 1$}}}; \, |p|)}
{|p|^{k_{\scalebox{.8}{$\scriptscriptstyle 1$}}
s_{\scalebox{.8}{$\scriptscriptstyle 1$}} + \cdots + k_{r}s_{r}}}\\ 
& = (1 - \chi_{a}(p)|p|^{\, - s_{r + \scalebox{.8}{$\scriptscriptstyle 1$}}})
\big(\tilde{f}(|p|^{- s_{\scalebox{.8}{$\scriptscriptstyle 1$}}}\!, \ldots,\, 
|p|^{-s_{r}}\!,\, \chi_{a}(p)|p|^{\, - s_{r + \scalebox{.8}{$\scriptscriptstyle 1$}}};
\, |p|) 
+ \tilde{f}(-\, |p|^{- s_{\scalebox{.8}{$\scriptscriptstyle 1$}}}\!, \ldots, 
- \, |p|^{-s_{r}}\!,\, \chi_{a}(p)|p|^{\, - s_{r + \scalebox{.8}{$\scriptscriptstyle 1$}}};\, |p|)\big)\slash 2.
\end{split}
\end{equation} 
In what follows, it will be convenient to express \eqref{eq: Residue-principal} as  
\[
\frac{L(s_{r + \scriptscriptstyle 1}, \chi_{a})R(\mathbf{s},
  \chi_{a})}{L_{c}(s_{r + \scriptscriptstyle 1}, \chi_{a})
\prod_{p \mid c} R_{\! p}(\mathbf{s}, \chi_{a})}
\] 
where we set 
\begin{equation*}
R(\mathbf{s}, \chi_{a}) : = \prod_{p}R_{\! p}(\mathbf{s}, \chi_{a}).
\end{equation*} 
Thus we get: 
\[
\left.\frac{\widetilde{Z}^{(c)}(\mathbf{s}; \chi_{\scriptscriptstyle \theta_{\scriptscriptstyle 0}}, \chi_{a_{\scriptscriptstyle 1} c_{\scriptscriptstyle 1}}\!)}
{L(s_{r + \scriptscriptstyle 1}, \chi_{\scriptscriptstyle \theta_{\scriptscriptstyle 0}})}
\right\vert_{q^{ - s_{r + \scalebox{.8}{$\scriptscriptstyle 1$}}}
= \,- q^{\scriptscriptstyle -1}} = \; 
\left.\frac{\widetilde{Z}^{(c)}(\mathbf{s}; \chi_{\scriptscriptstyle 1}\!, \chi_{a_{\scriptscriptstyle 1} c_{\scriptscriptstyle 1}}\!)}{\zeta(s_{r + {\scriptscriptstyle 1}})} 
\right\vert_{s_{r + \scalebox{.8}{$\scriptscriptstyle 1$}} = 1} 
= \;\;  \frac{R(\mathbf{s}', 1, \chi_{\scriptscriptstyle
    1})}{\zeta_{c}(1)\prod_{p \mid c} R_{\! p}(\mathbf{s}', 1, \chi_{\scriptscriptstyle 1})}
\] 
where $\mathbf{s}' : = (s_{\scriptscriptstyle 1}, \ldots, s_{r}),$ and
$\chi_{\scriptscriptstyle 1}$ is the trivial character.


Let 
$
\alpha =  \sum k_{\scriptscriptstyle i}\alpha_{\scriptscriptstyle i} 
$ 
(with $k_{\scriptscriptstyle i} \in \mathbb{N}$) be a fixed positive real root, and let $w \in W$ be an element (in reduced form) sending $\alpha$ to the simple root $\alpha_{r + \scriptscriptstyle 1};$ we shall denote the length of $w$ by $l(w).$ Fix $a\in \{1, \, \theta_{\scriptscriptstyle 0} \},$ and $\zeta_{a}\in \mathbb{C}$ such that $\zeta_{a}^{k_{r + \scalebox{.8}{$\scriptscriptstyle 1$}}} \! = \mathrm{sgn}(a).$

To this data, we attach the residue $R_{w}(\mathbf{s}'; c, a)$ defined by 
\begin{equation*}
R_{w}(\mathbf{s}'; c, a) \; : =  \!\lim_{q^{\scriptscriptstyle
    (d(\alpha) + 1 - 2\alpha(\mathbf{s}))\slash
    2k_{r + \scalebox{.8}{$\scriptscriptstyle 1$}}}
\to \, {\scriptscriptstyle \zeta_{a}^{-1}}} 
k_{r + \scriptscriptstyle 1}
\big(1 - \zeta_{a}q^{\scriptscriptstyle (d(\alpha) + 1 - 2\alpha(\mathbf{s}))\slash 2k_{r + \scalebox{.8}{$\scriptscriptstyle 1$}}}\big)
\, \widetilde{Z}^{(c)}\!(^{{\scriptscriptstyle w}}\!\mathbf{s}; 
\chi_{a}, \chi_{{\scriptscriptstyle \vartheta} \bar{c}_{\scriptscriptstyle 1}}\!)
\end{equation*} 
where 
$
\alpha(\mathbf{s})  =  \sum k_{\scriptscriptstyle i}s_{\scriptscriptstyle i},
$ 
$
\vartheta \in \{1, \, \theta_{\scriptscriptstyle 0} \}
$ 
and $\bar{c}_{\scriptscriptstyle 1}$ is a square-free divisor of $c.$
Here, it is understood that $s_{r + \scriptscriptstyle 1}$ is
expressed in terms of the other variables. The factor $k_{r + \scriptscriptstyle 1}$ should be thought of as the limit 
\begin{equation*}
  \lim_{q^{\scriptscriptstyle (d(\alpha) + 1 - 2\alpha(\mathbf{s}))
      \slash 2k_{r + \scalebox{.8}{$\scriptscriptstyle 1$}}}
	\to \, {\scriptscriptstyle \zeta_{a}^{-1}}} 
\frac{1 - \mathrm{sgn}(a)q^{\scriptscriptstyle (d(\alpha) + 1 - 2\alpha(\mathbf{s}))\slash 2}}
{1 - \zeta_{a}q^{\scriptscriptstyle (d(\alpha) + 1 -
    2\alpha(\mathbf{s}))
    \slash 2k_{r + \scalebox{.8}{$\scriptscriptstyle 1$}}}}
\end{equation*} 
and thus
\begin{equation*}
\begin{split}
R_{w}(\mathbf{s}'; c, a) \;  & =  \!\lim_{q^{\scriptscriptstyle (d(\alpha) + 1 - 2\alpha(\mathbf{s}))\slash 2}
	\to \, {\mathrm{sgn}(a)}} 
\big(1 - \mathrm{sgn}(a)q^{\scriptscriptstyle (d(\alpha) + 1 - 2\alpha(\mathbf{s}))\slash 2} \big)
\, \widetilde{Z}^{(c)}\!(^{{\scriptscriptstyle w}}\!\mathbf{s}; 
\chi_{a}, \chi_{{\scriptscriptstyle \vartheta} \bar{c}_{\scriptscriptstyle 1}}\!)\\
& = \left.\frac{\widetilde{Z}^{(c)}\!(^{{\scriptscriptstyle w}}\!\mathbf{s}; 
	\chi_{a}, \chi_{{\scriptscriptstyle \vartheta} \bar{c}_{\scriptscriptstyle 1}}\!)}
{L\Big(\alpha(\mathbf{s}) - \frac{d(\alpha)}{2} + \frac{1}{2}, \chi_{a}\Big)}
\right\vert_{q^{{\scriptscriptstyle - \alpha(\mathbf{s}) + \frac{d(\alpha)}{2} - \frac{1}{2}}} 
\,=\, \mathrm{sgn}(a)q^{-1}} \\ 
& =  
\left. \frac{\prod_{p \nmid c}R_{\! p}(^{{\scriptscriptstyle w}}\!\mathbf{s}, \chi_{a})}{L_{c}\Big(\alpha(\mathbf{s}) - \frac{d(\alpha)}{2} + \frac{1}{2}, \chi_{a}\Big)}
\right\vert_{q^{{\scriptscriptstyle - \alpha(\mathbf{s}) + \frac{d(\alpha)}{2} - \frac{1}{2}}} 
\,=\, \mathrm{sgn}(a)q^{-1}}\\
& = \left. \frac{R(^{{\scriptscriptstyle w}}\!\mathbf{s}, \chi_{a})}{L_{c}\Big(\alpha(\mathbf{s}) - \frac{d(\alpha)}{2} + \frac{1}{2}, \chi_{a}\Big) 
\prod_{p \mid c}R_{\! p}(^{{\scriptscriptstyle w}}\mathbf{s}, \chi_{a})}
\right\vert_{q^{{\scriptscriptstyle - \alpha(\mathbf{s}) + \frac{d(\alpha)}{2} - \frac{1}{2}}} 
\,=\, \mathrm{sgn}(a)q^{-1}}.
\end{split}
\end{equation*} 
Here $R_{\! p}(\mathbf{s}, \chi_{a})$ is defined by \eqref{eq: Local-factor-residue-principal}. 

\vskip10pt
\begin{prop}\label{MS-residue-general} --- Let notations be as above. Then there exist functions $\Gamma_{\! w}(a_{\scriptscriptstyle 2}, a; \zeta_{a}),$ independent of 
	$c_{\scriptscriptstyle 1}, c_{\scriptscriptstyle 2}, c_{\scriptscriptstyle 3},$ and 
	$L_{w, p}(z_{\scriptscriptstyle 1}, \ldots, z_{r}; \zeta_{a}),$ parametrized by the prime divisors of $c,$ such that 
	\begin{equation*}
	\begin{split}
	& \left. \big(1 - \zeta_{a}q^{\scriptscriptstyle (d(\alpha) + 1 - 2\alpha(\mathbf{s}))\slash 2k_{r + \scalebox{.8}{$\scriptscriptstyle 1$}}}\!\big)\,
	\widetilde{Z}^{(c)}\!(\mathbf{s}; \chi_{a_{\scriptscriptstyle 2} c_{\scriptscriptstyle 2}}, \chi_{c_{\scriptscriptstyle 1}}\!)
      \right\vert_{q^{{\scriptscriptstyle -\alpha(\mathbf{s})
            \slash k_{r + \scalebox{.8}{$\scriptscriptstyle 1$}}}}
		=\;  {\scriptscriptstyle \zeta_{a}^{-1}} q^{-
(d(\alpha) + 1)\slash 2k_{r + \scalebox{.8}{$\scriptscriptstyle 1$}}}}\\
	& = \frac{\chi_{a_{\scriptscriptstyle 2} c_{\scriptscriptstyle 2}}\!(c_{\scriptscriptstyle 1})}{2^{{\scriptscriptstyle l(w)}}k_{r + \scriptscriptstyle 1}}
	\Gamma_{\! w}(a_{\scriptscriptstyle 2}, a; \zeta_{a})\, 
	R_{w}(\mathbf{s}'; c, a)
	\prod_{p \, \mid \, c}
	L_{w, p}(|p|^{-s_{\scalebox{.8}{$\scriptscriptstyle 1$}}}\!, \ldots, |p|^{-s_{\scriptscriptstyle r}}; \zeta_{a}).
	\end{split}
	\end{equation*} 
\end{prop}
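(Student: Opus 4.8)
The plan is to transport the pole of $\widetilde{Z}^{(c)}(\mathbf{s}; \chi_{a_{\scriptscriptstyle 2} c_{\scriptscriptstyle 2}}, \chi_{c_{\scriptscriptstyle 1}})$ attached to the positive real root $\alpha$ along the group of functional equations satisfied by these multiple Dirichlet series until it becomes the principal pole analysed at the beginning of this section, and then to read the claimed factorisation off the factors accumulated along the way. In the coordinates $\fs_{\scriptscriptstyle i} = s_{\scriptscriptstyle i} - \tfrac{1}{2}$ the pole at $\alpha = \sum k_{\scriptscriptstyle i}\alpha_{\scriptscriptstyle i}$ is the locus $\alpha(\fs) = \tfrac{1}{2}$, i.e.\ a zero of the factor $1 - q^{1 - 2\alpha(\fs)}$ of $\mathrm{D}^{\scriptscriptstyle \mathrm{re}}(\fs)$. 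Since $w$ acts linearly on the $\fs$-variables and $w\alpha = \alpha_{r + {\scriptscriptstyle 1}}$, the reflection $w$ carries this locus to the one where the last coordinate of ${}^{{\scriptscriptstyle w}}\!\mathbf{s}$ equals $\alpha(\mathbf{s}) - \tfrac{d(\alpha)}{2} + \tfrac{1}{2} = 1$, which is the principal pole of $\widetilde{Z}^{(c)}$ at which $R_{w}(\mathbf{s}'; c, a)$ is evaluated. The constraint $\zeta_{a}^{k_{r + {\scriptscriptstyle 1}}} = \mathrm{sgn}(a)$, together with $1 - v^{k_{r + {\scriptscriptstyle 1}}} = (1 - v)(1 + v + \cdots + v^{k_{r + {\scriptscriptstyle 1}} - 1})$ for $v = \zeta_{a}q^{(d(\alpha) + 1 - 2\alpha(\mathbf{s}))\slash 2k_{r + {\scriptscriptstyle 1}}}$, will account for the factor $1\slash k_{r + {\scriptscriptstyle 1}}$: the $k_{r + {\scriptscriptstyle 1}}$-fold vanishing of $1 - v^{k_{r + {\scriptscriptstyle 1}}}$ near the $\alpha$-pole corresponds to the simple vanishing of $L(\alpha(\mathbf{s}) - \tfrac{d(\alpha)}{2} + \tfrac{1}{2}, \chi_{a})^{-1}$ at $s_{r + {\scriptscriptstyle 1}} = 1$, and at $v = 1$ the complementary factor tends to $k_{r + {\scriptscriptstyle 1}}$.

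First I would fix a reduced decomposition $w = w_{\scriptscriptstyle i_{\ell}}\cdots w_{\scriptscriptstyle i_{\scriptscriptstyle 1}}$, $\ell = l(w)$, and apply \eqref{eq: functional-eq-Z-sigma_r+1}, \eqref{eq: functional-eq-Z-sigma1} (which $\widetilde{Z}^{(c)}$ shares with $Z^{(c)}$, being a $W$-invariant twist) and their symmetric analogues in $s_{\scriptscriptstyle 2}, \ldots, s_{r}$, one reflection at a time; granting Conjecture \ref{Conjecture-Meromorphic continuation} these are identities of meromorphic functions, so the procedure is legitimate, and by the $1$-cocycle relation \eqref{eq: def-cocycle2} (cf.\ \eqref{eq: f-e-loc}) the outcome is independent of the chosen reduced word. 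Each application contributes a factor $\tfrac{1}{2}$; a power $|c_{\scriptscriptstyle j}|^{\frac{1}{2} - s}$; a ratio $\varphi(c_{\scriptscriptstyle j}c_{\scriptscriptstyle 3})\slash|c_{\scriptscriptstyle j}c_{\scriptscriptstyle 3}|$ and a product $\prod_{p \, \mid \, c_{\scriptscriptstyle j}c_{\scriptscriptstyle 3}}(1 - |p|^{2s - 2})^{-1}$; a sum over $\vartheta \in \{1, \theta_{\scriptscriptstyle 0}\}$; a gamma factor $\gamma_{\scriptscriptstyle q}^{+} + \mathrm{sgn}(\cdot)\, \gamma_{\scriptscriptstyle q}^{-}$; quadratic symbols; and an inner divisor sum carrying the $U_{\scriptscriptstyle m}$-factors. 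After $\ell$ steps the argument becomes ${}^{{\scriptscriptstyle w}}\!\mathbf{s}$, the conductor data becomes $(\chi_{a}, \chi_{\vartheta\bar{c}_{\scriptscriptstyle 1}})$ for divisors $\bar{c}_{\scriptscriptstyle 1}\mid c$, and the $\ell$ factors $\tfrac{1}{2}$ produce the $2^{- l(w)}$ in the statement.

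It then remains to regroup these factors, which is where the actual work lies. Specialising at $q^{-\alpha(\mathbf{s})\slash k_{r + {\scriptscriptstyle 1}}} = \zeta_{a}^{-1}q^{-(d(\alpha) + 1)\slash 2k_{r + {\scriptscriptstyle 1}}}$, I would show that the gamma factors, the sign factors and the residual $\vartheta$-sums decouple from the conductor pieces and collapse to a scalar $\Gamma_{\! w}(a_{\scriptscriptstyle 2}, a; \zeta_{a})$ depending only on $a_{\scriptscriptstyle 2}$, $a$, $\zeta_{a}$ (and $w$): the $\gamma_{\scriptscriptstyle q}^{\pm}$ depend only on the relevant one-variable argument and its parity, and once the conductor-dependent part has been split off the remaining $\vartheta$-character sums reduce to sign conditions. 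The conductor-dependent remainder — the powers $|c_{\scriptscriptstyle j}|^{\ast}$, the $\varphi$-ratios, the products over $p \mid c_{\scriptscriptstyle j}c_{\scriptscriptstyle 3}$, the $U_{\scriptscriptstyle m}$-factors and the divisor sums over $m \mid c_{\scriptscriptstyle j}c_{\scriptscriptstyle 3}$ — is multiplicative in $c$, hence assembles into an Euler product $\prod_{p \, \mid \, c} L_{w, p}(|p|^{-s_{\scriptscriptstyle 1}}, \ldots, |p|^{-s_{r}}; \zeta_{a})$ over the prime divisors of $c$, while the leading quadratic symbols combine, by the reciprocity law of \S\ref{Prelim} (valid since $q \equiv 1 \bmod 4$), into $\chi_{a_{\scriptscriptstyle 2} c_{\scriptscriptstyle 2}}(c_{\scriptscriptstyle 1})$. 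What survives is $\widetilde{Z}^{(c)}(^{{\scriptscriptstyle w}}\!\mathbf{s}; \chi_{a}, \chi_{\vartheta\bar{c}_{\scriptscriptstyle 1}})$ at $s_{r + {\scriptscriptstyle 1}} = 1$, that is, $R_{w}(\mathbf{s}'; c, a)$ up to the $1\slash k_{r + {\scriptscriptstyle 1}}$ normalisation explained above, which is the asserted identity.

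The main obstacle is the bookkeeping: tracking through $l(w)$ successive functional equations exactly how the factorisation $c = c_{\scriptscriptstyle 1}c_{\scriptscriptstyle 2}c_{\scriptscriptstyle 3}$ and the nested divisor sums evolve, and then verifying both that the $\Gamma_{\! w}$-part is genuinely independent of $c_{\scriptscriptstyle 1}, c_{\scriptscriptstyle 2}, c_{\scriptscriptstyle 3}$ and that the conductor part is exactly multiplicative, so that $L_{w, p}$ is well defined prime by prime. The cleanest route is probably to recast the entire computation in terms of the $W$-cocycle $\mathrm{M}_{w}(\mathrm{z}; q)$ of Section \ref{General-MDS}, reading $\Gamma_{\! w}$ and the $L_{w, p}$ directly off $\mathrm{M}_{w}$ specialised at the relevant arguments; this also makes independence of the reduced word automatic.
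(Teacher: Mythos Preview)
Your approach is correct and is essentially the paper's: iterate the functional equations along a reduced word for $w$, then at the pole observe that the $\gamma_{\scriptscriptstyle q}^{\pm}$-pieces decouple into a scalar $\Gamma_{\! w}$ while the conductor pieces factor over the primes dividing $c$. One point you pass over quickly deserves to be made explicit: in the iterated functional equation the right-hand side is a sum over many pairs $(\vartheta', \bar{c}_{\scriptscriptstyle 2})$, and you must note that only the terms with $\vartheta' = a$ and $\bar{c}_{\scriptscriptstyle 2} = 1$ can carry a pole at the specified location (since $L(s,\chi_{a_{\scriptscriptstyle 2}\bar{c}_{\scriptscriptstyle 2}})$ is polar only when $\bar{c}_{\scriptscriptstyle 2}=1$ and $\mathrm{sgn}(a_{\scriptscriptstyle 2})$ matches); this is what forces the residue to see $R_{w}(\mathbf{s}';c,a)$ alone.

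The paper streamlines exactly the bookkeeping you flag as the main obstacle by passing to
\[
\widetilde{Z}_{\chi}^{(c)}(\mathbf{s};\chi_{a_{\scriptscriptstyle 2}c_{\scriptscriptstyle 2}},\chi_{a_{\scriptscriptstyle 1}c_{\scriptscriptstyle 1}})
:= \chi_{a_{\scriptscriptstyle 1}}(c_{\scriptscriptstyle 2})\,\chi_{a_{\scriptscriptstyle 2}}(c_{\scriptscriptstyle 1})\left(\tfrac{c_{\scriptscriptstyle 1}}{c_{\scriptscriptstyle 2}}\right)\widetilde{Z}^{(c)}(\mathbf{s};\chi_{a_{\scriptscriptstyle 2}c_{\scriptscriptstyle 2}},\chi_{a_{\scriptscriptstyle 1}c_{\scriptscriptstyle 1}}),
\]
which absorbs all the stray quadratic symbols so that the functional equations \eqref{eq: functional-eq-Z-sigma_r+1}, \eqref{eq: functional-eq-Z-sigma1} become character-free in the coefficients; the factor $\chi_{a_{\scriptscriptstyle 2}c_{\scriptscriptstyle 2}}(c_{\scriptscriptstyle 1})$ in the statement then drops out at the very end when one undoes this normalisation, rather than having to be tracked through the iteration via reciprocity. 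Your suggestion to read $\Gamma_{\! w}$ and $L_{w,p}$ off the cocycle $\mathrm{M}_{w}$ is exactly what the paper does immediately \emph{after} the proposition (see \eqref{eq: function-Gammaw} and \eqref{eq: functions-Lwp}), once the structural factorisation has been established.
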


\begin{proof} \!Set 
$
\widetilde{Z}_{\chi}^{(c)}\!(\mathbf{s};  \chi_{a_{\scriptscriptstyle 2} c_{\scriptscriptstyle 2}}, 
\chi_{a_{\scriptscriptstyle 1} c_{\scriptscriptstyle 1}})
: =\chi_{a_{\scriptscriptstyle 1}}\!(c_{\scriptscriptstyle 2})
\chi_{a_{\scriptscriptstyle 2}}\!(c_{\scriptscriptstyle 1})
\left(\!\frac{c_{\scriptscriptstyle 1}}{c_{\scriptscriptstyle 2}} \!\right)
\widetilde{Z}^{(c)}(\mathbf{s};  \chi_{a_{\scriptscriptstyle 2} c_{\scriptscriptstyle 2}}, 
\chi_{a_{\scriptscriptstyle 1} c_{\scriptscriptstyle 1}}).
$ 
Then 
$
\widetilde{Z}_{\chi}^{(c)}\!(\mathbf{s};  \chi_{a_{\scriptscriptstyle 2} c_{\scriptscriptstyle 2}}, 
\chi_{a_{\scriptscriptstyle 1} c_{\scriptscriptstyle 1}})
$ 
satisfies the (somewhat) simpler functional equation 
\begin{equation} \label{eq: functional-eq-Z-chi-absorbed-sigma_r+1}
\begin{split}
& \widetilde{Z}_{\chi}^{(c)}\!(\mathbf{s};  \chi_{a_{\scriptscriptstyle 2} c_{\scriptscriptstyle 2}}, 
\chi_{a_{\scriptscriptstyle 1} c_{\scriptscriptstyle 1}})\,  
= \,  \tfrac{1}{2}\,  |c_{\scriptscriptstyle 2}|^{\frac{1}{2} - s_{r + \scalebox{.8}{$\scriptscriptstyle 1$}}} 
\frac{\varphi(c_{\scriptscriptstyle 1} c_{\scriptscriptstyle 3})}{|c_{\scriptscriptstyle 1} c_{\scriptscriptstyle 3}|}
\prod_{p \, \mid \, c_{\scriptscriptstyle 1}  c_{\scriptscriptstyle 3}} \big(\! 1 \, - \, 
|p|^{2 s_{r + \scalebox{.8}{$\scriptscriptstyle 1$}} - 2}\big)^{\scriptscriptstyle -1}\\
&\cdot \sum_{\vartheta \in \{1, \, \theta_{\scriptscriptstyle 0} \}}
\!\left(\gamma_{\scriptscriptstyle q}^{+}(s_{r + \scriptscriptstyle 1}; \, a_{\scriptscriptstyle 2}) \, + \, 
\mathrm{sgn}(a_{\scriptscriptstyle 1}\vartheta)\, 
\gamma_{\scriptscriptstyle q}^{-}(s_{r + \scriptscriptstyle 1})\right) 
\!\!\!\!\sum_{\substack{m \, \mid \, c_{\scriptscriptstyle 1} c_{\scriptscriptstyle 3} \\ (c_{\scriptscriptstyle 1}\!, \, m) = e}} 
\!U_{ \scriptscriptstyle m}(s_{r + \scriptscriptstyle 1})\,  
\widetilde{Z}_{\chi}^{(c)}\!(^{{\scriptscriptstyle
w_{r + \scalebox{.8}{$\scriptscriptstyle 1$}}}}\mathbf{s}; 
\chi_{a_{\scriptscriptstyle 2} c_{\scriptscriptstyle 2}}, 
\chi_{{\scriptscriptstyle \vartheta} c_{\scriptscriptstyle 1} m \slash e^{\scriptscriptstyle 2}}).
\end{split}
\end{equation}
We have similar functional equations with respect to all the
other transformations 
$
w_{\scriptscriptstyle i}
$
($i = 1, \ldots, r$), and hence with respect to all $w\in W.$ Notice
that the quadratic characters get absorbed into the functions
$\widetilde{Z}_{\chi}^{(c)}.$ One should also note that the expression
\begin{equation*}
|c_{\scriptscriptstyle 2}|^{\frac{1}{2} - s_{r + \scalebox{.8}{$\scriptscriptstyle 1$}}} 
\frac{\varphi(c_{\scriptscriptstyle 1} c_{\scriptscriptstyle 3})}{|c_{\scriptscriptstyle 1} c_{\scriptscriptstyle 3}|}
\prod_{p \, \mid \, c_{\scriptscriptstyle 1}  c_{\scriptscriptstyle 3}} \big(\! 1 \, - \, 
|p|^{2 s_{r + \scalebox{.8}{$\scriptscriptstyle 1$}} - 2}\big)^{\scriptscriptstyle -1}
U_{ \scriptscriptstyle m}(s_{r + \scriptscriptstyle 1})
\qquad \text{(with $m  \mid  c_{\scriptscriptstyle 1} c_{\scriptscriptstyle 3}$)}
\end{equation*}
occurring in the right-hand side of \eqref{eq: functional-eq-Z-chi-absorbed-sigma_r+1}, factors over irreducibles dividing $c.$

The functional equation corresponding to $w$ relates 
$
\widetilde{Z}_{\chi}^{(c)}\!(\mathbf{s};  \chi_{a_{\scriptscriptstyle 2} c_{\scriptscriptstyle 2}}, 
\chi_{c_{\scriptscriptstyle 1}}\!)
$ 
to a sum of functions of the form 
$
\widetilde{Z}_{\chi}^{(c)}\!(^{{\scriptscriptstyle w}}\!\mathbf{s}; 
\chi_{{\scriptscriptstyle \vartheta'}\bar{c}_{\scriptscriptstyle 2}}, 
\chi_{{\scriptscriptstyle \vartheta} \bar{c}_{\scriptscriptstyle 1}}\!)
$ 
with 
$
\vartheta, \vartheta' \in \{1, \, \theta_{\scriptscriptstyle 0} \},
$ 
and $\bar{c}_{\scriptscriptstyle 1}, \bar{c}_{\scriptscriptstyle 2}$ monic square-free and coprime divisors of $c.$ The latter functions could have a pole when $q^{-(^{{\scriptscriptstyle w}}\!\mathbf{s})} = \mathrm{sgn}(a)q^{\scriptscriptstyle -1}$ (where 
$
q^{-(^{{\scriptscriptstyle w}}\!\mathbf{s})}
$ 
has the obvious meaning) only if $\vartheta' = a$ and
$\bar{c}_{\scriptscriptstyle 2} = 1.$ We have
\begin{equation*}
\widetilde{Z}_{\chi}^{(c)}\!(^{{\scriptscriptstyle w}}\!\mathbf{s}; 
\chi_{a}, \chi_{{\scriptscriptstyle \vartheta} \bar{c}_{\scriptscriptstyle 1}}\!)
=\chi_{a}(\bar{c}_{\scriptscriptstyle 1})
\widetilde{Z}^{(c)}\!(^{{\scriptscriptstyle w}}\!\mathbf{s}; 
\chi_{a}, \chi_{{\scriptscriptstyle \vartheta} \bar{c}_{\scriptscriptstyle 1}}\!)
\end{equation*}
and thus 
\begin{equation*}
\left. \big(1 - \zeta_{a}q^{\scriptscriptstyle (d(\alpha) + 1 -
2\alpha(\mathbf{s}))\slash 2k_{r + \scalebox{.8}{$\scriptscriptstyle 1$}}}\!\big)\,
\widetilde{Z}_{\chi}^{(c)}\!(^{{\scriptscriptstyle w}}\!\mathbf{s}; 
\chi_{a}, \chi_{{\scriptscriptstyle \vartheta} \bar{c}_{\scriptscriptstyle 1}}\!)
\right\vert_{q^{{\scriptscriptstyle -\alpha(\mathbf{s})
      \slash k_{r + \scalebox{.8}{$\scriptscriptstyle 1$}}}}
  =\;  {\scriptscriptstyle \zeta_{a}^{-1}} q^{- (d(\alpha) + 1)
    \slash 2k_{r + \scalebox{.8}{$\scriptscriptstyle 1$}}}}
 = \; \frac{\chi_{a}(\bar{c}_{\scriptscriptstyle 1})}{k_{r + {\scriptscriptstyle 1}}}
 R_{w}(\mathbf{s}'; c, a).
\end{equation*} 
The sum of products of the factors involving 
$
\gamma_{\scriptscriptstyle q}^{+}, \gamma_{\scriptscriptstyle q}^{-}
$ 
factors out, giving rise to $\Gamma_{\! w}(a_{\scriptscriptstyle 2}, a; \zeta_{a}).$ This function is clearly independent of $c_{\scriptscriptstyle 1}, c_{\scriptscriptstyle 2}$ and $c_{\scriptscriptstyle 3}.$ The remaining sum factors over the prime divisors of 
$
c = c_{\scriptscriptstyle 1}c_{\scriptscriptstyle 2}c_{\scriptscriptstyle 3}.
$ 
This completes the proof. 
\end{proof} 

We shall use this proposition in order to determine the residues at the poles of the function 
$
Z_{\scriptscriptstyle 0}(\mathbf{s}, \chi_{a_{\scriptscriptstyle 2}}).
$ 
For this purpose, it will be convenient to index the family of functions 
$
\big(L_{w, p}(z_{\scriptscriptstyle 1}, \ldots, z_{r}; \zeta_{a})\big)_{\! p \mid c}
$ 
by $L_{w, p}^{\scriptscriptstyle (j)}$ with $j = 1, 2, 3$ according as $p$ is a divisor of 
$
c_{\scriptscriptstyle 1}, c_{\scriptscriptstyle 2}
$ 
or $c_{\scriptscriptstyle 3},$ respectively.

\subsection{Computation of $\Gamma_{w}(a_{\scriptscriptstyle 2}, a; \zeta_{a})$} 
To compute $\Gamma_{\! w}(a_{\scriptscriptstyle 2}, a; \zeta_{a}),$ we shall use the fact that this function is independent of 
$
c_{\scriptscriptstyle 1}, c_{\scriptscriptstyle 2}
$ 
and $c_{\scriptscriptstyle 3}.$ Indeed, by taking $c = 1,$ the functional equation of 
$
Z(\mathbf{s}; \chi_{a_{\scriptscriptstyle 2}}, \chi_{a_{\scriptscriptstyle 1}}\!)
: = \widetilde{Z}^{(1)}\!(\mathbf{s}; \chi_{a_{\scriptscriptstyle 2}}, \chi_{a_{\scriptscriptstyle 1}}\!)
$ 
with respect to $w\in W$ will give, after taking the appropriate residue, the precise relationship between $\Gamma_{\! w}$ and $\mathrm{M}_{w}.$ 

To see this, set 
$
t_{\scriptscriptstyle i} = q^{\, - s_{\scalebox{.65}{$\scriptscriptstyle i$}}}
$ 
($i = 1, \ldots, r + 1$) in 
$
Z(\mathbf{s}; \chi_{a_{\scriptscriptstyle 2}}, \chi_{a_{\scriptscriptstyle 1}}\!),
$ 
and let 
$
\mathrm{Z}(\mathrm{t}; \chi_{a_{\scriptscriptstyle 2}}, \chi_{a_{\scriptscriptstyle 1}}\!)
$ 
denote the resulting function. If we define\linebreak 
$
\vec{\mathrm{Z}}(\mathrm{t}; q) 
: = \, ^{t}\!(\mathrm{Z}(\mathrm{t}; 1, 1 \!), \, \mathrm{Z}(\mathrm{t}; \chi_{\scriptscriptstyle \theta_{\scriptscriptstyle 0}}\!, 1 \!), \, \mathrm{Z}(\mathrm{t}; 1\!, \chi_{\scriptscriptstyle \theta_{\scriptscriptstyle 0}} \!))
$ 
then, by \eqref{eq: functional-eq-Z-sigma_r+1}, \addtocounter{footnote}{0}\let\thefootnote\svthefootnote\eqref{eq:
functional-eq-Z-sigma1}\footnote{Recall that the functions
$
Z^{(c)}\!\left(\mathbf{s};
 \chi_{a_{\scriptscriptstyle 2} c_{\scriptscriptstyle 2}},
 \chi_{a_{\scriptscriptstyle 1} c_{\scriptscriptstyle 1}}\!\right)
$
and
$
\widetilde{Z}^{(c)}\!\left(\mathbf{s};
 \chi_{a_{\scriptscriptstyle 2} c_{\scriptscriptstyle 2}},
 \chi_{a_{\scriptscriptstyle 1} c_{\scriptscriptstyle 1}}\!\right)
$
satisfy the same functional equations.} and the fact that 
$
\mathrm{Z}(\mathrm{t}; \chi_{a_{\scriptscriptstyle 2}}, \chi_{a_{\scriptscriptstyle 1}}\!)
$ 
is symmetric in the vari-\linebreak ables
$
t_{\scriptscriptstyle 1}, \ldots, t_{r},
$ 
we have the functional equations 
\begin{equation} \label{eq: f-e-glob-matrix}
\vec{\mathrm{Z}}(\mathrm{t}; q) = B^{\scriptscriptstyle -1}\mathrm{M}_{w}(q\mathrm{t}; 1\slash q)B
\, \cdot \, \vec{\mathrm{Z}}(w\mathrm{t}; q) \;\; \qquad \;\; \text{(for $w\in W$)}
\end{equation} 
where 
\begin{equation*}
B: = \begin{pmatrix} 
1\slash 2 & 1\slash 2 & 0\\
1\slash 2 & - 1\slash 2 & 0\\
- 1\slash 2 & 1\slash 2 & 1\\
\end{pmatrix}.
\end{equation*} 
Here $\mathrm{M}_{w}$ is the cocycle defined by \eqref{eq: def-cocycle1} and \eqref{eq: def-cocycle2}. We note that the function 
$
\mathrm{Z}(\mathrm{t}; \chi_{\scriptscriptstyle \theta_{\scriptscriptstyle 0}}\!, \chi_{\scriptscriptstyle \theta_{\scriptscriptstyle 0}} \!), 
$ 
which could have been included in
$\vec{\mathrm{Z}}(\mathrm{t}; q),$ can be easily expressed in terms of
the other three functions as 
\begin{equation*}
\mathrm{Z}(\mathrm{t}; \chi_{\scriptscriptstyle \theta_{\scriptscriptstyle 0}}\!, \chi_{\scriptscriptstyle \theta_{\scriptscriptstyle 0}}\!) 
= - \, \mathrm{Z}(\mathrm{t}; 1, 1 \!) + \mathrm{Z}(\mathrm{t}; \chi_{\scriptscriptstyle \theta_{\scriptscriptstyle 0}}\!, 1 \!) + \mathrm{Z}(\mathrm{t}; 1\!, \chi_{\scriptscriptstyle \theta_{\scriptscriptstyle 0}} \!).
\end{equation*} 
Multiplying \eqref{eq: f-e-glob-matrix} by 
$
1 - \zeta_{a}q^{\scriptscriptstyle (d(\alpha) + 1 -
  2\alpha(\mathbf{s}))
  \slash 2k_{r + \scalebox{.8}{$\scriptscriptstyle 1$}}}
$ 
and then taking the limit as 
$ 
q^{{\scriptscriptstyle -\alpha(\mathbf{s})
    \slash k_{r + \scalebox{.8}{$\scriptscriptstyle 1$}}}}
\to \, \zeta_{a}^{\scriptscriptstyle -1} q^{{\scriptscriptstyle -
    (d(\alpha) + 1)
    \slash 2k_{r + \scalebox{.8}{$\scriptscriptstyle 1$}}}}\!,
$ 
it follows from Proposition \ref{MS-residue-general} that 
\begin{equation} \label{eq: function-Gammaw}
\begin{split}
& 2^{{\scriptscriptstyle - l(w)}}
\Gamma_{\! w}(a_{\scriptscriptstyle 2}, a; \zeta_{a})\\
& = \big(\varepsilon^{\scriptscriptstyle +}\!(a_{\scriptscriptstyle 2}), \, \varepsilon^{\scriptscriptstyle -}\!(a_{\scriptscriptstyle 2}), \, 0 \big)
\, \cdot \, B^{\scriptscriptstyle -1}
\left(\!\left. \mathrm{M}_{w}\big(q^{{\scriptscriptstyle 1} - s_{\scalebox{.8}{$\scriptscriptstyle 1$}}}\!, 
\ldots,\, q^{{\scriptscriptstyle 1} - s_{r + \scalebox{.8}{$\scriptscriptstyle 1$}}}; 1\slash q \big)
\right\vert_{q^{{\scriptscriptstyle -\alpha(\mathbf{s})
      \slash k_{r + \scalebox{.8}{$\scriptscriptstyle 1$}}}}
  =\;  {\scriptscriptstyle \zeta_{a}^{-1}} q^{- (d(\alpha) + 1)
    \slash 2k_{r + \scalebox{.8}{$\scriptscriptstyle 1$}}}}\!\right)B
\, \cdot \,  
^{t}\!(\varepsilon^{\scriptscriptstyle +}\!(a), \, \varepsilon^{\scriptscriptstyle -}\!(a), \, \varepsilon^{\scriptscriptstyle +}\!(a))
\end{split}
\end{equation} 
with 
$
\varepsilon^{\scriptscriptstyle \pm}\!(\vartheta) = (1 \pm \mathrm{sgn}(\vartheta))\slash 2 
$ 
for $\vartheta \in \{1, \, \theta_{\scriptscriptstyle 0} \}.$

\subsection{Residues of $Z_{\scriptscriptstyle 0}(\mathbf{s}, \chi_{a_{\scriptscriptstyle 2}})$}
  Let all notations be as before. By using
  \eqref{eq: sum-sq-free-vs-MDS},
  \eqref{eq: fundamental-eq-h} and
  Proposition \ref{MS-residue-general}, it is not hard
  to check that 
\[
  \lim_{q^{{\scriptscriptstyle -\alpha(\mathbf{s})
        \slash k_{r + \scalebox{.8}{$\scriptscriptstyle 1$}}}} \to \,
    {\scriptscriptstyle \zeta_{a}^{\scriptscriptstyle -1}} q^{-
      (d(\alpha) + 1) \slash 2k_{r + \scalebox{.8}{$\scriptscriptstyle 1$}}}}
\big(1 - \zeta_{a}q^{\scriptscriptstyle (d(\alpha) + 1 -
  2\alpha(\mathbf{s}))\slash 2k_{r + \scalebox{.8}{$\scriptscriptstyle 1$}}}\!\big)\,
Z_{\scriptscriptstyle 0}(\mathbf{s},
    \chi_{a_{\scalebox{.7}{$\scriptscriptstyle 2$}}})
\; = \; \frac{\Gamma_{\! w}(a_{\scriptscriptstyle 2}, a; \zeta_{a})}
{2^{{\scriptscriptstyle l(w)}}k_{r + {\scriptscriptstyle 1}}}
R(^{\scriptscriptstyle w}\!\mathbf{s}\scalebox{.95}{$\scriptscriptstyle '$}\!, \chi_{a})
\, \cdot \, \prod_{p}\frac{S_{\! p}^{w}\!(\mathbf{s}
\scalebox{.95}{$\scriptscriptstyle '$}\!, \zeta_{a})}
{R_{\! p}(^{\scriptscriptstyle w}\mathbf{s}\scalebox{.95}{$\scriptscriptstyle '$}\!, \chi_{a})}
\]
with
$
S_{\! p}^{w}\!(\mathbf{s}
\scalebox{.95}{$\scriptscriptstyle '$}\!, \zeta_{a})
$
($w$ being an element in reduced form of
$W$ sending $\alpha$ to $\alpha_{r + \scriptscriptstyle 1}$)
given by
\begin{equation*}
\begin{split}
  & S_{\! p}^{w}\!(\mathbf{s}
\scalebox{.95}{$\scriptscriptstyle '$}\!, \zeta_{a})
  = \Big(L_{p}({\scriptstyle \alpha(\mathbf{s}) \, - \, \frac{d(\alpha)}{2} \, + \, \frac{1}{2}}, \chi_{a})^{\scriptscriptstyle -1}\cdot \,  \big(L_{p}({\scriptstyle \alpha(\mathbf{s}) \, - \, \frac{d(\alpha)}{2} \, + \, \frac{1}{2}}, \chi_{a}) 
R_{\! p}(^{{\scriptscriptstyle w}}\!\mathbf{s}, \chi_{a}) \, - \,
L_{w, p}^{\scriptscriptstyle (1)} F(|p|^{\, -
  s_{\scalebox{.8}{$\scriptscriptstyle 1$}}}\!, \ldots, \,
|p|^{\, - s_{r + \scalebox{.8}{$\scriptscriptstyle 1$}}}; \, 
|p|) |p|^{- 3 s_{r + \scalebox{.8}{$\scriptscriptstyle 1$}}}\\ 
& \left. - \, L_{w, p}^{\scriptscriptstyle (2)}G^{\scriptscriptstyle
    (1)}(|p|^{\, - s_{\scalebox{.8}{$\scriptscriptstyle 1$}}}\!, \ldots, \,
  |p|^{\, - s_{r + \scalebox{.8}{$\scriptscriptstyle 1$}}}; \, |p|)
  |p|^{- 2 s_{r + \scalebox{.8}{$\scriptscriptstyle 1$}}} 
- \, L_{w, p}^{\scriptscriptstyle (3)}G^{\scriptscriptstyle (0)}(|p|^{\, - s_{\scalebox{.8}{$\scriptscriptstyle 1$}}}\!, \ldots, \, |p|^{\, - s_{r + \scalebox{.8}{$\scriptscriptstyle 1$}}}; \, |p|)|p|^{- 2 s_{r + \scalebox{.8}{$\scriptscriptstyle 1$}}}\big)\Big)\right\vert_{q^{{\scriptscriptstyle -\alpha(\mathbf{s})\slash k_{r + \scalebox{.8}{$\scriptscriptstyle 1$}}}}
=\;  {\scriptscriptstyle \zeta_{a}^{-1}}
q^{- (d(\alpha) + 1)\slash 2k_{r + \scalebox{.8}{$\scriptscriptstyle 1$}}}}\!.
\end{split}
\end{equation*}
Here
$
R_{\! p}(^{\scriptscriptstyle w}\!\mathbf{s}\scalebox{.95}{$\scriptscriptstyle '$}\!, \chi_{a})
$
(resp. $R(^{\scriptscriptstyle w}
\!\mathbf{s}\scalebox{.95}{$\scriptscriptstyle '$}\!, \chi_{a})$)
denotes the function
$
R_{\! p}(^{\scriptscriptstyle w}\!\mathbf{s}, \chi_{a})
$ 
(resp.
$
R(^{\scriptscriptstyle w}\!\mathbf{s}, \chi_{a})
$) 
with $q^{\, - s_{r + \scalebox{.8}{$\scriptscriptstyle 1$}}}$ such that
$
q^{{\scriptscriptstyle -\alpha(\mathbf{s})
    \slash k_{r + \scalebox{.8}{$\scriptscriptstyle 1$}}}}
= \zeta_{a}^{\scriptscriptstyle -1}
q^{{\scriptscriptstyle - (d(\alpha) + 1)
\slash 2k_{r + \scalebox{.8}{$\scriptscriptstyle 1$}}}}.
$ 
Note that
\begin{equation*}
\left.
L_{p}\bigg(\alpha(\mathbf{s})  -  \frac{d(\alpha)}{2}  +  \frac{1}{2},
\chi_{a}\bigg)^{-1}\right\vert_{q^{{\scriptscriptstyle
    -\alpha(\mathbf{s})
    \slash k_{r + \scalebox{.8}{$\scriptscriptstyle 1$}}}}
=\;  {\scriptscriptstyle \zeta_{a}^{-1}} q^{- (d(\alpha) + 1)
\slash 2k_{r + \scalebox{.8}{$\scriptscriptstyle 1$}}}} =\; 1 - |p|^{\scriptscriptstyle -1}
\end{equation*} 
and by \eqref{eq: Local-factor-residue-principal}, 
\begin{equation*}
\begin{split} 
L_{p}(s_{r + \scriptscriptstyle 1}, \chi_{a}) R_{\! p}(\mathbf{s}, \chi_{a})
&  = \chi_{a}(p)\tilde{f}_{\!\scriptscriptstyle \mathrm{odd}}(|p|^{- s_{\scalebox{.8}{$\scriptscriptstyle 1$}}}\!, \ldots,\, 
|p|^{-s_{r}}\!,\, |p|^{\, - s_{r + \scalebox{.8}{$\scriptscriptstyle 1$}}}; \, |p|)\\ 
& + \, \big(\tilde{f}_{\!\scriptscriptstyle \mathrm{even}}(|p|^{- s_{\scalebox{.8}{$\scriptscriptstyle 1$}}}\!, \ldots,\, 
|p|^{-s_{r}}\!,\, |p|^{\, - s_{r + \scalebox{.8}{$\scriptscriptstyle 1$}}}; \, |p|) 
\, + \, \tilde{f}_{\!\scriptscriptstyle \mathrm{even}}(-\, |p|^{- s_{\scalebox{.8}{$\scriptscriptstyle 1$}}}\!, \ldots, 
- \, |p|^{-s_{r}}\!,\, |p|^{\, - s_{r + \scalebox{.8}{$\scriptscriptstyle 1$}}}; \, |p|)\big)\slash 2.
\end{split}
\end{equation*} 
We replace 
$
L_{p}({\scriptstyle \alpha(\mathbf{s}) \, - \, \frac{d(\alpha)}{2} \, + \, \frac{1}{2}}, \chi_{a}) 
R_{\! p}(^{{\scriptscriptstyle w}}\!\mathbf{s}, \chi_{a})
$ 
in $S_{\! p}^{w}$ using this identity, and the $F,$
$G^{\scriptscriptstyle (a)}$ ($a\in \{0, 1\}$) in $S_{\! p}^{w}$
using the definition of these functions. To
$\tilde{f}_{\!\scriptscriptstyle
  \mathrm{odd}}$ and
$\tilde{f}_{\!\scriptscriptstyle \mathrm{even}}$
coming from 
$
L_{p}({\scriptstyle \alpha(\mathbf{s}) \, - \, \frac{d(\alpha)}{2} \, + \, \frac{1}{2}}, \chi_{a}) 
R_{\! p}(^{{\scriptscriptstyle w}}\!\mathbf{s}, \chi_{a}),
$ 
we apply the functional equation\linebreak corresponding to 
$w^{\scriptscriptstyle -1}$ to get back in the variables 
$
s_{\scriptscriptstyle 1}, \ldots, s_{r + \scriptscriptstyle 1}.
$ 
\!Viewing 
$
\tilde{f}_{\!\scriptscriptstyle \mathrm{odd}}(
|p|^{- s_{\scalebox{.8}{$\scriptscriptstyle 1$}}}\!, \ldots,\, 
|p|^{-s_{r}}\!,\, |p|^{\, - s_{r + \scalebox{.8}{$\scriptscriptstyle 1$}}}; \, |p|)
$ 
and 
$ 
\tilde{f}_{\!\scriptscriptstyle \mathrm{even}}(\!\pm\, |p|^{-
  s_{\scalebox{.8}{$\scriptscriptstyle 1$}}}\!,$
$\ldots,
\pm \, |p|^{-s_{r}}\!,\, |p|^{\, - s_{r + \scalebox{.8}{$\scriptscriptstyle 1$}}}; \, |p|)
$ 
as independent variables, it is then not hard to see that
the functions 
$
L_{w, p}^{\scriptscriptstyle (j)} 
$ 
are precisely those that cancel out the 
$
\tilde{f}_{\!\scriptscriptstyle \mathrm{odd}}
$'s 
and the 
$
\tilde{f}_{\!\scriptscriptstyle \mathrm{even}}
$'s 
in the expression of $S_{\! p}^{w}$ when
$q^{\, - s_{r + \scalebox{.8}{$\scriptscriptstyle 1$}}}$ is such that 
$
q^{{\scriptscriptstyle -\alpha(\mathbf{s})
\slash k_{r + \scalebox{.8}{$\scriptscriptstyle 1$}}}}
= \zeta_{a}^{\scriptscriptstyle -1} q^{{\scriptscriptstyle -
    (d(\alpha) + 1)
\slash 2k_{r + \scalebox{.8}{$\scriptscriptstyle 1$}}}}.
$ 

Concretely, assume, without loss of generality, that $p$ is linear, and set 
$
z_{\scriptscriptstyle i} = q^{\, - s_{\scalebox{.65}{$\scriptscriptstyle i$}}}
$ 
for $1 \le i \le r + 1.$ Then the contributions of 
$
\tilde{f}_{\!\scriptscriptstyle \mathrm{odd}}(\underline{z}, z_{r + {\scriptscriptstyle 1}}; q)
$ 
and 
$ 
\tilde{f}_{\!\scriptscriptstyle \mathrm{even}}(\pm \underline{z}, z_{r + {\scriptscriptstyle 1}}; q)
$ 
to 
$
L_{p}(\cdots, \chi_{a})R_{\! p}(\cdots, \chi_{a})
$ 
is given by 
\begin{equation*} 
(1\slash 2, \mathrm{sgn}(a), 1\slash 2)
\mathrm{M}_{w^{\scriptscriptstyle -1}}(w\mathrm{z}; q)
\tilde{{\bf{f}}}(\mathrm{z}; q);
\end{equation*} 
the remaining contribution of these functions is 
\begin{equation*}
\big(L_{w, p}^{\scriptscriptstyle (1)}, \, L_{w, p}^{\scriptscriptstyle (2)}, \, L_{w, p}^{\scriptscriptstyle (3)}\big)
\begin{pmatrix}
0 & 1 & 0\\
1\slash 2& 0 & - 1\slash 2\\
1\slash 2& 0 & 1\slash 2 \\
\end{pmatrix}{\tilde{\bf{f}}}(\mathrm{z}; q).
\end{equation*} 
Setting 
\[
{\bf{g}}(\mathrm{z}; q) \, := \,
\begin{pmatrix}
0 & 1 & 0\\
1\slash 2& 0 & - 1\slash 2\\
1\slash 2& 0 & 1\slash 2 \\
\end{pmatrix}{\tilde{\bf{f}}}(\mathrm{z}; q)
\] 
we must have 
\begin{equation} \label{eq: functions-Lwp} 
\big(L_{w, p}^{\scriptscriptstyle (1)}, \, L_{w, p}^{\scriptscriptstyle (2)}, \, L_{w, p}^{\scriptscriptstyle (3)}\big)
{\bf{g}}(\mathrm{z}; q) 
= (1\slash 2, \mathrm{sgn}(a), 1\slash 2)
\mathrm{M}_{w^{\scriptscriptstyle -1}}(w\mathrm{z}; q)
\begin{pmatrix}
0 & 1 & 1\\
1 & 0 & 0\\
0& -1 & 1 \\
\end{pmatrix}
{\bf{g}}(\mathrm{z}; q).
\end{equation} 
Here the last component of $\mathrm{z}$ is taken in agreement with the relation 
$ 
q^{{\scriptscriptstyle -\alpha(\mathbf{s})
\slash k_{r + \scalebox{.8}{$\scriptscriptstyle 1$}}}}
=\;  \zeta_{a}^{\scriptscriptstyle -1} 
q^{{\scriptscriptstyle - (d(\alpha) + 1)\slash 
2k_{r + \scalebox{.8}{$\scriptscriptstyle 1$}}}}.
$ 
Thus the functions 
$
L_{w, p}^{\scriptscriptstyle (j)} 
$ 
are the coefficients of the entries of ${\bf{g}}(\mathrm{z}; q)$ 
in the right-hand side of 
\eqref{eq: functions-Lwp}. To obtain 
$
L_{w, p}^{\scriptscriptstyle (j)}(|p|^{-s_{\scalebox{.8}{$\scriptscriptstyle 1$}}}\!, \ldots, |p|^{-s_{\scriptscriptstyle r}}; \zeta_{a})
$ 
for arbitrary monic irreducible $p,$ we just replace 
$\mathrm{sgn}(a)$ by 
$
\chi_{a}(p) = \mathrm{sgn}(a)^{\deg \, p},
$ 
$
z_{\scriptscriptstyle i} = |p|^{\, - s_{\scalebox{.65}{$\scriptscriptstyle i$}}}
$ 
and $q$ by $|p|.$

From this discussion, it follows that
$
S_{\! p}^{w}\!(\mathbf{s}
\scalebox{.95}{$\scriptscriptstyle '$}\!, \zeta_{a})
$
above can be expressed as
\begin{equation} \label{eq: Final-expression-Sp}
S_{\! p}^{w}\!(\mathbf{s}
\scalebox{.95}{$\scriptscriptstyle '$}\!, \zeta_{a})
= \big(1 - |p|^{\scriptscriptstyle -1} \big)
\bigg(L_{w, p}^{\scriptscriptstyle (1)}
|p|^{- s_{r + \scalebox{.8}{$\scriptscriptstyle 1$}}}  \, + \; 
\frac{L_{w, p}^{\scriptscriptstyle (2)} \, + \, L_{w, p}^{\scriptscriptstyle (3)}}{2\prod_{k = 1}^{r}\big(1 - |p|^{\, - s_{\scalebox{.65}{$\scriptscriptstyle k$}}}\!\big)} 
\left. \, + \;\, \frac{L_{w, p}^{\scriptscriptstyle (3)} \, - \, L_{w,
      p}^{\scriptscriptstyle (2)}}{2\prod_{k = 1}^{r}\big(1 + |p|^{\,
      - s_{\scalebox{.65}{$\scriptscriptstyle k$}}}\!\big)}\bigg)\right\vert_{q^{{\scriptscriptstyle
      -\alpha(\mathbf{s})
\slash k_{r + \scalebox{.8}{$\scriptscriptstyle 1$}}}}
	=\;  {\scriptscriptstyle \zeta_{a}^{-1}} q^{- (d(\alpha) + 1)
\slash 2k_{r + \scalebox{.8}{$\scriptscriptstyle 1$}}}}\!.
\end{equation}
As expected, we have the equality
\begin{equation} \label{eq: Regularization-prod-general-residue-Z-zero}
R(^{\scriptscriptstyle w}\!\mathbf{s}
\scalebox{.95}{$\scriptscriptstyle '$}\!, \chi_{a})
\, \cdot \, \prod_{p}\frac{S_{\! p}^{w}\!(\mathbf{s}
\scalebox{.95}{$\scriptscriptstyle '$}\!, \zeta_{a})}
{R_{\! p}(^{\scriptscriptstyle w}\mathbf{s}
\scalebox{.95}{$\scriptscriptstyle '$}\!, \chi_{a})}
= \prod_{p}S_{\! p}^{w}\!(\mathbf{s}
\scalebox{.95}{$\scriptscriptstyle '$}\!, \zeta_{a})
\end{equation}
but since the product
$
\prod_{p}S_{\! p}^{w}\!(\mathbf{s}
\scalebox{.95}{$\scriptscriptstyle '$}\!, \zeta_{a})
$
is divergent on any neighborhood of
$
s_{\scriptscriptstyle 1} = \cdots = s_{r} = \frac{1}{2},
$
a (non-trivial when $k_{r + \scriptscriptstyle 1} \ge 2$)
regularization process is required to justify
\eqref{eq: Regularization-prod-general-residue-Z-zero}; we shall
merely discuss the regularization process when
$k_{r + \scriptscriptstyle 1} = 2,$ see \ref{Case n=2} -- the general
case following, essentially, the same argument.

\section{Asymptotics of moments}
\label{Asympt-moments-introduction}
For $n \ge 1,$ let $\Phi_{n}$ denote the subset of $\Delta^{\mathrm{re}}_{+}$ with 
$k_{r + \scriptscriptstyle 1} = n.$ By Lemma \ref{Finiteness-roots-level-n}, $\Phi_{n}$ is a finite set. For each $\alpha \in \Phi_{n},$ fix $w_{\scriptscriptstyle \alpha} \in W$ in reduced form such that 
$
w_{\scriptscriptstyle \alpha}^{\scriptscriptstyle  -1}(\alpha_{r + \scriptscriptstyle 1}) = \alpha.
$ 
Let $\frak{S}_{n}(\mathrm{z}, a_{\scriptscriptstyle 2}),$ with 
$
\mathrm{z} = (z_{\scriptscriptstyle 1}^{}, \ldots, z_{r + {\scriptscriptstyle 1}}^{})
$ 
($
z_{\scriptscriptstyle i}^{}
= q^{ - s_{\scalebox{.65}{$\scriptscriptstyle i$}}}
$) and $a_{\scriptscriptstyle 2} \in \{1, \theta_{\scriptscriptstyle 0} \},$ be defined by 
\begin{equation}  \label{eq: Secondary-princ-parts-general}
\frak{S}_{n}(\mathrm{z}, a_{\scriptscriptstyle 2}) \, 
: = \, n^{\scriptscriptstyle -1}
\cdot\sum_{\alpha \in \Phi_{n}}\; \sum_{a\in \{1, \, \theta_{\scriptscriptstyle 0} \}}\;
\sum_{\zeta_{a}^{n}  = \mathrm{sgn}(a)}
\frac{\Gamma_{\!\! w_{\scalebox{.85}{$\scriptscriptstyle \alpha$}}}\!(a_{\scriptscriptstyle 2}, a; \zeta_{a})}{2^{l(w_{\scalebox{.85}{$\scriptscriptstyle \alpha$}})}}
\,\big(1 \, - \, \zeta_{a}q^{\scriptscriptstyle (d(\alpha) + 1)\slash 2n}
\mathrm{z}^{\scriptscriptstyle \alpha\slash n} \!\big)^{\scriptscriptstyle -1}
\prod_{p}S_{\! p}^{w_{\scalebox{.85}{$\scriptscriptstyle \alpha$}}}
\!(\underline{z}, \zeta_{a})
\end{equation} 
where 
$
\Gamma_{\!\! w_{\scalebox{.85}{$\scriptscriptstyle \alpha$}}}\!(a_{\scriptscriptstyle 2}, a; \zeta_{a})
$ 
and
$
S_{\! p}^{w_{\scalebox{.85}{$\scriptscriptstyle
      \alpha$}}}\!(\underline{z}, \zeta_{a})
$
are obtained from \eqref{eq: function-Gammaw} and \eqref{eq:
  functions-Lwp}, \eqref{eq: Final-expression-Sp}, respectively. It is
clear that the expression of\linebreak
$\frak{S}_{n}(\mathrm{z}, a_{\scriptscriptstyle 2})$ is independent of the choice of the elements $w_{\scriptscriptstyle \alpha}.$ 
Via \eqref{eq: Regularization-prod-general-residue-Z-zero},
the function $\frak{S}_{n}(\mathrm{z}, a_{\scriptscriptstyle 2})$
gives (after substituting 
$
z_{\scriptscriptstyle i}^{}
\!= q^{ - s_{\scalebox{.65}{$\scriptscriptstyle i$}}}\!,
$ 
$i = 1, \ldots, r + 1$) the sum of the principal parts of 
$
Z_{\scriptscriptstyle 0}(\mathbf{s}, \chi_{a_{\scriptscriptstyle 2}}\!)
$ 
at the poles corresponding to the roots in $\Phi_{n}.$


Now let $s_{\scriptscriptstyle 1}, \ldots, s_{r}$ be (fixed) {\it
  distinct} complex numbers with $\Re(s_{\scriptscriptstyle k}) = \frac{1}{2},$ and consider the function 
$
Z_{\scriptscriptstyle 0}(s_{\scriptscriptstyle 1}, \ldots, s_{r + {\scriptscriptstyle 1}}, \chi_{\scriptscriptstyle 1}).
$ 
For convenience, we replace $q^{- s_{r + {\scriptscriptstyle 1}}}$ in $Z_{\scriptscriptstyle 0}$ by $\xi,$ 
and denote the resulting function by $\mathscr{W}\!(\xi).$ Thus 
\begin{equation*}
\mathscr{W}\!(\xi) = \mathscr{W}\!(\mathbf{s}'\!, \xi)  = \, \sum_{D \ge 0}
\; \Bigg(\, \sum_{\substack{d - \mathrm{monic \; \& \; sq. \; free} \\ \deg \, d\, = \, D}} \;\,
\prod_{k = 1}^{r}L(s_{\scriptscriptstyle k}, \chi_{d}) 
\Bigg)\, \xi^{\scriptscriptstyle D}.
\end{equation*} 
By our assumptions, this function is meromorphic in the open disk $|\xi| < q^{\scriptscriptstyle - 1\slash 2}\!,$ 
with the only possible poles at 
\[ 
\scalebox{1.6}{$\scriptscriptstyle \xi$}_{\alpha,\,  \scalebox{.95}{$\scriptscriptstyle \zeta$}_{a}}
\!\!: =\scalebox{1.45}{$\scriptscriptstyle \zeta$}_{a}^{\scriptscriptstyle -1}
q^{\scriptscriptstyle \frac{\alpha(\mathbf{s}\scalebox{.75}{$\scriptscriptstyle '$})}{k_{\scalebox{.95}{$\scriptscriptstyle r$}
+ \scalebox{.8}{$\scriptscriptstyle 1$}}} \, - \, \frac{d(\alpha) + 1}{2k_{\scalebox{.95}{$\scriptscriptstyle r$} + \scalebox{.8}{$\scriptscriptstyle 1$}}}}
\] 
where, for 
$
\alpha = \!\sum k_{\scriptscriptstyle i}\alpha_{\scriptscriptstyle i} \in \Delta^{\mathrm{re}}_{+},
$ 
we set $\alpha(\mathbf{s}') : =  \sum_{i \le r} k_{\scriptscriptstyle i}s_{\scriptscriptstyle i};
$ 
to avoid difficulties, we will initially assume that the poles of $\mathscr{W}\!(\xi)$ are \addtocounter{footnote}{0}\let\thefootnote\svthefootnote simple\footnote{This happens generically. 
	In fact, two poles corresponding to positive real roots $\alpha$ and $\alpha'$ could coincide only it $\alpha, \alpha' \! \in \Phi_{n},$ for some $n \ge 1.$ Thus,\linebreak by choosing 
	$s_{\scalebox{.9}{$\scriptscriptstyle k$}} = \frac{1}{2} + it_{\scalebox{.9}{$\scriptscriptstyle k$}},$ with 
	$\pi \slash \log q, \, t_{\scalebox{.9}{$\scriptscriptstyle 1$}}, \ldots, t_{\scalebox{1.05}{$\scriptscriptstyle r$}}$ 
	$\mathbb{Q}$-linearly independent, we ensure that the poles of $\mathscr{W}\!(\xi)$ are simple.}\!\!. For $n, D \ge 1,$ let 
$
Q_{n}(\mathbf{s}'; D, q)
$ 
be defined by 
\[
Q_{n}(\mathbf{s}'; D, q) 
= n^{\scriptscriptstyle -1}
\cdot\sum_{\alpha \in \Phi_{n}} \bigg\{\sum_{a\in \{1, \, \theta_{\scriptscriptstyle 0} \}}\;
\sum_{\zeta_{a}^{n}  = \mathrm{sgn}(a)}
\frac{\Gamma_{\!\! w_{\scalebox{.85}{$\scriptscriptstyle \alpha$}}}
\!(1, a; \zeta_{a})}{2^{l(w_{\scalebox{.85}{$\scriptscriptstyle \alpha$}})}}
S_{\scriptscriptstyle \alpha}(\mathbf{s}'\!, \zeta_{a})\zeta_{a}^{\scriptscriptstyle D}\bigg\}
q^{\scalebox{.95}{$\scriptscriptstyle \frac{D\, (d(\alpha) + 1 - 2\alpha(\mathbf{s}
\scalebox{.7}{$\scriptscriptstyle '$}))}{2n}$}}
\] 
where 
$
S_{\scriptscriptstyle \alpha}(\mathbf{s}'\!, \zeta_{a}) 
: = \prod_{p}S_{\! p}^{w_{\scalebox{.85}{$\scriptscriptstyle \alpha$}}},
$ 
with $S_{\! p}^{w_{\scalebox{.85}{$\scriptscriptstyle \alpha$}}}$ given by \eqref{eq: Final-expression-Sp}.

With this notation, our main result is the following:

\vskip5pt
 \begin{thm} \label{Main Theorem: Full-moment-asymptotics} --- Let $D, N \ge 1$ be integers, and suppose that 
   $r \ge 4.$ Then, under the Assumption
   \eqref{eq: Estimate-H1}, and
   Conjectures \ref{Conjecture-Meromorphic continuation},
   \ref{Conjecture-Meromorphic continuation-Z0}, we have
 	\[ 
 	\sum_{\substack{d - \mathrm{monic \; \& \; sq. \; free} \\ \deg \, d\, = \, D}}\;\,
 	\prod_{k = 1}^{r}L(s_{\scriptscriptstyle k}, \chi_{d}) 
 	\, = \sum_{n \, \le \, N} Q_{n}(\mathbf{s}'; D, q) \, + \, 
 	O_{\scalebox{.85}{$\scriptscriptstyle \Theta$}\scriptscriptstyle, \,  q, \, r}
 	\!\left(q^{\scalebox{.9}{$\scriptscriptstyle D$} \scalebox{.9}{$\scriptscriptstyle (1 + \Theta)\slash 2$}}\right)
 	\] 
 	for any $(N + 1)^{\scriptscriptstyle -1} \! < \Theta  < N^{\scriptscriptstyle -1}\!.$
 \end{thm}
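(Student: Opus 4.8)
The plan is to extract the asymptotics of the coefficient sum directly from the analytic structure of the generating function $\mathscr{W}\!(\xi)=\sum_{D\ge 0}\big(\sum_{\deg d = D}\prod_k L(s_k,\chi_d)\big)\xi^D$, which by our assumptions is meromorphic on the disk $|\xi|<q^{-1/2}$ with poles only at the points $\xi_{\alpha,\zeta_a}=\zeta_a^{-1}q^{\alpha(\mathbf{s}')/k_{r+1}-(d(\alpha)+1)/(2k_{r+1})}$ attached to positive real roots $\alpha$ of $\fg(A)$. The argument is a contour-integration (Cauchy coefficient extraction) combined with a careful count of how many poles lie inside a shrinking circle. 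Concretely, for $(N+1)^{-1}<\Theta<N^{-1}$ set $\rho=q^{-(1+\Theta)/2}$; then write
\[
\sum_{\substack{d-\mathrm{monic\;\&\;sq.\;free}\\ \deg d = D}}\prod_{k=1}^{r}L(s_k,\chi_d)
=\frac{1}{2\pi i}\oint_{|\xi|=\rho_0}\frac{\mathscr{W}\!(\xi)}{\xi^{D+1}}\,d\xi,
\]
for a small $\rho_0$, and push the contour outward to radius $\rho$, collecting residues at all poles in $|\xi|<\rho$.

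\textbf{Key steps.} First I would enumerate the poles crossed: a pole $\xi_{\alpha,\zeta_a}$ has modulus $q^{-1/2}\cdot q^{-(d(\alpha)+1-2k_{r+1}/2)/(2k_{r+1})}$; using $\Re(s_k)=\tfrac12$ one checks $|\xi_{\alpha,\zeta_a}|=q^{-1/2}q^{-(d(\alpha)+1-k_{r+1})/(2k_{r+1})}$, and by Lemma~\ref{Finiteness-roots-level-n} together with \eqref{eq: condition**} one sees that $d(\alpha)\ge 2k_{r+1}-1$ for real roots that are not simple, so $|\xi_{\alpha,\zeta_a}|\le q^{-1/2}q^{-(k_{r+1}-1)/(2k_{r+1})}$ crosses below $\rho=q^{-(1+\Theta)/2}$ precisely when $k_{r+1}=n\le N$ (the case $n=N$ versus $n=N+1$ is controlled by the strict inequalities $(N+1)^{-1}<\Theta<N^{-1}$). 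Second, each such pole is simple (by the footnote genericity argument, using $\mathbb{Q}$-linear independence of $\pi/\log q, t_1,\dots,t_r$), and its residue contributes, via the principal-part computation of Section~\ref{Poles-residues-ref-section} — namely \eqref{eq: Secondary-princ-parts-general}, \eqref{eq: function-Gammaw}, \eqref{eq: Final-expression-Sp} and the regularization \eqref{eq: Regularization-prod-general-residue-Z-zero} — exactly the term $Q_n(\mathbf{s}';D,q)q^{D(d(\alpha)+1-2\alpha(\mathbf{s}'))/(2n)}$ after summing over $\alpha\in\Phi_n$, $a\in\{1,\theta_0\}$, and $\zeta_a^n=\mathrm{sgn}(a)$. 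Summing the residues over all $n\le N$ produces $\sum_{n\le N}Q_n(\mathbf{s}';D,q)$. Third, the remaining contour integral over $|\xi|=\rho$ is bounded by $\rho^{-D}\cdot\max_{|\xi|=\rho}|\mathscr{W}\!(\xi)|$; here I would invoke Conjectures~\ref{Conjecture-Meromorphic continuation} and \ref{Conjecture-Meromorphic continuation-Z0} to get that, after removing the finitely many poles with $k_{r+1}\le N$, the function $\mathscr{W}\!(\xi)$ is holomorphic and polynomially bounded on $|\xi|=\rho$ uniformly (the absolute convergence away from the zero set of $\mathrm{D}^{\mathrm{re}}$ gives a bound $O_{q,r}(1)$ up to the finitely many poles just below $\rho$, which are separated from the circle by the genericity choice), yielding the error term $O_{\Theta,q,r}\big(q^{D(1+\Theta)/2}\big)$.

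\textbf{Main obstacle.} The delicate point is making the outward contour shift rigorous: one must verify that the Dirichlet-series/multiple-Dirichlet-series manipulations of Section~\ref{WMDS-assoc-moments}, in particular \eqref{eq: sum-sq-free-vs-MDS} and \eqref{eq: fundamental-eq-h}, combined with Conjecture~\ref{Conjecture-Meromorphic continuation-Z0}, genuinely provide \emph{uniform} control of $\mathscr{W}\!(\xi)$ on the circle $|\xi|=\rho$ — that is, that the infinite sum $\sum_{h-\mathrm{monic}}\mu(h)Z(\mathbf{s},\chi_{a_2};h)$ converges with a bound independent of $D$ on that circle, so that the contributions of the infinitely many poles corresponding to roots with $k_{r+1}>N$ (which accumulate toward $|\xi|=q^{-1/2}$) do not spoil the estimate. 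This requires the bound \eqref{eq: Estimate-H1} to propagate through \eqref{eq: fundamental-eq-h} and Lemma~\ref{Local-vect-estimate}, and it is precisely the step where the three inputs — Assumption~\eqref{eq: Estimate-H1}, Conjecture~\ref{Conjecture-Meromorphic continuation}, Conjecture~\ref{Conjecture-Meromorphic continuation-Z0} — are jointly used. The residue bookkeeping itself (identifying each residue with $Q_n$) is essentially the content of Section~\ref{Poles-residues-ref-section} and, while involved, is routine given that section.
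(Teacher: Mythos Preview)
Your approach---Cauchy coefficient extraction for $\mathscr{W}(\xi)$, shift the contour outward to $|\xi|=q^{-(1+\Theta)/2}$, collect residues, bound the remaining integral---is exactly what the paper does, and the paper's proof is in fact shorter than your sketch. Two corrections are worth making.

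First, your modulus formula is wrong. Since $\Re(s_k)=\tfrac12$ for all $k$, one has $\Re(\alpha(\mathbf{s}'))=\tfrac12\sum_{i\le r}k_i=\tfrac12(d(\alpha)-k_{r+1})$, and hence
\[
|\xi_{\alpha,\zeta_a}|
= q^{\,\Re(\alpha(\mathbf{s}'))/k_{r+1}-(d(\alpha)+1)/(2k_{r+1})}
= q^{-(k_{r+1}+1)/(2k_{r+1})}
= q^{-(n+1)/(2n)},
\]
depending only on $n=k_{r+1}$, not on $d(\alpha)$. So all poles with the same $n$ sit on one circle, and the inequality $q^{-(n+1)/(2n)}<q^{-(1+\Theta)/2}\iff 1/n>\Theta\iff n\le N$ is immediate---no appeal to \eqref{eq: condition**} or to bounds on $d(\alpha)$ is needed.

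Second, the ``main obstacle'' you flag is not one. Once Conjectures~\ref{Conjecture-Meromorphic continuation} and \ref{Conjecture-Meromorphic continuation-Z0} give meromorphy of $\mathscr{W}(\xi)$ on $|\xi|<q^{-1/2}$, and the choice of $\Theta$ ensures no pole lies on $|\xi|=q^{-(1+\Theta)/2}$, the function is continuous on that compact circle; the paper simply takes $\max_{|\xi|=q^{-(1+\Theta)/2}}|\mathscr{W}(\xi)|$ as the implied constant. There is no need to trace \eqref{eq: Estimate-H1} through \eqref{eq: fundamental-eq-h} and Lemma~\ref{Local-vect-estimate} for a uniform bound on the outer circle---those ingredients are used upstream to establish the meromorphic continuation, not again at the contour-shift step.
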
 

\begin{proof} \!For $(N + 1)^{\scriptscriptstyle -1} \! < \Theta  < N^{\scriptscriptstyle -1}\!,$ let 
	$
	\mathscr{A}_{\scriptscriptstyle  \Theta} = \{\xi \in \mathbb{C} : q^{\scalebox{.9}{$\scriptscriptstyle - 2$}} \le |\xi| \le 
	q^{\scalebox{.9}{$\scriptscriptstyle - (1 + \Theta)\slash 2$}}\},
	$ 
	and consider the integral 
	\[
	I(D) = \frac{1}{2 \pi \sqrt{-1}}\oint_{\partial 	\mathscr{A}_{\scriptscriptstyle  \Theta}} 
	\!\frac{\mathscr{W}\!(\xi)}{\xi^{\scriptscriptstyle D + 1}}\, d\xi.
	\] 
	where $\partial \mathscr{A}_{\scriptscriptstyle  \Theta}$ denotes the boundary of 
	$\mathscr{A}_{\scriptscriptstyle  \Theta}.$ In view of our
        assumptions, the function $\mathscr{W}\!(\xi)$ is meromorphic in 
	$|\xi| < q^{\scriptscriptstyle - 1\slash 2}\!,$ and since $\Re(s_{\scriptscriptstyle k}) = \frac{1}{2},$ for $k = 1, \ldots, r,$ 
	its poles are on the circles 
	\[
	|\xi| =  
|\scalebox{1.6}{$\scriptscriptstyle \xi$}_{\alpha,\,  \scalebox{.95}{$\scriptscriptstyle \zeta$}_{a}}|
= q^{\scriptscriptstyle -\frac{n + 1}{2n}}
\qquad \text{(with $\alpha \in \Phi_{n},$ $\zeta_{a}^{2 n} = 1$ for $n \ge 1$).}
\] 
Thus, by our choice of $\Theta,$ this function has no poles on the boundary of $\mathscr{A}_{\scriptscriptstyle  \Theta}.$ On the other hand, we have 
\[
\sum_{\substack{d - \mathrm{monic \; \& \; sq. \; free} \\ \deg \, d\, = \, D}}\;\,
\prod_{k = 1}^{r}L(s_{\scriptscriptstyle k}, \chi_{d}) 
= \frac{1}{2 \pi \sqrt{-1}}\; \oint\limits_{|\xi| \, = \, {\scalebox{1.5}{$\scriptscriptstyle q$}}^{\, \scriptscriptstyle - 2}} 
\!\frac{\mathscr{W}\!(\xi)}{\xi^{\scriptscriptstyle D + 1}}\, d\xi
\] 
and 
\[
\frac{1}{2 \pi \sqrt{-1}}\; \int\limits_{|\xi| \, = \, q^{\scalebox{.9}{$\scriptscriptstyle - (1 + \Theta)\slash 2$}}} 
\!\frac{\mathscr{W}\!(\xi)}{\xi^{\scriptscriptstyle D + 1}}\, d\xi 
\, \ll_{\scalebox{.85}{$\scriptscriptstyle \Theta$}\scriptscriptstyle, \,   q, \,  r} 
\, q^{\scalebox{.9}{$\scriptscriptstyle D$} \scalebox{.9}{$\scriptscriptstyle (1 + \Theta)\slash 2$}}
\] 
where the implied constant is taken to be the maximum of $|\mathscr{W}\!(\xi)|$ on the circle 
$|\xi| = q^{\scalebox{.9}{$\scriptscriptstyle - (1 + \Theta)\slash 2$}}.$ By applying the residue theorem, 
our assertion follows at once from \eqref{eq: Secondary-princ-parts-general} and the definition of $Q_{n}(\mathbf{s}'; D, q).$
\end{proof}

\begin{rem} --- It would be interesting to study the analytic properties of the generating series 
	$
	\sum_{n \ge 1} Q_{n}(\mathbf{s}'; D, q),
	$ 
	with fixed $D,$ as a function of $s_{\scriptscriptstyle 1}, \ldots, s_{r},$ and $q$ (not necessarily a prime power). 
\end{rem}

In order to get the asymptotic formula at the center of the critical strip, we have to study in detail the behavior of the function 
$\mathscr{W}\!(\mathbf{s}'\!, \xi)$ in a neighborhood of a pole. To do so, fix $n \ge 1,$ $\zeta$ a $2n$-th root of $1,$ and 
recall that
\[
\prod_{\alpha \in \Delta^{\mathrm{re}}_{+}}
\!\left(1 - q^{\scalebox{.95}{$\scriptscriptstyle d(\alpha) + 1 - 2\alpha(\mathbf{s}\scalebox{.7}{$\scriptscriptstyle '$})$}}
\xi^{2k_{\scalebox{.95}{$\scriptscriptstyle r$} + \scalebox{.8}{$\scriptscriptstyle 1$}}}\right)
\, \cdot \, \mathscr{W}\!(\mathbf{s}'\!, \xi)
\] 
where, as before, $\alpha =  \!\sum k_{\scriptscriptstyle i}\alpha_{\scriptscriptstyle i},$ is holomorphic in a neighborhood
of the point $s_{\scriptscriptstyle i} = \frac{1}{2}$ ($i = 1, \ldots, r$), and 
$
\xi = q^{\scalebox{.95}{$\scriptscriptstyle - \frac{n + 1}{2n}$}}\zeta; 
$ 
we can eliminate the unnecessary factors of the product, that is, those that do not vanish at this point, 
and consider instead the function
\begin{equation} \label{eq: aux-function-poles-study}
\mathscr{W}_{n,\,  {\scriptscriptstyle \zeta}}(\mathbf{s}'\!, \xi)
\; = \prod_{\alpha \in \Phi_{n}}
\!\left(1 - q^{\scalebox{.95}{$\scriptscriptstyle (d(\alpha) + 1 - 2\alpha(\mathbf{s}\scalebox{.7}{$\scriptscriptstyle '$}))\slash 2n$}}
\zeta^{\scriptscriptstyle -1}\xi\right)
\, \cdot \, \mathscr{W}\!(\mathbf{s}'\!, \xi).
\end{equation}
Let 
$
\frak{s}_{\scriptscriptstyle i} \! = s_{\scriptscriptstyle i} - \frac{1}{2},
$ 
$
u = \xi - q^{\scalebox{.95}{$\scriptscriptstyle -\frac{n + 1}{2n}$}}\zeta,
$ 
and consider the Weierstrass polynomial 
$
\mathbf{W}_{n,\,  {\scriptscriptstyle \zeta}}(\frak{s}_{\scriptscriptstyle 1}, \ldots, \frak{s}_{r}, u)
$ 
defined by 
\[
\mathbf{W}_{n,\,  {\scriptscriptstyle \zeta}}(\frak{s}_{\scriptscriptstyle 1}, \ldots, \frak{s}_{r}, u) 
\; = \prod_{\alpha \in \Phi_{n}} 
\!\left(u  - q^{\scalebox{.95}{$\scriptscriptstyle \frac{
2k_{\scalebox{.6}{$\scriptscriptstyle 1$}}\frak{s}_{\scalebox{.6}{$\scriptscriptstyle 1$}} + \, \cdots \, + \, 
2k_{\scalebox{.8}{$\scriptscriptstyle r$}}\frak{s}_{\scalebox{.8}{$\scriptscriptstyle r$}} - \, n \, -  \, 1}{2n}$}}\zeta 
+ q^{\scalebox{.95}{$\scriptscriptstyle -\frac{n + 1}{2n}$}}\zeta\right)\!.
\] 
By applying the Weierstrass division theorem (see, for example, \cite{Ga-Ro}), there exist unique functions 
$\mathbf{R}_{n,\,  {\scriptscriptstyle \zeta}}, \mathbf{S}_{n,\,  {\scriptscriptstyle \zeta}},$ both ho-\linebreak 
lomorphic in a neighborhood of ${\bf 0} \in \mathbb{C}^{r + \scriptscriptstyle{1}}\!,$ and 
$\mathbf{R}_{n,\,  {\scriptscriptstyle \zeta}}$ polynomial in $u$ with 
$
\deg_{u} \mathbf{R}_{n,\,  {\scriptscriptstyle \zeta}} 
< \deg_{u} \mathbf{W}_{n,\,  {\scriptscriptstyle \zeta}},
$ 
such that 
\[
\mathscr{W}_{n,\,  {\scriptscriptstyle \zeta}}
\!\left(\frak{s}_{\scriptscriptstyle 1} + \tfrac{1}{2},\, 
\frak{s}_{\scriptscriptstyle 2} + \tfrac{1}{2}, \ldots,\, 
u + q^{\scalebox{.95}{$\scriptscriptstyle -\frac{n + 1}{2n}$}}\zeta \right) 
= (\mathbf{S}_{n,\,  {\scriptscriptstyle \zeta}}
\cdot \mathbf{W}_{n,\,  {\scriptscriptstyle \zeta}})(\frak{s}_{\scriptscriptstyle 1}, \ldots, \frak{s}_{r}, u) 
+ \mathbf{R}_{n,\,  {\scriptscriptstyle \zeta}}(\frak{s}_{\scriptscriptstyle 1}, \ldots, \frak{s}_{r}, u).
\] 
Dividing this by the product in \eqref{eq: aux-function-poles-study}, it follows that we can express 
$\mathscr{W}\!(\mathbf{s}'\!, \xi)$ as 
\begin{equation} \label{conseq-Weierstrass-division-theorem} 
\mathscr{W}\!(\mathbf{s}'\!, \xi) 
\,= \, \tilde{\mathbf{S}}_{n,\,  {\scriptscriptstyle \zeta}}(\mathbf{s}'\!, \xi) 
\, + \, \frac{\tilde{\mathbf{R}}_{n,\,  {\scriptscriptstyle \zeta}}(\mathbf{s}'\!, \xi)}
{\prod_{\alpha \in \Phi_{n}}
	\!\left(1 - q^{\scalebox{.95}{$\scriptscriptstyle (d(\alpha) + 1 - 2\alpha(\mathbf{s}\scalebox{.7}{$\scriptscriptstyle '$}))\slash 2n$}}
	\zeta^{\scriptscriptstyle -1}\xi\right)}
\end{equation}
with 
$\tilde{\mathbf{S}}_{n,\,  {\scriptscriptstyle \zeta}}$ and 
$\tilde{\mathbf{R}}_{n,\,  {\scriptscriptstyle \zeta}}$ holomorphic in a neighborhood of the point 
$
(\mathbf{s}'\!, \xi) = \big(\frac{1}{2}, \ldots, \frac{1}{2}, 
q^{\scalebox{.95}{$\scriptscriptstyle - \frac{n + 1}{2n}$}}\zeta \big) \in \mathbb{C}^{r + {\scriptscriptstyle 1}};
$ 
the function $\tilde{\mathbf{R}}_{n,\,  {\scriptscriptstyle \zeta}}(\mathbf{s}'\!, \xi)$ is a polynomial in the variable $\xi$ of degree 
smaller than $|\Phi_{n}|.$

We have the following: 

\vskip5pt
\begin{prop} \label{Analytic-continuation-frakSn} --- For $n \ge 1$ and $\zeta$ a $2n$-th root of $1,$ let 
$\frak{S}_{n,\,  {\scriptscriptstyle \zeta}}(\mathbf{s}'\!, \xi)$ be defined by 
\[
\frak{S}_{n,\,  {\scriptscriptstyle \zeta}}(\mathbf{s}'\!, \xi)
=  \scalebox{1.5}{$\scriptscriptstyle \frac{1}{n}$}
\, \cdot\sum_{\alpha \in \Phi_{n}}
\frac{2^{\scriptscriptstyle - l(w_{\scalebox{.85}{$\scriptscriptstyle \alpha$}})}
\Gamma_{\!\! w_{\scalebox{.85}{$\scriptscriptstyle \alpha$}}}\!(1, a; \zeta^{\scriptscriptstyle -1}) 
S_{\scriptscriptstyle \alpha}(\mathbf{s}'\!, \zeta^{\scriptscriptstyle -1})}
{1 - 
q^{\scalebox{.95}{$\scriptscriptstyle (d(\alpha) + 1 - 2\alpha(\mathbf{s}\scalebox{.7}{$\scriptscriptstyle '$}))\slash 2n$}}
\zeta^{\scriptscriptstyle -1}\xi}
\] 
with $a = 1$ or $\theta_{\scriptscriptstyle 0}$ according as $\zeta^{n} = 1$ or $-1.$ 
Then, for $\mathbf{s}' = (s_{\scriptscriptstyle 1}, \ldots, s_{r})$ in a neighborhood of $(\frac{1}{2}, \ldots, \frac{1}{2}) \in \mathbb{C}^{r}\!,$ and 
$
\xi \ne q^{\scalebox{.95}{$\scriptscriptstyle (2\alpha(\mathbf{s}\scalebox{.7}{$\scriptscriptstyle '$}) - d(\alpha) - 1)\slash 2n$}}
\zeta
$ 
\,for all $\alpha \in \Phi_{n},$ we have 
\[
\frak{S}_{n,\,  {\scriptscriptstyle \zeta}}(\mathbf{s}'\!, \xi)
= \frac{\tilde{\mathbf{R}}_{n,\,  {\scriptscriptstyle \zeta}}(\mathbf{s}'\!, \xi)}
{\prod_{\alpha \in \Phi_{n}}
	\!\left(1 - q^{\scalebox{.95}{$\scriptscriptstyle (d(\alpha) + 1 - 2\alpha(\mathbf{s}\scalebox{.7}{$\scriptscriptstyle '$}))\slash 2n$}}
	\zeta^{\scriptscriptstyle -1}\xi\right)}.
\] 
In particular, the function 
$
\frak{S}_{n}
\!\left(q^{-s_{\scalebox{.65}{$\scriptscriptstyle 1$}}}\!, \ldots, q^{-s_{\scalebox{.85}{$\scriptscriptstyle r$}}}\!, \xi, 1\right) 
$ 
can be analytically continued for $\mathbf{s}' = (s_{\scriptscriptstyle 1}, \ldots, s_{r})$ in a neighborhood of 
$(\frac{1}{2}, \ldots, \frac{1}{2}),$ and 
$
\xi \ne q^{\scalebox{.95}{$\scriptscriptstyle (2\alpha(\mathbf{s}\scalebox{.7}{$\scriptscriptstyle '$}) - d(\alpha) - 1)\slash 2n$}}
\zeta
$ 
with $\alpha \in \Phi_{n},$ and $\zeta$ any $2n$-th root of $1.$  
\end{prop}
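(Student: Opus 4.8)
The plan is to identify $\frak{S}_{n,\zeta}(\mathbf{s}'\!,\xi)$ with the rational-in-$\xi$ remainder term produced by the Weierstrass division \eqref{conseq-Weierstrass-division-theorem}, by matching their principal parts at the finitely many poles of $\mathscr{W}\!(\mathbf{s}'\!,\xi)$ that collide at the centre of the critical strip. First I would check that $\frak{S}_{n,\zeta}(\mathbf{s}'\!,\xi)$ is exactly the summand of $\frak{S}_{n}(q^{-s_{1}},\dots,q^{-s_{r}},\xi,1)$ indexed by the $2n$-th root $\zeta$: writing $z_{i}=q^{-s_{i}}$ and $z_{r+1}=\xi$, one has $\mathrm{z}^{\alpha/n}=q^{-\alpha(\mathbf{s}')/n}\,\xi$ for $\alpha\in\Phi_{n}$ (as $k_{r+1}=n$), so the factor $1-\zeta_{a}q^{(d(\alpha)+1)/2n}\mathrm{z}^{\alpha/n}$ appearing in \eqref{eq: Secondary-princ-parts-general} equals $1-\zeta_{a}q^{(d(\alpha)+1-2\alpha(\mathbf{s}'))/2n}\xi$; setting $\zeta=\zeta_{a}^{-1}$ and using that $\zeta_{a}^{n}=\mathrm{sgn}(a)$ forces $a=1$ when $\zeta^{n}=1$ and $a=\theta_{\scriptscriptstyle 0}$ when $\zeta^{n}=-1$, one recovers precisely the expression in the statement, and $\frak{S}_{n}(\cdot,1)=\sum_{\zeta^{2n}=1}\frak{S}_{n,\zeta}$.

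Next I would invoke the residue computations of Section \ref{Poles-residues-ref-section} — Proposition \ref{MS-residue-general}, the residue formula for $Z_{\scriptscriptstyle 0}$ obtained there, and the regularization \eqref{eq: Regularization-prod-general-residue-Z-zero} — which tell us that, for each $\alpha\in\Phi_{n}$, the term $n^{-1}2^{-l(w_{\alpha})}\Gamma_{w_{\alpha}}(1,a;\zeta^{-1})S_{\alpha}(\mathbf{s}'\!,\zeta^{-1})\big/\bigl(1-q^{(d(\alpha)+1-2\alpha(\mathbf{s}'))/2n}\zeta^{-1}\xi\bigr)$ is the principal part of $\mathscr{W}\!(\mathbf{s}'\!,\xi)=Z_{\scriptscriptstyle 0}(\mathbf{s},\chi_{1})\big|_{q^{-s_{r+1}}=\xi}$ at the simple pole $\xi=\xi_{\alpha,\zeta}:=q^{(2\alpha(\mathbf{s}')-d(\alpha)-1)/2n}\zeta$; since that pole is simple, its residue already determines its full principal part. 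Hence $\frak{S}_{n,\zeta}(\mathbf{s}'\!,\xi)$ is the sum of the principal parts of $\mathscr{W}$ at exactly the poles $\xi_{\alpha,\zeta}$, $\alpha\in\Phi_{n}$. Moreover $\alpha(\mathbf{s}')\to\tfrac{1}{2}(d(\alpha)-n)$ as $\mathbf{s}'\to(\tfrac{1}{2},\dots,\tfrac{1}{2})$, so every $\xi_{\alpha,\zeta}\to q^{-(n+1)/2n}\zeta$; combining the genericity of the poles of $\mathscr{W}$ assumed above with the finiteness of $\Phi_{n}$ (Lemma \ref{Finiteness-roots-level-n}), there is a polydisk $\mathscr{U}\times\mathscr{D}$ around $\bigl(\tfrac{1}{2},\dots,\tfrac{1}{2},q^{-(n+1)/2n}\zeta\bigr)$ on which the only poles of $\mathscr{W}$ are the $\xi_{\alpha,\zeta}$, all of them lying in $\mathscr{D}$ once $\mathbf{s}'\in\mathscr{U}$.

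Subtracting these principal parts, $\mathscr{W}\!(\mathbf{s}'\!,\xi)-\frak{S}_{n,\zeta}(\mathbf{s}'\!,\xi)$ is holomorphic on $\mathscr{U}\times\mathscr{D}$. Writing $\Pi:=\prod_{\alpha\in\Phi_{n}}\bigl(1-q^{(d(\alpha)+1-2\alpha(\mathbf{s}'))/2n}\zeta^{-1}\xi\bigr)$, the identity \eqref{conseq-Weierstrass-division-theorem} reads $\mathscr{W}=\tilde{\mathbf{S}}_{n,\zeta}+\tilde{\mathbf{R}}_{n,\zeta}/\Pi$ with $\tilde{\mathbf{S}}_{n,\zeta}$ holomorphic on $\mathscr{U}\times\mathscr{D}$, so $D:=\frak{S}_{n,\zeta}-\tilde{\mathbf{R}}_{n,\zeta}/\Pi=\tilde{\mathbf{S}}_{n,\zeta}-\bigl(\mathscr{W}-\frak{S}_{n,\zeta}\bigr)$ is holomorphic on $\mathscr{U}\times\mathscr{D}$. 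Now fix $\mathbf{s}'\in\mathscr{U}$: as a function of $\xi$, $D(\mathbf{s}'\!,\cdot)$ is a proper rational function — written over the common denominator $\Pi$, both $\frak{S}_{n,\zeta}$ and $\tilde{\mathbf{R}}_{n,\zeta}/\Pi$ have numerator of $\xi$-degree $<|\Phi_{n}|=\deg_{\xi}\Pi$ — hence it vanishes at $\xi=\infty$, and its only possible poles, the $\xi_{\alpha,\zeta}$, lie in $\mathscr{D}$; holomorphy on $\mathscr{U}\times\mathscr{D}$ removes all of them, so $D(\mathbf{s}'\!,\cdot)$ is entire and tends to $0$ at $\infty$, whence $D(\mathbf{s}'\!,\cdot)\equiv 0$ by Liouville. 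As this holds for every $\mathbf{s}'$ in the open set $\mathscr{U}$, we obtain the displayed identity and the asserted analytic continuation of $\frak{S}_{n,\zeta}$; summing over the $2n$-th roots of unity $\zeta$ then gives the ``in particular'' statement for $\frak{S}_{n}(q^{-s_{1}},\dots,q^{-s_{r}},\xi,1)$.

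The step I expect to be the main obstacle is the one in the second paragraph: extracting from the residue analysis of Section \ref{Poles-residues-ref-section} — which is carried through in full only for $k_{r+1}=2$ and which rests on the non-trivial regularization \eqref{eq: Regularization-prod-general-residue-Z-zero} of the product $\prod_{p}S_{p}^{w_{\alpha}}$ (divergent near the centre) — the precise statement that $\frak{S}_{n,\zeta}(\mathbf{s}'\!,\xi)$ is meromorphic near $\bigl(\tfrac{1}{2},\dots,\tfrac{1}{2},q^{-(n+1)/2n}\zeta\bigr)$ with poles confined to the $\xi_{\alpha,\zeta}$, and that it reproduces the \emph{entire} principal part of $\mathscr{W}$ at each such pole uniformly as these poles coalesce. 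Granted that, the Weierstrass comparison and the one-variable Liouville argument above are routine.
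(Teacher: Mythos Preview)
Your argument is correct and matches the paper's proof in all essentials. The paper also moves to a generic point $\mathbf{s}'_{\rho} = \tfrac{1}{2}\mathbf{1} + \rho\mathbf{a}$ (with $a_{1},\dots,a_{r}$ chosen $\mathbb{Q}$-linearly independent) at which the $\xi_{\alpha,\zeta}$ are simple and mutually distinct, identifies the residue of $\mathscr{W}$ at each $\xi_{\alpha,\zeta}$ with the corresponding summand of $\frak{S}_{n,\zeta}$ via Section~\ref{Poles-residues-ref-section}, obtains the displayed identity on an open polydisk around $\mathbf{s}'_{\rho}$, and then observes that ``it extends to $D_{\mathbf{1/2}}$ by analytic continuation'' because $\tilde{\mathbf{R}}_{n,\zeta}/\Pi$ is already meromorphic on the full neighbourhood by the Weierstrass construction.

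The only presentational difference is that the paper writes $\tilde{\mathbf{R}}_{n,\zeta}/\Pi$ in partial fractions (legitimate exactly because the poles are distinct at generic $\mathbf{s}'$) and matches the coefficients $\mathbf{C}_{n,\zeta,\alpha}(\mathbf{s}')$ one by one, whereas you subtract and invoke Liouville; these are equivalent. More to the point, your ``main obstacle'' evaporates once you order the steps as the paper does: you do \emph{not} need to know beforehand that $\frak{S}_{n,\zeta}$ is meromorphic at the centre, nor that it captures the full principal part of $\mathscr{W}$ as the poles coalesce. It is enough to verify the identity for generic $\mathbf{s}'$ in a polydisk $D_{\mathbf{s}'_{\rho}}$ --- which requires only the simple-pole residues already supplied by Section~\ref{Poles-residues-ref-section} --- and then let the right-hand side, meromorphic by construction, \emph{define} the continuation of $\frak{S}_{n,\zeta}$ to the whole of $D_{\mathbf{1/2}}$. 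The regularization \eqref{eq: Regularization-prod-general-residue-Z-zero} enters only to rewrite the residue of $\mathscr{W}$ at a simple pole as $n^{-1}2^{-l(w_{\alpha})}\Gamma_{w_{\alpha}}S_{\alpha}$, not to control anything uniformly as poles collide.
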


\begin{proof} Let $D_{\scalebox{1.}{$\scriptscriptstyle {\mathbf{1\slash 2}}$}}$ (resp. $D_{n,  {\scriptscriptstyle \zeta}}$) 
	be a polydisk centered at ${\mathbf{\frac{1}{2}}}: = \big(\frac{1}{2}, \ldots, \frac{1}{2}\big)\in \mathbb{C}^{r}$ 
	(resp. a disk centered at $q^{\scalebox{.95}{$\scriptscriptstyle - \frac{n + 1}{2n}$}}\zeta$) 
	such that \eqref{conseq-Weierstrass-division-theorem}\linebreak holds on 
	$
	V : = D_{\scalebox{1.}{$\scriptscriptstyle {\mathbf{1\slash 2}}$}} \times 
	D_{n,  {\scriptscriptstyle \zeta}}
	$ 
	(away from the singularities of the right-hand side), \!and 
	$
	q^{\scalebox{.95}{$\scriptscriptstyle (2\alpha(\mathbf{s}\scalebox{.7}{$\scriptscriptstyle '$}) - d(\alpha) - 1)\slash 2n$}}\zeta 
	\in D_{n,  {\scriptscriptstyle \zeta}}
	$ 
	for every 
	$
	\mathbf{s}' \in 
	D_{\scalebox{1.}{$\scriptscriptstyle {\mathbf{1\slash 2}}$}},
	$ 
	and all $\alpha \in \Phi_{n}.$ Let $a_{\scalebox{.97}{$\scriptscriptstyle 1$}}, \ldots, a_{r}$ be $\mathbb{Q}$-linearly independent 
	complex numbers, and put ${\mathbf{a}} : = (a_{\scalebox{.97}{$\scriptscriptstyle 1$}}, \ldots, a_{r}).$ 
	If $\rho$ is a sufficiently small positive number, then 
	$
	\mathbf{s}_{\scalebox{1.}{$\scriptscriptstyle \rho$}}{\!\!\!\scalebox{1.1}{$\scriptscriptstyle '$}} 
	: = {\mathbf{\frac{1}{2}}} + \rho {\mathbf{a}} \in D_{\scalebox{1.}{$\scriptscriptstyle {\mathbf{1\slash 2}}$}},
	$ 
	and the numbers 
	$
	\left(q^{\scalebox{1.}{$\scriptscriptstyle (d(\alpha) - 2\alpha(\mathbf{s}_{\scalebox{.8}{$\scriptscriptstyle \rho$}}
{\!\!\!\scalebox{.85}{$\scriptscriptstyle '$}}))\slash 2n$}}
	\right)_{\scriptscriptstyle \alpha \in \Phi_{\scalebox{.85}{$\scriptscriptstyle n$}}}
	$ 
	are mutually distinct; thus, by continuity, this property holds throughout a polydisk 
	$
	D_{\mathbf{s}_{\scalebox{.85}{$\scriptscriptstyle \rho$}}{\!\!\!\scalebox{.95}{$\scriptscriptstyle '$}}}
	\subset D_{\scalebox{1.}{$\scriptscriptstyle {\mathbf{1\slash 2}}$}}
	$ 
	centered at 
	$
	\mathbf{s}_{\scalebox{1.}{$\scriptscriptstyle \rho$}}{\!\!\!\scalebox{1.1}{$\scriptscriptstyle '$}}. 
	$ 
	
	Now, for each 
	$
	\mathbf{s}' \in 
	D_{\mathbf{s}_{\scalebox{.85}{$\scriptscriptstyle \rho$}}{\!\!\!\scalebox{.95}{$\scriptscriptstyle '$}}},
	$ 
	we can write (via a partial fraction decomposition in $\xi$) 
	\[
	\frac{\tilde{\mathbf{R}}_{n,\,  {\scriptscriptstyle \zeta}}(\mathbf{s}'\!, \xi)}
	{\prod_{\alpha \in \Phi_{n}}
		\!\left(1 - q^{\scalebox{.95}{$\scriptscriptstyle (d(\alpha) + 1 - 2\alpha(\mathbf{s}\scalebox{.7}{$\scriptscriptstyle '$}))\slash 2n$}}
		\zeta^{\scriptscriptstyle -1}\xi\right)}
	\, = \sum_{\alpha \in \Phi_{n}} 
	\frac{\mathbf{C}_{n,\,  {\scriptscriptstyle \zeta},\, \alpha}(\mathbf{s}')}
	{1 - q^{\scalebox{.95}{$\scriptscriptstyle (d(\alpha) + 1 - 2\alpha(\mathbf{s}\scalebox{.7}{$\scriptscriptstyle '$}))\slash 2n$}}
		\zeta^{\scriptscriptstyle -1}\xi}
	\] 
	for\, 
	$
		\xi \ne q^{\scalebox{.95}{$\scriptscriptstyle (2\alpha(\mathbf{s}\scalebox{.7}{$\scriptscriptstyle '$}) - d(\alpha) - 1)\slash 2n$}}
		\zeta
		$ 
		\,($\alpha \in \Phi_{n}$). Substituting this into \eqref{conseq-Weierstrass-division-theorem}, we find that, for each $\alpha \in \Phi_{n},$ 
\[
\mathbf{C}_{n,\,  {\scriptscriptstyle \zeta},\, \alpha}(\mathbf{s}') 
\;\;= \, \lim_{\xi \, \to \, q^{\scalebox{.95}{$\scriptscriptstyle (2\alpha(\mathbf{s}\scalebox{.7}{$\scriptscriptstyle '$}) - d(\alpha) - 1)\slash 2n$}}\zeta} \left(1 - q^{\scalebox{.95}{$\scriptscriptstyle (d(\alpha) + 1 - 2\alpha(\mathbf{s}\scalebox{.7}{$\scriptscriptstyle '$}))\slash 2n$}}
\zeta^{\scriptscriptstyle -1}\xi\right)\!\mathscr{W}\!(\mathbf{s}'\!, \xi) 
= \frac{\Gamma_{\!\! w_{\scalebox{.85}{$\scriptscriptstyle \alpha$}}}\!(1, a; \zeta^{\scriptscriptstyle -1})}
{2^{\scalebox{.95}{$\scriptscriptstyle l(w_{\scalebox{.85}{$\scriptscriptstyle \alpha$}})$}} n}
S_{\scriptscriptstyle \alpha}(\mathbf{s}'\!, \zeta^{\scriptscriptstyle -1}).
\] 
This gives our first assertion when 
$
\mathbf{s}' \in 
D_{\mathbf{s}_{\scalebox{.85}{$\scriptscriptstyle \rho$}}{\!\!\!\scalebox{.95}{$\scriptscriptstyle '$}}}; 
$ 
it extends to $D_{\scalebox{1.}{$\scriptscriptstyle {\mathbf{1\slash 2}}$}}$ by analytic continuation. 

We now have
\[
\frak{S}_{n}
\!\left(q^{-s_{\scalebox{.65}{$\scriptscriptstyle 1$}}}\!, \ldots, q^{-s_{\scalebox{.85}{$\scriptscriptstyle r$}}}\!, \xi, 1\right) 
\, = \sum_{\zeta}\,  \frac{\tilde{\mathbf{R}}_{n,\,  {\scriptscriptstyle \zeta}}(\mathbf{s}'\!, \xi)}
{\prod_{\alpha \in \Phi_{n}}
	\!\left(1 - q^{\scalebox{.95}{$\scriptscriptstyle (d(\alpha) + 1 - 2\alpha(\mathbf{s}\scalebox{.7}{$\scriptscriptstyle '$}))\slash 2n$}}
	\zeta^{\scriptscriptstyle -1}\xi\right)}
\] 
which proves the second assertion, and completes the proof. 
\end{proof}

\vskip5pt
\begin{thm}  --- Let $D, N \ge 1$ be integers, and suppose that 
  $r \ge 4.$ Then, under the Assumption
   \eqref{eq: Estimate-H1}, and
   Conjectures \ref{Conjecture-Meromorphic continuation},
   \ref{Conjecture-Meromorphic continuation-Z0}, we have
\[ 
\sum_{\substack{d - \mathrm{monic \; \& \; sq. \; free} \\ \deg \, d\, = \, D}}\;\,
L\big(\tfrac{1}{2}, \chi_{d}\!\big)^{\scalebox{1.25}{$\scriptscriptstyle r$}} 
= \sum_{n \, \le \, N} Q_{n}(D, q)q^{\left(\!\scalebox{.9}{$\scriptscriptstyle \frac{1}{2}$} 
+ \scalebox{.9}{$\scriptscriptstyle \frac{1}{2n}$}\!\right)\scalebox{1.2}{$\scriptscriptstyle D$}} + \, 
O_{\scalebox{.85}{$\scriptscriptstyle \Theta$}\scriptscriptstyle, \,  q, \, r}
	\!\left(q^{\scalebox{.9}{$\scriptscriptstyle D$} \scalebox{.9}{$\scriptscriptstyle (1 + \Theta)\slash 2$}}\right)
	\] 
	for any $(N + 1)^{\scriptscriptstyle -1} \! < \Theta  < N^{\scriptscriptstyle -1}\!,$ where 
	\[
	Q_{n}(D, q)q^{\left(\!\scalebox{.9}{$\scriptscriptstyle \frac{1}{2}$} 
		+ \scalebox{.9}{$\scriptscriptstyle \frac{1}{2n}$}\!\right)\scalebox{1.2}{$\scriptscriptstyle D$}} 
	= \sum_{\zeta^{\scriptscriptstyle \scalebox{.9}{$\scriptscriptstyle 2$} n} = 1}\, \underset{\xi \, = \, q^{\scriptscriptstyle - (n + \scalebox{.9}{$\scriptscriptstyle 1$})
			\slash \scalebox{.9}{$\scriptscriptstyle 2$}n}\zeta}{\mathrm{Res}}
\, \frak{S}_{n,\,  {\scriptscriptstyle \zeta}}\!\left({\mathbf{\tfrac{1}{2}}}, \xi \right)\xi^{\scriptscriptstyle - D - 1}
	\] 
	for $n \ge 1.$ 
\end{thm}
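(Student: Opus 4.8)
The plan is to specialize Theorem~\ref{Main Theorem: Full-moment-asymptotics} to the diagonal point $s_{\scriptscriptstyle 1} = \cdots = s_{r} = \tfrac{1}{2}$ and then reinterpret each summand $Q_{n}(\mathbf{s}'; D, q)$ as a sum of residues of the functions $\frak{S}_{n,\, {\scriptscriptstyle \zeta}}(\mathbf{s}'\!, \xi)$ produced in Proposition~\ref{Analytic-continuation-frakSn}. The subtlety is that Theorem~\ref{Main Theorem: Full-moment-asymptotics} requires the $s_{\scriptscriptstyle k}$ to be \emph{distinct} (so that the poles of $\mathscr{W}\!(\xi)$ are simple), whereas the target statement sets them all equal to $\tfrac{1}{2}$. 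First I would therefore take a one-parameter family $s_{\scriptscriptstyle k}(\rho) = \tfrac{1}{2} + \rho a_{\scriptscriptstyle k}$ with $a_{\scriptscriptstyle 1}, \ldots, a_{r}$ fixed and $\mathbb{Q}$-linearly independent (also $\mathbb{Q}$-independent from $\pi\slash\log q$), so that for small $\rho \ne 0$ the poles of $\mathscr{W}\!(\mathbf{s}'(\rho), \xi)$ are simple and Theorem~\ref{Main Theorem: Full-moment-asymptotics} applies verbatim, giving
\[
\sum_{\substack{d - \mathrm{monic \; \& \; sq. \; free} \\ \deg \, d\, = \, D}}
\prod_{k = 1}^{r} L(s_{\scriptscriptstyle k}(\rho), \chi_{d})
= \sum_{n \le N} Q_{n}(\mathbf{s}'(\rho); D, q)
+ O_{\Theta,\, q,\, r}\!\left(q^{D(1+\Theta)\slash 2}\right).
\]
Both sides are holomorphic in $\rho$ in a neighborhood of $0$: the left side trivially (finite sum of products of $L$-polynomials), and the right side because, by Proposition~\ref{Analytic-continuation-frakSn}, each $Q_{n}(\mathbf{s}'(\rho); D, q)$ is obtained from $\frak{S}_{n,\, {\scriptscriptstyle \zeta}}(\mathbf{s}'(\rho), \xi)$, which extends holomorphically across the diagonal. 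Letting $\rho \to 0$ then yields the asymptotic formula at $\mathbf{s}' = \mathbf{\tfrac{1}{2}}$ with the limiting coefficients $Q_{n}(D, q)q^{(\frac{1}{2} + \frac{1}{2n})D} := \lim_{\rho \to 0} Q_{n}(\mathbf{s}'(\rho); D, q)$.

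Next I would identify this limit with the residue sum. Recall from the discussion preceding \eqref{eq: aux-function-poles-study} and from Proposition~\ref{Analytic-continuation-frakSn} that $\frak{S}_{n,\, {\scriptscriptstyle \zeta}}(\mathbf{s}'\!, \xi)$ equals the rational-in-$\xi$ function $\tilde{\mathbf{R}}_{n,\, {\scriptscriptstyle \zeta}}(\mathbf{s}'\!, \xi)\big\slash \prod_{\alpha \in \Phi_{n}}\big(1 - q^{(d(\alpha)+1-2\alpha(\mathbf{s}'))\slash 2n}\zeta^{\scriptscriptstyle -1}\xi\big)$, whose only poles in $\xi$ (for $\mathbf{s}'$ near $\mathbf{\tfrac12}$) are at the points $\xi_{\alpha,\, {\scriptscriptstyle \zeta}} = q^{(2\alpha(\mathbf{s}') - d(\alpha) - 1)\slash 2n}\zeta$, each simple when the $s_{\scriptscriptstyle k}$ are suitably generic. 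Writing $Q_{n}(\mathbf{s}'; D, q)$ via its definition in Section~\ref{Asympt-moments-introduction} as $n^{-1}\sum_{\alpha \in \Phi_{n}}\{\cdots\} q^{D(d(\alpha)+1-2\alpha(\mathbf{s}'))\slash 2n}$, I would expand $\mathscr{W}\!(\mathbf{s}'\!, \xi) = \sum_{D \ge 0}(\cdots)\xi^{D}$ and read off the $\xi^{D}$-coefficient of the principal part $\sum_{\zeta^{2n}=1}\frak{S}_{n,\, {\scriptscriptstyle \zeta}}(\mathbf{s}'\!, \xi)$; since the $D$-th Taylor coefficient of $\big(1 - c\,\xi\big)^{-1}$ is $c^{D}$, and since $\mathrm{Res}_{\xi = 1\slash c}\big[(1-c\xi)^{-1}g(\xi)\xi^{-D-1}\big] = -c^{-1}\cdot(-c)\cdot c^{D}g(1\slash c) = c^{D}g(1\slash c)$ up to the standard sign bookkeeping, this matches exactly $\sum_{\zeta^{2n}=1}\mathrm{Res}_{\xi = q^{-(n+1)\slash 2n}\zeta}\,\frak{S}_{n,\, {\scriptscriptstyle \zeta}}(\mathbf{s}'\!, \xi)\,\xi^{-D-1}$, by grouping the $\alpha \in \Phi_{n}$ according to which $2n$-th root of unity appears (recall $\zeta_{a}^{n} = \mathrm{sgn}(a)$ forces $\zeta_{a}^{2n} = 1$, and $a = 1$ or $\theta_{\scriptscriptstyle 0}$ according as $\zeta^{n} = \pm 1$). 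Specializing the resulting identity at $\mathbf{s}' = \mathbf{\tfrac12}$ — legitimate because, by Proposition~\ref{Analytic-continuation-frakSn}, every term is holomorphic there once $\xi \ne q^{(2\alpha(\mathbf{s}') - d(\alpha) - 1)\slash 2n}\zeta$, i.e. the residue operation commutes with the limit $\rho \to 0$ — gives the claimed formula for $Q_{n}(D, q)q^{(\frac12 + \frac{1}{2n})D}$.

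The main obstacle is the passage to the diagonal. Individually, many of the residues $\mathbf{C}_{n,\, {\scriptscriptstyle \zeta},\, \alpha}(\mathbf{s}')$ — equivalently the quantities $S_{\scriptscriptstyle \alpha}(\mathbf{s}'\!, \zeta_{a})$, which are infinite products $\prod_{p}S_{\! p}^{w_{\alpha}}$ — are genuinely singular as $\mathbf{s}' \to \mathbf{\tfrac12}$ when $k_{r + \scriptscriptstyle 1} \ge 2$, exactly the divergence flagged after \eqref{eq: Regularization-prod-general-residue-Z-zero}; it is only the \emph{sum} over $\alpha \in \Phi_{n}$ (more precisely, the combination packaged in $\tilde{\mathbf{R}}_{n,\, {\scriptscriptstyle \zeta}}$ via the Weierstrass division) that stays holomorphic. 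So the argument cannot simply substitute $\mathbf{s}' = \mathbf{\tfrac12}$ into the term-by-term expression; one must keep the function $\frak{S}_{n,\, {\scriptscriptstyle \zeta}}(\mathbf{s}'\!, \xi)$ intact (in its $\tilde{\mathbf{R}}\slash\prod$ form) through the limit, and only then take residues in $\xi$ at $\xi = q^{-(n+1)\slash 2n}\zeta$. This is precisely what Proposition~\ref{Analytic-continuation-frakSn} was engineered to supply, so the proof is really a matter of bookkeeping: assemble the genericity family, invoke the previous theorem, match Taylor coefficients of principal parts to residues, and push the holomorphic continuation of Proposition~\ref{Analytic-continuation-frakSn} to $\mathbf{s}' = \mathbf{\tfrac12}$. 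The regularization needed to make sense of the limiting value of each $Q_{n}(D,q)$ in concrete terms (carried out for $n = 2$ in Section~\ref{computations-introduction}) is not needed for the \emph{existence} statement proved here.
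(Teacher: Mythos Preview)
Your argument is correct, but it takes an unnecessarily circuitous route compared to the paper. The paper works \emph{directly} at $\mathbf{s}' = \mathbf{\tfrac{1}{2}}$: under the conjectures, $\mathscr{W}\!\big(\mathbf{\tfrac{1}{2}}, \xi\big)$ is already meromorphic in $|\xi| < q^{-1/2}$, and Proposition~\ref{Analytic-continuation-frakSn} (via the decomposition \eqref{conseq-Weierstrass-division-theorem}, which holds for all $\mathbf{s}'$ in a neighborhood of $\mathbf{\tfrac{1}{2}}$, including $\mathbf{\tfrac{1}{2}}$ itself) identifies its principal part at each pole $\xi = q^{-(n+1)/2n}\zeta$ as precisely $\frak{S}_{n,\, {\scriptscriptstyle \zeta}}\!\big(\mathbf{\tfrac{1}{2}}, \xi\big)$. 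One then reruns the contour-shifting argument from the proof of Theorem~\ref{Main Theorem: Full-moment-asymptotics} verbatim, with the only change that the poles are now of order up to $|\Phi_n|$ rather than simple; the residue picked up is exactly the stated $\mathrm{Res}_{\xi = q^{-(n+1)/2n}\zeta}\,\frak{S}_{n,\, {\scriptscriptstyle \zeta}}\!\big(\mathbf{\tfrac{1}{2}}, \xi\big)\,\xi^{-D-1}$. No deformation is needed.

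Your detour through a generic family $\mathbf{s}'(\rho)$ and a limit $\rho \to 0$ works, but Proposition~\ref{Analytic-continuation-frakSn} was built precisely so that one need not approach the diagonal as a limit of simple-pole configurations. There is also a small point you gloss over: for the asymptotic to survive the limit $\rho \to 0$, you need the implied constant in the $O$-term --- which in the proof of Theorem~\ref{Main Theorem: Full-moment-asymptotics} is $\max_{|\xi| = q^{-(1+\Theta)/2}} |\mathscr{W}\!(\mathbf{s}'(\rho), \xi)|$ --- to stay bounded as $\rho \to 0$. This does follow from the joint holomorphy of $\mathscr{W}$ in $(\mathbf{s}', \xi)$ on the relevant compact set, but it should be said explicitly. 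The paper's direct approach avoids this issue entirely.
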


\begin{proof} \!Our assumptions imply that, for every small positive $\varepsilon,$ 
	the function $\mathscr{W}\!(\xi) = \mathscr{W}\!\big({\mathbf{\frac{1}{2}}}, \xi\big)$ is meromorphic in the disk\linebreak 
	$|\xi| < q^{\, \scriptscriptstyle - \frac{1}{2} - \varepsilon}$ with the only possible poles at 
	$  
	\xi = q^{\scriptscriptstyle - (n + \scalebox{.9}{$\scriptscriptstyle 1$})
		\slash \scalebox{.9}{$\scriptscriptstyle 2$}n}\zeta,
	$ 
	$n = 1, 2, \ldots,$ and for each $n,$ $\zeta$ is any $2n$-th root of $1.$ By Proposition 
	\ref{Analytic-continuation-frakSn}, the principal part of $\mathscr{W}\!(\xi)$ at a pole 
	$  
	\xi = q^{\scriptscriptstyle - (n + \scalebox{.9}{$\scriptscriptstyle 1$})
		\slash \scalebox{.9}{$\scriptscriptstyle 2$}n}\zeta
	$ 
	is given by  
	\[
\frac{\tilde{\mathbf{R}}_{n,\,  {\scriptscriptstyle \zeta}}\!\left({\mathbf{\frac{1}{2}}}, \xi \right)}
	{\left(1 - q^{\scalebox{.95}{$\scriptscriptstyle (n + \scalebox{.9}{$\scriptscriptstyle 1$})
	\slash \scalebox{.9}{$\scriptscriptstyle 2$}n$}}
		\zeta^{\scriptscriptstyle -1}\xi\right)^{\scriptscriptstyle |\Phi_{ \scalebox{.85}{$\scriptscriptstyle n$}}|}}
		= \frak{S}_{n,\,  {\scriptscriptstyle \zeta}}\!\left({\mathbf{\tfrac{1}{2}}}, \xi \right)\!.
	\] 
	From here on, the argument proceeds exactly as in the proof of Theorem \ref{Main Theorem: Full-moment-asymptotics}. 
\end{proof}

\begin{rem} --- The order of the pole of the rational function 
	$
	\frak{S}_{n,\,  {\scriptscriptstyle \zeta}}\!\left({\mathbf{\tfrac{1}{2}}}, \xi \right)
	$ 
	at 
	$  
	\xi = q^{\scriptscriptstyle - (n + \scalebox{.9}{$\scriptscriptstyle 1$})
		\slash \scalebox{.9}{$\scriptscriptstyle 2$}n}\zeta
	$ 
	\,may be smaller than the maxi-\linebreak mal limit $|\Phi_{n}|,$ and this happens, for example, when $n = 1.$ While, for 
	any given $n \ge 1,$ the function $\frak{S}_{n,\,  {\scriptscriptstyle \zeta}}(\mathbf{s}'\!, \xi)$ (and 
	hence the coefficient $Q_{n}(D, q)$ in the above asymptotic formula) can, in principle, be computed explicitly, 
	we do not have any prediction, at the moment, for what the order of the pole of 
	$
	\frak{S}_{n,\,  {\scriptscriptstyle \zeta}}\!\left({\mathbf{\tfrac{1}{2}}}, \xi \right)
	$ 
	at 
	$\xi = q^{\scriptscriptstyle - (n + \scalebox{.9}{$\scriptscriptstyle 1$})\slash \scalebox{.9}{$\scriptscriptstyle 2$}n}\zeta
	$ 
	should, in general, be. 
\end{rem}

\section{Some explicit computations}
\label{computations-introduction}

\subsection{The case $n = 1$} \label{Case n=1}
By Lemma \ref{Finiteness-roots-level-n}, each 
$
\alpha \in \Phi_{\scriptscriptstyle 1}
$ 
is of the form 
\[
\alpha = \sum_{i = 1}^{r} k_{\scriptscriptstyle i}\alpha_{\scriptscriptstyle i} \, +\, 
\alpha_{r + {\scriptscriptstyle 1}}
\] 
with $k_{\scriptscriptstyle i} = 0$ or $1$ for all $i = 1, \ldots, r.$ Conversely, any element 
$
\alpha_{\scriptscriptstyle i_{\scalebox{.62}{$\scriptscriptstyle 1$}}} \! 
+ \, \alpha_{\scriptscriptstyle i_{\scalebox{.62}{$\scriptscriptstyle 2$}}} \! 
+ \cdots + \alpha_{\scriptscriptstyle i_{\scalebox{.75}{$\scriptscriptstyle m$}}} \! 
+ \, \alpha_{r + {\scriptscriptstyle 1}}
$ 
is a root, since 
$$
w_{\scriptscriptstyle i_{\scalebox{.62}{$\scriptscriptstyle 1$}}} \! \cdots \, 
w_{\scriptscriptstyle i_{\scalebox{.75}{$\scriptscriptstyle m$}}}\!(\alpha_{r + {\scriptscriptstyle 1}}) 
= \alpha_{\scriptscriptstyle i_{\scalebox{.62}{$\scriptscriptstyle 1$}}}\! + \cdots + 
\alpha_{\scriptscriptstyle i_{\scalebox{.75}{$\scriptscriptstyle m$}}} \! + \,
\alpha_{r + \scriptscriptstyle 1};
$$ 
the subgroup $W_{\scriptscriptstyle 0} :  = \langle w_{\scriptscriptstyle i} : i = 1, \ldots, r \rangle$ is isomorphic to 
$(\mathbb{Z}\slash 2\mathbb{Z})^{r}.$ Taking $w$ to be the identity, we find by \eqref{eq: functions-Lwp} that 
$
L_{{\scriptscriptstyle \mathrm{id}}, p}^{\scriptscriptstyle (1)} = \mathrm{sgn}(a)^{\deg \, p}\!,
$ 
$
L_{{\scriptscriptstyle \mathrm{id}}, p}^{\scriptscriptstyle (2)} = 0
$ 
and 
$L_{{\scriptscriptstyle \mathrm{id}}, p}^{\scriptscriptstyle (3)} = 1.$ Thus we get: 
\begin{equation*}
S_{\! p}^{{\scriptscriptstyle \mathrm{id}}}  = \left(1 - \frac{1}{|p|}\right)
\left(\frac{1}{|p|}  +  \frac{1}{2\prod_{k = 1}^{r}\left(1 - |p|^{\, - s_{\scalebox{.75}{$\scriptscriptstyle k$}}}\!\right)} 
+  \frac{1}{2\prod_{k = 1}^{r}\left(1 + |p|^{\, - s_{\scalebox{.75}{$\scriptscriptstyle k$}}}\!\right)}\right). 
\end{equation*} 
Similarly, by \eqref{eq: function-Gammaw}, 
\begin{equation*}
\Gamma_{\! {\scriptscriptstyle \mathrm{id}}}(a_{\scriptscriptstyle 2}, a; \zeta_{a})
= \varepsilon^{\scriptscriptstyle +}\!(a_{\scriptscriptstyle 2})\varepsilon^{\scriptscriptstyle +}\!(a) \, + \, \varepsilon^{\scriptscriptstyle -}\!(a_{\scriptscriptstyle 2})\varepsilon^{\scriptscriptstyle -}\!(a).
\end{equation*} 
Summing now over $a,$ we find that the contribution to $\frak{S}_{1}(\cdot, a_{\scriptscriptstyle 2})$ corresponding to 
$\alpha = \alpha_{r + \scriptscriptstyle 1}$ is 
\begin{equation*}
\left(\frac{\varepsilon^{\scriptscriptstyle +}\!(a_{\scriptscriptstyle 2})}{1  -  q^{{\scriptscriptstyle 1} - s_{r + \scalebox{.85}{$\scriptscriptstyle 1$}}}} + \frac{\varepsilon^{\scriptscriptstyle -}\!(a_{\scriptscriptstyle 2})}
{1  +  q^{{\scriptscriptstyle 1} - s_{r + \scalebox{.85}{$\scriptscriptstyle 1$}}}} \right)
\, \cdot \, \prod_{p}\left(1 - \frac{1}{|p|} \right)
\left(\frac{1}{|p|}  +  \frac{1}{2\prod_{k = 1}^{r}\left(1 - |p|^{\, - s_{\scalebox{.75}{$\scriptscriptstyle k$}}}\!\right)} 
+  \frac{1}{2\prod_{k = 1}^{r}\left(1 + |p|^{\, - s_{\scalebox{.75}{$\scriptscriptstyle k$}}}\!\right)}\right).
\end{equation*} 
To compute the other contributions, write 
$
q^{- s_{\scalebox{.75}{$\scriptscriptstyle k$}}} = q^{{\scriptscriptstyle -1\slash 2}}\, \xi_{k}
$ 
($k = 1, \ldots, r$), and put $\xi = q^{ - s_{r + \scalebox{.85}{$\scriptscriptstyle 1$}}}$. Set 
\begin{equation*}
S_{\! p}(\xi_{\scriptscriptstyle 1}, \ldots, \xi_{r}) \, 
: = \, \left(1 - \frac{1}{|p|} \right)\left(\frac{1}{|p|}  +  \frac{\prod_{k = 1}^{r}\big(1 - (q^{\, \scriptscriptstyle - 1\slash 2}\, \xi_{k})^{\deg \, p}\big)^{\!\scriptscriptstyle -1} + \, \prod_{k = 1}^{r}\big(1 + (q^{\, \scriptscriptstyle - 1\slash 2}\, \xi_{k})^{\deg \, p}\big)^{\!\scriptscriptstyle -1}}{2}\right).
\end{equation*} 
Let 
$
\alpha = \sum k_{\scriptscriptstyle i}\alpha_{\scriptscriptstyle i}  + 
\alpha_{r + \scriptscriptstyle 1}
$ 
($k_{\scriptscriptstyle i} = 0$ or $1$) be a root in $\Phi_{\scriptscriptstyle 1},$ and take $w = w_{\scriptscriptstyle \alpha} = w_{\scriptscriptstyle 1}^{k_{\scalebox{.62}{$\scriptscriptstyle 1$}}} 
\cdots \, w_{r}^{k_{\scalebox{.8}{$\scriptscriptstyle r$}}};$ then 
$
\mathrm{M}_{w^{\scriptscriptstyle -1}}(w\mathrm{z}; q) = \mathrm{M}_{w}(\mathrm{z}; q)^{-1}
$ 
is a diagonal matrix. By \eqref{eq: functions-Lwp} and \eqref{eq: Final-expression-Sp}, we have 
\begin{equation*}
S_{\! p}^{w}  = \left(1 - \frac{1}{|p|} \right)
\left(D_{w, \, p}^{\scriptscriptstyle (2)}(\mathrm{sgn}(a)\xi)^{\deg \, p} \, + \; 
\frac{D_{w, \, p}^{\scriptscriptstyle (1)}}{2\prod_{k = 1}^{r}\left(1 - (q^{\, \scriptscriptstyle - 1\slash 2}\, \xi_{k})^{\deg \, p}\right)} 
\left. \, + \;\, \frac{D_{w, \, p}^{\scriptscriptstyle (3)}}{2\prod_{k = 1}^{r}\left(1 + (q^{\, \scriptscriptstyle - 1\slash 2}\, \xi_{k})^{\deg \, p}\right)}\right)\right\vert_{\scriptscriptstyle \xi   \, = \frac{1}{q \, \mathrm{sgn}(a)\, 
\xi_{\scalebox{.8}{$\scriptscriptstyle 1$}}^{k_{\scalebox{.62}{$\scriptscriptstyle 1$}}} \! \cdots \, \xi_{r}^{k_{\scalebox{.8}{$\scriptscriptstyle r$}}}}}
\end{equation*} 
where 
$
\mathrm{M}_{w}\!\left((q^{\, \scriptscriptstyle - 1\slash 2}\, \xi_{\scriptscriptstyle 1})^{\deg \, p}\!, \ldots, (q^{\, \scriptscriptstyle - 1\slash 2}\, \xi_{r})^{\deg \, p}; |p| \right)^{\scriptscriptstyle -1} \! 
= \; \mathrm{diag}(D_{w, \, p}^{\scriptscriptstyle (1)}, D_{w, \, p}^{\scriptscriptstyle (2)}, D_{w, \, p}^{\scriptscriptstyle (3)}).
$ 
This diagonal matrix can be easily computed by using the cocycle relation \eqref{eq: def-cocycle2} and induction on the length of $w;$ we have 
$
\mathrm{M}_{w}(\mathrm{z}; q)^{\scriptscriptstyle -1} \! 
= \prod_{i} 
\mathrm{M}_{w_{\scalebox{.7}{$\scriptscriptstyle i$}}}\!(\mathrm{z}; q)^{ - k_{\scalebox{.75}{$\scriptscriptstyle i$}}}\!,
$ 
and by \eqref{eq: def-cocycle1}, 
$$
\mathrm{M}_{w_{\scalebox{.7}{$\scriptscriptstyle i$}}}\!(\mathrm{z}; q)^{\scriptscriptstyle -1} \! 
= \, \mathrm{diag}\!\left( - \frac{q z_{\scalebox{.9}{$\scriptscriptstyle i$}} (1 - z_{\scalebox{.9}{$\scriptscriptstyle i$}})}
{1 - q z_{\scalebox{.9}{$\scriptscriptstyle i$}}}, 
\, \sqrt{q} z_{\scalebox{.9}{$\scriptscriptstyle i$}}, 
\, \frac{q z_{\scalebox{.9}{$\scriptscriptstyle i$}} 
(1 + z_{\scalebox{.9}{$\scriptscriptstyle i$}})}
{1 + q z_{\scalebox{.9}{$\scriptscriptstyle i$}}} \right) 
\qquad \text{($i = 1, \ldots, r$)}.
$$ 
Accordingly, 
$
S_{\! p}^{w}(\xi_{\scriptscriptstyle 1}, \ldots, \xi_{r}) 
= S_{\! p}(\xi_{\scriptscriptstyle 1}^{\varepsilon_{\scalebox{.8}{$\scriptscriptstyle 1$}}}\!, \ldots, \xi_{r}^{\varepsilon_{r}}),
$ 
where $\varepsilon_{\scalebox{.9}{$\scriptscriptstyle i$}} : = 1 - 2k_{\scalebox{.9}{$\scriptscriptstyle i$}}.$ We also have 
\begin{equation*}
\begin{split}
2^{{\scriptscriptstyle - l(w)}}
\Gamma_{\! w}(a_{\scriptscriptstyle 2}, a; \zeta_{a})
& = (1, \, \mathrm{sgn}(a_{\scriptscriptstyle 2}), \, 0) \, \cdot \, \mathrm{M}_{w}\big(q^{\scriptscriptstyle 1\slash 2}\, \xi_{\scriptscriptstyle 1}, 
\ldots,\, q^{\scriptscriptstyle 1\slash 2}\, \xi_{r}; 1\slash q \big)\, \cdot \,  
^{t}\!(1\slash 2, \, \mathrm{sgn}(a)\slash 2, \, 1\slash 2) \\
& = \, \frac{1}{2}\prod_{i = 1}^{r} \left(\frac{1 -  q^{\scriptscriptstyle 1\slash 2}\, 
\xi_{\scalebox{1.2}{$\scriptscriptstyle i$}}^{\scriptscriptstyle -1}}
{1 - q^{\scriptscriptstyle 1\slash 2}\, \xi_{\scalebox{1.2}{$\scriptscriptstyle i$} }^{} }
\right)^{\! (1  -  \varepsilon_{\scalebox{.8}{$\scriptscriptstyle i$}})\slash 2}   \! 
+ \;\,   \frac{1}{2}\mathrm{sgn}(a)\mathrm{sgn}(a_{\scriptscriptstyle 2})\prod_{i = 1}^{r}
\xi_{\scalebox{1.2}{$\scriptscriptstyle i$}}^{\scriptscriptstyle (\varepsilon_{\scalebox{.8}{$\scriptscriptstyle i$}} - 1)\slash 2}.
\end{split}
\end{equation*} 
The product 
$
\prod_{p} S_{\! p}(\xi_{\scriptscriptstyle 1}, \ldots, \xi_{r})
$ 
does not converge when $|\xi_{k}| = 1,$ but it has meromorphic continuation. Indeed, letting 
$A_{\! p}(\xi_{\scriptscriptstyle 1}, \ldots, \xi_{r})$ be defined by 
\[
A_{\! p}(\xi_{\scriptscriptstyle 1}, \ldots, \xi_{r}) \;\;
= \prod_{1 \, \le \, i \, \le \, j \, \le \, r} 
\left(1 - \frac{(\xi_{\scalebox{1.2}{$\scriptscriptstyle i$}} 
\xi_{\scalebox{1.2}{$\scriptscriptstyle j$}})^{\deg \, p}}{|p|} \right) 
\, \cdot \, \left( 1 \, + \, \frac{- \, 2 + \prod_{k = 1}^{r}\big(1 - (q^{\, \scriptscriptstyle - 1\slash 2}\, \xi_{k})^{\deg \, p}\big)^{\!\scriptscriptstyle -1} + \, \prod_{k = 1}^{r}\big(1 + (q^{\, \scriptscriptstyle - 1\slash 2}\, \xi_{k})^{\deg \, p}\big)^{\!\scriptscriptstyle -1}}{2 \left(1 + |p|^{\scriptscriptstyle -1}\right)} 
\right)
\] 
it is easy to see that the product 
$
\prod_{p} A_{\! p}(\xi_{\scriptscriptstyle 1}, \ldots, \xi_{r})
$ 
is absolutely convergent in the polydisk 
$
|\xi_{k}| \le q^{\scriptscriptstyle \frac{1}{4} - \varepsilon}
$ 
($k = 1, \ldots, r$), with sufficiently small positive $\varepsilon.$ It is clear that 
\[
\prod_{p} S_{\! p}(\xi_{\scriptscriptstyle 1}, \ldots, \xi_{r}) 
= \frac{1}{\zeta(2)} 
\!\prod_{1 \, \le \, i \, \le \, j \, \le \, r} 
\left(1 - \xi_{\scalebox{1.2}{$\scriptscriptstyle i$}} 
\xi_{\scalebox{1.2}{$\scriptscriptstyle j$}}\right)^{\scriptscriptstyle -1} 
\cdot \; \prod_{p} A_{\! p}(\xi_{\scriptscriptstyle 1}, \ldots, \xi_{r}).
\] 
Thus if we write the right-hand side of this as 
$
\zeta(2)^{\scriptscriptstyle -1}
G(\xi_{\scriptscriptstyle 1}, \ldots, \xi_{r}),
$ 
then we can express 
\begin{equation} \label{eq: Main-residue-formula}
\begin{split}
\frak{S}_{\scriptscriptstyle 1} 
& = \frac{1}{\zeta(2)}\sum_{\varepsilon_{\scalebox{.8}{$\scriptscriptstyle i$}} = \pm 1} 
\, \left( \prod_{i = 1}^{r} \left(\frac{1 -  q^{\scriptscriptstyle 1\slash 2}
\, \xi_{\scalebox{1.2}{$\scriptscriptstyle i$}}^{\scriptscriptstyle -1}}
{1 - q^{\scriptscriptstyle 1\slash 2}\, \xi_{\scalebox{1.2}{$\scriptscriptstyle i$}}^{}}
\right)^{\! (1  -  \varepsilon_{\scalebox{.8}{$\scriptscriptstyle i$}})\slash 2}
\cdot \; \frac{G(\xi_{\scriptscriptstyle 1}^{\varepsilon_{\scalebox{.8}{$\scriptscriptstyle 1$}}}\!, \ldots, \xi_{r}^{\varepsilon_{r}}\!)}
{1  -  q^{2}\, \xi_{\scriptscriptstyle 1}^{1  -  \varepsilon_{\scalebox{.8}{$\scriptscriptstyle 1$}}} 
\cdots \;  \xi_{r}^{1  -  \varepsilon_{\scriptscriptstyle r}}\xi^{^{2}}} \right)\\
& + \, \frac{(q \, \xi)\, \mathrm{sgn}(a_{\scriptscriptstyle 2})}{\zeta(2)} 
\!\sum_{\varepsilon_{\scalebox{.8}{$\scriptscriptstyle i$}} = \pm 1} 
\frac{G(\xi_{\scriptscriptstyle 1}^{\varepsilon_{\scalebox{.8}{$\scriptscriptstyle 1$}}}\!, \ldots, \xi_{r}^{\varepsilon_{r}} \!)}
{1  -  q^{2}\, \xi_{\scriptscriptstyle 1}^{1  -  \varepsilon_{\scalebox{.8}{$\scriptscriptstyle 1$}}} 
\cdots \;  \xi_{r}^{1  -  \varepsilon_{\scriptscriptstyle r}}\xi^{^{2}}}.
\end{split}
\end{equation} 
It turns out that this function can be defined and is analytic in an entire neighborhood of 
$|\xi_{k}|  = 1$ ($k = 1, \ldots, r$). \!To see this, \!we need the following lemma -- a variant of \cite[Lemma 2.5.2]{CFKRS}.

\vskip10pt
\begin{lem} \label{average-to-integral-vers1} --- Let $a_{\scalebox{.97}{$\scriptscriptstyle 1$}}, \ldots, a_{r}$ be distinct non-zero complex numbers such that $a_{\scriptscriptstyle i}a_{\!\scriptscriptstyle j} \ne 1$ for all $1\le i, j \le r.$ Suppose $h$ is a symmetric function of $r$ variables, holomorphic on a domain containing 
	$
	\Big(a_{\scriptscriptstyle 1}^{\delta_{\scalebox{.85}{$\scriptscriptstyle 1$}}}\!, \ldots, a_{r}^{\delta_{r}}\Big)
	$ 
	for all 
	$
	(\delta_{\scalebox{.97}{$\scriptscriptstyle 1$}}, \ldots, \delta_{r}) \in \{\pm 1 \}^{r}.
	$ 
	Then 
	\begin{equation*}
	\sum_{\delta_{\!\scriptscriptstyle j} \, = \, \pm 1} 
	\frac{h\big(a_{\scriptscriptstyle 1}^{\delta_{\scalebox{.85}{$\scriptscriptstyle 1$}}}\!, \ldots, a_{r}^{\delta_{r}}\big)}
	{\prod_{1 \, \le \, i \, \le \, j \, \le \, r} 
		\Big(1 - a_{i}^{\delta_{i}}a_{\!\scriptscriptstyle j}^{\delta_{\scalebox{.75}{$\scriptscriptstyle j$}}}\Big)}
	= \frac{(-1)^{r(r + 1)\slash 2}}{r!}\frac{1}{(2\pi \sqrt{-1})^{^{\! r}}}\!\oint \cdots \oint
	h(z_{\scalebox{.97}{$\scriptscriptstyle 1$}}, \ldots, z_{r}) 
	\frac{\prod_{1\le i < j \le r}(z_{\!\scriptscriptstyle j}^{} - z_{\scriptscriptstyle i}^{})^{\scriptscriptstyle 2}
		(1 - z_{\scriptscriptstyle i}^{} z_{\!\scriptscriptstyle j}^{})}
	{\prod_{i, j = 1}^{r} (1 - z_{\scriptscriptstyle i}^{}a_{\!\scriptscriptstyle j}^{})
		(1 - z_{\scriptscriptstyle i}^{}a_{\!\scriptscriptstyle j}^{\scriptscriptstyle -1})}\, 
\frac{d z_{\scalebox{.85}{$\scriptscriptstyle 1$}}^{}}{z_{\scalebox{.85}{$\scriptscriptstyle 1$}}^{\scriptscriptstyle r}} \cdots 
	\frac{d z_{\scriptscriptstyle r}^{}}{z_{\scriptscriptstyle r}^{\scriptscriptstyle r}}
	\end{equation*}
	where each path of integration encloses the $a_{\!\scriptscriptstyle j}^{ \scriptscriptstyle \pm 1}\!,$ but not 
	$
	z_{\!\scriptscriptstyle j}^{} = 0.
	$ 
\end{lem}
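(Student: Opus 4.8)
The plan is to prove Lemma \ref{average-to-integral-vers1} by recognizing the left-hand side as a sum of residues of the integrand on the right. First I would denote by $h(z_{1},\ldots,z_{r})$ the integrand
\[
I(z_{1},\ldots,z_{r}) \, := \, h(z_{1},\ldots,z_{r})\,
\frac{\prod_{1\le i < j \le r}(z_{j} - z_{i})^{2}(1 - z_{i}z_{j})}
{\prod_{i, j = 1}^{r}(1 - z_{i}a_{j})(1 - z_{i}a_{j}^{-1})}\,
\frac{1}{z_{1}^{r}\cdots z_{r}^{r}},
\]
and compute the iterated contour integral by residues, one variable at a time. For each variable $z_{i}$, the only poles inside the contour are at $z_{i} = a_{j}$ and $z_{i} = a_{j}^{-1}$ for $j = 1,\ldots,r$ (the pole at $z_{i} = 0$ is explicitly excluded, and the factor $z_{i}^{-r}$ in the denominator produces no pole inside the contour since $0$ is not enclosed). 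Since the denominator $\prod_{i,j}(1 - z_{i}a_{j})(1 - z_{i}a_{j}^{-1})$ only has simple zeros (using $a_{i}\ne a_{j}$ for $i\ne j$ and $a_{i}a_{j}\ne 1$), the residue computation is straightforward: picking up the residue at $z_{i} = a_{\sigma(i)}^{\delta_{i}}$ for each choice of a function $\sigma\colon\{1,\ldots,r\}\to\{1,\ldots,r\}$ and signs $\delta_{i}\in\{\pm1\}$.

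Second I would argue that the terms where $\sigma$ is \emph{not} a bijection vanish. This is the standard mechanism behind such identities: if $\sigma(i) = \sigma(i')$ for some $i \ne i'$, then after taking residues at $z_{i} = a_{\sigma(i)}^{\delta_{i}}$ and $z_{i'} = a_{\sigma(i)}^{\delta_{i'}}$ the Vandermonde-type factor $\prod_{i<j}(z_{j} - z_{i})^{2}$ forces a zero when $\delta_{i} = \delta_{i'}$ (two variables are set equal), and when $\delta_{i}\ne\delta_{i'}$ the factor $\prod_{i<j}(1 - z_{i}z_{j})$ contributes $1 - a_{\sigma(i)}a_{\sigma(i)}^{-1} = 0$. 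So only permutations $\sigma\in S_{r}$ survive. For each such permutation and each sign vector $(\delta_{1},\ldots,\delta_{r})$, the residue is computed explicitly; by the symmetry of $h$ (and of the whole structure of the remaining factors under simultaneously permuting the $a_{j}$) all $r!$ permutations give the same contribution, which cancels the $1/r!$ on the right-hand side. One then checks that the residue at the identity permutation, after simplifying $\prod_{i<j}(a_{\sigma(i)}^{\delta_{i}} - a_{\sigma(j)}^{\delta_{j}})^{2}(1 - a_{\sigma(i)}^{\delta_{i}}a_{\sigma(j)}^{\delta_{j}})$ against the corresponding factors of the denominator and the Jacobian factors $a_{j}^{\mp r}$ from $z_{i}^{-r}$ and the derivative of $1 - z_{i}a_{j}^{\mp1}$, reduces precisely to
\[
\frac{h\big(a_{1}^{\delta_{1}},\ldots,a_{r}^{\delta_{r}}\big)}
{\prod_{1\le i\le j\le r}\big(1 - a_{i}^{\delta_{i}}a_{j}^{\delta_{j}}\big)}
\]
up to the sign $(-1)^{r(r+1)/2}$, which accounts for the diagonal factors $1 - a_{i}^{\delta_{i}}a_{i}^{\delta_{i}}$ and the orientation conventions.

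The main obstacle I anticipate is the bookkeeping in the residue simplification: matching the precise power of each $a_{j}$ and the exact sign. Concretely, the factor $1/z_{1}^{r}\cdots z_{r}^{r}$ together with the $r$ factors $(1 - z_{i}a_{j})$ or $(1 - z_{i}a_{j}^{-1})$ whose zeros we are \emph{not} taking residues at, and the derivative $\frac{d}{dz_{i}}(1 - z_{i}a_{j}^{\pm1}) = -a_{j}^{\pm1}$ from the simple pole, all conspire to produce factors that must be tracked carefully to see the $a_{j}$-dependence cancel out of everything except $h$ and the product in the denominator. The cleanest way to organize this is to substitute $z_{i}\mapsto a_{i}^{\delta_{i}}$ symbolically and verify the identity for $r = 1$ and $r = 2$ by hand to fix all signs and normalizations, then present the general case as ``the same computation, with the $r!$ permutations accounting for the factorial and the Vandermonde/antisymmetry argument killing the non-bijective terms.'' This is precisely the strategy of \cite[Lemma 2.5.2]{CFKRS}, of which the present lemma is a mild variant (the difference being the extra factor $\prod_{i\le j}(1 - a_{i}^{\delta_{i}}a_{j}^{\delta_{j}})$ in the denominator on the left, matched by the $\prod_{i<j}(1 - z_{i}z_{j})$ in the numerator on the right together with the diagonal terms), so I would also simply cite that computation and indicate the modification rather than reproduce every step.
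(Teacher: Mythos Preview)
Your approach is correct and essentially identical to the paper's: compute the integral as a sum of residues at $z_i = a_{\sigma(i)}^{\delta_{\sigma(i)}}$, observe that the factors $(z_j - z_i)^2(1 - z_i z_j)$ in the numerator kill all non-bijective choices of $\sigma$, and then simplify the residue at each permutation to the summand on the left (the paper carries out the simplification explicitly via the identity $\frac{(a_j^{\delta_j} - a_i^{\delta_i})^2(1 - a_i^{\delta_i}a_j^{\delta_j})}{(1 - a_j^{\delta_j}a_i)(1 - a_j^{\delta_j}a_i^{-1})(1 - a_i^{\delta_i}a_j)(1 - a_i^{\delta_i}a_j^{-1})} = -\frac{a_i^{\delta_i}a_j^{\delta_j}}{1 - a_i^{\delta_i}a_j^{\delta_j}}$, which is exactly the bookkeeping you anticipate). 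The only minor difference is that the paper does the general computation directly rather than checking small cases first.
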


\begin{proof} The right-hand side is the sum of the residues at the poles (with non-zero coordinates) of the integrand.

	Fix a pole at $z_{\scriptscriptstyle i}^{} = a_{\!\scriptscriptstyle j_{\scalebox{.62}{$\scriptscriptstyle i$}}}^{\scriptscriptstyle \pm 1}$ for $i = 1, \ldots, r.$ We cannot have $j_{\scriptscriptstyle i} = \! j_{\scriptscriptstyle i'}$ for some $i < i',$ 
as the factor 
	$
	(z_{\scriptscriptstyle i'} \! - z_{\scriptscriptstyle i}^{})^{\scriptscriptstyle 2}(1 - z_{\scriptscriptstyle i}^{} 
	z_{\scriptscriptstyle i'}\!)
	$ 
	in the numerator vanishes when 
	$
	z_{\scriptscriptstyle i}^{} = a_{\!\scriptscriptstyle j_{\scalebox{.62}{$\scriptscriptstyle i$}}}^{\scriptscriptstyle \pm 1}
	$ 
	and 
	$
	z_{\scriptscriptstyle i'} \! = a_{\!\scriptscriptstyle j_{\scalebox{.62}{$\scriptscriptstyle i$}}}^{\scriptscriptstyle \pm 1}\!.
	$ 
	Thus the assignment $\sigma : i \mapsto j_{\scriptscriptstyle i}$ is an element of the symmetric group $\mathbb{S}_{r}.$ If we denote the exponent $\{\pm 1\}$ of 
	$
	a_{\!\scriptscriptstyle j_{\scalebox{.62}{$\scriptscriptstyle i$}}} \! = a_{\scalebox{.95}{$\scriptscriptstyle \sigma(i)$}}
	$ 
	by $\delta_{\scalebox{.95}{$\scriptscriptstyle \sigma(i)$}},$ it follows (upon replacing $z_{\scriptscriptstyle i}$ by 
	$
	z_{\scalebox{.95}{$\scriptscriptstyle \sigma^{\scalebox{.7}{$\scriptscriptstyle -1$}}\!(i)$}}
	$) 
	that the residue of the integrand at the pole 
	$
	\Big(a_{\scalebox{.95}{$\scriptscriptstyle \sigma(1)$}}^{\delta_{\scalebox{.75}{$\scriptscriptstyle \sigma(1)$}}}\!, \ldots, a_{\scalebox{.95}{$\scriptscriptstyle \sigma(r)$}}^{\delta_{\scalebox{.75}{$\scriptscriptstyle \sigma(r)$}}}\!\Big)
	$ 
	is 
	\begin{equation*}
	\frac{h\big(a_{\scriptscriptstyle 1}^{\delta_{\scalebox{.85}{$\scriptscriptstyle 1$}}}\!, \ldots, a_{r}^{\delta_{r}}\big)}
	{\prod_{j = 1}^{r} a_{\!\scriptscriptstyle j}^{r\, \delta_{\scalebox{.85}{$\scriptscriptstyle j$}}}} \;\, \cdot 
	\prod_{1 \, \le \, i \, < \, j \, \le \, r}\frac{\big(a_{\!\scriptscriptstyle j}^{\delta_{\scalebox{.85}{$\scriptscriptstyle j$}}} 
		\! - a_{\scriptscriptstyle i}^{\delta_{i}}\big)^{\!\scriptscriptstyle 2}
		\big(1 - a_{\scriptscriptstyle i}^{\delta_{i}}a_{\!\scriptscriptstyle j}^{\delta_{\scalebox{.85}{$\scriptscriptstyle j$}}}\big)}
	{\big(1 - a_{\!\scriptscriptstyle j}^{\delta_{\scalebox{.85}{$\scriptscriptstyle j$}}}a_{\scriptscriptstyle i}^{}\big)
		\big(1 - a_{\!\scriptscriptstyle j}^{\delta_{\scalebox{.85}{$\scriptscriptstyle j$}}}a_{\scriptscriptstyle i}^{\scriptscriptstyle -1}\big)
		\big(1 - a_{\scriptscriptstyle i}^{\delta_{i}}a_{\!\scriptscriptstyle j}^{}\big)
		\big(1 - a_{\scriptscriptstyle i}^{\delta_{i}}a_{\!\scriptscriptstyle j}^{\scriptscriptstyle -1}\big)}
	\prod_{k = 1}^{r}\bigg(\! -\frac{a_{\scriptscriptstyle k}^{\delta_{\scalebox{.95}{$\scriptscriptstyle k$}}}}
	{1 - a_{\scriptscriptstyle k}^{2\delta_{\scalebox{.95}{$\scriptscriptstyle k$}}}}\bigg).
	\end{equation*} 
	For $i < j,$ we have
	\begin{equation*}
	\frac{\big(a_{\!\scriptscriptstyle j}^{\delta_{\scalebox{.85}{$\scriptscriptstyle j$}}} \! - a_{\scriptscriptstyle i}^{\delta_{i}}\big)^{\!\scriptscriptstyle 2}\big(1 - a_{\scriptscriptstyle i}^{\delta_{i}}a_{\!\scriptscriptstyle j}^{\delta_{\scalebox{.85}{$\scriptscriptstyle j$}}}\big)}
	{\big(1 - a_{\!\scriptscriptstyle j}^{\delta_{\scalebox{.85}{$\scriptscriptstyle j$}}}a_{\scriptscriptstyle i}^{}\big)
		\big(1 - a_{\!\scriptscriptstyle j}^{\delta_{\scalebox{.85}{$\scriptscriptstyle j$}}}
		a_{\scriptscriptstyle i}^{\scriptscriptstyle -1}\big)
		\big(1 - a_{\scriptscriptstyle i}^{\delta_{i}}a_{\!\scriptscriptstyle j}^{}\big)
		\big(1 - a_{\scriptscriptstyle i}^{\delta_{i}}a_{\!\scriptscriptstyle j}^{\scriptscriptstyle -1}\big)} 
	= -\, \frac{a_{\scriptscriptstyle i}^{\delta_{i}}a_{\!\scriptscriptstyle j}^{\delta_{\scalebox{.85}{$\scriptscriptstyle j$}}}}
	{1 - a_{\scriptscriptstyle i}^{\delta_{i}}a_{\!\scriptscriptstyle j}^{\delta_{\scalebox{.85}{$\scriptscriptstyle j$}}}}
	\end{equation*} 
	and the equality in the lemma follows.
\end{proof} 

To apply the formula in the lemma, note that in \eqref{eq: Main-residue-formula} we can express: 
\[
\prod_{i = 1}^{r} \left(\frac{1 -  q^{\scriptscriptstyle 1\slash 2}
\xi_{\scalebox{1.2}{$\scriptscriptstyle i$}}^{\scriptscriptstyle -1}}
{1 - q^{\scriptscriptstyle 1\slash 2}\xi_{\scalebox{1.2}{$\scriptscriptstyle i$}}^{}}
\right)^{\! (1  -  \varepsilon_{\scalebox{.8}{$\scriptscriptstyle i$}})\slash 2}  
\! = \;\; \prod_{i = 1}^{r} \!\frac{1 -  q^{\scriptscriptstyle 1\slash 2}
\xi_{\scalebox{1.2}{$\scriptscriptstyle i$}}^{\varepsilon_{\scalebox{.8}{$\scriptscriptstyle i$}}}}
{1 - q^{\scriptscriptstyle 1\slash 2} \xi_{\scalebox{1.2}{$\scriptscriptstyle i$}}^{}}  
\qquad \text{(for $\varepsilon_{\scalebox{.9}{$\scriptscriptstyle i$}} \in \{\pm 1\},$ $i = 1, \ldots, r$).}
\] 
Thus if we let 
\begin{equation*}
\begin{split}
&\mathscr{H}_{\scriptscriptstyle 1}\!\left(\xi_{\scriptscriptstyle 1}, \ldots, \xi_{r}, u\right) \, = \, \frac{\prod_{i = 1}^{r} \left(1 -  q^{\scriptscriptstyle 1\slash 2}\xi_{\scriptscriptstyle i}\right) }
{1 - u \left(\xi_{\scriptscriptstyle 1} 
	\cdots \,  \xi_{r}\right)^{\scriptscriptstyle -1}}
\, \cdot\, \mathrm{A}\!\left(\xi_{\scriptscriptstyle 1}, \ldots, \xi_{r}\right) \\
&\mathscr{H}_{\scriptscriptstyle 2}\!\left(\xi_{\scriptscriptstyle 1}, \ldots, \xi_{r}, u\right) \, = \, 
\frac{\mathrm{A}\!\left(\xi_{\scriptscriptstyle 1}, \ldots, \xi_{r}\right)}
{1 - u \left(\xi_{\scriptscriptstyle 1} 
	\cdots \,  \xi_{r}\right)^{\scriptscriptstyle -1}}
\end{split}
\end{equation*}
with 
$
\mathrm{A}\!\left(\xi_{\scriptscriptstyle 1}, \ldots, \xi_{r}\right) 
: = \prod_{p} A_{\! p}\!\left(\xi_{\scriptscriptstyle 1}, \ldots, \xi_{r}\right)\!,
$ 
then we can write 
\begin{equation*}
\begin{split}
\frak{S}_{\scriptscriptstyle 1} & = \frac{1}{\zeta(2)}\prod_{i = 1}^{r} \left(1 - q^{\scriptscriptstyle 1\slash 2}\, \xi_{\scalebox{1.2}{$\scriptscriptstyle i$}}\right)^{\!\scriptscriptstyle - 1}
\, \cdot\, \sum_{\varepsilon_{\scalebox{.8}{$\scriptscriptstyle i$}} = \pm 1} 
\frac{\mathscr{H}_{\scriptscriptstyle 1}
\!\left(\xi_{\scriptscriptstyle 1}^{\varepsilon_{\scalebox{.8}{$\scriptscriptstyle 1$}}}\!, \ldots, \xi_{r}^{\varepsilon_{r}}\!, 
q^{\scriptscriptstyle 2}\, \xi_{\scriptscriptstyle 1}^{} \cdots \;  \xi_{r}^{}\, \xi^{\scriptscriptstyle 2}\right)}
{\prod_{1 \, \le \, i \, \le \, j \, \le \, r} 
\big(1 - \xi_{\scalebox{1.1}{$\scriptscriptstyle i$}}^{\varepsilon_{\scalebox{.8}{$\scriptscriptstyle i$}}} 
\xi_{\scalebox{.93}{$\scriptscriptstyle j$}}^{\varepsilon_{\!\scalebox{.65}{$\scriptscriptstyle j$}}}\big)}\\
& + \, \frac{\mathrm{sgn}(a_{\scriptscriptstyle 2}) q\, \xi}{\zeta(2)}
\!\sum_{\varepsilon_{\scalebox{.8}{$\scriptscriptstyle i$}} = \pm 1} 
\frac{\mathscr{H}_{\scriptscriptstyle 2}
\!\left(\xi_{\scriptscriptstyle 1}^{\varepsilon_{\scalebox{.8}{$\scriptscriptstyle 1$}}}\!, \ldots, \xi_{r}^{\varepsilon_{r}}\!, 
	q^{\scriptscriptstyle 2}\, \xi_{\scriptscriptstyle 1}^{} \cdots \;  \xi_{r}^{}\, \xi^{\scriptscriptstyle 2}\right)}
{\prod_{1 \, \le \, i \, \le \, j \, \le \, r} 
	\big(1 - \xi_{\scalebox{1.1}{$\scriptscriptstyle i$}}^{\varepsilon_{\scalebox{.8}{$\scriptscriptstyle i$}}} 
	\xi_{\scalebox{.93}{$\scriptscriptstyle j$}}^{\varepsilon_{\!\scalebox{.65}{$\scriptscriptstyle j$}}}\big)}.
\end{split}
\end{equation*} 
The sum in the right-hand side can be expressed, using Lemma \ref{average-to-integral-vers1}, as a multiple integral, and note that the integrand is\linebreak defined and holomorphic in a neighborhood of $\xi_{\scriptscriptstyle 1} = \cdots = \xi_{r} = 1.$ 
Thus for each $D \in \mathbb{N},$ the coefficient $Q_{\scriptscriptstyle 1}\!(D, q)$ can be obtained from the integral 
\[
\frac{1}{2 \pi \sqrt{-1}}\; \oint\limits_{|\xi| \, = \, {\scalebox{1.5}{$\scriptscriptstyle q$}}^{\, \scriptscriptstyle - 2}} 
\!\frak{S}_{\scriptscriptstyle 1}
\!\left(q^{\scriptscriptstyle -1\slash 2}\!, \ldots, 
q^{\scriptscriptstyle -1\slash 2}\!,  \xi, 1\right) 
\frac{d\xi}{\xi^{\scriptscriptstyle D + 1}}
\] 
see for comparison \cite[Conjecture~5]{AK}.

\subsection{The case $n = 2$} \label{Case n=2}
First the set
$
\Phi_{\scriptscriptstyle 2}
$
is computed as follows. Let $\alpha \in \Phi_{\scriptscriptstyle 2}$
satisfy the condition \eqref{eq: condition*}. Then the coefficients of
$\alpha$ also satisfy the inequality \eqref{eq: condition**}, i.e.
\[
  \sum_{j = 1}^{r} k_{\!\scriptscriptstyle j} \le 3.
\]
It follows that at most three $k_{\!\scriptscriptstyle j}$ ($1 \le j
\le r$) can be non-zero, and hence by \eqref{eq: effect-coeff-roots}, 
$
w_{r + {\scriptscriptstyle 1}}(\alpha)
= \alpha_{\!\scriptscriptstyle j_{\scalebox{.62}{$\scriptscriptstyle 1$}}} \! + 
\alpha_{\!\scriptscriptstyle j_{\scalebox{.62}{$\scriptscriptstyle 2$}}}
$ 
(with $1 \le j_{\scriptscriptstyle 1} < j_{\scriptscriptstyle 2} \le r$), or 
$
w_{r +  {\scriptscriptstyle 1}}(\alpha) =
\alpha_{\!\scriptscriptstyle j_{\scalebox{.62}{$\scriptscriptstyle 1$}}} \!+ 
\alpha_{\!\scriptscriptstyle j_{\scalebox{.62}{$\scriptscriptstyle 2$}}} 
\!+ \alpha_{\!\scriptscriptstyle j_{\scalebox{.62}{$\scriptscriptstyle 3$}}} 
\!+ \alpha_{r + {\scriptscriptstyle 1}}
$ 
for some $1 \le j_{\scriptscriptstyle 1} < j_{\scriptscriptstyle 2} <
j_{\scriptscriptstyle 3} \le r.$ 
However, $w_{\!\scriptscriptstyle j_{\scalebox{.62}{$\scriptscriptstyle 1$}}}
\!(\alpha_{\!\scriptscriptstyle j_{\scalebox{.62}{$\scriptscriptstyle 1$}}} \! + 
\alpha_{\!\scriptscriptstyle j_{\scalebox{.62}{$\scriptscriptstyle 2$}}}\!) =
-\,\alpha_{\!\scriptscriptstyle j_{\scalebox{.62}{$\scriptscriptstyle 1$}}}
 \! + \alpha_{\!\scriptscriptstyle
   j_{\scalebox{.62}{$\scriptscriptstyle 2$}}} \notin \Delta,$ 
and so
\[
  \alpha = \alpha_{\!\scriptscriptstyle j_{\scalebox{.62}{$\scriptscriptstyle 1$}}} \!+ 
\alpha_{\!\scriptscriptstyle j_{\scalebox{.62}{$\scriptscriptstyle 2$}}} \!+ 
\alpha_{\!\scriptscriptstyle j_{\scalebox{.62}{$\scriptscriptstyle 3$}}} 
 \! + 2\alpha_{r +  {\scriptscriptstyle 1}}.
\]
It is now clear that a positive real root $\alpha$ is in
$\Phi_{\scriptscriptstyle 2}$ if and only if it is of the form
\[
  \alpha \;\; = \sum_{\substack{j = 1 \\ j \neq
    j_{\scalebox{.71}{$\scriptscriptstyle 1$}}, \,
    j_{\scalebox{.71}{$\scriptscriptstyle 2$}}, \,
    j_{\scalebox{.71}{$\scriptscriptstyle 3$}}}}^{r}
k_{\!\scriptscriptstyle j}\alpha_{\!\scriptscriptstyle j} 
+ \alpha_{\!\scriptscriptstyle j_{\scalebox{.62}{$\scriptscriptstyle 1$}}}  \!+
\alpha_{\!\scriptscriptstyle j_{\scalebox{.62}{$\scriptscriptstyle 2$}}}  
\! +  \alpha_{\!\scriptscriptstyle j_{\scalebox{.62}{$\scriptscriptstyle 3$}}}  
\! +  2\alpha_{r +  {\scriptscriptstyle 1}}
\]
for some $1 \le j_{\scriptscriptstyle 1} < j_{\scriptscriptstyle 2} <
j_{\scriptscriptstyle 3} \le r,$ and with $k_{\!\scriptscriptstyle j}
\in \{0, 2\}$ for 
$
j \neq \! j_{\scriptscriptstyle 1}, j_{\scriptscriptstyle 2},
j_{\scriptscriptstyle 3}.
$

For $\alpha \in \Phi_{\scriptscriptstyle 2},$ we take the {W}eyl group
element sending $\alpha$ to $\alpha_{r + {\scriptscriptstyle 1}}$ to
be
\[
  w_{\scriptscriptstyle \alpha} :=
  w_{\!\scriptscriptstyle j_{\scalebox{.62}{$\scriptscriptstyle 1$}}}
  \!w_{\!\scriptscriptstyle j_{\scalebox{.62}{$\scriptscriptstyle 2$}}}
  \!w_{\!\scriptscriptstyle j_{\scalebox{.62}{$\scriptscriptstyle 3$}}}
  \!w_{r + {\scriptscriptstyle 1}}
  \!\prod_{\substack{j = 1 \\ k_{\!\scriptscriptstyle j} = 2}}^{r} w_{\!\scriptscriptstyle j}.
\]
This element is in reduced form, since the set of positive real roots
sent by $w_{\scriptscriptstyle \alpha}$ to negative real roots is
\[
  \{\alpha_{\!\scriptscriptstyle j} \!\}_{\!\scriptscriptstyle j \le r
    \,:\,  k_{\!\scalebox{.62}{$\scriptscriptstyle j$}} = 2} \cup
  \{\beta, \beta + \alpha_{\!\scriptscriptstyle
    j_{\scalebox{.62}{$\scriptscriptstyle 1$}}}\!, \, \beta + \alpha_{\!\scriptscriptstyle
    j_{\scalebox{.62}{$\scriptscriptstyle 2$}}}\!,\,
  \beta + \alpha_{\!\scriptscriptstyle j_{\scalebox{.62}{$\scriptscriptstyle 3$}}}\!\}
  \]
where $\beta = \sum \alpha_{\!\scriptscriptstyle j} + \alpha_{r +
  {\scriptscriptstyle 1}},$ the sum being over all $j \le r$ such that
$k_{\!\scriptscriptstyle j} = 2.$

To compute $\frak{S}_{\scriptscriptstyle 2}(\mathrm{z},
a_{\scriptscriptstyle 2}\!),$ consider first the case $r = 3.$
Thus
$
\Phi_{\scriptscriptstyle 2}
= \{\alpha_{\scriptscriptstyle 1} + \alpha_{\scriptscriptstyle 2}
+ \alpha_{\scriptscriptstyle 3} + 2 \alpha_{\scriptscriptstyle 4}\},
$
and
$w_{\scriptscriptstyle \alpha}
= w_{\scriptscriptstyle 1}
w_{\scriptscriptstyle 2}
w_{\scriptscriptstyle 3}
w_{\scriptscriptstyle 4}.$
Let $a \in \{1, \theta_{\scriptscriptstyle 0}\},$ and fix $\zeta_{a}$
such that $\zeta_{a}^{2} = \mathrm{sgn}(a).$ We shall compute
\begin{equation}  \label{eq: Secondary-princ-part-n=2_r=3} 
\frac{1}{2^{\scriptscriptstyle 5}}
\frac{\Gamma_{\!\! w_{\scalebox{.85}{$\scriptscriptstyle \alpha$}}}\!(a_{\scriptscriptstyle 2}, a;
  \zeta_{a})}{1 - \zeta_{a}q^{\scriptscriptstyle 3\slash 2} 
\mathrm{z}^{\scriptscriptstyle \alpha\slash 2}}
\prod_{p}S_{\! p}^{w_{\scalebox{.85}{$\scriptscriptstyle \alpha$}}}(\underline{z}, \zeta_{a}).
\end{equation} 
Using the cocycle relation \eqref{eq: def-cocycle2}, we have 
\[
\mathrm{M}_{w_{\scalebox{.85}{$\scriptscriptstyle \alpha$}}^{\scalebox{.85}{$\scriptscriptstyle -1$}}}(w_{\scriptscriptstyle \alpha}\mathrm{z}) =
\mathrm{M}_{w_{\scalebox{.85}{$\scriptscriptstyle
      \alpha$}}}^{}\!(\mathrm{z})^{\scriptscriptstyle -1} \!
= \mathrm{M}_{w_{\!\scalebox{.80}{$\scriptscriptstyle 1$}}^{}
  \!w_{\!\scalebox{.82}{$\scriptscriptstyle 2$}}^{}
  \!w_{\!\scalebox{.82}{$\scriptscriptstyle 3$}}^{}}
\!(w_{\scriptscriptstyle 4}\mathrm{z})^{\scriptscriptstyle -1}
\mathrm{M}_{w_{\!\scalebox{.82}{$\scriptscriptstyle 4$}}^{}}\!(\mathrm{z})^{\scriptscriptstyle -1}.
\]
Writing
$
\mathrm{M}_{w_{\!\scalebox{.80}{$\scriptscriptstyle 1$}}^{}
  \!w_{\!\scalebox{.82}{$\scriptscriptstyle 2$}}^{}
  \!w_{\!\scalebox{.82}{$\scriptscriptstyle 3$}}^{}}
\!(w_{\scriptscriptstyle 4}\mathrm{z})^{\scriptscriptstyle -1}
\!= \mathrm{diag}(\mathrm{d}_{\scriptscriptstyle 1},
\mathrm{d}_{\scriptscriptstyle 2}, \mathrm{d}_{\scriptscriptstyle 3})
$
and
$
\mathrm{M}_{w_{\!\scalebox{.82}{$\scriptscriptstyle 4$}}^{}}\!(\mathrm{z})^{\scriptscriptstyle -1}
\!= U\mathrm{diag}(\mathrm{e}_{\scriptscriptstyle 1},
\mathrm{e}_{\scriptscriptstyle 2}, \mathrm{e}_{\scriptscriptstyle 3})U,
$
it follows from \eqref{eq: functions-Lwp} that
\begin{equation} \label{eq: Explicit-Lwp-n=2-r=3} 
  \begin{split}
    L_{w_{\scalebox{.85}{$\scriptscriptstyle \alpha$}},\, p}^{\scriptscriptstyle (1)} & \, = \,
    \tfrac{1}{4}\mathrm{e}_{\scriptscriptstyle 1}
    (\mathrm{d}_{\scriptscriptstyle 1} \! +
    \mathrm{d}_{\scriptscriptstyle 3}
    \! + 2\mathrm{sgn}(a)\mathrm{d}_{\scriptscriptstyle 2})\,
    -\, \tfrac{1}{4}\mathrm{e}_{\scriptscriptstyle 3}
    (\mathrm{d}_{\scriptscriptstyle 1} \! +
    \mathrm{d}_{\scriptscriptstyle 3} \! -
    2\mathrm{sgn}(a)\mathrm{d}_{\scriptscriptstyle 2}) \\
    \tfrac{1}{2} \big(L_{w_{\scalebox{.85}{$\scriptscriptstyle
          \alpha$}},\, p}^{\scriptscriptstyle (2)}
    + L_{w_{\scalebox{.85}{$\scriptscriptstyle \alpha$}},\, p}^{\scriptscriptstyle (3)}\big)
\, & = \, \tfrac{1}{8}\mathrm{e}_{\scriptscriptstyle 1}
(\mathrm{d}_{\scriptscriptstyle 1} \! + \mathrm{d}_{\scriptscriptstyle 3}
\! + 2\mathrm{sgn}(a)\mathrm{d}_{\scriptscriptstyle 2})\,
+\, \tfrac{1}{8}\mathrm{e}_{\scriptscriptstyle 3}
(\mathrm{d}_{\scriptscriptstyle 1} \! + \mathrm{d}_{\scriptscriptstyle 3}
\! - 2\mathrm{sgn}(a)\mathrm{d}_{\scriptscriptstyle 2})
\, + \, \tfrac{1}{4}\mathrm{e}_{\scriptscriptstyle 2}
(\mathrm{d}_{\scriptscriptstyle 1} \! - \mathrm{d}_{\scriptscriptstyle 3}) \\
\tfrac{1}{2}\big(L_{w_{\scalebox{.85}{$\scriptscriptstyle \alpha$}},\, p}^{\scriptscriptstyle (3)}
- L_{w_{\scalebox{.85}{$\scriptscriptstyle \alpha$}},\, p}^{\scriptscriptstyle (2)}\big)\, & = \, \tfrac{1}{8}\mathrm{e}_{\scriptscriptstyle 1}
(\mathrm{d}_{\scriptscriptstyle 1} \! + \mathrm{d}_{\scriptscriptstyle 3}
\! + 2\mathrm{sgn}(a)\mathrm{d}_{\scriptscriptstyle 2})\,
+\, \tfrac{1}{8}\mathrm{e}_{\scriptscriptstyle 3}
(\mathrm{d}_{\scriptscriptstyle 1} \! + \mathrm{d}_{\scriptscriptstyle 3}
\! - 2\mathrm{sgn}(a)\mathrm{d}_{\scriptscriptstyle 2})
\, - \, \tfrac{1}{4}\mathrm{e}_{\scriptscriptstyle 2}
(\mathrm{d}_{\scriptscriptstyle 1} \! - \mathrm{d}_{\scriptscriptstyle 3}).
\end{split}
\end{equation}
Here $p$ is a monic linear polynomial.
Setting
$
z_{\scriptscriptstyle k}^{}
\!= q^{-s_{\scalebox{.65}{$\scriptscriptstyle k$}}}
\!:= q^{\, \scriptscriptstyle -1\slash 2}
\xi_{\scriptscriptstyle k}^{\scriptscriptstyle 2}
$
($k = 1, 2, 3$), and hence
$
z_{\scriptscriptstyle 4}^{} \!=
(q^{\scriptscriptstyle 3\slash 4}
\zeta_{a}
\xi_{\scriptscriptstyle 1}
\xi_{\scriptscriptstyle 2}
\xi_{\scriptscriptstyle 3})^{\scriptscriptstyle -1}\!,
$
we have explicitly 
\begin{equation} \label{eq: Explicit-d1-d2-d3} 
  \begin{split} 
&\mathrm{d}_{\scriptscriptstyle 1} 
= \frac{\left(1 - \frac{\xi_{\scriptscriptstyle 1}}{\zeta_{a}
q^{\scriptscriptstyle 3\slash 4}\xi_{\scriptscriptstyle 2}\xi_{\scriptscriptstyle 3}}\right)
\!\left(1 - \frac{\xi_{\scriptscriptstyle 2}}{\zeta_{a}
q^{\scriptscriptstyle 3\slash 4}\xi_{\scriptscriptstyle 1}\xi_{\scriptscriptstyle 3}}\right)
\!\left(1 - \frac{\xi_{\scriptscriptstyle 3}}{\zeta_{a}
q^{\scriptscriptstyle 3\slash 4}\xi_{\scriptscriptstyle 1}\xi_{\scriptscriptstyle 2}}\right)}
{\left(1 - \frac{\zeta_{a}\xi_{\scriptscriptstyle 2}\xi_{\scriptscriptstyle 3}}
{q^{\scriptscriptstyle 1\slash 4}\xi_{\scriptscriptstyle 1}}\right)
\!\left(1 - \frac{\zeta_{a}\xi_{\scriptscriptstyle 1}\xi_{\scriptscriptstyle 3}}
{q^{\scriptscriptstyle 1\slash 4}\xi_{\scriptscriptstyle 2}}\right)
\!\left(1 - \frac{\zeta_{a}\xi_{\scriptscriptstyle 1}\xi_{\scriptscriptstyle 2}}
{q^{\scriptscriptstyle 1\slash 4}\xi_{\scriptscriptstyle 3}}\right)},\;\;\; 
\mathrm{d}_{\scriptscriptstyle 2} = \frac{\zeta_{a}}
{q^{\scriptscriptstyle 3\slash 4}\xi_{\scriptscriptstyle 1}\xi_{\scriptscriptstyle 2}
\xi_{\scriptscriptstyle 3}} \\
&\mathrm{d}_{\scriptscriptstyle 3} 
= \frac{\left(1 + \frac{\xi_{\scriptscriptstyle 1}}{\zeta_{a}
q^{\scriptscriptstyle 3\slash 4}\xi_{\scriptscriptstyle 2}\xi_{\scriptscriptstyle 3}}\right)
\!\left(1 + \frac{\xi_{\scriptscriptstyle 2}}{\zeta_{a}
q^{\scriptscriptstyle 3\slash 4}\xi_{\scriptscriptstyle 1}\xi_{\scriptscriptstyle 3}}\right)
\!\left(1 + \frac{\xi_{\scriptscriptstyle 3}}{\zeta_{a}
q^{\scriptscriptstyle 3\slash 4}\xi_{\scriptscriptstyle 1}\xi_{\scriptscriptstyle 2}}\right)}
{\left(1 + \frac{\zeta_{a}\xi_{\scriptscriptstyle 2}\xi_{\scriptscriptstyle 3}}
{q^{\scriptscriptstyle 1\slash 4}\xi_{\scriptscriptstyle 1}}\right)
\!\left(1 + \frac{\zeta_{a}\xi_{\scriptscriptstyle 1}\xi_{\scriptscriptstyle 3}}
{q^{\scriptscriptstyle 1\slash 4}\xi_{\scriptscriptstyle 2}}\right)
\!\left(1 + \frac{\zeta_{a}\xi_{\scriptscriptstyle 1}\xi_{\scriptscriptstyle 2}}
{q^{\scriptscriptstyle 1\slash 4}\xi_{\scriptscriptstyle 3}}\right)}
\end{split}
\end{equation}
and
\begin{equation} \label{eq: Explicit-e1-e2-e3} 
\mathrm{e}_{\scriptscriptstyle 1} =
\frac{1 - (\zeta_{a}q^{\scriptscriptstyle 3\slash 4}
  \xi_{\scriptscriptstyle 1}\xi_{\scriptscriptstyle 2}\xi_{\scriptscriptstyle 3})^{\scriptscriptstyle -1}}
{1-\zeta_{a}
  q^{\, \scriptscriptstyle -1\slash 4}\xi_{\scriptscriptstyle 1}\xi_{\scriptscriptstyle 2}
  \xi_{\scriptscriptstyle 3}},\;\;\; \mathrm{e}_{2}
= (\zeta_{a}
q^{\scriptscriptstyle 1\slash 4}\xi_{\scriptscriptstyle 1}\xi_{\scriptscriptstyle 2}
\xi_{\scriptscriptstyle 3})^{\scriptscriptstyle -1},\;\;\, \text{and}
\;\;\;\,
\mathrm{e}_{3} =
\frac{1 + (\zeta_{a}q^{\scriptscriptstyle 3\slash 4}
  \xi_{\scriptscriptstyle 1}
  \xi_{\scriptscriptstyle 2}
  \xi_{\scriptscriptstyle 3})^{\scriptscriptstyle -1}}
{1+\zeta_{a}
  q^{\, \scriptscriptstyle -1\slash 4}
  \xi_{\scriptscriptstyle 1}
  \xi_{\scriptscriptstyle 2}
  \xi_{\scriptscriptstyle 3}}.
\end{equation}
Thus, by \eqref{eq: Final-expression-Sp}, we find that
\[
  S_{\! p}^{w_{\!\scalebox{.85}{$\scriptscriptstyle \alpha^{^{}}$}}} \!=
  \frac{1 - \mathrm{sgn}(a)\mathfrak{t}_{\scriptscriptstyle 3}
    \!(\xi_{\scriptscriptstyle 1}^{\scriptscriptstyle \pm 1}\!,\, 
    \xi_{\scriptscriptstyle 2}^{\scriptscriptstyle \pm 1}\!,\, 
    \xi_{\scriptscriptstyle 3}^{\scriptscriptstyle \pm 1}\!)
    q^{\, \scriptscriptstyle -3\slash 2} -
    \cdots -\mathfrak{t}_{\scriptscriptstyle 12}
    (\xi_{\scriptscriptstyle 1}^{\scriptscriptstyle \pm 1}\!,\, 
    \xi_{\scriptscriptstyle 2}^{\scriptscriptstyle \pm 1}\!,\,
    \xi_{\scriptscriptstyle 3}^{\scriptscriptstyle \pm 1}\!)
    q^{\, \scriptscriptstyle -6} \! + q^{\, \scriptscriptstyle -7}}
  {\left(1 - \frac{\mathrm{sgn}(a)
        \xi_{\scriptscriptstyle 1}^{\scriptscriptstyle 2}
        \xi_{\scriptscriptstyle 2}^{\scriptscriptstyle 2}
        \xi_{\scriptscriptstyle 3}^{\scriptscriptstyle 2}}{\sqrt{q}}\right)
    \prod_{\! 1 \, \le \, i \, \le \, 3}
    \left(1 - \frac{\mathrm{sgn}(a)\xi_{\scriptscriptstyle i}^{\scriptscriptstyle 2}}
      {\sqrt{q}}\right)
    \prod_{\! 1 \, \le \, j \, \le \, 3}
    \left(1 - \frac{\mathrm{sgn}(a)
        \xi_{\scriptscriptstyle 1}^{\scriptscriptstyle 2}
        \xi_{\scriptscriptstyle 2}^{\scriptscriptstyle 2}
        \xi_{\scriptscriptstyle 3}^{\scriptscriptstyle 2}}
      {\sqrt{q}\, \xi_{\!\scriptscriptstyle j}^{\scriptscriptstyle 4}}\right)}
\]
where
$
\mathfrak{t}_{\scriptscriptstyle n}
\!(\xi_{\scriptscriptstyle 1}^{\scriptscriptstyle \pm 1}\!,\,
    \xi_{\scriptscriptstyle 2}^{\scriptscriptstyle \pm 1}\!,\,
    \xi_{\scriptscriptstyle 3}^{\scriptscriptstyle \pm 1}\!)
    $
    ($n = 3, \ldots, 12$) are symmetric polynomials in the variables
    $\xi_{\scriptscriptstyle 1}^{\scriptscriptstyle \pm 1}\!,\,
    \xi_{\scriptscriptstyle 2}^{\scriptscriptstyle \pm 1}\!,\,
    \xi_{\scriptscriptstyle 3}^{\scriptscriptstyle \pm 1}
    $
    (i.e., invariant under the hyperoctahedral
group $\mathbb{S}_{2} \wr \mathbb{S}_{3}$) with positive integral
coefficients. \!We regularize
$
S_{\! p}^{w_{\!\scalebox{.85}{$\scriptscriptstyle \alpha^{^{}}$}}}
$
\!by taking its numerator, which we shall denote by
$S_{\! p}^{\scriptscriptstyle \mathrm{reg}}\!.$ Note that the
denominator of $S_{\! p}^{w_{\!\scalebox{.85}{$\scriptscriptstyle \alpha^{^{}}$}}}$ \!is
$
R_{\!p}\!\left(\frac{\zeta_{a} \xi_{\scriptscriptstyle 2}\xi_{\scriptscriptstyle 3}}
  {q^{\scriptscriptstyle 1\slash 4}\xi_{\scriptscriptstyle 1}}\!,
  \frac{\zeta_{a}\xi_{\scriptscriptstyle 1}\xi_{\scriptscriptstyle 3}}
  {q^{\scriptscriptstyle 1\slash 4}\xi_{\scriptscriptstyle 2}}\!,
  \frac{\zeta_{a} \xi_{\scriptscriptstyle 1}\xi_{\scriptscriptstyle 2}}
  {q^{\scriptscriptstyle 1\slash 4}\xi_{\scriptscriptstyle 3}}\right)^{\!\scriptscriptstyle -1}\!,
$
where
\begin{equation} \label{eq: Local-factor-residue-r=3}
  R_{\! p}(z_{\scriptscriptstyle 1}^{}, z_{\scriptscriptstyle 2}^{}, z_{\scriptscriptstyle 3}^{}) =
  \left(1 - qz_{\scriptscriptstyle 1}^{\scriptscriptstyle 2}
    z_{\scriptscriptstyle 2}^{\scriptscriptstyle 2}
    z_{\scriptscriptstyle 3}^{\scriptscriptstyle 2}\right)^{\scriptscriptstyle -1}
  \; \cdot \prod_{1 \, \le \, i \, \le \, j \, \le \, 3}
  (1 - z_{\scriptscriptstyle i}^{} z_{\!\scriptscriptstyle j}^{})^{\scriptscriptstyle-1}
\end{equation}
is the local factor of the ``modified'' residue at
$z_{\scriptscriptstyle 4}^{}  = 1\slash q$
of the function
$
Z(z_{\scriptscriptstyle 1}^{}, z_{\scriptscriptstyle 2}^{},
z_{\scriptscriptstyle 3}^{}, z_{\scriptscriptstyle 4}^{})
: = Z^{\scriptscriptstyle (1)}\!(\mathbf{s}; \chi_{\scriptscriptstyle 1}\!,
\chi_{\scriptscriptstyle 1}\!),
$
see \cite[Section~4, eqn (20)]{Dia}.

\vskip5pt
\begin{rem} \label{Crucial-Remark} --- It is easy to check that in fact the functions 
\[
\left(\frac{L_{w_{\!\scalebox{.85}{$\scriptscriptstyle \alpha^{^{}}$}},\, p}^{\scriptscriptstyle (1)}}
{q^{\scriptscriptstyle 3\slash 4}\zeta_{a}
	\xi_{\scriptscriptstyle 1}
	\xi_{\scriptscriptstyle 2}
	\xi_{\scriptscriptstyle 3}}\right)
R_{\!p}\!\left(\frac{\zeta_{a} \xi_{\scriptscriptstyle 2}\xi_{\scriptscriptstyle 3}}
{q^{\scriptscriptstyle 1\slash 4}\xi_{\scriptscriptstyle 1}}\!,
\frac{\zeta_{a}\xi_{\scriptscriptstyle 1}\xi_{\scriptscriptstyle 3}}
{q^{\scriptscriptstyle 1\slash 4}\xi_{\scriptscriptstyle 2}}\!,
\frac{\zeta_{a} \xi_{\scriptscriptstyle 1}\xi_{\scriptscriptstyle 2}}
{q^{\scriptscriptstyle 1\slash 4}\xi_{\scriptscriptstyle 3}}\right)^{\!\scriptscriptstyle -1}
 \;\;\; \mathrm{and}\;\;\;\;\;\;
\frac{L_{w_{\!\scalebox{.85}{$\scriptscriptstyle \alpha^{^{}}$}},\, p}^{\scriptscriptstyle (3)}
	\pm \, L_{w_{\!\scalebox{.85}{$\scriptscriptstyle \alpha^{^{}}$}},\, p}^{\scriptscriptstyle (2)}}
{2\prod_{j = 1}^{3} (1 \mp q^{\,\scriptscriptstyle -1\slash 2}
	\xi_{\!\scriptscriptstyle j}^{\scriptscriptstyle 2})}\, \cdot \,
	R_{\!p}\!\left(\frac{\zeta_{a} \xi_{\scriptscriptstyle 2}\xi_{\scriptscriptstyle 3}}
	{q^{\scriptscriptstyle 1\slash 4}\xi_{\scriptscriptstyle 1}}\!,
	\frac{\zeta_{a}\xi_{\scriptscriptstyle 1}\xi_{\scriptscriptstyle 3}}
	{q^{\scriptscriptstyle 1\slash 4}\xi_{\scriptscriptstyle 2}}\!,
	\frac{\zeta_{a} \xi_{\scriptscriptstyle 1}\xi_{\scriptscriptstyle 2}}
	{q^{\scriptscriptstyle 1\slash 4}\xi_{\scriptscriptstyle 3}}\right)^{\!\scriptscriptstyle -1}
\] 
computed using \eqref{eq: Explicit-Lwp-n=2-r=3} -- \eqref{eq: Local-factor-residue-r=3}, are symmetric in the variables
$\xi_{\scriptscriptstyle 1}^{\scriptscriptstyle \pm 1}\!,\, 
\xi_{\scriptscriptstyle 2}^{\scriptscriptstyle \pm 1}\!,\, 
\xi_{\scriptscriptstyle 3}^{\scriptscriptstyle \pm 1}\!.
$ 
These symmetries will be used in an essential way later in this section. 
\end{rem}

\vskip5pt
Now when $r = 3,$ the function
$
Z^{(c)}(\mathbf{s}; \chi_{a_{\scriptscriptstyle 2} c_{\scriptscriptstyle 2}}, 
\chi_{a_{\scriptscriptstyle 1} c_{\scriptscriptstyle 1}}\!)
$
is meromorphic on all of $\mathbb{C}^{\scriptscriptstyle 4}.$ Using
Proposition \ref{MS-residue-general}, it follows (as in \cite{Dia})
that the product in \eqref{eq: Secondary-princ-part-n=2_r=3} is
\[
  \prod_{p} S_{\! p}^{w_{\!\scalebox{.85}{$\scriptscriptstyle \alpha^{^{}}$}}}
  \!=\, \left(1 - \mathrm{sgn}(a)\sqrt{q}
    \,\xi_{\scriptscriptstyle 1}^{\scriptscriptstyle 2}
    \xi_{\scriptscriptstyle 2}^{\scriptscriptstyle 2}
    \xi_{\scriptscriptstyle 3}^{\scriptscriptstyle 2}
  \right)^{\scriptscriptstyle -1}
  \; \cdot \prod_{1 \, \le \, i \, \le \, j \, \le \,  3}
  \left(1 - \frac{\mathrm{sgn}(a)\sqrt{q}
      \,\xi_{\scriptscriptstyle 1}^{\scriptscriptstyle 2}
      \xi_{\scriptscriptstyle 2}^{\scriptscriptstyle 2}
      \xi_{\scriptscriptstyle 3}^{\scriptscriptstyle 2}}
    {\xi_{\scriptscriptstyle i}^{\scriptscriptstyle 2}
      \xi_{\!\scriptscriptstyle j}^{\scriptscriptstyle 2}}\right)^{\!\scriptscriptstyle -1}
  \!\prod_{p} S_{\! p}^{\scriptscriptstyle \mathrm{reg}}
\]
where, for a general monic irreducible,
$
S_{\! p}^{\scriptscriptstyle \mathrm{reg}}
$
is obtained by replacing $\mathrm{sgn}(a) \mapsto \chi_{a}(p),$
$\xi_{\scriptscriptstyle k} \mapsto \xi_{\scriptscriptstyle k}^{\deg\,p}\!,$ and
$q \mapsto |p|.$ Note that the product over $p$ in the right-hand side
converges only in a neighborhood of $\xi_{\scriptscriptstyle k} \! = 1$ (for $k = 1, 2, 3$).
\!When
$
\xi_{\scriptscriptstyle 1}
\! = \xi_{\scriptscriptstyle 2}
\! = \xi_{\scriptscriptstyle 3} \! = \xi,$ we have
$
S_{\! p}^{\scriptscriptstyle \mathrm{reg}}
\!= P\left(\chi_{a}(p) \xi^{2\deg\,p}\!, |p|^{-\frac{1}{2}}\right)\!,
$
with $P(x, y)$ given by
\begin{align*}
P(x, y) =\; & \big(1 - y^{\scriptscriptstyle 2}\big)
            \big(1 - xy\big)
            \big(1 - x^{\scriptscriptstyle -1}y\big) \\
          & \cdot \Big[1
            +\big(x + x^{\scriptscriptstyle -1} \big)y
            \,+ \, \big(x + x^{\scriptscriptstyle -1} \big)^{\!\scriptscriptstyle 2} y^{\scriptscriptstyle 2}
            - \, 4\big(x + x^{\scriptscriptstyle -1}\big)y^{\scriptscriptstyle 3}
            -\, 5\big(x + x^{\scriptscriptstyle -1}\big)^{\!\scriptscriptstyle 2}y^{\scriptscriptstyle 4}
            +\, \big(x + x^{\scriptscriptstyle -1}\big)
            \big(3x + x^{\scriptscriptstyle -1}\big)
            \big(x + 3x^{\scriptscriptstyle -1}\big)y^{\scriptscriptstyle 5} \\
             &- \, \big(x+x^{\scriptscriptstyle -1}\big)
               \big(7+3x^{\scriptscriptstyle 2}+ 3x^{\scriptscriptstyle -2}\big)y^{\scriptscriptstyle 7}
            + \, \big(8+5x^{\scriptscriptstyle 2}+5x^{\scriptscriptstyle -2}\big)y^{\scriptscriptstyle 8}
            - \, \big(x+x^{\scriptscriptstyle -1}\big)y^{\scriptscriptstyle 9}
               - \, y^{\scriptscriptstyle 10}\Big]. \\
\end{align*}
In particular,
\[
  P(1, y) = (1 - y)^{\scriptscriptstyle 5} (1 + y)
  (1 + 4 \, y + 11 y^{\scriptscriptstyle 2} + 10 \, y^{\scriptscriptstyle 3} 
  - 11 y^{\scriptscriptstyle 4} + 11 y^{\scriptscriptstyle 6}
  - 4 \, y^{\scriptscriptstyle 7} - y^{\scriptscriptstyle 8})
\]
which is precisely the polynomial occurring in \cite{Zha1} and \cite{Dia}.

By \eqref{eq: function-Gammaw}, the remaining part of
\eqref{eq: Secondary-princ-part-n=2_r=3} is given by
\begin{equation*}
  \begin{split}
  2^{-5}\Gamma_{\!\! w_{\scalebox{.85}{$\scriptscriptstyle \alpha$}}}
  \!(a_{\scriptscriptstyle 2}, a;
  \zeta_{a})
   = \bigg[&\frac{\psi(z_{\scriptscriptstyle 1}^{}, z_{\scriptscriptstyle 2}^{},
  z_{\scriptscriptstyle 3}^{}, z_{\scriptscriptstyle 4}^{}; q)
  + \psi(z_{\scriptscriptstyle 1}^{}, z_{\scriptscriptstyle 2}^{},
  z_{\scriptscriptstyle 3}^{}, - z_{\scriptscriptstyle 4}^{}; q)}
{4 \, q^{\scriptscriptstyle 3\slash 2} z_{\scriptscriptstyle 1}^{}
  z_{\scriptscriptstyle 2}^{}
  z_{\scriptscriptstyle 3}^{} z_{\scriptscriptstyle
    4}^{\scriptscriptstyle 4}}\\
& + \frac{\mathrm{sgn}(a_{\scriptscriptstyle 2})(
  \phi(z_{\scriptscriptstyle 1}^{}, z_{\scriptscriptstyle 2}^{},
  z_{\scriptscriptstyle 3}^{}, z_{\scriptscriptstyle 4}^{}; q)
  - \phi(- z_{\scriptscriptstyle 1}^{}, - z_{\scriptscriptstyle 2}^{},
  - z_{\scriptscriptstyle 3}^{}, z_{\scriptscriptstyle 4}^{}; q))}
{4 \, q^{\scriptscriptstyle 3\slash 2} z_{\scriptscriptstyle 1}^{}
  z_{\scriptscriptstyle 2}^{}
  z_{\scriptscriptstyle 3}^{} z_{\scriptscriptstyle
    4}^{\scriptscriptstyle 4}}\\
& - \frac{\mathrm{sgn}(a)}
{2 \, q^{\scriptscriptstyle 3} z_{\scriptscriptstyle 1}^{}
  z_{\scriptscriptstyle 2}^{} z_{\scriptscriptstyle 3}^{} z_{\scriptscriptstyle 4}^{\scriptscriptstyle 4}}
\!\left(\!\frac{1 - \mathrm{sgn}(a_{\scriptscriptstyle 2}) z_{\scriptscriptstyle 4}^{}}
  {1 - \mathrm{sgn}(a_{\scriptscriptstyle 2}) q z_{\scriptscriptstyle 4}^{}}\!\right)
\bigg]_{z_{\scalebox{.6}{$\scriptscriptstyle 1$}}= \frac{
    \xi_{\scalebox{.75}{$\scriptscriptstyle 1$}}^{\scalebox{.75}{$\scriptscriptstyle 2$}}}{\sqrt{q}},\,
  z_{\scalebox{.6}{$\scriptscriptstyle 2$}}= \frac{\xi_{\scalebox{.75}{$\scriptscriptstyle 2$}}^{\scalebox{.75}{$\scriptscriptstyle 2$}}}{\sqrt{q}},\,
  z_{\scalebox{.6}{$\scriptscriptstyle 3$}}=
  \frac{\xi_{\scalebox{.75}{$\scriptscriptstyle 3$}}^{\scalebox{.75}{$\scriptscriptstyle 2$}}}
  {\sqrt{q}},\,
  z_{\scalebox{.6}{$\scriptscriptstyle 4$}}=
  \frac{1}{q^{\scalebox{.75}{$\scriptscriptstyle 3\slash 4$}}\zeta_{a}
  \xi_{\scalebox{.75}{$\scriptscriptstyle 1$}}
  \xi_{\scalebox{.75}{$\scriptscriptstyle 2$}}
  \xi_{\scalebox{.75}{$\scriptscriptstyle 3$}}}}
\end{split}
\end{equation*}
where the functions
$
\phi(z_{\scriptscriptstyle 1}^{}, z_{\scriptscriptstyle 2}^{},
z_{\scriptscriptstyle 3}^{}, z_{\scriptscriptstyle 4}^{}; q)
$
and
$
\psi(z_{\scriptscriptstyle 1}^{}, z_{\scriptscriptstyle 2}^{},
  z_{\scriptscriptstyle 3}^{}, z_{\scriptscriptstyle 4}^{}; q)
$
are given by
\begin{equation} \label{eq: Functions-phi-psi}
  \begin{split}
& \phi(z_{\scriptscriptstyle 1}^{}, z_{\scriptscriptstyle 2}^{},
z_{\scriptscriptstyle 3}^{}, z_{\scriptscriptstyle 4}^{}; q) =
\frac{(1 - \sqrt{q}\, z_{\scriptscriptstyle 1}^{} z_{\scriptscriptstyle
    4}^{}) (1 - \sqrt{q}\, z_{\scriptscriptstyle 2}^{} z_{\scriptscriptstyle 4}^{})
(1 - \sqrt{q}\, z_{\scriptscriptstyle 3}^{} z_{\scriptscriptstyle 4}^{})  
(1 - q z_{\scriptscriptstyle 4}^{\scriptscriptstyle 2})}{
(1 - q^{\scriptscriptstyle 3\slash 2} z_{\scriptscriptstyle 1}^{}
z_{\scriptscriptstyle 4}^{})
(1 - q^{\scriptscriptstyle 3\slash 2} z_{\scriptscriptstyle 2}^{}
z_{\scriptscriptstyle 4}^{})
(1 - q^{\scriptscriptstyle 3\slash 2} z_{\scriptscriptstyle 3}^{}
z_{\scriptscriptstyle 4}^{}) (1 - q^{\scriptscriptstyle 2}
z_{\scriptscriptstyle 4}^{\scriptscriptstyle 2})}, \\
& \psi(z_{\scriptscriptstyle 1}^{}, z_{\scriptscriptstyle 2}^{},
  z_{\scriptscriptstyle 3}^{}, z_{\scriptscriptstyle 4}^{}; q) = -\,
  \frac{(1 - q^{\scriptscriptstyle 3\slash 2} z_{\scriptscriptstyle 4}^{})
  \phi(z_{\scriptscriptstyle 1}^{}, z_{\scriptscriptstyle 2}^{},
z_{\scriptscriptstyle 3}^{}, z_{\scriptscriptstyle 4}^{}; q)
}{\sqrt{q} - q z_{\scriptscriptstyle 4}^{}}. 
  \end{split}
\end{equation}
For
$
\xi_{\scriptscriptstyle 1}
\! = \xi_{\scriptscriptstyle 2}
\! = \xi_{\scriptscriptstyle 3}
\! = \!1,$
one obtains, up to a factor of $\frac{1}{4},$ the values given in
\cite[Table1, p.~20]{Dia}.

\vskip10pt
{\bf The case $r \ge 4.$} We fix a root $\alpha \in
\Phi_{\scriptscriptstyle 2},$
\[
  \alpha = \alpha_{\scriptscriptstyle 1}
   \! +\alpha_{\scriptscriptstyle 2}
   + \alpha_{\scriptscriptstyle 3}
   + 2\!\sum_{j \in J}\alpha_{\!\scriptscriptstyle j}
  +2 \alpha_{r + {\scriptscriptstyle 1}} \qquad \text{(where $J \subseteq \{4,\ldots, r\}$).}
\]
Write $\alpha = \alpha' \! + \alpha_{\scalebox{1.0}{$\scriptscriptstyle J$}}^{},$ where
$
\alpha' := \alpha_{\scriptscriptstyle 1}
   \! +\alpha_{\scriptscriptstyle 2}
   + \alpha_{\scriptscriptstyle 3}
  +2 \alpha_{r + {\scriptscriptstyle 1}} \in \Phi_{\scriptscriptstyle 2}.
  $
  Then
  $
  w_{\!\scriptscriptstyle \alpha^{^{}}} \! = w_{\!{\scriptscriptstyle \alpha'}}
  w_{\!\scalebox{1.0}{$\scriptscriptstyle J$}}^{},
  $
  where
  $
w_{\!\scalebox{1.0}{$\scriptscriptstyle J$}}^{} := \prod_{j \, \in \,
  J}w_{\!\scriptscriptstyle j}.
$
Applying \eqref{eq: def-cocycle2}, we see that\linebreak
$
\mathrm{M}_{w_{\scalebox{.85}{$\scriptscriptstyle
      \alpha$}}^{\scalebox{.85}{$\scriptscriptstyle
      -1$}}}(w_{\!\scriptscriptstyle \alpha^{^{}}}\!\mathrm{z})
= \mathrm{M}_{w_{\scalebox{.85}{$\scriptscriptstyle \alpha'$}}}
\!(w_{\!\scalebox{1.0}{$\scriptscriptstyle
    J$}}^{}\mathrm{z})^{\scriptscriptstyle -1} 
\mathrm{M}_{w_{\!\scalebox{.9}{$\scriptscriptstyle J$}}^{}}
\!(\mathrm{z})^{\scriptscriptstyle -1}\!.
$
Thus, if we let
$
\mathrm{M}_{w_{\!\scalebox{.9}{$\scriptscriptstyle J$}}^{}}
\!(\mathrm{z})^{\scriptscriptstyle -1}
\!= \mathrm{diag}(\mathrm{b}_{\scriptscriptstyle 1},
\mathrm{b}_{\scriptscriptstyle 2}, \mathrm{b}_{\scriptscriptstyle 3}),
$
and $p$ a monic linear polynomial, then, by \eqref{eq: functions-Lwp},
we have
\[
 L_{w_{\!\scalebox{.85}{$\scriptscriptstyle \alpha^{^{}}$}},\, p}^{\scriptscriptstyle (1)}
 =\, \mathrm{b}_{\scriptscriptstyle 2}
 L_{w_{\!\scalebox{.85}{$\scriptscriptstyle \alpha'$}},\, p}^{\scriptscriptstyle (1)},\;\;\;
 L_{w_{\!\scalebox{.85}{$\scriptscriptstyle \alpha^{^{}}$}},\, p}^{\scriptscriptstyle (2)}
    \!+ L_{w_{\!\scalebox{.85}{$\scriptscriptstyle \alpha^{^{}}$}},\, p}^{\scriptscriptstyle (3)}
    =\, \mathrm{b}_{\scriptscriptstyle 1}
    \big(L_{w_{\!\scalebox{.85}{$\scriptscriptstyle \alpha'$}},\, p}^{\scriptscriptstyle (2)}
    \!+ L_{w_{\!\scalebox{.85}{$\scriptscriptstyle \alpha'$}},\,
      p}^{\scriptscriptstyle (3)}\big),
    \;\, \mathrm{and}\;\;\,
L_{w_{\!\scalebox{.85}{$\scriptscriptstyle \alpha^{^{}}$}},\, p}^{\scriptscriptstyle (3)}
\!- L_{w_{\!\scalebox{.85}{$\scriptscriptstyle \alpha^{^{}}$}},\,
  p}^{\scriptscriptstyle (2)}
=\, \mathrm{b}_{\scriptscriptstyle 3}
\big(L_{w_{\!\scalebox{.85}{$\scriptscriptstyle \alpha'$}},\, p}^{\scriptscriptstyle (3)}
\!- L_{w_{\!\scalebox{.85}{$\scriptscriptstyle \alpha'$}},\,
  p}^{\scriptscriptstyle (2)}\big)
\]
with
$
L_{w_{\!\scalebox{.85}{$\scriptscriptstyle \alpha'$}},\, p}^{\scriptscriptstyle (i)}
$
($i = 1, 2, 3$) computed by using \eqref{eq: Explicit-Lwp-n=2-r=3},
\eqref{eq: Explicit-d1-d2-d3} and \eqref{eq: Explicit-e1-e2-e3}.
Indeed, the functions
$
L_{w_{\!\scalebox{.85}{$\scriptscriptstyle \alpha'$}},\, p}^{\scriptscriptstyle (i)}
$
are evaluated at
\[
  w_{\!\scalebox{1.0}{$\scriptscriptstyle J$}}^{}
  (z_{\scriptscriptstyle 1}^{}, z_{\scriptscriptstyle 2}^{},
  z_{\scriptscriptstyle 3}^{}, z_{r + {\scriptscriptstyle 1}}^{}) =
   \left(z_{\scriptscriptstyle 1}^{}, z_{\scriptscriptstyle 2}^{},
     z_{\scriptscriptstyle 3}^{}, z_{r + {\scriptscriptstyle 1}}^{}
     \cdot \prod_{j \,\in \, J}q^{\scriptscriptstyle 1\slash 2}
     z_{\!\scriptscriptstyle j}^{}\right)
 \]
 subject to the condition that
 $
\mathrm{z}^{\scriptscriptstyle \alpha\slash 2}
= \zeta_{a}^{\scriptscriptstyle -1}
q^{\scriptscriptstyle - (d(\alpha) + 1)\slash 4}.
$ 
Accordingly, if we set
$
z_{\scriptscriptstyle k}^{}  \!= q^{\scriptscriptstyle -1\slash 2}
\xi_{\scriptscriptstyle k}^{\scriptscriptstyle 2}
$ 
($k = 1, \ldots, r$), then we can write 
\[
z_{r + {\scriptscriptstyle 1}}^{}
\!\prod_{j \,\in \, J}q^{\scriptscriptstyle 1\slash 2}
     z_{\!\scriptscriptstyle j}^{} =
\frac{1}{q^{\scriptscriptstyle 3\slash 4}\zeta_{a}
\xi_{\scriptscriptstyle 1}\xi_{\scriptscriptstyle 2}\xi_{\scriptscriptstyle 3}}.
\]
It now follows from \eqref{eq: Final-expression-Sp} that
\begin{equation}  \label{eq: Sp_w-alpha}
S_{\! p}^{w_{\!\scalebox{.85}{$\scriptscriptstyle \alpha^{^{}}$}}}
\!= \, \big(1 - q^{\scriptscriptstyle -1}\big)
\left[\frac{\mathrm{b}_{\scriptscriptstyle 2}
 L_{w_{\!\scalebox{.85}{$\scriptscriptstyle \alpha'$}},\, p}^{\scriptscriptstyle (1)}}
{q^{\scriptscriptstyle 3\slash 4}\zeta_{a}
  \xi_{\scriptscriptstyle 1}
\xi_{\scriptscriptstyle 2}
\xi_{\scriptscriptstyle 3}
\!\prod_{j \,\in \, J}\xi_{\!\scriptscriptstyle j}^{\scriptscriptstyle 2}}
+\frac{\mathrm{b}_{\scriptscriptstyle 1}
  \big(L_{w_{\!\scalebox{.85}{$\scriptscriptstyle \alpha'$}},\, p}^{\scriptscriptstyle (2)}
    \!+ L_{w_{\!\scalebox{.85}{$\scriptscriptstyle \alpha'$}},\,
      p}^{\scriptscriptstyle (3)}\big)}
  {2 \prod_{j = 1}^{r} (1 - q^{\,\scriptscriptstyle -1\slash 2}
\xi_{\!\scriptscriptstyle j}^{\scriptscriptstyle 2})}
  + \frac{\mathrm{b}_{\scriptscriptstyle 3}
\big(L_{w_{\!\scalebox{.85}{$\scriptscriptstyle \alpha'$}},\, p}^{\scriptscriptstyle (3)}
\!- L_{w_{\!\scalebox{.85}{$\scriptscriptstyle \alpha'$}},\,
  p}^{\scriptscriptstyle (2)}\big)}
  {2 \prod_{j = 1}^{r} (1 + q^{\,\scriptscriptstyle -1\slash 2}
\xi_{\!\scriptscriptstyle j}^{\scriptscriptstyle 2})}\right]
\end{equation}
where the entries of the diagonal matrix
$
\mathrm{M}_{w_{\!\scalebox{.9}{$\scriptscriptstyle J$}}^{}}
\!(\mathrm{z})^{\scriptscriptstyle -1}
$
are given by
\[
  \mathrm{b}_{\scriptscriptstyle 1} =
  \prod_{j \in  J} \left(\frac{1 - q^{\,\scriptscriptstyle -1\slash 2}
      \xi_{\!\scriptscriptstyle j}^{\scriptscriptstyle 2}}
    {1 - q^{\,\scriptscriptstyle -1\slash 2}
\xi_{\!\scriptscriptstyle j}^{\scriptscriptstyle - 2}}\!\right),\;\;\;
  \mathrm{b}_{\scriptscriptstyle 2} =
  \prod_{j \in  J}\xi_{\!\scriptscriptstyle j}^{\scriptscriptstyle 2},\;\, \mathrm{and}\;\;\,
  \mathrm{b}_{\scriptscriptstyle 3} =
  \prod_{j \in  J} \left(\frac{1 + q^{\,\scriptscriptstyle -1\slash 2}
      \xi_{\!\scriptscriptstyle j}^{\scriptscriptstyle 2}}
    {1 + q^{\,\scriptscriptstyle -1\slash 2}
\xi_{\!\scriptscriptstyle j}^{\scriptscriptstyle - 2}}\!\right).
\]
Note that the first term in
$
S_{\!
  p}^{w_{\!\scalebox{.85}{$\scriptscriptstyle \alpha^{^{}}$}}}
$
\!is
\[
\frac{L_{w_{\!\scalebox{.85}{$\scriptscriptstyle \alpha'$}},\, p}^{\scriptscriptstyle (1)}
\big(q^{\,\scriptscriptstyle -1\slash 2}
\xi_{\scriptscriptstyle 1}^{\scriptscriptstyle 2},
q^{\,\scriptscriptstyle -1\slash 2}
\xi_{\scriptscriptstyle 2}^{\scriptscriptstyle 2},
q^{\,\scriptscriptstyle -1\slash 2}
\xi_{\scriptscriptstyle 3}^{\scriptscriptstyle 2}; \zeta_{a}\big)}
{q^{\scriptscriptstyle 3\slash 4}\zeta_{a}
  \xi_{\scriptscriptstyle 1}
\xi_{\scriptscriptstyle 2}
\xi_{\scriptscriptstyle 3}}
= \frac{L_{w_{\!\scalebox{.85}{$\scriptscriptstyle \alpha'$}},\, p}^{\scriptscriptstyle (1)}
\Big(q^{\,\scriptscriptstyle -1\slash 2}
\xi_{\scriptscriptstyle 1}^{\scriptscriptstyle 2},
q^{\,\scriptscriptstyle -1\slash 2}
\xi_{\scriptscriptstyle 2}^{\scriptscriptstyle 2},
q^{\,\scriptscriptstyle -1\slash 2}
\xi_{\scriptscriptstyle 3}^{\scriptscriptstyle 2},
\frac{1}{q^{\scriptscriptstyle 3\slash 4}\zeta_{a}
  \xi_{\scriptscriptstyle 1}
\xi_{\scriptscriptstyle 2}
\xi_{\scriptscriptstyle 3}};
\mathrm{sgn}(a); q\Big)}
{q^{\scriptscriptstyle 3\slash 4}\zeta_{a}
  \xi_{\scriptscriptstyle 1}
\xi_{\scriptscriptstyle 2}
\xi_{\scriptscriptstyle 3}}.
\]
Moreover, $S_{\! p}^{w_{\!\scalebox{.85}{$\scriptscriptstyle \alpha^{^{}}$}}}$
\!is obtained by replacing
$
\xi_{\!\scriptscriptstyle j}^{} \mapsto
\xi_{\!\scriptscriptstyle j}^{\scriptscriptstyle - 1}
$
(for $j \in J$) in
$
S_{\! p}^{w_{\!\scalebox{.85}{$\scriptscriptstyle \alpha'$}}}\!.
$
We regularize
$
S_{\! p}^{w_{\!\scalebox{.85}{$\scriptscriptstyle \alpha'$}}}
$
\!by setting 
\begin{equation} \label{eq: Sp-regularized}
  S_{\! p}^{\scriptscriptstyle \mathrm{reg}} : =
S_{\! p}^{w_{\!\scalebox{.85}{$\scriptscriptstyle \alpha'$}}}
\!\!\, R_{\! p}^{\scriptscriptstyle (3)}
  \!\left(\frac{\zeta_{a} \xi_{\scriptscriptstyle 2}\xi_{\scriptscriptstyle 3}}
  {q^{\scriptscriptstyle 1\slash 4}\xi_{\scriptscriptstyle 1}}\!,
  \frac{\zeta_{a}\xi_{\scriptscriptstyle 1}\xi_{\scriptscriptstyle 3}}
  {q^{\scriptscriptstyle 1\slash 4}\xi_{\scriptscriptstyle 2}}\!,
  \frac{\zeta_{a} \xi_{\scriptscriptstyle 1}\xi_{\scriptscriptstyle 2}}
  {q^{\scriptscriptstyle 1\slash 4}\xi_{\scriptscriptstyle 3}}\right)^{\!\scriptscriptstyle -1}
\!\cdot \; \prod_{i = 1}^{3}\prod_{j = 4}^{r}
\left(1 - \frac{
    \xi_{\scriptscriptstyle i}^{\scriptscriptstyle 2}
    \xi_{\!\scriptscriptstyle j}^{\scriptscriptstyle 2}}{q}\right)
\left(1- \frac{
    \xi_{\scriptscriptstyle i}^{\scriptscriptstyle -2}
    \xi_{\!\scriptscriptstyle j}^{\scriptscriptstyle 2}}{q}\right)
\prod_{4 \, \le \, k \, \le \, l \, \le \, r}\left(1 -
  \frac{\xi_{\scriptscriptstyle k}^{\scriptscriptstyle 2}
    \xi_{\scriptscriptstyle l}^{\scriptscriptstyle 2}}{q}\right)
\end{equation}
where
$
R_{\! p}^{\scriptscriptstyle (3)}
(z_{\scriptscriptstyle 1}^{}, z_{\scriptscriptstyle 2}^{}, z_{\scriptscriptstyle 3}^{})
= R_{\! p}(z_{\scriptscriptstyle 1}^{}, z_{\scriptscriptstyle 2}^{}, z_{\scriptscriptstyle 3}^{})
$
is the local factor \eqref{eq: Local-factor-residue-r=3}. This is a
polynomial in $q^{\scriptscriptstyle -1\slash 2}$ of the form
\[
  S_{\! p}^{\scriptscriptstyle \mathrm{reg}} = 1 \,
  +\, O\Big(q^{\scriptscriptstyle -\frac{3}{2}}\Big).
\]
As before, we define $S_{\! p}^{\scriptscriptstyle \mathrm{reg}},$ for
arbitrary $p,$ by simply substituting $\mathrm{sgn}(a) \mapsto
\chi_{a}(p),$ $\xi_{\scriptscriptstyle k} \mapsto \xi_{\scriptscriptstyle k}^{\deg\,p}\!,$ and
$q \mapsto |p|$ in the above formula. The product
$\prod_{p} S_{\! p}^{\scriptscriptstyle \mathrm{reg}}$ converges
absolutely, as long as each of the first three variables
$\xi_{\scriptscriptstyle 1},
\xi_{\scriptscriptstyle 2},
\xi_{\scriptscriptstyle 3}
$
is in a small neighborhood of the unit circle,
and
$
|\xi_{\scriptscriptstyle k}| < q^{\scriptscriptstyle \varepsilon}
$
($k=4, \ldots, r$) for sufficiently small positive $\varepsilon.$ Let $N_{\! p}({\bf \xi}, \chi_{a}),$ where
$
{\bf \xi} : =
$
$
(\xi_{\scriptscriptstyle 1}, \ldots, \xi_{r}),
$
be defined by
\begin{equation*}
 \begin{split}
N_{\! p}({\bf \xi}, \chi_{a}) = \; & R_{\! p}^{\scriptscriptstyle (3)}
  \!\left(\frac{(\zeta_{a} \xi_{\scriptscriptstyle
        2}\xi_{\scriptscriptstyle 3}\!)^{\scriptscriptstyle \deg\,p}}
  {|p|^{\scriptscriptstyle 1\slash 4}\xi_{\scriptscriptstyle 1}^{\scriptscriptstyle \deg\,p}}\!,
  \frac{(\zeta_{a}\xi_{\scriptscriptstyle 1}\xi_{\scriptscriptstyle 3}\!)^{\scriptscriptstyle \deg\,p}}
  {|p|^{\scriptscriptstyle 1\slash 4}\xi_{\scriptscriptstyle 2}^{\scriptscriptstyle \deg\,p}}\!,
  \frac{(\zeta_{a} \xi_{\scriptscriptstyle 1}\xi_{\scriptscriptstyle 2}\!)^{\scriptscriptstyle \deg\,p}}
  {|p|^{\scriptscriptstyle 1\slash 4}\xi_{\scriptscriptstyle 3}^{\scriptscriptstyle \deg\,p}}\right)\\
& \cdot\, \prod_{i = 1}^{3}\prod_{j = 4}^{r}
\left(1 - \frac{
    \xi_{\scriptscriptstyle i}^{\scriptscriptstyle 2 \deg\,p}
    \xi_{\!\scriptscriptstyle j}^{\scriptscriptstyle 2 \deg\,p}}{|p|}\right)^{\!\scriptscriptstyle -1}
\!\left(1- \frac{
    \xi_{\scriptscriptstyle i}^{\scriptscriptstyle -2 \deg\,p}
    \xi_{\!\scriptscriptstyle j}^{\scriptscriptstyle 2 \deg\,p}}{|p|}\right)^{\!\scriptscriptstyle -1}
\cdot \prod_{4 \, \le \, k \, \le \, l \, \le \, r}\left(1 -
  \frac{\xi_{\scriptscriptstyle k}^{\scriptscriptstyle 2 \deg\,p}
    \xi_{\scriptscriptstyle l}^{\scriptscriptstyle 2 \deg\,p}}{|p|}\right)^{\!\scriptscriptstyle -1}
    \end{split}
  \end{equation*}
  and set $N({\bf \xi}, \chi_{a}) := \prod_{p}N_{\! p}({\bf \xi},
  \chi_{a}).$ Thus
  \begin{equation*}
    \begin{split}
      N({\bf \xi}, \chi_{a}) = \, & \left(1 - \mathrm{sgn}(a)\sqrt{q}
    \,\xi_{\scriptscriptstyle 1}^{\scriptscriptstyle 2}
    \xi_{\scriptscriptstyle 2}^{\scriptscriptstyle 2}
    \xi_{\scriptscriptstyle 3}^{\scriptscriptstyle 2}
  \right)^{\scriptscriptstyle -1}
  \, \cdot \prod_{1 \, \le \, i \, \le \, j \, \le \,  3}
  \left(1 - \frac{\mathrm{sgn}(a)\sqrt{q}
      \,\xi_{\scriptscriptstyle 1}^{\scriptscriptstyle 2}
      \xi_{\scriptscriptstyle 2}^{\scriptscriptstyle 2}
      \xi_{\scriptscriptstyle 3}^{\scriptscriptstyle 2}}
    {\xi_{\scriptscriptstyle i}^{\scriptscriptstyle 2}
      \xi_{\!\scriptscriptstyle j}^{\scriptscriptstyle
        2}}\right)^{\!\scriptscriptstyle -1}\\
  &\cdot\, \prod_{i = 1}^{3}\prod_{j = 4}^{r}
\big(1 - 
   \xi_{\scriptscriptstyle i}^{\scriptscriptstyle 2}
    \xi_{\!\scriptscriptstyle j}^{\scriptscriptstyle 2}\big)^{\scriptscriptstyle -1}
\big(1- 
    \xi_{\scriptscriptstyle i}^{\scriptscriptstyle -2}
    \xi_{\!\scriptscriptstyle j}^{\scriptscriptstyle 2}\big)^{\scriptscriptstyle -1}
\, \cdot \prod_{4 \, \le \, k \, \le \, l \, \le \, r} \big(1 -
  \xi_{\scriptscriptstyle k}^{\scriptscriptstyle 2}
    \xi_{\scriptscriptstyle l}^{\scriptscriptstyle 2}\big)^{\scriptscriptstyle -1}\!.
    \end{split}
    \end{equation*} 
    For every monic irreducible $p,$ let $R_{\! p}^{\scriptscriptstyle (r)}\!(\underline{z})$ 
    denote the local factor of the (modified) residue at $z_{r + {\scriptscriptstyle 1}} = 1\slash q$ of the function
$
Z(\mathrm{z}):=
Z^{\scriptscriptstyle (1)}\!(\mathbf{s}; \chi_{\scriptscriptstyle 1}\!,
\chi_{\scriptscriptstyle 1}\!).
$ 
For $a\in \{1, \, \theta_{\scriptscriptstyle 0} \},$ define 
$\mathscr{R}_{\! p}^{\scriptscriptstyle (r)}\!({\bf \xi}, \chi_{a})$ by 
\[
\mathscr{R}_{\! p}^{\scriptscriptstyle (r)}\!({\bf \xi}, \chi_{a}) : = 
R_{\! p}^{\scriptscriptstyle (r)}
\!\left(\frac{(\zeta_{a} \xi_{\scriptscriptstyle 2}\xi_{\scriptscriptstyle 3}\!)^{\scriptscriptstyle \deg\,p}}
{|p|^{\scriptscriptstyle 1\slash 4}\xi_{\scriptscriptstyle 1}^{\scriptscriptstyle \deg\,p}}\!,
\frac{(\zeta_{a}\xi_{\scriptscriptstyle 1}\xi_{\scriptscriptstyle 3}\!)^{\scriptscriptstyle \deg\,p}}
{|p|^{\scriptscriptstyle 1\slash 4}\xi_{\scriptscriptstyle 2}^{\scriptscriptstyle \deg\,p}}\!,
\frac{(\zeta_{a} \xi_{\scriptscriptstyle 1}\xi_{\scriptscriptstyle 2}\!)^{\scriptscriptstyle \deg\,p}}
{|p|^{\scriptscriptstyle 1\slash 4}\xi_{\scriptscriptstyle 3}^{\scriptscriptstyle \deg\,p}}\!, 
\frac{\xi_{\scriptscriptstyle 4}^{\scriptscriptstyle 2 \deg\,p}}
{|p|^{\scriptscriptstyle 3\slash 4} 
(\zeta_{a} \xi_{\scriptscriptstyle 1}
\xi_{\scriptscriptstyle 2}
\xi_{\scriptscriptstyle 3}\!)^{^{\deg\,p}}}, \ldots, 
\frac{\xi_{\scriptscriptstyle r}^{\scriptscriptstyle 2 \deg\,p}}
{|p|^{\scriptscriptstyle 3\slash 4} 
	(\zeta_{a} \xi_{\scriptscriptstyle 1}
	\xi_{\scriptscriptstyle 2}
	\xi_{\scriptscriptstyle 3}\!)^{^{\deg\,p}}}\right)
\] 
and put 
$
\mathscr{R}^{\scriptscriptstyle (r)}\!({\bf \xi}, \chi_{a}) = 
\prod_{p}\mathscr{R}_{\! p}^{\scriptscriptstyle (r)}\!({\bf \xi}, \chi_{a}).
$ 
Then the product over $p$ in \eqref{eq: Secondary-princ-parts-general} 
(in the variables $\xi_{\scriptscriptstyle 1}, \ldots, \xi_{r}$) corresponding to the 
$(\alpha', \zeta_{a})$-term is given by the formula 
\begin{equation} \label{eq: reg-formula-princ-parts} 
\mathscr{R}^{\scriptscriptstyle (r)}\!({\bf \xi}, \chi_{a}) 
\prod_{p}\frac{S_{\! p}^{w_{\!\scalebox{.85}{$\scriptscriptstyle \alpha'$}}}\!\!({\bf \xi}, \chi_{a})}
{\mathscr{R}_{\! p}^{\scriptscriptstyle (r)}\!({\bf \xi}, \chi_{a})};
\end{equation} 
this product converges initially when 
$
\xi_{\scriptscriptstyle 1},
\xi_{\scriptscriptstyle 2},
\xi_{\scriptscriptstyle 3}
$ 
are in a small neighborhood of the unit circle, 
and 
$
|\xi_{\scriptscriptstyle k}| < q^{\scriptscriptstyle \varepsilon}
$ 
for $k \ge 4.$ Thus we cannot immediately simplify 
$
\prod_{p}\mathscr{R}_{\! p}^{\scriptscriptstyle (r)}\!({\bf \xi}, \chi_{a})
$ 
in \eqref{eq: reg-formula-princ-parts}. To show that this product does indeed cancel 
$\mathscr{R}^{\scriptscriptstyle (r)}\!({\bf \xi}, \chi_{a}),$ and thus obtain a completely explicit formula for 
\eqref{eq: reg-formula-princ-parts}, we shall need the following lemma.

\vskip10pt
\begin{lem}  \label{Euler-prod-convergence-Rp} --- For sufficiently small $\varepsilon > 0,$ the series defining 
	$R_{\! p}^{\scriptscriptstyle (r)}\slash R_{\! p}^{\scriptscriptstyle (3)}$ is absolutely convergent, and 
	\[
	\frac{R_{\! p}^{\scriptscriptstyle (r)}\!(\xi_{\scriptscriptstyle 1}^{\scriptscriptstyle \deg\,p}\!, 
		\ldots, \xi_{r}^{\scriptscriptstyle \deg\,p})}{R_{\! p}^{\scriptscriptstyle (3)}
		\!(\xi_{\scriptscriptstyle 1}^{\scriptscriptstyle \deg\,p}\!, 
		\xi_{\scriptscriptstyle 2}^{\scriptscriptstyle \deg\,p}\!, 
		\xi_{\scriptscriptstyle 3}^{\scriptscriptstyle \deg\,p})} 
	= 1 \, + \, O\left(|p|^{\scriptscriptstyle - 1 - \varepsilon} \right)
	\] 
	in the polydisk $\Omega_{\varepsilon} : |\xi_{\scriptscriptstyle k}| < 
	q^{\, \scriptscriptstyle - \frac{1}{4} + \frac{\varepsilon}{2}}
	$ 
	for $k \le 3,$ and $|\xi_{\scriptscriptstyle k}| < 
	q^{\, \scriptscriptstyle - 3 - 11\varepsilon}$ 
	for $k \ge 4.$ 
\end{lem}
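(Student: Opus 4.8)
The plan is to compare $R_{\!p}^{\scriptscriptstyle(r)}$ and $R_{\!p}^{\scriptscriptstyle(3)}$ coefficient by coefficient in their expansions at the origin, exploiting that passing to the (modified) residue at $z_{r+{\scriptscriptstyle1}}=1/q$ of $Z^{\scriptscriptstyle(1)}(\mathbf{s};\chi_{\scriptscriptstyle1},\chi_{\scriptscriptstyle1})$ collapses the direction ($z_{r+{\scriptscriptstyle1}}$) along which the indefinite‑type growth lives, leaving a function governed by the rank‑$3$ picture \eqref{eq: Local-factor-residue-r=3} together with mild corrections in the legs $j\ge4$. First I would record, following the residue computation of \cite[Section~4]{Dia} and the analogue of \eqref{eq: Local-factor-residue-principal}, that $R_{\!p}^{\scriptscriptstyle(r)}(z_1,\ldots,z_r)=1+\sum_{\mathbf{k}\ne\mathbf{0}}\rho_{\!p}(\mathbf{k})\,z_1^{k_1}\cdots z_r^{k_r}$ is an honest power series, the sum running over $\mathbf{k}$ with $k_1+\cdots+k_r$ even (the $\pm\underline{z}$ symmetrization kills the odd part). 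Two structural facts are then needed. \emph{(a)} Since the construction of the multiple {D}irichlet series in \cite{DV} is uniform in the number of legs $r$, the $p$‑part coefficients with $k_4=\cdots=k_r=0$ coincide with those of the rank‑$3$ series, so $R_{\!p}^{\scriptscriptstyle(r)}(z_1,z_2,z_3,0,\ldots,0)=R_{\!p}^{\scriptscriptstyle(3)}(z_1,z_2,z_3)$; equivalently, every monomial of $R_{\!p}^{\scriptscriptstyle(r)}-R_{\!p}^{\scriptscriptstyle(3)}$ is divisible by some $z_j$ with $j\ge4$. \emph{(b)} By \eqref{eq: Local-factor-residue-r=3}, the reciprocal $1/R_{\!p}^{\scriptscriptstyle(3)}=(1-qz_1^2z_2^2z_3^2)\prod_{1\le i\le j\le3}(1-z_iz_j)$ is a \emph{polynomial}, so $R_{\!p}^{\scriptscriptstyle(r)}/R_{\!p}^{\scriptscriptstyle(3)}$ is again a power series with constant term $1$, whose absolute convergence in $\Omega_{\varepsilon}$ reduces to that of $R_{\!p}^{\scriptscriptstyle(r)}$.

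For the quantitative estimate I would use the following bound on the coefficients, which is the analytic heart of the matter: in the first three variables $\rho_{\!p}(\mathbf{k})$ grows in $|p|$ no faster than the coefficients of the explicit rational function $R_{\!p}^{\scriptscriptstyle(3)}$ — a ``$\min$‑limited'' rate $\ll_{\varepsilon}(1+|\mathbf{k}|)^{C}|p|^{\,c(k_1+k_2+k_3)+\varepsilon|\mathbf{k}|}$ with an absolute $c<\tfrac14$ — while each additional power of a $z_j$ with $j\ge4$ costs at most $|p|^{1/2+\varepsilon}$, by \eqref{eq: Estimate-H1} and conditions (i)--(iii) of \ref{Formal-MDS}. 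In $\Omega_{\varepsilon}$ one has $|\xi_k|^{\deg p}<|p|^{-1/4+\varepsilon/2}$ for $k\le3$ and $|\xi_k|^{\deg p}<|p|^{-3-11\varepsilon}$ for $k\ge4$; since $c<\tfrac14$ the sum over the first three exponents converges geometrically, and since $|p|^{1/2}\cdot|p|^{-3}=|p|^{-5/2}$, each leg $j\ge4$ contributes a convergent geometric factor that is moreover $O(|p|^{-5/2+O(\varepsilon)})$ for every nonzero exponent — the radii $|p|^{-3-11\varepsilon}$ and $|p|^{-1/4+\varepsilon/2}$ being tuned precisely so these $\varepsilon$‑losses are absorbed. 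Summing, $R_{\!p}^{\scriptscriptstyle(r)}$ converges absolutely in $\Omega_{\varepsilon}$, and by \emph{(a)} the only surviving terms in $R_{\!p}^{\scriptscriptstyle(r)}-R_{\!p}^{\scriptscriptstyle(3)}$ involve a $z_j$ with $j\ge4$, whence $R_{\!p}^{\scriptscriptstyle(r)}-R_{\!p}^{\scriptscriptstyle(3)}\ll_{\varepsilon,\,r}|p|^{-1-\varepsilon}$ on $\Omega_{\varepsilon}$ (in fact $\ll|p|^{-5/2+O(\varepsilon)}$). Finally, in $\Omega_{\varepsilon}$ every factor of $R_{\!p}^{\scriptscriptstyle(3)}$ has the form $(1-w)^{-1}$ with $|w|<|p|^{-1/2+\varepsilon}$, so $R_{\!p}^{\scriptscriptstyle(3)}$ is bounded and bounded away from $0$; dividing yields $R_{\!p}^{\scriptscriptstyle(r)}/R_{\!p}^{\scriptscriptstyle(3)}=1+O_{\varepsilon,\,r}(|p|^{-1-\varepsilon})$ on $\Omega_{\varepsilon}$ and, together with \emph{(b)}, the asserted absolute convergence of the quotient series.

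The main obstacle is exactly the first clause of \emph{(b)} — that $\rho_{\!p}(\mathbf{k})$ is only ``$\min$‑limited'' (slope $<\tfrac14$ per degree) in $z_1,z_2,z_3$: the crude Weil bounds drawn from (i)--(iii) of \ref{Formal-MDS} and from \eqref{eq: Estimate-H1} give a power of $|p|$ linear in $k_1+k_2+k_3$ of slope $\ge\tfrac12$, which is \emph{not} beaten by the radius $|p|^{-1/4}$ and would make the series diverge. One must instead use that taking the residue at $z_{r+{\scriptscriptstyle1}}=1/q$ — confining $\alpha(\mathbf{s})$ to the wall $\alpha_{r+{\scriptscriptstyle1}}$ — removes the imaginary‑root contributions and leaves a function whose dependence on $z_1,z_2,z_3$ is rational of the rank‑$3$ shape \eqref{eq: Local-factor-residue-r=3}. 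For $r=3$ this is \eqref{eq: Local-factor-residue-r=3} itself; for $r\ge4$ it follows by rerunning the residue analysis of \cite[Section~4]{Dia} through the cocycle factorization $\mathrm{M}_{w_\alpha}^{-1}=\mathrm{M}_{w_{\alpha'}}(w_J\mathrm{z})^{-1}\,\mathrm{M}_{w_J}(\mathrm{z})^{-1}$ of \eqref{eq: def-cocycle2} already used in \eqref{eq: Sp_w-alpha}, which isolates the rank‑$3$ behaviour in $z_1,z_2,z_3$ and packages the $j\in J$ dependence into the diagonal entries $\mathrm{b}_1,\mathrm{b}_2,\mathrm{b}_3$, each a ratio of $(1\mp q^{-1/2}\xi_j^{\pm2})$'s equal to $1+O(|p|^{-1/2}|\xi_j|^{2\deg p})$.
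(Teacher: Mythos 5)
You correctly isolate the real difficulty — the crude slope-$\tfrac12$ coefficient bound coming from \eqref{eq: Estimate-H1} is not beaten by the radius $q^{-1/4+\varepsilon/2}$ in the first three variables — but the ingredient you substitute for it is asserted, not proved, and the route you sketch cannot deliver it. Your ``min-limited'' bound $\rho_p(\mathbf{k})\ll |p|^{c(k_1+k_2+k_3)+\varepsilon|\mathbf{k}|}\cdot|p|^{(1/2+\varepsilon)(k_4+\cdots+k_r)}$ with $c<\tfrac14$ is nowhere established: the cocycle factorization $\mathrm{M}_{w_\alpha}^{-1}=\mathrm{M}_{w_{\alpha'}}(w_J\mathrm{z})^{-1}\mathrm{M}_{w_J}(\mathrm{z})^{-1}$ you invoke governs the residue data $L^{(j)}_{w_\alpha,p}$ and $S_p^{w_\alpha}$ attached to the poles at the roots of $\Phi_2$, not the power-series coefficients of $R_p^{(r)}$, which is $(1-q^{-1})(\tfrac12,1,\tfrac12)\tilde{\mathbf f}(\boldsymbol\xi,q^{-1};q)$ and is a priori only controlled in $|z_i|<q^{-1/2}$. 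On $\Omega_\varepsilon$ the first three variables lie well outside that polydisk, so any argument must meromorphically continue $R_p^{(r)}$ there and rule out poles along real-root hyperplanes; your proposal never addresses this (your step (b), ``convergence of the quotient reduces to that of $R_p^{(r)}$'', simply presupposes the hard statement). Moreover your claimed bound would give $R_p^{(r)}/R_p^{(3)}-1=O(|p|^{-5/2+O(\varepsilon)})$, strictly stronger than the lemma; the paper's method shows the extremal admissible term ($n_4+\cdots+n_r=1$, $n_1=n_2=n_3=2$) is only of size $|p|^{-1}$ up to $\varepsilon$'s, which is a warning that the posited bound is too optimistic. Even your structural fact (a) (that the pure $z_1,z_2,z_3$ part of $R_p^{(r)}$ equals $R_p^{(3)}$) is a nontrivial rank-compatibility claim that you do not justify.

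The paper closes the gap by a completely different mechanism, which your proposal omits. It first writes $R_p^{(r)}$ via the functional equation \eqref{eq: f-e-loc} for $w_{\alpha'}^{-1}$ (with $w_{\alpha'}=w_1w_2w_3w_{r+1}$), then uses invariance under $w^{\ast}=w_{12}w_{13}w_{23}$ to continue $R_p^{(r)}/R_p^{(3)}$ across $|\xi_k|=q^{-1/4}$, checking through the root inequality \eqref{eq: pos-roots-omega-eps-ineq} (using $n_1+n_2+n_3\le 6\sum_{k\ge4}n_k$ for roots with $w^{\ast}(\alpha)>0$) that no real-root singularity enters $\Omega_\varepsilon$. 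The decisive step is then combinatorial: the $w^{\ast}$-functional equation plus Cauchy's integral formula on $|\xi_k|=q^{-1/4}$ forces the coefficients $B_{n_1,\ldots,n_r}$ of the quotient to vanish unless $2(n_4+\cdots+n_r)\ge\max\{n_1,n_2,n_3\}$ (this also yields your fact (a) as a byproduct), while Cauchy's inequalities at radius $q^{-1/2-\varepsilon}$ give $|B_{\mathbf n}|\ll q^{(1/2+\varepsilon)|\mathbf n|}$; the tiny radii $q^{-3-11\varepsilon}$ in the legs $j\ge4$ then absorb the slope-$\tfrac12$ growth transferred from $n_1,n_2,n_3$ by the support condition, giving $1+O(|p|^{-1-\varepsilon})$. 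To repair your proof you would need to either prove your mixed coefficient bound (for which no mechanism is offered) or reproduce this continuation--invariance--support-condition argument.
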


\begin{proof} \!We can obviously assume that $p$ is linear. By \eqref{eq: f-e-loc} and 
	\eqref{eq: Local-factor-residue-principal}, we can write 
	\begin{equation*}
	\begin{split}
		R_{\! p}^{\scriptscriptstyle (r)}({\bf \xi}) & 
		= \left(1 - q^{\scriptscriptstyle -1} \right) \, \cdot \, \left(\tfrac{1}{2}, 1, \tfrac{1}{2}\right)
		\tilde{{\bf f}}\left({\bf \xi}, q^{\scriptscriptstyle -1}; q\right) \\
	&= \left(1 - q^{\scriptscriptstyle -1} \right) \, \cdot \, 
	\left(\tfrac{1}{2}, 1, \tfrac{1}{2}\right) 
	\mathrm{M}_{w_{\!\scalebox{.9}{$\scriptscriptstyle \alpha'$}}^{\scalebox{.9}{$\scriptscriptstyle -1$}}}
\!\left({\bf \xi}, q^{\scriptscriptstyle -1}; q\right)
	\tilde{{\bf f}}\left(w_{\!{\scriptscriptstyle \alpha'}}^{\scriptscriptstyle -1}\big({\bf \xi}, q^{\scriptscriptstyle -1}\big);\,  q\right)
	\end{split}
	\end{equation*} 
	where 
	$
	w_{\!{\scriptscriptstyle \alpha'}} = 
	w_{\scriptscriptstyle 1}
	w_{\scriptscriptstyle 2}
	w_{\scriptscriptstyle 3}
	w_{r + {\scriptscriptstyle 1}}.
	$ 
	The entries of 
	\[ 
	\xi_{\scriptscriptstyle 1}^{\scriptscriptstyle 2} 
	\xi_{\scriptscriptstyle 2}^{\scriptscriptstyle 2} 
	\xi_{\scriptscriptstyle 3}^{\scriptscriptstyle 2} 
	R_{\! p}^{\scriptscriptstyle (3)}\!\left(\xi_{\scriptscriptstyle 1}, \xi_{\scriptscriptstyle 2}, \xi_{\scriptscriptstyle 3}\right)^{\scriptscriptstyle -1} \, \cdot \, \left(\tfrac{1}{2}, 1, \tfrac{1}{2}\right)
	\mathrm{M}_{w_{\!\scalebox{.9}{$\scriptscriptstyle \alpha'$}}^{\scalebox{.9}{$\scriptscriptstyle -1$}}}\!\left({\bf \xi}, q^{\scriptscriptstyle -1}; q\right)
	\] 
	are polynomials in 
	$
	\xi_{\scriptscriptstyle 1}, \xi_{\scriptscriptstyle 2}, \xi_{\scriptscriptstyle 3},
	$ 
	and 
	$
	w_{\!{\scriptscriptstyle \alpha'}}^{\scriptscriptstyle -1}
	\big({\bf \xi}, q^{\scriptscriptstyle -1}\big) 
	= \left(w_{\!{\scriptscriptstyle \alpha'}}^{\scriptscriptstyle  -1}\big({\bf \xi}, q^{\scriptscriptstyle  -1}\big)_{k}\right)_{1 \, \le k \, \le \,  r+1}
	$ 
	with 
	\[ 
	w_{\!{\scriptscriptstyle \alpha'}}^{\scriptscriptstyle -1}\big({\bf \xi}, q^{\scriptscriptstyle -1}\big)_{k} = 
	\begin{cases} 
	\left(\xi_{\scriptscriptstyle 1}\xi_{\scriptscriptstyle 2}\xi_{\scriptscriptstyle 3}\right) 
	\slash \xi_{\scriptscriptstyle k} & \text{if } k \le 3 \\ 
	q\, \xi_{\scriptscriptstyle 1}\xi_{\scriptscriptstyle 2}\xi_{\scriptscriptstyle 3}\xi_{\scriptscriptstyle k} & \text{if } 4 \le k \le r \\ 
	\left(q^{\scriptscriptstyle 3\slash 2}\xi_{\scriptscriptstyle 1}\xi_{\scriptscriptstyle 2}\xi_{\scriptscriptstyle 3}\right)^{\scriptscriptstyle -1} & \text{if } k = r+1.
	\end{cases}
	\] 
	Thus 
	$
	R_{\! p}^{\scriptscriptstyle (r)}({\bf \xi})
	\slash R_{\! p}^{\scriptscriptstyle (3)}\!\left(\xi_{\scriptscriptstyle 1}, \xi_{\scriptscriptstyle 2}, \xi_{\scriptscriptstyle 3}\right)
	$ 
	is holomorphic for 
	$
	-\frac{1}{4} - \varepsilon < \log_{q}|\xi_{\scriptscriptstyle k}| < -\frac{1}{4}
	$ 
	($k \le 3$) and 
	$
	\log_{q}|\xi_{\scriptscriptstyle k}| < - 3 - 8\varepsilon$ ($k \ge 4$).

	Now, for $i < j \le r,$ let $w_{\scriptscriptstyle ij} \in W$ be defined by 
	$
	w_{\scriptscriptstyle ij} \!= \!w_{\scriptscriptstyle i}w_{\!\scriptscriptstyle j}w_{r+{\scriptscriptstyle 1}}
	w_{\scriptscriptstyle i}w_{\!\scriptscriptstyle j},
	$ 
	and put $w^{\scriptscriptstyle \ast} \!\! = \!w_{\scriptscriptstyle 12}w_{\scriptscriptstyle 13}w_{\scriptscriptstyle 23}.$ By applying the cocycle relation \eqref{eq: def-cocycle2}, it is straightforward to check that the 
	function $R_{\! p}^{\scriptscriptstyle (r)}\slash R_{\! p}^{\scriptscriptstyle (3)}$ is invariant under $w^{\scriptscriptstyle \ast}\!.$ Note that this element acts on $\xi_{\scriptscriptstyle k}^{}$ by 
\[
\xi_{\scriptscriptstyle k}^{} \overset{w^{\ast}}{\longrightarrow}
 \begin{cases} 
\left(\sqrt{q}\, \xi_{\scriptscriptstyle k}^{} \right)^{\scriptscriptstyle -1} & \text{if } k \le 3 \\ 
q^{\scriptscriptstyle 3 \slash 2}
\xi_{\scriptscriptstyle 1}^{\scriptscriptstyle 2} 
\xi_{\scriptscriptstyle 2}^{\scriptscriptstyle 2} 
\xi_{\scriptscriptstyle 3}^{\scriptscriptstyle 2} 
\xi_{\scriptscriptstyle k}^{} & \text{if } k \ge 4. 
\end{cases}
\] 
Accordingly, the function 
$
R_{\! p}^{\scriptscriptstyle (r)}({\bf \xi}) \slash R_{\! p}^{\scriptscriptstyle (3)}
\!\left(\xi_{\scriptscriptstyle 1}, \xi_{\scriptscriptstyle 2}, \xi_{\scriptscriptstyle 3}\right)
$ 
is holomorphic for 
$
- \frac{1}{4} < \log_{q} |\xi_{\scriptscriptstyle k}| 
< - \frac{1}{4} + \frac{\varepsilon}{2}$ ($k \le 3$), and 
$ 
\log_{q} |\xi_{\scriptscriptstyle k}| < - 3 \linebreak 
- 11\varepsilon$ ($k \ge 4$); 
it is clear that in this region the function is represented by the scalar function 
\[
\left(1 - q^{\scriptscriptstyle -1}\right)R_{\! p}^{\scriptscriptstyle (3)}
\!\left(\xi_{\scriptscriptstyle 1}, \xi_{\scriptscriptstyle 2}, \xi_{\scriptscriptstyle 3}\right)^{\scriptscriptstyle -1} 
\, \cdot \, \left(\tfrac{1}{2}, 1, \tfrac{1}{2}\right)
\mathrm{M}_{w_{\!\scalebox{.9}{$\scriptscriptstyle \alpha'$}}^{\scalebox{.9}{$\scriptscriptstyle -1$}}w^{\ast}}
\!\left({\bf \xi}, q^{\scriptscriptstyle -1}; q\right) 
\tilde{{\bf f}}\left(w_{\!{\scriptscriptstyle \alpha'}}^{\scriptscriptstyle -1} w^{\scriptscriptstyle \ast}\big({\bf \xi}, q^{\scriptscriptstyle -1}\big);\,  q\right).
\] 
Since, by our assumptions, the vector function $\tilde{{\bf f}}\left({\bf \xi}, \xi_{r+{\scriptscriptstyle 1}}; q\right)$ continues meromorphically to the open convex cone $X^{\ast}\!,$ it follows at once that $R_{\! p}^{\scriptscriptstyle (r)}\slash R_{\! p}^{\scriptscriptstyle (3)}$ is meromorphic in the polydisk $\Omega_{\varepsilon}$ stated in the lemma.

To see that no pole can in fact occur in this region, \!we argue as follows. \!We show that: 
\begin{equation} \label{eq: pos-roots-omega-eps-ineq}
d(\alpha) - 1 + \sum_{k = 1}^{r} 2n_{\scriptscriptstyle k}\log_{q} |\xi_{\scriptscriptstyle k}| - 2n_{r+{\scriptscriptstyle 1}} < 0
\end{equation} 
for all 
$
\alpha = \sum n_{\scriptscriptstyle k}\alpha_{\scriptscriptstyle k} \in \Delta^{\mathrm{re}}_{+},$ and ${\bf \xi} \in \Omega_{\varepsilon}.$ Indeed, since 
$l(w^{\scriptscriptstyle \ast}) = 11,$ 
\[ 
w^{\scriptscriptstyle \ast}\!(\alpha) < 0 \iff \alpha \in 
\{\alpha_{\scriptscriptstyle 1}, \alpha_{\scriptscriptstyle 2}, \alpha_{\scriptscriptstyle 3}\} 
\cup \left\{\sum_{k \le 3} n_{\scriptscriptstyle k}\alpha_{\scriptscriptstyle k} + \alpha_{r+{\scriptscriptstyle 1}}: 
\text{with }n_{\scriptscriptstyle k} \in \{0, 1\},\text{not all zero}\right\} 
\cup \{\alpha_{\scriptscriptstyle 1} + \alpha_{\scriptscriptstyle 2} + \alpha_{\scriptscriptstyle 3} 
+ 2\alpha_{r+{\scriptscriptstyle 1}}\}.
\] 
All these roots satisfy \eqref{eq: pos-roots-omega-eps-ineq}, as the reader can easily verify. If 
$w^{\scriptscriptstyle \ast}\!(\alpha) > 0,$ then by computing the coefficients of $\alpha_{\scriptscriptstyle k}$ (for $k \le 3$) in 
$w^{\scriptscriptstyle \ast}\!(\alpha),$ one finds that
\begin{equation} \label{eq: Ineq-pos-roots-coeff}
n_{\scriptscriptstyle 1} 
+ n_{\scriptscriptstyle 2} 
+n_{\scriptscriptstyle 3} 
\le 6\sum_{k = 4}^{r}n_{\scriptscriptstyle k}.
\end{equation} 
Thus, if $\log_{q} |\xi_{\scriptscriptstyle k}| < - \frac{1}{4} + \frac{\varepsilon}{2}$ for $k \le 3,$ 
and $\log_{q} |\xi_{\scriptscriptstyle k}| < - 3 - 11\varepsilon$ for $k \ge 4,$ then the expression in 
\eqref{eq: pos-roots-omega-eps-ineq} is smaller than 
\[ 
(\tfrac{1}{2} + \varepsilon)(n_{\scriptscriptstyle 1} + n_{\scriptscriptstyle 2} + n_{\scriptscriptstyle 3}) - (5 + 22\varepsilon)
\sum_{k = 4}^{r} n_{\scriptscriptstyle k}
\] 
which by \eqref{eq: Ineq-pos-roots-coeff} is $\le 0.$ It follows that 
\begin{equation*}
\begin{split}
\frac{R_{\! p}^{\scriptscriptstyle (r)}({\bf \xi})}
{R_{\! p}^{\scriptscriptstyle (3)}\!\left(
\xi_{\scriptscriptstyle 1}, 
\xi_{\scriptscriptstyle 2}, 
\xi_{\scriptscriptstyle 3}\right)} 
& = \left(1 - q^{\scriptscriptstyle -1}\right)
R_{\! p}^{\scriptscriptstyle (3)}\!\left(
\xi_{\scriptscriptstyle 1}, 
\xi_{\scriptscriptstyle 2}, 
\xi_{\scriptscriptstyle 3}\right)^{\scriptscriptstyle -1} \, \cdot \, \left(\tfrac{1}{2}, 1, \tfrac{1}{2}\right)
\tilde{{\bf f}}\left({\bf \xi}, q^{\scriptscriptstyle -1}; q\right) \\
& = \left(1 - q^{\scriptscriptstyle -1}\right)\sum B_{\scriptscriptstyle 
n_{\scalebox{.7}{$\scriptscriptstyle 1$}}\!, \, 
n_{\scalebox{.7}{$\scriptscriptstyle 2$}}\!,\, 
n_{\scalebox{.7}{$\scriptscriptstyle 3$}}\!, \ldots,  
n_{\scalebox{.95}{$\scriptscriptstyle r$}}} 
\xi_{\scriptscriptstyle 1}^{\scriptscriptstyle 
n_{\scalebox{.7}{$\scriptscriptstyle 1$}}}
\xi_{\scriptscriptstyle 2}^{\scriptscriptstyle 
n_{\scalebox{.7}{$\scriptscriptstyle 2$}}}
\xi_{\scriptscriptstyle 3}^{\scriptscriptstyle 
n_{\scalebox{.7}{$\scriptscriptstyle 3$}}} 
\cdots \; \xi_{r}^{\scriptscriptstyle 
n_{\scalebox{.95}{$\scriptscriptstyle r$}}}
\end{split}
\end{equation*} 
the power series being normally convergent on $\Omega_{\varepsilon}.$ Note
that 
$
B_{\scriptscriptstyle 0, \ldots, 0} 
= \left(1 - q^{\scriptscriptstyle -1} \right)^{\scriptscriptstyle -1}
$ 
so that the constant term of 
$
R_{\! p}^{\scriptscriptstyle (r)} \slash R_{\! p}^{\scriptscriptstyle (3)}
$ 
is 1.

For fixed $n_{\scriptscriptstyle 4}, \ldots,n_{r},$ let us temporarily denote the subseries 
$
\sum_{n_{\scalebox{.7}{$\scriptscriptstyle 1$}},\, 
n_{\scalebox{.7}{$\scriptscriptstyle 2$}},\, 
n_{\scalebox{.7}{$\scriptscriptstyle 3$}} \, \ge \, 0} 
B_{\scriptscriptstyle 
	n_{\scalebox{.7}{$\scriptscriptstyle 1$}}\!, \, 
	n_{\scalebox{.7}{$\scriptscriptstyle 2$}}\!,\, 
	n_{\scalebox{.7}{$\scriptscriptstyle 3$}}\!, \ldots, 
	n_{\scalebox{.95}{$\scriptscriptstyle r$}}} 
 \xi_{\scriptscriptstyle 1}^{n_{\scalebox{.7}{$\scriptscriptstyle 1$}}}
\xi_{\scriptscriptstyle 2}^{n_{\scalebox{.7}{$\scriptscriptstyle 2$}}}
\xi_{\scriptscriptstyle 3}^{n_{\scalebox{.7}{$\scriptscriptstyle 3$}}}
$ 
by 
$
F_{n_{\scalebox{.7}{$\scriptscriptstyle 4$}}, \ldots, n_{\scalebox{.95}{$\scriptscriptstyle r$}}}\!(
\xi_{\scriptscriptstyle 1}, 
\xi_{\scriptscriptstyle 2}, 
\xi_{\scriptscriptstyle 3}).
$ 
\!The invariance of $R_{\! p}^{\scriptscriptstyle (r)}\slash R_{\! p}^{\scriptscriptstyle (3)}$ under $w^{\scriptscriptstyle \ast}$ 
implies that 
\[
F_{n_{\scalebox{.7}{$\scriptscriptstyle 4$}}, \ldots, n_{\scalebox{.95}{$\scriptscriptstyle r$}}}
\!(
\xi_{\scriptscriptstyle 1}, 
\xi_{\scriptscriptstyle 2}, 
\xi_{\scriptscriptstyle 3}) = 
\left(q^{\scriptscriptstyle 3 \slash 2}
\xi_{\scriptscriptstyle 1}^{\scriptscriptstyle 2}
\xi_{\scriptscriptstyle 2}^{\scriptscriptstyle 2}
\xi_{\scriptscriptstyle 3}^{\scriptscriptstyle 2}\right)^{n_{\scalebox{.7}{$\scriptscriptstyle 4$}} +  \cdots + n_{\scalebox{.95}{$\scriptscriptstyle r$}}}
\!\!F_{n_{\scalebox{.7}{$\scriptscriptstyle 4$}}, \ldots, n_{\scalebox{.95}{$\scriptscriptstyle r$}}}
\!\left(\frac{1}{\sqrt{q} \, \xi_{\scriptscriptstyle 1}}, 
\frac{1}{\sqrt{q} \, \xi_{\scriptscriptstyle 2}}, 
\frac{1}{\sqrt{q} \, \xi_{\scriptscriptstyle 3}}\right)
\] 
when $|\xi_{\scriptscriptstyle k}| = q^{\scriptscriptstyle -1 \slash 4}.$ Thus by applying Cauchy’s integral formula
\[
B_{n_{\scalebox{.7}{$\scriptscriptstyle 1$}}\!, \ldots, 
n_{\scalebox{.95}{$\scriptscriptstyle r$}}} = 
\frac{1}{\big(2\pi \sqrt{-1}\big)^{\!{\scriptscriptstyle 3}}}
\, \oint\limits_{|\xi_{\scalebox{.8}{$\scriptscriptstyle 1$}}|\,= \,
	q^{\, \scalebox{.8}{$\scriptscriptstyle  -1\slash 4$}}} \;
\oint\limits_{|\xi_{\scalebox{.8}{$\scriptscriptstyle 2$}}|\,= \,
	q^{\, \scalebox{.8}{$\scriptscriptstyle  -1\slash 4$}}} \;
\oint\limits_{|\xi_{\scalebox{.8}{$\scriptscriptstyle 3$}}|\,= \,
	q^{\, \scalebox{.8}{$\scriptscriptstyle  -1\slash 4$}}}
\!\frac{\left(q^{\scriptscriptstyle 3\slash 2}
	\xi_{\scriptscriptstyle 1}^{\scriptscriptstyle 2}
	\xi_{\scriptscriptstyle 2}^{\scriptscriptstyle 2}
	\xi_{\scriptscriptstyle 3}^{\scriptscriptstyle 2}\right)^{n_{\scalebox{.7}{$\scriptscriptstyle 4$}}
		+ \cdots + n_{\scalebox{.95}{$\scriptscriptstyle r$}}}
	\!\!F_{n_{\scalebox{.7}{$\scriptscriptstyle 4$}}, \ldots, 
		n_{\scalebox{.95}{$\scriptscriptstyle r$}}}\!\left(\frac{1}{\sqrt{q}\,
		\xi_{\scalebox{.85}{$\scriptscriptstyle 1$}}},
	\frac{1}{\sqrt{q}\,
		\xi_{\scalebox{.85}{$\scriptscriptstyle 2$}}},
	\frac{1}{\sqrt{q}\,
		\xi_{\scalebox{.85}{$\scriptscriptstyle 3$}}}\right)}{
	\xi_{\scriptscriptstyle 1}^{n_{\scalebox{.7}{$\scriptscriptstyle 1$}}
		+ \, \scalebox{.95}{$\scriptscriptstyle 1$}}
	\xi_{\scriptscriptstyle 2}^{n_{\scalebox{.7}{$\scriptscriptstyle 2$}}
		+ \, \scalebox{.95}{$\scriptscriptstyle 1$}}
	\xi_{\scriptscriptstyle 3}^{n_{\scalebox{.7}{$\scriptscriptstyle 3$}}
		+ \, \scalebox{.95}{$\scriptscriptstyle 1$}}}
\; d \xi_{\scriptscriptstyle 3}\,
d \xi_{\scriptscriptstyle 2}\,
d\xi_{\scriptscriptstyle 1}
\]
we find that
$
B_{n_{\scalebox{.7}{$\scriptscriptstyle 1$}}\!, \ldots, 
	n_{\scalebox{.95}{$\scriptscriptstyle r$}}} \!= 0
$
if
$
2(n_{\scriptscriptstyle 4} + \cdots + n_{r}) < 
\max\{n_{\scriptscriptstyle 1},
n_{\scriptscriptstyle 2},
n_{\scriptscriptstyle 3}\}.
$
On the other hand, if
$
|\xi_{\scriptscriptstyle k}| = q^{\scriptscriptstyle -\frac{1}{2} - \varepsilon}
$ 
for all $k \le r,$ we have 
\[
\big|R_{\! p}^{\scriptscriptstyle (3)}\!\left(
\xi_{\scriptscriptstyle 1}, 
\xi_{\scriptscriptstyle 2}, 
\xi_{\scriptscriptstyle 3}\right)^{\scriptscriptstyle -1} \big| 
\le \big(1 + q^{\scriptscriptstyle - 2 - 6\varepsilon} \big) \big(1 +
q^{\scriptscriptstyle - 1 - 2\varepsilon} \big)^{\scriptscriptstyle 6} \ll 1
\] 
and, by our assumptions on $\tilde{f}(\mathrm{z}; q),$ the same
estimate holds for the entries of $\tilde{{\bf{f}}}\left({\bf \xi}, q^{\scriptscriptstyle -1}; q\right).$ By applying Cauchy's ine-\linebreak 
qualities, \!we find that 
$
\left|B_{n_{\scalebox{.7}{$\scriptscriptstyle 1$}}\!, \ldots, 
	n_{\scalebox{.95}{$\scriptscriptstyle r$}}}\right| \ll 
q^{\scriptscriptstyle \left(\frac{1}{2} + \, \varepsilon \right)
	(n_{\scalebox{.7}{$\scriptscriptstyle 1$}}
	+ \cdots + n_{\scalebox{.95}{$\scriptscriptstyle r$}})}\!.
$ 
Thus, for ${\bf \xi} \in \Omega_{\varepsilon},$ \!we have 
\begin{equation*}
\begin{split}
\left|\frac{R_{\! p}^{\scriptscriptstyle (r)}({\bf \xi})}{R_{\! p}^{\scriptscriptstyle (3)}
\!\left(\xi_{\scriptscriptstyle 1}, 
\xi_{\scriptscriptstyle 2}, 
\xi_{\scriptscriptstyle 3} \right)} -1 \right| 
&\ll -1 \, + \, \sum_{n_{\scalebox{.7}{$\scriptscriptstyle 4$}}\, \ge \, 0} \cdots 
\sum_{n_{\scalebox{.95}{$\scriptscriptstyle r$}} \,\ge \, 0} 
q^{\scriptscriptstyle \left(- \frac{5}{2} - 10\,\varepsilon \right)(n_{\scalebox{.7}{$\scriptscriptstyle 4$}} 
+ \cdots + n_{\scalebox{.95}{$\scriptscriptstyle r$}})}
\sum_{n_{\scalebox{.7}{$\scriptscriptstyle 1$}} = 0}^{2(n_{\scalebox{.7}{$\scriptscriptstyle 4$}} 
+ \cdots + n_{\scalebox{.95}{$\scriptscriptstyle r$}})}  \;
\sum_{n_{\scalebox{.7}{$\scriptscriptstyle 2$}} = 0}^{2(n_{\scalebox{.7}{$\scriptscriptstyle 4$}} 
+ \cdots + n_{\scalebox{.95}{$\scriptscriptstyle r$}})} \; 
\sum_{n_{\scalebox{.7}{$\scriptscriptstyle 3$}} = 0}^{2(n_{\scalebox{.7}{$\scriptscriptstyle 4$}} 
+ \cdots + n_{\scalebox{.95}{$\scriptscriptstyle r$}})} 
q^{\scriptscriptstyle \left(\frac{1}{4} + \frac{3\varepsilon}{2}\right)
(n_{\scalebox{.7}{$\scriptscriptstyle 1$}} 
+  n_{\scalebox{.7}{$\scriptscriptstyle 2$}} 
+  n_{\scalebox{.7}{$\scriptscriptstyle 3$}})}\\
&  \ll -1 \, + \, \sum_{n_{\scalebox{.7}{$\scriptscriptstyle 4$}}\, \ge \, 0} \cdots 
\sum_{n_{\scalebox{.95}{$\scriptscriptstyle r$}} \ge 0} q^{\, \scriptscriptstyle -(1 +\varepsilon)
(n_{\scalebox{.7}{$\scriptscriptstyle 4$}} + \cdots + n_{\scalebox{.95}{$\scriptscriptstyle r$}})}\\
& \ll q^{\, \scriptscriptstyle -1 - \varepsilon}
\end{split} 
\end{equation*} 
which completes the proof.
\end{proof}

\vskip10pt
\begin{prop} \label{Regularization-Justification} --- For sufficiently small $\varepsilon > 0,$ we have 
	\[ 
	\mathscr{R}^{\scriptscriptstyle (r)}\!({\bf \xi}, \chi_{a}) 
	\prod_{p}\frac{S_{\! p}^{w_{\!\scalebox{.85}{$\scriptscriptstyle \alpha'$}}}\!\!({\bf \xi}, \chi_{a})}
	{\mathscr{R}_{\! p}^{\scriptscriptstyle (r)}\!({\bf \xi}, \chi_{a})} 
	= N({\bf \xi}, \chi_{a})\prod_{p}S_{\! p}^{\scriptscriptstyle \mathrm{reg}}\!({\bf \xi}, \chi_{a})	
	\] 
	for 
	$
	\xi_{\scriptscriptstyle 1},
	\xi_{\scriptscriptstyle 2},
	\xi_{\scriptscriptstyle 3}
	$ 
	in a small neighborhood of the unit circle, and 
	$
	|\xi_{\scriptscriptstyle k}| < q^{\scriptscriptstyle \varepsilon}
	$ 
	for $k \ge 4.$
\end{prop}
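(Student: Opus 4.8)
The plan is to prove the identity by comparing the two sides factor-by-factor over the primes, reducing everything to an identity of the local factors and then using the convergence estimate from Lemma~\ref{Euler-prod-convergence-Rp} to legitimize all rearrangements. First I would observe that, by the definition of $S_{\! p}^{\scriptscriptstyle \mathrm{reg}}$ in \eqref{eq: Sp-regularized}, for each monic irreducible $p$ we have the pointwise identity
\[
S_{\! p}^{w_{\!\scalebox{.85}{$\scriptscriptstyle \alpha'$}}}\!\!({\bf \xi}, \chi_{a})
= S_{\! p}^{\scriptscriptstyle \mathrm{reg}}\!({\bf \xi}, \chi_{a})\,
\cdot\, R_{\! p}^{\scriptscriptstyle (3)}\!\!\left(\tfrac{(\zeta_{a}\xi_{\scriptscriptstyle 2}\xi_{\scriptscriptstyle 3})^{\deg p}}{|p|^{1/4}\xi_{\scriptscriptstyle 1}^{\deg p}},\dots\right)
\prod_{i=1}^{3}\prod_{j=4}^{r}\left(1-\tfrac{\xi_{\scriptscriptstyle i}^{2\deg p}\xi_{\!\scriptscriptstyle j}^{2\deg p}}{|p|}\right)^{\!-1}\!\!\left(1-\tfrac{\xi_{\scriptscriptstyle i}^{-2\deg p}\xi_{\!\scriptscriptstyle j}^{2\deg p}}{|p|}\right)^{\!-1}\!\!\!\prod_{4\le k\le l\le r}\left(1-\tfrac{\xi_{\scriptscriptstyle k}^{2\deg p}\xi_{\scriptscriptstyle l}^{2\deg p}}{|p|}\right)^{\!-1}.
\]
By the definition of $\mathscr{R}_{\! p}^{\scriptscriptstyle (r)}$ and $N_{\! p}$, the $R_{\! p}^{\scriptscriptstyle (3)}$-factor together with the three product blocks on the right is precisely $\mathscr{R}_{\! p}^{\scriptscriptstyle (r)}({\bf\xi},\chi_{a})\big/N_{\! p}({\bf\xi},\chi_{a})$ multiplied by $R_{\! p}^{\scriptscriptstyle (r)}\big/R_{\! p}^{\scriptscriptstyle (3)}$; so after dividing by $\mathscr{R}_{\! p}^{\scriptscriptstyle (r)}$ we obtain, for each $p$,
\[
\frac{S_{\! p}^{w_{\!\scalebox{.85}{$\scriptscriptstyle \alpha'$}}}\!\!({\bf \xi}, \chi_{a})}{\mathscr{R}_{\! p}^{\scriptscriptstyle (r)}\!({\bf \xi}, \chi_{a})}
= \frac{S_{\! p}^{\scriptscriptstyle \mathrm{reg}}\!({\bf \xi}, \chi_{a})}{N_{\! p}({\bf \xi}, \chi_{a})}\cdot\frac{R_{\! p}^{\scriptscriptstyle (3)}({\bf\xi})}{R_{\! p}^{\scriptscriptstyle (r)}({\bf\xi})}.
\]
This is the algebraic heart of the proof; it is a bookkeeping identity among the explicit local factors and requires no estimates.

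Next I would take the product over all monic irreducibles $p$ and justify that it may be split. Here the point is that each of the three families
\[
\prod_{p}\frac{S_{\! p}^{\scriptscriptstyle \mathrm{reg}}}{N_{\! p}},\qquad
\prod_{p}\frac{R_{\! p}^{\scriptscriptstyle (3)}}{R_{\! p}^{\scriptscriptstyle (r)}},\qquad
\prod_{p}\mathscr{R}_{\! p}^{\scriptscriptstyle (r)}
\]
converges absolutely in the stated region. For the first one I would use the expansion $S_{\! p}^{\scriptscriptstyle \mathrm{reg}} = 1 + O(q^{-3/2})$ from the display following \eqref{eq: Sp-regularized} together with the absolute convergence of $\prod_p N_{\! p}$ (which is a finite product of geometric-type Euler products, convergent exactly when $\xi_{\scriptscriptstyle 1},\xi_{\scriptscriptstyle 2},\xi_{\scriptscriptstyle 3}$ are near the unit circle and $|\xi_{\scriptscriptstyle k}|<q^{\varepsilon}$ for $k\ge 4$, by comparison of $\deg p$-th powers). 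For the second family, Lemma~\ref{Euler-prod-convergence-Rp} gives precisely $R_{\! p}^{\scriptscriptstyle (r)}/R_{\! p}^{\scriptscriptstyle (3)} = 1 + O(|p|^{-1-\varepsilon})$ on the polydisk $\Omega_{\varepsilon}$, and the region in the proposition is contained in $\Omega_{\varepsilon}$ after the substitution $z_{\scriptscriptstyle k} = q^{-1/2}\xi_{\scriptscriptstyle k}^{2}$ (the relevant arguments of $R_{\! p}^{\scriptscriptstyle (r)}$ are the $\xi_{\scriptscriptstyle i}^{\pm 1}\xi_{\scriptscriptstyle j}^{\pm 1}q^{\mp 1/4}$ and $\xi_{\scriptscriptstyle k}^{2}q^{-3/4}(\zeta_a\xi_1\xi_2\xi_3)^{-1}$, all of which lie in the polydisk $\Omega_{\varepsilon}$ for the stated ranges). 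Since $\sum_p |p|^{-1-\varepsilon}<\infty$ in $\mathbb{F}_q[x]$, this product is absolutely convergent. The factor $\mathscr{R}^{\scriptscriptstyle(r)}({\bf\xi},\chi_a)=\prod_p\mathscr{R}_{\! p}^{\scriptscriptstyle(r)}$ is convergent in exactly the region where \eqref{eq: reg-formula-princ-parts} is asserted to converge, namely $\xi_{\scriptscriptstyle 1},\xi_{\scriptscriptstyle 2},\xi_{\scriptscriptstyle 3}$ near the unit circle and $|\xi_{\scriptscriptstyle k}|<q^{\varepsilon}$ for $k\ge 4$. With all three families absolutely convergent, the product of the per-$p$ identities rearranges to
\[
\prod_p\frac{S_{\! p}^{w_{\!\scalebox{.85}{$\scriptscriptstyle \alpha'$}}}}{\mathscr{R}_{\! p}^{\scriptscriptstyle (r)}}
= \frac{\prod_p S_{\! p}^{\scriptscriptstyle \mathrm{reg}}}{N({\bf\xi},\chi_a)}\cdot\prod_p\frac{R_{\! p}^{\scriptscriptstyle (3)}}{R_{\! p}^{\scriptscriptstyle (r)}},
\]
and multiplying through by $\mathscr{R}^{\scriptscriptstyle(r)}({\bf\xi},\chi_a)=\prod_p\mathscr{R}_{\! p}^{\scriptscriptstyle(r)}$ — which equals $\big(\prod_p R_{\! p}^{\scriptscriptstyle(r)}/R_{\! p}^{\scriptscriptstyle(3)}\big)^{-1}$ times the relevant remaining blocks contained in $N({\bf\xi},\chi_a)$ — yields the claimed formula $N({\bf\xi},\chi_a)\prod_p S_{\! p}^{\scriptscriptstyle \mathrm{reg}}$ after the $R^{(3)}/R^{(r)}$ and $R^{(r)}/R^{(3)}$ factors cancel.

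The main obstacle is the second step: verifying that $R_{\! p}^{\scriptscriptstyle (r)}/R_{\! p}^{\scriptscriptstyle (3)}$ really is a genuine Euler factor with the claimed size uniformly in $p$ on a polydisk large enough to contain the arguments forced by the constraint $\mathrm{z}^{\alpha/2}=\zeta_a^{-1}q^{-(d(\alpha)+1)/4}$. This is exactly the content of Lemma~\ref{Euler-prod-convergence-Rp}, whose proof in turn hinges on the $w^{\scriptscriptstyle\ast}$-invariance of $R_{\! p}^{\scriptscriptstyle(r)}/R_{\! p}^{\scriptscriptstyle(3)}$ and the positivity-type inequality \eqref{eq: pos-roots-omega-eps-ineq}; so in the proof of Proposition~\ref{Regularization-Justification} itself I only need to carefully check that the polydisk $\Omega_\varepsilon$ of that lemma indeed contains all points of the form $\big((q^{-1/2}\xi_{\scriptscriptstyle k}^{2})\big)_{k\le r}$ arising here, with $\xi_{\scriptscriptstyle 1},\xi_{\scriptscriptstyle 2},\xi_{\scriptscriptstyle 3}$ near the unit circle and $|\xi_{\scriptscriptstyle k}|<q^{\varepsilon}$ for $k\ge 4$; this is a matter of matching the exponents and choosing $\varepsilon$ small enough. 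Once that containment is confirmed, the rest is the routine manipulation of absolutely convergent Euler products sketched above.
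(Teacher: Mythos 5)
Your argument breaks down at the step where you split the product over $p$ into separately convergent pieces and invoke Lemma~\ref{Euler-prod-convergence-Rp} at the points of interest. First, the splitting itself is not available in the stated region: when $\xi_{\scriptscriptstyle 1},\xi_{\scriptscriptstyle 2},\xi_{\scriptscriptstyle 3}$ lie near the unit circle and $|\xi_{\scriptscriptstyle k}|<q^{\varepsilon}$ for $k\ge 4$, the individual Euler products $\prod_p N_{\! p}$ and $\prod_p \mathscr{R}_{\! p}^{\scriptscriptstyle (r)}$ diverge — e.g.\ $\prod_p\bigl(1-\xi_{\scriptscriptstyle k}^{2\deg p}\xi_{\scriptscriptstyle l}^{2\deg p}/|p|\bigr)^{-1}=(1-\xi_{\scriptscriptstyle k}^{2}\xi_{\scriptscriptstyle l}^{2})^{-1}$ requires $|\xi_{\scriptscriptstyle k}\xi_{\scriptscriptstyle l}|<1$, which fails generically there. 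This is exactly why the paper stresses, just before \eqref{eq: reg-formula-princ-parts}, that one ``cannot immediately simplify'' $\prod_p\mathscr{R}_{\! p}^{\scriptscriptstyle (r)}$; only the product of the \emph{ratios} converges, and justifying the cancellation is the whole content of the proposition, not a routine rearrangement of absolutely convergent products. Second, the claimed containment in $\Omega_{\varepsilon}$ is false: the arguments fed into $R_{\! p}^{\scriptscriptstyle (r)}$ in positions $k\ge 4$ are $\xi_{\scriptscriptstyle k}^{2}q^{-3/4}(\zeta_a\xi_{\scriptscriptstyle 1}\xi_{\scriptscriptstyle 2}\xi_{\scriptscriptstyle 3})^{-1}$, of modulus about $q^{-3/4+O(\varepsilon)}$, whereas Lemma~\ref{Euler-prod-convergence-Rp} demands modulus $<q^{-3-11\varepsilon}$ in those slots. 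So the lemma simply cannot be applied at the points the proposition is about, and the estimate $R_{\! p}^{\scriptscriptstyle (r)}/R_{\! p}^{\scriptscriptstyle (3)}=1+O(|p|^{-1-\varepsilon})$ is not at your disposal where you need it. (There is also a bookkeeping slip in your per-$p$ identity: by \eqref{eq: Sp-regularized} one has simply $S_{\! p}^{w_{\alpha'}}=N_{\! p}\,S_{\! p}^{\scriptscriptstyle\mathrm{reg}}$, hence $S_{\! p}^{w_{\alpha'}}/\mathscr{R}_{\! p}^{\scriptscriptstyle (r)}=S_{\! p}^{\scriptscriptstyle\mathrm{reg}}\cdot N_{\! p}/\mathscr{R}_{\! p}^{\scriptscriptstyle (r)}$, not the expression with the extra $N_{\! p}^{-1}$ and $R^{\scriptscriptstyle(3)}/R^{\scriptscriptstyle(r)}$ you wrote.)

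The missing idea, which is the heart of the paper's proof, is a \emph{new} uniform local estimate valid on the large region: one shows $N_{\! p}^{-1}\mathscr{R}_{\! p}^{\scriptscriptstyle (r)}=1+O(|p|^{-1-\eta_{\varepsilon}})$ uniformly on $\mathrm{D}_{\varepsilon}$, and this is obtained not from Lemma~\ref{Euler-prod-convergence-Rp} but from the functional equation \eqref{eq: f-e-loc} (writing $\mathscr{R}_{\! p}^{\scriptscriptstyle (r)}$ via the cocycle $\mathrm{M}_{w_{\alpha'}}^{-1}$ applied to $\tilde{\mathbf{f}}$) combined with the uniform estimate of Lemma~\ref{Local-vect-estimate} on $\tilde{\mathbf{f}}$; this is what makes $\prod_p N_{\! p}/\mathscr{R}_{\! p}^{\scriptscriptstyle (r)}$ absolutely convergent and holomorphic on $\mathrm{D}_{\varepsilon}$. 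Lemma~\ref{Euler-prod-convergence-Rp} is then used only on a much smaller region (after restricting to $|\xi_{\scriptscriptstyle k}|<q^{-\varepsilon}$ for $k\ge4$ and passing to the primed variables $\xi_{\scriptscriptstyle k}'$) to identify this product with $N/\mathscr{R}^{\scriptscriptstyle (r)}$, and the identity is propagated back to the stated region by analytic continuation. Without the uniform estimate on $\mathrm{D}_{\varepsilon}$ and the continuation argument, your proof does not go through.
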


\begin{proof} By definition, 
	$
	S_{\! p}^{w_{\!\scalebox{.85}{$\scriptscriptstyle \alpha'$}}}\!\!({\bf \xi}, \chi_{a}) 
	= N_{\! p}({\bf \xi}, \chi_{a})
	S_{\! p}^{\scriptscriptstyle \mathrm{reg}}\!({\bf \xi}, \chi_{a}).
	$ 
	Thus, by taking 
	$
	{\bf \xi} =
	(\xi_{\scriptscriptstyle 1}, \ldots, \xi_{r})
	$ 
	as stated in the proposition, we can write 
	\[ 
	\prod_{p}\frac{S_{\! p}^{w_{\!\scalebox{.85}{$\scriptscriptstyle \alpha'$}}}\!\!({\bf \xi}, \chi_{a})}
	{\mathscr{R}_{\! p}^{\scriptscriptstyle (r)}\!({\bf \xi}, \chi_{a})} 
	 = \prod_{p}S_{\! p}^{\scriptscriptstyle \mathrm{reg}}\!({\bf \xi}, \chi_{a}) 
	\, \cdot \, \prod_{p} \frac{N_{\! p}({\bf \xi}, \chi_{a})}{\mathscr{R}_{\! p}^{\scriptscriptstyle (r)}\!({\bf \xi}, \chi_{a})};
	\] 
	the product in the left-hand side converges by our assumptions. \!To conclude that 
	\[
\prod_{p} \frac{N_{\! p}({\bf \xi}, \chi_{a})}{\mathscr{R}_{\! p}^{\scriptscriptstyle (r)}\!({\bf \xi}, \chi_{a})}
=  \frac{N({\bf \xi}, \chi_{a})}{\mathscr{R}^{\scriptscriptstyle (r)}\!({\bf \xi}, \chi_{a})}
\] 
it suffices to show that 
\begin{equation}  \label{Local-asympt-estimate} 
N_{\! p}({\bf \xi}, \chi_{a})^{\scriptscriptstyle -1}\mathscr{R}_{\! p}^{\scriptscriptstyle (r)}\!({\bf \xi}, \chi_{a})
= 1 \, + \, O\left(|p|^{- 1 - \eta_{\scalebox{.9}{$\scriptscriptstyle \varepsilon$}}} \right) 
\qquad{\text{(with $\eta_{\scalebox{1.1}{$\scriptscriptstyle \varepsilon$}} \sim \tfrac{1}{2}$ as $\varepsilon \to 0$)}}
\end{equation} 
uniformly in 
$
\mathrm{D}_{\varepsilon} : = 
\{\text{${\bf \xi}: q^{- \varepsilon\slash 3} 
< |\xi_{\scriptscriptstyle k}| < q^{\varepsilon\slash 2}$ 
for $k = 1, 2, 3$ and $|\xi_{\scriptscriptstyle k}| < q^{\varepsilon\slash 2}$ 
for $k=4, \ldots, r$}\};
$ 
here it is understood that the function 
$
N_{\! p}({\bf \xi}, \chi_{a})^{\scriptscriptstyle -1}
\mathscr{R}_{\! p}^{\scriptscriptstyle (r)}\!({\bf \xi}, \chi_{a})
$ 
is holomorphic in $\mathrm{D}_{\varepsilon}.$ Indeed, assuming \eqref{Local-asympt-estimate} for the moment, the product 
\[
\prod_{p} N_{\! p}({\bf \xi}, \chi_{a})^{\scriptscriptstyle -1}
\mathscr{R}_{\! p}^{\scriptscriptstyle (r)}\!({\bf \xi}, \chi_{a})
\] 
converges absolutely, hence it is a holomorphic function, in $\mathrm{D}_{\varepsilon}.$ 
Restricting to $|\xi_{\scriptscriptstyle k}| < q^{-\varepsilon}$ ($k \ge 4$), we clearly have 
\begin{equation*}
\begin{split}
&\prod_{i = 1}^{3}\prod_{j = 4}^{r}
\left(1 - 
\xi_{\scriptscriptstyle i}^{\scriptscriptstyle 2}
\xi_{\!\scriptscriptstyle j}^{\scriptscriptstyle 2}\right)
\left(1- 
\xi_{\scriptscriptstyle i}^{\scriptscriptstyle -2}
\xi_{\!\scriptscriptstyle j}^{\scriptscriptstyle 2}\right)
\;\; \cdot \prod_{4 \, \le \, k \, \le \, l \, \le \, r} \big(1 -
\xi_{\scriptscriptstyle k}^{\scriptscriptstyle 2}
\xi_{\scriptscriptstyle l}^{\scriptscriptstyle 2}\big)\\ 
& = \prod_{p}\left\{\prod_{i = 1}^{3}\prod_{j = 4}^{r}
\left(1 - \frac{
	\xi_{\scriptscriptstyle i}^{\scriptscriptstyle 2 \deg\,p}
	\xi_{\!\scriptscriptstyle j}^{\scriptscriptstyle 2 \deg\,p}}{|p|}\right)
\!\left(1- \frac{
	\xi_{\scriptscriptstyle i}^{\scriptscriptstyle -2 \deg\,p}
	\xi_{\!\scriptscriptstyle j}^{\scriptscriptstyle 2 \deg\,p}}{|p|}\right)
\;\; \cdot \prod_{4 \, \le \, k \, \le \, l \, \le \, r}\left(1 -
\frac{\xi_{\scriptscriptstyle k}^{\scriptscriptstyle 2 \deg\,p}
\xi_{\scriptscriptstyle l}^{\scriptscriptstyle 2 \deg\,p}}{|p|}\right)\right\}.
\end{split}
\end{equation*} 
Put 
$
\xi_{\scriptscriptstyle 1}' 
= \frac{\xi_{\scriptscriptstyle 2}\xi_{\scriptscriptstyle 3}}
{q^{\scriptscriptstyle 1\slash 4}\xi_{\scriptscriptstyle 1}}, 
\xi_{\scriptscriptstyle 2}' 
= \frac{\xi_{\scriptscriptstyle 1}\xi_{\scriptscriptstyle 3}}
{q^{\scriptscriptstyle 1\slash 4}\xi_{\scriptscriptstyle 2}}, 
\xi_{\scriptscriptstyle 3}' 
= \frac{\xi_{\scriptscriptstyle 1}\xi_{\scriptscriptstyle 2}}
{q^{\scriptscriptstyle 1\slash 4}\xi_{\scriptscriptstyle 3}}, 
$ 
and 
$
\xi_{\scriptscriptstyle k}' 
= \frac{\xi_{\scriptscriptstyle k}^{\scriptscriptstyle 2}}
{q^{\scriptscriptstyle 3\slash 4} 
\xi_{\scriptscriptstyle 1}
\xi_{\scriptscriptstyle 2}
\xi_{\scriptscriptstyle 3}}
$ 
for $k \ge 4.$ Thus 
\[
\xi_{\scriptscriptstyle 1} 
= q^{\frac{1}{4}}
\sqrt{\xi_{\scriptscriptstyle 2}' \xi_{\scriptscriptstyle 3}'},\;\, 
\xi_{\scriptscriptstyle 2} 
= q^{\frac{1}{4}}
\sqrt{\xi_{\scriptscriptstyle 1}' \xi_{\scriptscriptstyle 3}'},\;\, 
\xi_{\scriptscriptstyle 3} 
= q^{\frac{1}{4}}
\sqrt{\xi_{\scriptscriptstyle 1}' \xi_{\scriptscriptstyle 2}'}  
\] 
with the obvious choice of the square root. \!Note that 
\[
q^{\,  - \frac{1}{4} - \frac{\varepsilon}{3}} 
< |\xi_{\scriptscriptstyle k}'| 
< q^{\, - \frac{1}{4} + \frac{\varepsilon}{2}} 
\implies 
q^{\, - \frac{\varepsilon}{3}} 
< |\xi_{\scriptscriptstyle k}| 
< q^{\frac{\varepsilon}{2}} 
\;\;  \text{(for $k \le 3$)} \;\; \text{and} \;\;
|\xi_{\scriptscriptstyle k}'| 
< q^{\, - \frac{3}{4} - \frac{\varepsilon}{2}} 
\implies  
|\xi_{\scriptscriptstyle k}| 
< q^{\frac{\varepsilon}{2}} 
\;\;  \text{(for $k \ge 4$).}
\] 
It follows that the product 
$
\prod_{p}\left(R_{\! p}^{\scriptscriptstyle (r)}\slash R_{\! p}^{\scriptscriptstyle (3)}\right), 
$ 
as a function of the variables 
$
{\bf \xi'} \! =
(\xi_{\scriptscriptstyle 1}', \ldots, \xi_{r}'),
$ 
is holomorphic in the domain 
$
\mathrm{D}_{\varepsilon}' \!: = 
\!\{\text{${\bf \xi}' \!: \! q^{ - \frac{1}{4}  - \frac{\varepsilon}{3}} 
\! < |\xi_{\scriptscriptstyle k}'| < q^{ - \frac{1}{4} + \frac{\varepsilon}{2}}$ 
for $k\le 3,$ $|\xi_{\scriptscriptstyle k}'| < q^{ - \frac{3}{4} - \frac{7\varepsilon}{2}}$ 
for $k \ge 4$}\}.
$ 
On the other hand, by Lemma \ref{Euler-prod-convergence-Rp}, the (original) series\linebreak 
of 
$
R_{\! p}^{\scriptscriptstyle (r)}\slash R_{\! p}^{\scriptscriptstyle (3)}
$ 
is absolutely convergent, and 
\[
	\frac{R_{\! p}^{\scriptscriptstyle (r)}\!(\xi_{\scriptscriptstyle 1}'^{\scriptscriptstyle \deg\,p}\!, 
		\ldots, \xi_{r}'^{\scriptscriptstyle \deg\,p})}{R_{\! p}^{\scriptscriptstyle (3)}
		\!(\xi_{\scriptscriptstyle 1}'^{\scriptscriptstyle \deg\,p}\!, 
		\xi_{\scriptscriptstyle 2}'^{\scriptscriptstyle \deg\,p}\!, 
		\xi_{\scriptscriptstyle 3}'^{\scriptscriptstyle \deg\,p})} 
	= 1 \, + \, O\left(|p|^{\scriptscriptstyle - 1 - \varepsilon} \right)
	\] 
	when 
	$|\xi_{\scriptscriptstyle k}'| < 
	q^{\, - \frac{1}{4} + \frac{\varepsilon}{2}}
	$ 
	for $k \le 3,$ and $|\xi_{\scriptscriptstyle k}'| < 
	q^{\, \scriptscriptstyle - 3 - 11\varepsilon}
	$ 
	for $k \ge 4.$ If 
	$
	|\xi_{\scriptscriptstyle k}'| < 
	q^{\, - \frac{1}{2} - \varepsilon}
	$ 
	($k \le 3$), 
	\[
	R^{\scriptscriptstyle (3)}
	\!(\xi_{\scriptscriptstyle 1}', 
	\xi_{\scriptscriptstyle 2}', 
	\xi_{\scriptscriptstyle 3}')
	= \prod_{p}R_{\! p}^{\scriptscriptstyle (3)}
	\!(\xi_{\scriptscriptstyle 1}'^{\scriptscriptstyle \deg\,p}\!, 
	\xi_{\scriptscriptstyle 2}'^{\scriptscriptstyle \deg\,p}\!, 
	\xi_{\scriptscriptstyle 3}'^{\scriptscriptstyle \deg\,p})
	\] 
	and thus, subject to \eqref{Local-asympt-estimate}, the proposition follows by analytic continuation. 
	
	It remains to establish the asymptotic formula \eqref{Local-asympt-estimate}. 
	By applying the functional equation \eqref{eq: f-e-loc}, we have 
	\begin{equation*}
	\begin{split}
	\mathscr{R}_{\! p}^{\scriptscriptstyle (r)}\!({\bf \xi}, \chi_{a}) & = 
	R_{\! p}^{\scriptscriptstyle (r)}
	\!\left(\frac{(\zeta_{a} \xi_{\scriptscriptstyle 2}\xi_{\scriptscriptstyle 3}\!)^{\scriptscriptstyle \deg\,p}}
	{|p|^{\scriptscriptstyle 1\slash 4}\xi_{\scriptscriptstyle 1}^{\scriptscriptstyle \deg\,p}}\!,
	\frac{(\zeta_{a}\xi_{\scriptscriptstyle 1}\xi_{\scriptscriptstyle 3}\!)^{\scriptscriptstyle \deg\,p}}
	{|p|^{\scriptscriptstyle 1\slash 4}\xi_{\scriptscriptstyle 2}^{\scriptscriptstyle \deg\,p}}\!,
	\frac{(\zeta_{a} \xi_{\scriptscriptstyle 1}\xi_{\scriptscriptstyle 2}\!)^{\scriptscriptstyle \deg\,p}}
	{|p|^{\scriptscriptstyle 1\slash 4}\xi_{\scriptscriptstyle 3}^{\scriptscriptstyle \deg\,p}}\!, 
	\frac{\xi_{\scriptscriptstyle 4}^{\scriptscriptstyle 2 \deg\,p}}
	{|p|^{\scriptscriptstyle 3\slash 4} 
		(\zeta_{a} \xi_{\scriptscriptstyle 1}
		\xi_{\scriptscriptstyle 2}
		\xi_{\scriptscriptstyle 3}\!)^{^{\deg\,p}}}, \ldots, 
	\frac{\xi_{\scriptscriptstyle r}^{\scriptscriptstyle 2 \deg\,p}}
	{|p|^{\scriptscriptstyle 3\slash 4} 
		(\zeta_{a} \xi_{\scriptscriptstyle 1}
		\xi_{\scriptscriptstyle 2}
		\xi_{\scriptscriptstyle 3}\!)^{^{\deg\,p}}}\right)\\
	& = \left(1 - |p|^{\scriptscriptstyle -1}\right) 
	\, \cdot \, \left(\tfrac{1}{2}, \scalebox{1.5}{$\scriptscriptstyle \chi_{\scalebox{.8}{$\scriptscriptstyle a$}}\!(p)$}, \tfrac{1}{2}\right)
	\!\left.\mathrm{M}_{w_{\scalebox{.85}{$\scriptscriptstyle \alpha'$}}}
\big(\mathrm{z}; |p|\big)^{\scriptscriptstyle -1} 
\tilde{{\bf f}}\big(\mathrm{z}; |p|\big)
\right\vert_{\scriptscriptstyle z_{\scalebox{.6}{$\scriptscriptstyle 1$}} = 
\frac{\xi_{\scalebox{.75}{$\scriptscriptstyle 1$}}^{\scalebox{.75}{$\scriptscriptstyle 2\deg\, p$}}}{\sqrt{|p|}}\!, 
\ldots, \, z_{\scalebox{.72}{$\scriptscriptstyle r$}} = 
\frac{\xi_{\scalebox{.9}{$\scriptscriptstyle r$}}^{\scalebox{.75}{$\scriptscriptstyle 2\deg\, p$}}}{\sqrt{|p|}}\!,\,  
z_{\scalebox{.72}{$\scriptscriptstyle r$} \scalebox{.6}{$\scriptscriptstyle + 1$}} = 
\frac{1}{|p|^{\scalebox{.75}{$\scriptscriptstyle 3 \slash 4$}} \left(\zeta_{a} 
\xi_{\scalebox{.75}{$\scriptscriptstyle 1$}}
\xi_{\scalebox{.75}{$\scriptscriptstyle 2$}}
\xi_{\scalebox{.75}{$\scriptscriptstyle 3$}}\!\right)^{\scalebox{.75}{$\scriptscriptstyle \deg\,p$}}}}
\end{split}
\end{equation*} 
where we recall that 
$
w_{\!{\scriptscriptstyle \alpha'}} \!= 
w_{\scriptscriptstyle 1}
w_{\scriptscriptstyle 2}
w_{\scriptscriptstyle 3}
w_{r + {\scriptscriptstyle 1}}.
$ 
By Lemma \ref{Local-vect-estimate}, the vector function 
$\tilde{{\bf f}}\big(\mathrm{z}; |p|\big)$ satisfies 
\[
\tilde{{\bf f}}\big(\mathrm{z}; |p|\big) 
\; =\,   ^{^{^{t}}}\!\!\bigg(\prod_{j = 1}^{r}\left(1 - z_{\!\scriptscriptstyle j}\right)^{\scriptscriptstyle -1}\!, \, 
z_{r + {\scriptscriptstyle 1}}, 
\prod_{j = 1}^{r}\left(1 + z_{\!\scriptscriptstyle j}\right)^{\scriptscriptstyle -1}
\bigg) 
\, + \, O_{\varepsilon}\left(|p|^{-\frac{3}{2} + 4(r+1)\varepsilon}\right)
\] 
as long as $|z_{\scriptscriptstyle i}| \le |p|^{- \frac{1}{2} + \varepsilon}$ ($i = 1, \ldots, r$), and 
$|z_{r + {\scriptscriptstyle 1}}| \le |p|^{- \frac{3}{4} + \varepsilon}.$ By computing 
$ 
\left(\tfrac{1}{2}, \scalebox{1.5}{$\scriptscriptstyle \chi_{\scalebox{.8}{$\scriptscriptstyle a$}}\!(p)$}, \tfrac{1}{2}\right)
\!\mathrm{M}_{w_{\scalebox{.85}{$\scriptscriptstyle \alpha'$}}}
\big(\mathrm{z}; |p|\big)^{\scriptscriptstyle -1}
$ 
using \eqref{eq: def-cocycle2}, one finds that 
\begin{equation*}
\begin{split}
& \left(1 - |p|^{\scriptscriptstyle -1}\right)
N_{\! p}({\bf \xi}, \chi_{a})^{\scriptscriptstyle -1}
\, \cdot \, \left(\tfrac{1}{2}, \scalebox{1.5}{$\scriptscriptstyle \chi_{\scalebox{.8}{$\scriptscriptstyle a$}}\!(p)$}, \tfrac{1}{2}\right)
\!\mathrm{M}_{w_{\scalebox{.85}{$\scriptscriptstyle \alpha'$}}}
\big(\mathrm{z}; |p|\big)^{\scriptscriptstyle -1} 
\; ^{^{^{t}}}\!\!\bigg(\prod_{j = 1}^{r}\left(1 - z_{\!\scriptscriptstyle j}\right)^{\scriptscriptstyle -1}\!, \, 
z_{r + {\scriptscriptstyle 1}}, 
\prod_{j = 1}^{r}\left(1 + z_{\!\scriptscriptstyle j}\right)^{\scriptscriptstyle -1}
\bigg)\\
& = 1 \, + \, O\left(|p|^{- 1 - \eta'_{\scalebox{.9}{$\scriptscriptstyle \varepsilon$}}} \right)
\end{split}
\end{equation*} 
uniformly for ${\bf \xi} \in \mathrm{D}_{\varepsilon},$ 
with 
$
\eta'_{\scalebox{1.1}{$\scriptscriptstyle \varepsilon$}} \sim \tfrac{1}{2}
$ 
as $\varepsilon \to 0,$ and $z_{\!\scriptscriptstyle j}$ ($j \le r + 1$) as above. This gives \eqref{Local-asympt-estimate}, which completes the proof.\end{proof}

It remains to compute the factor 
$
2^{-l(w_{\scalebox{.85}{$\scriptscriptstyle \alpha$}})}\Gamma_{\!\! w_{\scalebox{.85}{$\scriptscriptstyle \alpha$}}}\!(a_{\scriptscriptstyle 2}, a; \zeta_{a} \!).
$ 
For ease of notation, let 
$
\overline{\mathrm{M}}_{w}(\mathrm{z}; q) = \mathrm{M}_{w}(q\mathrm{z}; 1\slash q)
$ 
for $w \in W.$ Then 
$ 
\overline{\mathrm{M}}_{w_{\!\scalebox{.85}{$\scriptscriptstyle \alpha^{^{}}$}}}\!\!(\mathrm{z}; q) = \overline{\mathrm{M}}_{w_{\!\scalebox{.9}{$\scriptscriptstyle J$}}^{}}\!(\mathrm{z}; q)  
\overline{\mathrm{M}}_{w_{\!\scalebox{.85}{$\scriptscriptstyle \alpha'$}}}\!(w_{\!\scalebox{1.0}{$\scriptscriptstyle J$}}^{}\mathrm{z}; q), 
$ 
and 
\begin{equation} \label{eq: Global-cocycle-formula} 
\overline{\mathrm{M}}_{w_{\!\scalebox{.85}{$\scriptscriptstyle \alpha'$}}}\!(\mathrm{z}; q) 
= 
\frac{1}{2 \, q^{\scriptscriptstyle 3\slash 2} z_{\scriptscriptstyle 1}^{}
z_{\scriptscriptstyle 2}^{}
z_{\scriptscriptstyle 3}^{} 
z_{r + {\scriptscriptstyle 1}}^{\scriptscriptstyle 4}}
\begin{pmatrix} 
\psi(z_{\scriptscriptstyle 1}^{}, z_{\scriptscriptstyle 2}^{},
z_{\scriptscriptstyle 3}^{}, z_{r + {\scriptscriptstyle 1}}^{}; q) & 
- \frac{2}{q^{\scalebox{.95}{$\scriptscriptstyle 3 \slash 2$}}}\!\left(\!\frac{1 - q z_{r + {\scalebox{.85}{$\scriptscriptstyle 1$}}}^{\scalebox{.9}{$\scriptscriptstyle 2$}}}
{1 - q^{\scalebox{.9}{$\scriptscriptstyle 2$}} z_{r + {\scalebox{.85}{$\scriptscriptstyle 1$}}}^{\scalebox{.9}{$\scriptscriptstyle 2$}}}\!\right)& 
\psi(z_{\scriptscriptstyle 1}^{}, z_{\scriptscriptstyle 2}^{},
z_{\scriptscriptstyle 3}^{}, - z_{r + {\scriptscriptstyle 1}}^{}; q) \\
\phi(z_{\scriptscriptstyle 1}^{}, z_{\scriptscriptstyle 2}^{},
z_{\scriptscriptstyle 3}^{}, z_{r + {\scriptscriptstyle 1}}^{}; q) & 
- \frac{2 z_{r + {\scalebox{.85}{$\scriptscriptstyle 1$}}}^{}}{q^{\scalebox{.95}{$\scriptscriptstyle 3 \slash 2$}}}\!\Big(\!\frac{q - 1}
{1 - q^{\scalebox{.9}{$\scriptscriptstyle 2$}} 
z_{r + {\scalebox{.85}{$\scriptscriptstyle 1$}}}^{\scalebox{.9}{$\scriptscriptstyle 2$}}}\!\Big) & 
- \, \phi(- z_{\scriptscriptstyle 1}^{}, - z_{\scriptscriptstyle 2}^{},
- z_{\scriptscriptstyle 3}^{}, z_{r +{\scriptscriptstyle 1}}^{}; q)\\
-\, \psi(- z_{\scriptscriptstyle 1}^{}, - z_{\scriptscriptstyle 2}^{},
- z_{\scriptscriptstyle 3}^{}, - z_{r + {\scriptscriptstyle 1}}^{}; q) & 
- \frac{2}{q^{\scalebox{.95}{$\scriptscriptstyle 3 \slash 2$}}}
\!\left(\! \frac{1 - q z_{r + {\scalebox{.85}{$\scriptscriptstyle 1$}}}^{\scalebox{.9}{$\scriptscriptstyle 2$}}}
{1 - q^{\scalebox{.9}{$\scriptscriptstyle 2$}} 
z_{r + {\scalebox{.85}{$\scriptscriptstyle 1$}}}^{\scalebox{.9}{$\scriptscriptstyle 2$}}} \!\right) & 
-\, \psi(- z_{\scriptscriptstyle 1}^{}, - z_{\scriptscriptstyle 2}^{},
- z_{\scriptscriptstyle 3}^{}, z_{r + {\scriptscriptstyle 1}}^{}; q)  \\
\end{pmatrix}.
\end{equation} 
Here $\phi$ and $\psi$ are given by \eqref{eq: Functions-phi-psi}. The formula \eqref{eq: function-Gammaw} now yields: \begin{equation*}
\begin{split}
2^{-l(w_{\scalebox{.85}{$\scriptscriptstyle \alpha$}})}\Gamma_{\!\! w_{\scalebox{.85}{$\scriptscriptstyle \alpha$}}}\!(a_{\scriptscriptstyle 2}, a; \zeta_{a} \!) 
= \, &\frac{1}{2}\left(\prod_{j \, \in \, J} \frac{1 - z_{\!\scriptscriptstyle j}^{\scriptscriptstyle -1}}{1 - qz_{\!\scriptscriptstyle j}},\, 
\mathrm{sgn}(a_{\scriptscriptstyle 2})q^{\scriptscriptstyle -|J| \slash 2} 
\cdot \prod_{j \, \in \, J} \frac{1}{z_{\!\scriptscriptstyle  j}},\, 0\right)\\
&\cdot \left.\overline{\mathrm{M}}_{w_{\!\scalebox{.85}{$\scriptscriptstyle \alpha'$}}}\!(\mathrm{z}; q)\right\vert_{ 
z_{\scalebox{.95}{$\scriptscriptstyle r$} + {\scalebox{.82}{$\scriptscriptstyle 1$}}} = 
\frac{1}
{\scalebox{1.3}{$\scriptscriptstyle q$}^{\scalebox{.82}{$\scriptscriptstyle 3 \slash 2$}} 
\zeta_{a} (\scalebox{1.3}{$\scriptscriptstyle z$}_{\scalebox{.75}{$\scriptscriptstyle 1$}}^{} 
\scalebox{1.3}{$\scriptscriptstyle z$}_{\scalebox{.75}{$\scriptscriptstyle 2$}}^{} 
\scalebox{1.3}{$\scriptscriptstyle z$}_{\scalebox{.75}{$\scriptscriptstyle 3$}}^{}\!)
^{\scalebox{.82}{$\scriptscriptstyle 1 \slash 2$}}}} 
\cdot \;\, ^{\scalebox{1.2}{$\scriptscriptstyle t$}}\!(1,  \mathrm{sgn}(a),  1).
\end{split}
\end{equation*}

{\bf The computation of $Q_{\scriptscriptstyle 2}(D, q).$} Putting everything together, we can now compute the contribution
\[
\frak{S}_{\scriptscriptstyle 2}(\mathrm{z}, a_{\scriptscriptstyle 2} \!)  
=  \tfrac{1}{2}\, 
\cdot\sum_{\alpha \, \in \, \Phi_{\scalebox{.75}{$\scriptscriptstyle 2$}}}\;
\sum_{a \, \in \, \{1, \, \theta_{\scalebox{.75}{$\scriptscriptstyle 0$}} \}}\;
\sum_{\zeta_{a}^{\scalebox{.85}{$\scriptscriptstyle 2$}} \, = \,\mathrm{sgn}(a)}
\frac{\Gamma_{\!\! w_{\scalebox{.85}{$\scriptscriptstyle
        \alpha$}}}\!(a_{\scriptscriptstyle 2}, a; \zeta_{a} \!)}
{2^{l(w_{\scalebox{.85}{$\scriptscriptstyle \alpha$}})}}
\,\big(1 \, - \, \zeta_{a}q^{\scriptscriptstyle (d(\alpha) + 1)\slash 4}
\mathrm{z}^{\scriptscriptstyle \alpha\slash 2}\!\big)^{\scriptscriptstyle -1}
S_{\scriptscriptstyle \alpha}(\underline{z}, \chi_{a})
\]
of the principal parts at the poles corresponding to the roots in
$\Phi_{\scriptscriptstyle 2}.$ Here
$S_{\scriptscriptstyle \alpha}(\underline{z}, \chi_{a} \!)$ is defined
as follows. \!Express $\alpha$ as\linebreak
\[
  \alpha \;\; = \sum_{\substack{j = 1 \\ j \neq
      j_{\scalebox{.71}{$\scriptscriptstyle 1$}}, \,
      j_{\scalebox{.71}{$\scriptscriptstyle 2$}}, \,
      j_{\scalebox{.71}{$\scriptscriptstyle 3$}}}}^{r}
  k_{\!\scriptscriptstyle j}
  \alpha_{\!\scriptscriptstyle j} +
  \alpha_{\!\scriptscriptstyle j_{\scalebox{.62}{$\scriptscriptstyle 1$}}} \! +
  \alpha_{\!\scriptscriptstyle j_{\scalebox{.62}{$\scriptscriptstyle 2$}}} \! +
  \alpha_{\!\scriptscriptstyle j_{\scalebox{.62}{$\scriptscriptstyle 3$}}}
 \! + 2\alpha_{r+ {\scriptscriptstyle 1}}
\]
for some
$
1 \le j_{\scriptscriptstyle 1} < j_{\scriptscriptstyle 2} <
j_{\scriptscriptstyle 3} \le r,
$
and with $k_{\!\scriptscriptstyle j} \in \{0, 2\}$ for
$
j \ne \! j_{\scriptscriptstyle 1}, j_{\scriptscriptstyle 2},
j_{\scriptscriptstyle 3}.$ 
Write
$
\{1, \ldots, r \} \setminus \{j_{\scriptscriptstyle 1}, j_{\scriptscriptstyle 2},
j_{\scriptscriptstyle 3} \} = \{j_{\scriptscriptstyle 4},
j_{\scriptscriptstyle 5}, \cdots \},
$
and, for $i \le r,$ define
$
\delta_{\!\scriptscriptstyle j_{\scalebox{.62}{$\scriptscriptstyle i$}}}
= - 1
$
or $1$ according as
$
k_{\!\scriptscriptstyle j_{\scalebox{.62}{$\scriptscriptstyle i$}}}\! = 2
$
or not. If we set, as before,
$
z_{\scriptscriptstyle k} \! = q^{\scriptscriptstyle -1\slash 2}
\xi_{\scriptscriptstyle k}^{\scriptscriptstyle 2}
$
for $k\le r,$ then, by Proposition \ref{Regularization-Justification}, 
\[
  S_{\scriptscriptstyle \alpha}(\underline{z}, \chi_{a})
  : = N\Big(\xi_{\! j_{\scalebox{.64}{$\scriptscriptstyle 1$}}}^{}\!,
  \xi_{\! j_{\scalebox{.64}{$\scriptscriptstyle 2$}}}^{}\!,
  \xi_{\! j_{\scalebox{.64}{$\scriptscriptstyle 3$}}}^{}\!,
  \xi_{\! j_{\scalebox{.64}{$\scriptscriptstyle
        4$}}}^{\scriptscriptstyle \delta_{\!\scriptscriptstyle j_{\scalebox{.64}{$\scriptscriptstyle 4$}}}}\!, \ldots,
  \xi_{\! j_{\scalebox{.80}{$\scriptscriptstyle
        r$}}}^{\scriptscriptstyle \delta_{\!\scriptscriptstyle
      j_{\scalebox{.80}{$\scriptscriptstyle r$}}}}\!,\chi_{a}\! \Big)
  \prod_{p} S_{\! p}^{\mathrm{reg}}\Big(\xi_{\! j_{\scalebox{.64}{$\scriptscriptstyle 1$}}}^{}\!,
  \xi_{\! j_{\scalebox{.64}{$\scriptscriptstyle 2$}}}^{}\!,
  \xi_{\! j_{\scalebox{.64}{$\scriptscriptstyle 3$}}}^{}\!,
  \xi_{\! j_{\scalebox{.64}{$\scriptscriptstyle
        4$}}}^{\scriptscriptstyle \delta_{\!\scriptscriptstyle j_{\scalebox{.64}{$\scriptscriptstyle 4$}}}}\!, \ldots,
  \xi_{\! j_{\scalebox{.80}{$\scriptscriptstyle
        r$}}}^{\scriptscriptstyle \delta_{\!\scriptscriptstyle
      j_{\scalebox{.80}{$\scriptscriptstyle r$}}}}\!, \chi_{a}\! \Big)
\] 
where $S_{\! p}^{\scriptscriptstyle \mathrm{reg}}\!({\bf \xi}, \chi_{a})$ is given explicitly by \eqref{eq: Sp-regularized}, 
\eqref{eq: Sp_w-alpha}, and \eqref{eq: Local-factor-residue-r=3}, \!with $\mathrm{sgn}(a) \mapsto
\chi_{a}(p),$ $\xi_{\scriptscriptstyle k} \mapsto \xi_{\scriptscriptstyle k}^{\deg\,p}\!\!,$ and
$q \mapsto |p|.$ Notice that, due to the singularities of 
\[
\prod_{i = 1}^{3}\prod_{j = 4}^{r}
\big(1 - 
\xi_{\scriptscriptstyle i}^{\scriptscriptstyle 2}
\xi_{\!\scriptscriptstyle j}^{\scriptscriptstyle 2}\big)^{\scriptscriptstyle -1}
\big(1- 
\xi_{\scriptscriptstyle i}^{\scriptscriptstyle -2}
\xi_{\!\scriptscriptstyle j}^{\scriptscriptstyle 2}\big)^{\scriptscriptstyle -1}
\, \cdot \prod_{4 \, \le \, k \, \le \, l \, \le \, r} \big(1 -
\xi_{\scriptscriptstyle k}^{\scriptscriptstyle 2}
\xi_{\scriptscriptstyle l}^{\scriptscriptstyle 2}\big)^{\scriptscriptstyle -1}
  \]
  in $N({\bf \xi}, \chi_{a}),$ we cannot take
$
\xi_{i} = \xi_{\! j}
$
right away, for any $1 \le i \ne \! j \le r.$ To see that $\frak{S}_{\scriptscriptstyle 2}(\mathrm{z}, a_{\scriptscriptstyle 2} \!)$ is indeed defined when $z_{\scriptscriptstyle k} = q^{\scriptscriptstyle -1\slash 2}\!,$\linebreak 
for all $k \le r,$ we first note that 
$
\prod_{p} S_{\! p}^{\scriptscriptstyle \mathrm{reg}}\!({\bf \xi}, \chi_{a})	
$ 
is symmetric in the variables
$\xi_{\scriptscriptstyle 1}^{\scriptscriptstyle \pm 1}\!,\,
\xi_{\scriptscriptstyle 2}^{\scriptscriptstyle \pm 1}\!,\,
\xi_{\scriptscriptstyle 3}^{\scriptscriptstyle \pm 1}\!,
$ 
and (separately) in $\xi_{\scriptscriptstyle k},$ for $4\le k \le r.$ 
This follows at once from \eqref{eq: Sp-regularized}, \eqref{eq: Sp_w-alpha}, and Remark \ref{Crucial-Remark}. Moreover, 
the functions $\mathscr{G}_{\scriptscriptstyle 1}({\bf \xi}, \zeta_{a})$ and $\mathscr{G}_{\scriptscriptstyle 2}({\bf \xi}, \zeta_{a})$ defined by 
\begin{equation*}
\begin{split} 
\mathscr{G}_{\scriptscriptstyle 1}({\bf \xi}, \zeta_{a}) : = \, &\frac{\prod_{j = 1}^{r}\big(1 - q^{\scriptscriptstyle 1\slash 2}\xi_{\!\scriptscriptstyle j}^{\scriptscriptstyle 2}\big)}
{\left(1 - \mathrm{sgn}(a)\sqrt{q}
	\,\xi_{\scriptscriptstyle 1}^{\scriptscriptstyle 2}
	\xi_{\scriptscriptstyle 2}^{\scriptscriptstyle 2}
	\xi_{\scriptscriptstyle 3}^{\scriptscriptstyle 2}
	\right)\prod_{1 \le i \le j \le  3}
	\left(1 - \frac{\mathrm{sgn}(a)\sqrt{q}
		\,\xi_{\scriptscriptstyle 1}^{\scriptscriptstyle 2}
		\xi_{\scriptscriptstyle 2}^{\scriptscriptstyle 2}
		\xi_{\scriptscriptstyle 3}^{\scriptscriptstyle 2}}
	{\xi_{\scriptscriptstyle i}^{\scriptscriptstyle 2}
		\xi_{\!\scriptscriptstyle j}^{\scriptscriptstyle
			2}}\right)}\\
&\cdot  (1, 0, 0)\,\overline{\mathrm{M}}_{w_{\!\scalebox{.85}{$\scriptscriptstyle \alpha'$}}}
\bigg(q^{\,\scriptscriptstyle -1\slash 2}
\xi_{\scriptscriptstyle 1}^{\scriptscriptstyle 2},
q^{\,\scriptscriptstyle -1\slash 2}
\xi_{\scriptscriptstyle 2}^{\scriptscriptstyle 2},
q^{\,\scriptscriptstyle -1\slash 2}
\xi_{\scriptscriptstyle 3}^{\scriptscriptstyle 2},
\frac{1}{q^{\scriptscriptstyle 3\slash 4}\zeta_{a}
	\xi_{\scriptscriptstyle 1}
	\xi_{\scriptscriptstyle 2}
	\xi_{\scriptscriptstyle 3}}; q\bigg)
\; ^{\scalebox{1.2}{$\scriptscriptstyle t$}}\!(1,  \mathrm{sgn}(a),  1)
\end{split}
\end{equation*} 
and 
\begin{equation*}
\begin{split} 
\mathscr{G}_{\scriptscriptstyle 2}({\bf \xi}, \zeta_{a}) : = \, & \frac{\xi_{\scriptscriptstyle 1} \cdots \, \xi_{r}}
{\left(1 - \mathrm{sgn}(a)\sqrt{q}
	\,\xi_{\scriptscriptstyle 1}^{\scriptscriptstyle 2}
	\xi_{\scriptscriptstyle 2}^{\scriptscriptstyle 2}
	\xi_{\scriptscriptstyle 3}^{\scriptscriptstyle 2}
	\right)\prod_{1 \le i \le j \le 3}
	\left(1 - \frac{\mathrm{sgn}(a)\sqrt{q}
		\,\xi_{\scriptscriptstyle 1}^{\scriptscriptstyle 2}
		\xi_{\scriptscriptstyle 2}^{\scriptscriptstyle 2}
		\xi_{\scriptscriptstyle 3}^{\scriptscriptstyle 2}}
	{\xi_{\scriptscriptstyle i}^{\scriptscriptstyle 2}
		\xi_{\!\scriptscriptstyle j}^{\scriptscriptstyle 2}}\right)}\\
&\cdot (0, 1, 0)\, 
\overline{\mathrm{M}}_{w_{\!\scalebox{.85}{$\scriptscriptstyle \alpha'$}}}
\bigg(q^{\,\scriptscriptstyle -1\slash 2}
\xi_{\scriptscriptstyle 1}^{\scriptscriptstyle 2},
q^{\,\scriptscriptstyle -1\slash 2}
\xi_{\scriptscriptstyle 2}^{\scriptscriptstyle 2},
q^{\,\scriptscriptstyle -1\slash 2}
\xi_{\scriptscriptstyle 3}^{\scriptscriptstyle 2},
\frac{1}{q^{\scriptscriptstyle 3\slash 4}\zeta_{a}
	\xi_{\scriptscriptstyle 1}
	\xi_{\scriptscriptstyle 2}
	\xi_{\scriptscriptstyle 3}}; q\bigg) 
\; ^{\scalebox{1.2}{$\scriptscriptstyle t$}}\!(1,  \mathrm{sgn}(a),  1)
\end{split}
\end{equation*} 
($
w_{\!{\scriptscriptstyle \alpha'}}
= w_{\!\scriptscriptstyle 1}^{} \!w_{\!\scriptscriptstyle 2}^{} w_{\!\scriptscriptstyle 3}^{} w_{\!r + {\scriptscriptstyle 1}}^{}
$) 
satisfy the same symmetries. \!If we set 
\[
\mathscr{K}_{\scriptscriptstyle 1, 2}(\mathbf{x}, u, \zeta_{a}) \, : =\,  
\frac{\mathscr{G}_{\scriptscriptstyle 1, 2}\!\left({\mathbf x}, \zeta_{a}\right)
\prod_{p} S_{\! p}^{\scriptscriptstyle \mathrm{reg}}\!\left({\mathbf x}, \chi_{a}\right)}
{(1  -  q^{\scriptscriptstyle 3\slash 4} 
	\zeta_{a}\,
	x_{\scriptscriptstyle 4}^{\, \scalebox{.95}{$\scriptscriptstyle -1$}} 
	\cdots\,  x_{r}^{\, \scalebox{.95}{$\scriptscriptstyle -1$}}u)
\prod_{i = 1}^{3}\prod_{\! j = 4}^{r}
(1 - 
x_{\scriptscriptstyle i}^{\scriptscriptstyle 2}
x_{\!\scriptscriptstyle j}^{\scriptscriptstyle 2})
(1- 
x_{\scriptscriptstyle i}^{\scriptscriptstyle -2}
x_{\!\scriptscriptstyle j}^{\scriptscriptstyle 2})
\, \cdot \prod_{4 \le k \le l \le r} (1 -
x_{\scriptscriptstyle k}^{\scriptscriptstyle 2}
x_{\scriptscriptstyle l}^{\scriptscriptstyle 2})}
\] 
and define $\mathscr{L}_{\scriptscriptstyle 1, 2}({\bf \xi}, z_{r + {\scriptscriptstyle 1}}, \zeta_{a})$ by 
\[
\mathscr{L}_{\scriptscriptstyle 1, 2}({\bf \xi}, z_{r + {\scriptscriptstyle 1}}, \zeta_{a}) \, : = \,  
\frac{1}{2^{3} 3! (r - 3)!}
\sum_{\sigma \, \in \, \mathbb{S}_{r}} \;\, \sum_{\delta_{\scalebox{.75}{$\scriptscriptstyle \sigma(i)$}}  \, = \, \pm 1} 
\!\mathscr{K}_{\scriptscriptstyle 1, 2}\!\left(\xi_{\scalebox{.95}{$\scriptscriptstyle \sigma(1)$}}^{\delta_{\scalebox{.75}{$\scriptscriptstyle \sigma(1)$}}}\!, 
\ldots, \xi_{\scalebox{.95}{$\scriptscriptstyle \sigma(r)$}}^{\delta_{\scalebox{.75}{$\scriptscriptstyle \sigma(r)$}}}, 
z_{r + {\scriptscriptstyle 1}} \cdot \prod_{j = 1}^{r} \xi_{\!\scriptscriptstyle j}^{}, \zeta_{a}\!\right)
\] 
where the first sum is over all elements of the symmetric group $\mathbb{S}_{r}$ on $r$ letters, then we have the following:

\vskip10pt
\begin{thm} --- For 
	$
	\xi_{\scalebox{.97}{$\scriptscriptstyle 1$}}, \ldots, \xi_{r}
	$ 
	distinct elements on the unit circle such that 
	$
	\xi_{\scriptscriptstyle i}\xi_{\scriptscriptstyle j} \ne 1$ 
	and 
	$
	|\xi_{\scriptscriptstyle i} - 1| < \rho,
	$ 
	for all $1\le i, j \le r$ and a sufficiently small positive $\rho,$ we have 
	\[
\frak{S}_{\scriptscriptstyle 2}
\!\left(q^{\scriptscriptstyle -1\slash 2}{\bf \xi}^{\scriptscriptstyle 2}\!, z_{r + {\scriptscriptstyle 1}}, a_{\scriptscriptstyle 2}\right)  
	=  \tfrac{1}{4}\!\sum_{\zeta_{a}^{\scalebox{.85}{$\scriptscriptstyle 2$}}  \, = \, \pm 1}
	\left\{\mathscr{L}_{\scriptscriptstyle 1}({\bf \xi}, z_{r + {\scriptscriptstyle 1}}, \zeta_{a})
	\prod_{j = 1}^{r}\big(1 - q^{\scriptscriptstyle 1\slash 2}\xi_{\!\scriptscriptstyle j}^{\scriptscriptstyle 2}\big)^{\scriptscriptstyle -1}
	+ \, \frac{\mathrm{sgn}(a_{\scriptscriptstyle 2})}{\xi_{\scriptscriptstyle 1} \cdots \, \xi_{r}}
	\!\mathscr{L}_{\scriptscriptstyle 2}({\bf \xi}, z_{r + {\scriptscriptstyle 1}}, \zeta_{a}) \right\}
\] 
where 
$
q^{\scriptscriptstyle -1\slash 2}{\bf \xi}^{\scriptscriptstyle 2} 
: = \left(q^{\scriptscriptstyle -1\slash 2}\xi_{\scriptscriptstyle 1}^{\scriptscriptstyle 2}, \ldots, 
q^{\scriptscriptstyle -1\slash 2}\xi_{r}^{\scriptscriptstyle 2}\right).
$ 
Moreover, for complex $u$ with $|u| < q^{- 3\slash 4}(1 - \rho)^{r - 3}\!,$ we have 
\begin{equation} \label{eq: IR}
	\begin{split}
	& \sum_{\sigma \, \in \, \mathbb{S}_{r}} \;\, \sum_{\delta_{\scalebox{.75}{$\scriptscriptstyle \sigma(i)$}}  \, = \, \pm 1} 
	\!\mathscr{K}_{\scriptscriptstyle 1, 2}\!\left(\xi_{\scalebox{.95}{$\scriptscriptstyle \sigma(1)$}}^{\delta_{\scalebox{.75}{$\scriptscriptstyle \sigma(1)$}}}\!, 
	\ldots, \xi_{\scalebox{.95}{$\scriptscriptstyle \sigma(r)$}}^{\delta_{\scalebox{.75}{$\scriptscriptstyle \sigma(r)$}}}\!, u, \zeta_{a}\!\right)\\
	& = \frac{(-1)^{r(r  +  {\scriptscriptstyle 1})\slash {\scriptscriptstyle 2}}}{(2\pi \sqrt{-1})^{^{r}}}\!
	\oint\limits_{\scalebox{.85}{$\scriptscriptstyle |z_{\scalebox{.55}{$\scriptscriptstyle 1$}} - 1| 
			\,= \, $} \scalebox{.92}{$\scriptscriptstyle \rho$}}  \hskip-1.7pt \cdots 
	\hskip-1.7pt\oint\limits_{\scalebox{.85}{$\scriptscriptstyle |z_{\scalebox{.7}{$\scriptscriptstyle r$}} - 1| \, = \, $} 
		\scalebox{.92}{$\scriptscriptstyle \rho$}} 
	\mathscr{G}_{\scriptscriptstyle 1, 2}\!\left(\mathrm{z}, \zeta_{a}\right)
	(1  -  q^{\scriptscriptstyle 3\slash 4} 
	\zeta_{a}\,
	z_{\scriptscriptstyle 4}^{\, \scalebox{.95}{$\scriptscriptstyle -1$}} 
	\cdots\,  z_{r}^{\, \scalebox{.95}{$\scriptscriptstyle -1$}}u)^{\scriptscriptstyle -1}
	\!\prod_{p} S_{\! p}^{\scriptscriptstyle \mathrm{reg}}\!\left(\mathrm{z}, \chi_{a}\right)\\ 
	&\hskip102pt \cdot \, \frac{\prod_{1 \, \le \, i  \, < \, j \, \le \, r} (z_{\scriptscriptstyle i}^{} - z_{\!\scriptscriptstyle j}^{})^{\mathrm{\bf{e}}_{\scalebox{.65}{$\scriptscriptstyle ij$}}}
		(1 - z_{\scriptscriptstyle i}^{} z_{\!\scriptscriptstyle j}^{}) \, \cdot \, \prod_{1 \, \le \, k  \, \le \, l \, \le \, 3}
		(1 - z_{\scriptscriptstyle k}^{} z_{\scriptscriptstyle l}^{})} 
	{\prod_{i,  j = 1}^{r} \left(1 - z_{\scriptscriptstyle i}^{}\, \xi_{\scriptscriptstyle j}^{}\right)
		\left(1 - z_{\scriptscriptstyle i}^{}\, \xi_{\scriptscriptstyle j}^{\scriptscriptstyle -1}\right) 
		\, \cdot\, \prod_{k = 1}^{3}\prod_{l = 4}^{r} (1 + z_{\scriptscriptstyle k}^{} z_{\scriptscriptstyle l}^{})   
		(z_{\scriptscriptstyle k}^{\scriptscriptstyle -1} \! + z_{\scriptscriptstyle k}^{\scriptscriptstyle -2} z_{\scriptscriptstyle l}^{})   
		\prod_{4 \, \le \, k  \, \le \, l \, \le \, r} (1 + z_{\scriptscriptstyle k}^{} z_{\scriptscriptstyle l}^{})} 
	\frac{d \mathrm{z}}{\mathrm{z}^{\scriptscriptstyle r}}
	\end{split}
	\end{equation} 
	where $\mathrm{\bf{e}}_{\scalebox{.85}{$\scriptscriptstyle ij$}} = 1$ or $2$ according as $i\le 3$ and $j\ge 4$ or not, and 
	$\mathrm{z}^{r} : = z_{\scalebox{.98}{$\scriptscriptstyle 1$}}^{r} \cdots\,  z_{r}^{r}.$
\end{thm}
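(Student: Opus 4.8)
The plan is to assemble the formula for $\frak{S}_{\scriptscriptstyle 2}$ by collecting, over all roots $\alpha \in \Phi_{\scriptscriptstyle 2}$, the contributions computed piecemeal earlier in this section, and then to express the result as a symmetrization over $\mathbb{S}_r$ which, by Lemma \ref{average-to-integral-vers1}, becomes the stated multiple integral. First I would fix a root $\alpha \in \Phi_{\scriptscriptstyle 2}$, write $\alpha = \alpha' + \alpha_{\scalebox{1.0}{$\scriptscriptstyle J$}}^{}$ with $\alpha' = \alpha_{\scriptscriptstyle 1} + \alpha_{\scriptscriptstyle 2} + \alpha_{\scriptscriptstyle 3} + 2\alpha_{r+{\scriptscriptstyle 1}}$ in the canonical form, and substitute into the defining expression \eqref{eq: Secondary-princ-parts-general} for $\frak{S}_{\scriptscriptstyle 2}$ the factor $2^{-l(w_{\scalebox{.85}{$\scriptscriptstyle \alpha$}})}\Gamma_{\!\! w_{\scalebox{.85}{$\scriptscriptstyle \alpha$}}}$ in the form just derived via \eqref{eq: Global-cocycle-formula}, together with $S_{\scriptscriptstyle \alpha}(\underline{z}, \chi_a)$ as given by Proposition \ref{Regularization-Justification}. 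Setting $z_{\scriptscriptstyle k} = q^{\scriptscriptstyle -1\slash 2}\xi_{\scriptscriptstyle k}^{\scriptscriptstyle 2}$ and using that passing from $w_{\!\scalebox{.85}{$\scriptscriptstyle \alpha'$}}$ to $w_{\!\scalebox{.85}{$\scriptscriptstyle \alpha^{^{}}$}}$ amounts to the substitution $\xi_{\!\scriptscriptstyle j} \mapsto \xi_{\!\scriptscriptstyle j}^{\scriptscriptstyle -1}$ for $j \in J$ (equivalently $\delta_{\!\scriptscriptstyle j} = -1$ exactly when $k_{\!\scriptscriptstyle j} = 2$), each $\alpha$-term is seen to be a single evaluation of the universal functions $\mathscr{G}_{\scriptscriptstyle 1}, \mathscr{G}_{\scriptscriptstyle 2}$ and $\prod_p S_{\! p}^{\scriptscriptstyle \mathrm{reg}}$ at sign-twisted arguments.

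The second step is the bookkeeping of the sum. Summing over $\alpha \in \Phi_{\scriptscriptstyle 2}$ means summing over the choice of $\{j_{\scriptscriptstyle 1}, j_{\scriptscriptstyle 2}, j_{\scriptscriptstyle 3}\}$ and over the subset $J \subseteq \{1,\dots,r\} \setminus \{j_{\scriptscriptstyle 1}, j_{\scriptscriptstyle 2}, j_{\scriptscriptstyle 3}\}$; since the summand depends only on which variables carry $\delta = +1$ (namely $\xi_{j_1}, \xi_{j_2}, \xi_{j_3}$ and the $\xi_j$, $j \notin J$) versus $\delta = -1$, this double sum is exactly the sum over all $\sigma \in \mathbb{S}_r$ and all sign patterns $\delta_{\scalebox{.75}{$\scriptscriptstyle \sigma(i)$}} = \pm 1$, divided by the overcounting factor $3!\,(r-3)!$ for ordering within the two blocks and by $2^3$ because flipping all three of $\delta_{\sigma(1)}, \delta_{\sigma(2)}, \delta_{\sigma(3)}$ must not be allowed (the roots of $\Phi_{\scriptscriptstyle 2}$ have $k_{j_i} = 1$, not $2$). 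This reconciliation of symmetry factors against the definition of $\mathscr{L}_{\scriptscriptstyle 1, 2}$ — tracking precisely which sign flips are genuine and accounting for the $\tfrac14$ versus $\tfrac{1}{2^3 3!(r-3)!}$ prefactors together with the $n^{-1} = \tfrac12$ in \eqref{eq: Secondary-princ-parts-general} — is the step I expect to be the main obstacle, because it is easy to be off by a power of two or to mishandle the distinction between the $\xi_i^{\pm 1}$-symmetry of $\prod_p S_{\! p}^{\scriptscriptstyle \mathrm{reg}}$ (which is automatic by Remark \ref{Crucial-Remark}) and the genuine independent sign choices that index distinct roots. Here one uses the symmetries recorded in Remark \ref{Crucial-Remark} and the manifest $\mathbb{S}_{r-3}$-symmetry of $\mathscr{G}_{\scriptscriptstyle 1, 2}$ and of the regularized local factors to see that the symmetrized expression is independent of the arbitrary labelling, so that $\frak{S}_{\scriptscriptstyle 2}$ collapses to the two terms weighted by $\prod_j (1 - q^{\scriptscriptstyle 1\slash 2}\xi_j^2)^{-1}$ and $\mathrm{sgn}(a_2)(\xi_1\cdots\xi_r)^{-1}$ respectively.

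For the integral representation \eqref{eq: IR}, the final step is to apply Lemma \ref{average-to-integral-vers1} with $r$ replaced by the full set of variables, taking $a_i = \xi_i$ and $h = \mathscr{K}_{\scriptscriptstyle 1, 2}(\cdot, u, \zeta_a)$. The only point requiring care is that the sum on the left of \eqref{eq: IR} is not quite the sum appearing in Lemma \ref{average-to-integral-vers1}: the denominator there is $\prod_{1 \le i \le j \le r}(1 - a_i^{\delta_i} a_j^{\delta_j})$, whereas $\mathscr{K}_{\scriptscriptstyle 1, 2}$ already contains the cross-factors $\prod_{i=1}^3 \prod_{j=4}^r (1 - x_i^2 x_j^2)(1 - x_i^{-2} x_j^2) \cdot \prod_{4 \le k \le l \le r}(1 - x_k^2 x_l^2)$ in its denominator; one must therefore absorb the remaining factors — those involving pairs $(i,j)$ with $i \le 3$, or products with $u$, or the block $1 \le k \le l \le 3$ — and check that the resulting $h$ is holomorphic on a neighbourhood of the relevant torus, which holds because $|u| < q^{-3/4}(1-\rho)^{r-3}$ keeps the pole $z_4^{-1}\cdots z_r^{-1}u$ away from the contours and because $\prod_p S_{\! p}^{\scriptscriptstyle \mathrm{reg}}$ converges there by Proposition \ref{Regularization-Justification}. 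The integrand's numerator $\prod_{i<j}(z_i - z_j)^{\mathrm{\bf e}_{ij}}(1 - z_i z_j) \cdot \prod_{1 \le k \le l \le 3}(1 - z_k z_l)$, with $\mathrm{\bf e}_{ij} = 1$ when $i \le 3 < j$ and $2$ otherwise, is precisely what Lemma \ref{average-to-integral-vers1} produces after these cancellations, the reduced exponent $\mathrm{\bf e}_{ij} = 1$ reflecting that one power of $(z_i - z_j)$ has been consumed cancelling the $(1 - x_i^{-2} x_j^2)$-type factors; shifting the contours from $|z_i| = q^{-1/4}$ (or the circles enclosing $\xi_j^{\pm 1}$) to $|z_i - 1| = \rho$ is legitimate since no pole is crossed for $\rho$ small. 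Substituting \eqref{eq: IR} back into $\mathscr{L}_{\scriptscriptstyle 1, 2}$ and then into the formula for $\frak{S}_{\scriptscriptstyle 2}$ completes the proof.
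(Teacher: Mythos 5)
Your treatment of the first identity --- assembling the $\alpha$-terms via $\alpha=\alpha'+\alpha_{\scalebox{1.0}{$\scriptscriptstyle J$}}$, the sign flips $\xi_{\!\scriptscriptstyle j}\mapsto\xi_{\!\scriptscriptstyle j}^{\scriptscriptstyle -1}$ for $j\in J$, and the reconciliation of the $2^{3}\,3!\,(r-3)!$ overcount against the block symmetries of $\mathscr{G}_{\scriptscriptstyle 1,2}$ and $\prod_{p}S_{\!p}^{\scriptscriptstyle\mathrm{reg}}$ --- is essentially the ``simple counting argument'' the paper invokes, so that part matches.

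The gap is in the integral representation \eqref{eq: IR}. Lemma \ref{average-to-integral-vers1} cannot be applied as a black box with $h=\mathscr{K}_{\scriptscriptstyle 1,2}$: its hypotheses require $h$ symmetric in all $r$ variables, and accordingly its left-hand side carries no sum over $\sigma$ and has a factor $1/r!$, whereas $\mathscr{K}_{\scriptscriptstyle 1,2}$ is invariant only under the block group (symmetric in $\xi_{\scriptscriptstyle 1}^{\scriptscriptstyle\pm1},\xi_{\scriptscriptstyle 2}^{\scriptscriptstyle\pm1},\xi_{\scriptscriptstyle 3}^{\scriptscriptstyle\pm1}$ and, separately, in $\xi_{\scriptscriptstyle 4},\dots,\xi_{r}$, but not under $\xi_{\!\scriptscriptstyle j}\mapsto\xi_{\!\scriptscriptstyle j}^{\scriptscriptstyle -1}$ for $j\ge4$, precisely because those sign patterns index distinct roots); you never explain how the $\sum_{\sigma}$ on the left of \eqref{eq: IR} is to emerge from the lemma. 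Worse, the ``absorb the remaining factors'' step fails the holomorphy check you propose: the natural choice $h=\mathscr{K}_{\scriptscriptstyle 1,2}\prod_{i\le j}(1-x_{\scriptscriptstyle i}x_{\!\scriptscriptstyle j})$ still has simple poles along $x_{\!\scriptscriptstyle j}=x_{\scriptscriptstyle i}$ for $i\le3<j$, coming from the factors $(1-x_{\scriptscriptstyle i}^{\scriptscriptstyle -2}x_{\!\scriptscriptstyle j}^{\scriptscriptstyle 2})$ in the denominator of $\mathscr{K}_{\scriptscriptstyle 1,2}$, and these diagonals meet the polydisk bounded by the contours $|z_{\scriptscriptstyle i}-1|=\rho$. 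The argument is saved only because the kernel's factors $(z_{\scriptscriptstyle i}-z_{\!\scriptscriptstyle j})^{\mathrm{\bf e}_{\scalebox{.65}{$\scriptscriptstyle ij$}}}$, combined with the extra $(z_{\!\scriptscriptstyle j}-z_{\scriptscriptstyle i})$ hidden in $(1-z_{\scriptscriptstyle i}^{\scriptscriptstyle -2}z_{\!\scriptscriptstyle j}^{\scriptscriptstyle 2})$, cancel these diagonal poles, while the new poles $z_{\!\scriptscriptstyle j}=-z_{\scriptscriptstyle i}$, $z_{\!\scriptscriptstyle j}=-z_{\scriptscriptstyle i}^{\scriptscriptstyle -1}$ and $(1+z_{\scriptscriptstyle k}z_{\scriptscriptstyle l})=0$ lie near $-1$, outside the contours; but verifying this means re-deriving the residue identity with the modified kernel, not quoting Lemma \ref{average-to-integral-vers1}. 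This is exactly what the paper does: inside the proof it states and proves a generalization of Lemma \ref{average-to-integral-vers1} for non-symmetric $h$ and the block kernel $K_{m}$ (applied with $m=3$, $h=\mathscr{G}_{\scriptscriptstyle 1,2}\prod_{p}S_{\!p}^{\scriptscriptstyle\mathrm{reg}}\,(1-q^{\scriptscriptstyle 3\slash4}\zeta_{a}z_{\scriptscriptstyle 4}^{\scriptscriptstyle -1}\cdots z_{r}^{\scriptscriptstyle -1}u)^{\scriptscriptstyle -1}$ and $a_{\scriptscriptstyle i}=\xi_{\scriptscriptstyle i}$), whose proof rests on the rewriting of $\prod(z_{\scriptscriptstyle i}-z_{\!\scriptscriptstyle j})^{\mathrm{\bf e}_{\scalebox{.65}{$\scriptscriptstyle ij$}}}\slash\prod(z_{\scriptscriptstyle k}^{\scriptscriptstyle -1}+z_{\scriptscriptstyle k}^{\scriptscriptstyle -2}z_{\scriptscriptstyle l})$ as $\prod(z_{\!\scriptscriptstyle j}-z_{\scriptscriptstyle i})^{\scriptscriptstyle 2}\slash\prod(1-z_{\scriptscriptstyle k}^{\scriptscriptstyle -2}z_{\scriptscriptstyle l}^{\scriptscriptstyle 2})$ and then the same residue computation and evaluation identity as in Lemma \ref{average-to-integral-vers1}. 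Your sketch asserts the outcome of this step rather than supplying it (and the contour $|z_{\scriptscriptstyle i}|=q^{\scriptscriptstyle -1\slash4}$ you mention plays no role here); the gap is repairable, but the generalized lemma, or an equivalent re-derivation, is the missing ingredient.
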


\begin{proof} By Proposition \ref{Regularization-Justification}, the function 
	$
	\prod_{p} S_{\! p}^{\scriptscriptstyle \mathrm{reg}}\!(\mathrm{z}, \chi_{a})	
	=  N(\mathrm{z}, \chi_{a})^{\scriptscriptstyle -1}
	\!\cdot N(\mathrm{z}, \chi_{a})\prod_{p}S_{\! p}^{\scriptscriptstyle \mathrm{reg}}\!(\mathrm{z}, \chi_{a})	
	$ 
	is holomorphic on a small polydisk 
	$
	|z_{\scriptscriptstyle i} - 1| \le \rho
	$ 
	($i = 1, \ldots, r$), and by \eqref{eq: Global-cocycle-formula}, \eqref{eq: Functions-phi-psi}, the same is true for 
	$ 
	\mathscr{G}_{\scriptscriptstyle 1, 2}\!\left(\mathrm{z}, \zeta_{a}\right)
	(1  -  q^{\scriptscriptstyle 3\slash 4} 
	\zeta_{a}\,
	z_{\scriptscriptstyle 4}^{\, \scalebox{.95}{$\scriptscriptstyle -1$}} 
	\cdots\,  z_{r}^{\, \scalebox{.95}{$\scriptscriptstyle -1$}}u)^{\scriptscriptstyle -1}\!.
	$ 
	As all these functions are symmetric in the variables 
	$
	z_{\scriptscriptstyle 1}^{\, \scriptscriptstyle \pm 1}\!,\, 
	z_{\scriptscriptstyle 2}^{\, \scriptscriptstyle \pm 1}\!,\,
	z_{\scriptscriptstyle 3}^{\, \scriptscriptstyle \pm 1}\!,
	$ 
	and (separately) symmetric in $z_{\scriptscriptstyle k}$ ($k \ge 4$), our first assertion can be verified by a simple counting argument. To establish the integral formula, \!we apply the following slight generalization of Lemma \ref{average-to-integral-vers1}:
	 
\vskip5pt
\begin{lem} --- Let 
	$
	a_{	\scalebox{.97}{$\scriptscriptstyle 1$}}, \ldots, a_{r}
	$ 
	be distinct elements on the unit circle such that $a_{\scriptscriptstyle i}a_{\! \scriptscriptstyle j} \ne 1$ and $|a_{\scriptscriptstyle i} - 1| < \varepsilon$, for all $1\le i, j \le r$ and a small positive $\varepsilon.$ Suppose $h$ is a function of $r$ complex variables, holomorphic on the polydisk $|z_{\scriptscriptstyle i} - 1| \le \varepsilon,$ $i = 1, \ldots, r,$ and, for $0 \le m < r,$ define 
	$K_{m}(\mathrm{z})$ by 
	\[
	K_{m}(\mathrm{z}) = \frac{h(\mathrm{z})}
	{\prod_{k = 1}^{m}\prod_{l = m + 1}^{r} (1 - z_{\scriptscriptstyle k}^{\scriptscriptstyle 2} 
		z_{\scriptscriptstyle l}^{\scriptscriptstyle 2})   
		(1 - z_{\scriptscriptstyle k}^{\scriptscriptstyle -2} 
		z_{\scriptscriptstyle l}^{\scriptscriptstyle 2})   
		\prod_{m + 1 \, \le \, k  \, \le \, l \, \le \, r} 
		(1 - z_{\scriptscriptstyle k}^{\scriptscriptstyle 2} 
		z_{\scriptscriptstyle l}^{\scriptscriptstyle 2})}.
	\] 
	Then we have 
	\begin{equation*}
	\begin{split}
	&\frac{(-1)^{r(r  +  {\scriptscriptstyle 1})\slash {\scriptscriptstyle 2}}}{(2\pi \sqrt{-1})^{^{r}}}\!
	\oint\limits_{\scalebox{.85}{$\scriptscriptstyle |z_{\scalebox{.55}{$\scriptscriptstyle 1$}} - 1| 
			= \varepsilon$}}  \hskip-1.5pt\cdots 
	\hskip-1.5pt\oint\limits_{\scalebox{.85}{$\scriptscriptstyle |z_{\scalebox{.7}{$\scriptscriptstyle r$}} - 1| = \varepsilon$}} 
	h(\mathrm{z}) 
	\frac{\prod_{1 \, \le \, i  \, < \, j \, \le \, r} (z_{\scriptscriptstyle i}^{} - z_{\!\scriptscriptstyle j}^{})^{\mathrm{\bf{e}}_{\scalebox{.65}{$\scriptscriptstyle ij$}}}
		(1 - z_{\scriptscriptstyle i}^{} z_{\!\scriptscriptstyle j}^{}) \, \cdot \, \prod_{1 \, \le \, k  \, \le \, l \, \le \, m}
		(1 - z_{\scriptscriptstyle k}^{} z_{\scriptscriptstyle l}^{})} 
	{\prod_{i,  j = 1}^{r} (1 - z_{\scriptscriptstyle i}^{}a_{\!\scriptscriptstyle j}^{})
		(1 - z_{\scriptscriptstyle i}^{}a_{\!\scriptscriptstyle j}^{\scriptscriptstyle -1}) 
		\, \cdot\, \prod_{k = 1}^{m}\prod_{l = m + 1}^{r} (1 + z_{\scriptscriptstyle k}^{} z_{\scriptscriptstyle l}^{})   
		(z_{\scriptscriptstyle k}^{\scriptscriptstyle -1} \! + z_{\scriptscriptstyle k}^{\scriptscriptstyle -2} z_{\scriptscriptstyle l}^{})   
		\prod_{m + 1 \, \le \, k  \, \le \, l \, \le \, r} (1 + z_{\scriptscriptstyle k}^{} z_{\scriptscriptstyle l}^{})} 
	\frac{d \mathrm{z}}{\mathrm{z}^{\scriptscriptstyle r}}\\
	&  =  \sum_{\sigma \, \in \, \mathbb{S}_{r}} \; \sum_{\delta_{\scalebox{.75}{$\scriptscriptstyle \sigma(i)$}}  \, = \, \pm 1} 
	K_{m}\Big(a_{\scalebox{.95}{$\scriptscriptstyle \sigma(1)$}}^{\delta_{\scalebox{.75}{$\scriptscriptstyle \sigma(1)$}}}\!, \ldots, 
	a_{\scalebox{.95}{$\scriptscriptstyle \sigma(r)$}}^{\delta_{\scalebox{.75}{$\scriptscriptstyle \sigma(r)$}}} \!\Big) 
	\end{split}
	\end{equation*} 
	where $\mathrm{\bf{e}}_{\scalebox{.85}{$\scriptscriptstyle ij$}} = 1$ or $2$ according as $i\le m$ and $j\ge m+1$ or not, and 
	$\mathrm{z}^{r} : = z_{\scalebox{.98}{$\scriptscriptstyle 1$}}^{r} \cdots\,  z_{r}^{r}.$
\end{lem}

\begin{proof} \!We proceed as in the proof of Lemma \ref{average-to-integral-vers1}. \!The poles of the integrand (inside the contour) occur at 
	$
	z_{\scriptscriptstyle i}^{} \! = a_{\scalebox{.95}{$\scriptscriptstyle \sigma(i)$}}^{\delta_{\scalebox{.75}{$\scriptscriptstyle \sigma(i)$}}}\!,
	$ 
	for $\sigma \in \mathbb{S}_{r},$ and $\delta_{\scalebox{1.0}{$\scriptscriptstyle \sigma(i)$}} \! \in \{ \pm 1\}$ for $1 \le i \le r.$ Writing 
	\[
	\frac{\prod_{1 \, \le \, i  \, < \, j \, \le \, r} (z_{\scriptscriptstyle i}^{} - z_{\!\scriptscriptstyle j}^{})^{\mathrm{\bf{e}}_{\scalebox{.65}{$\scriptscriptstyle ij$}}}} 
	{\prod_{k = 1}^{m}\prod_{l = m+1}^{r} (z_{\scriptscriptstyle k}^{\scriptscriptstyle -1} \! + z_{\scriptscriptstyle k}^{\scriptscriptstyle -2} z_{\scriptscriptstyle l}^{})} = \frac{\prod_{1 \, \le \, i  \, < \, j \, \le \, r}(z_{\!\scriptscriptstyle j}^{} - z_{\scriptscriptstyle i}^{})^{\scriptscriptstyle 2}} 
	{\prod_{k = 1}^{m}\prod_{l = m+1}^{r} (1 - z_{\scriptscriptstyle k}^{\scriptscriptstyle -2} z_{\scriptscriptstyle l}^{\scriptscriptstyle 2})} 
	\] 
	the identity stated in the lemma follows easily from the fact that 
	\[
	\frac{(-1)^{\scalebox{1.05}{$\scriptscriptstyle r(r  -  1)\slash 2$}}
		\prod_{1 \, \le \, i  \, < \, j \, \le \, r}(z_{\!\scriptscriptstyle j}^{} - z_{\scriptscriptstyle i}^{})^{\scriptscriptstyle 2}
		(1 - z_{\scriptscriptstyle i}^{} z_{\!\scriptscriptstyle j}^{})} 
	{\prod_{i = 1}^{r} \prod_{\! j \ne i} (1 - z_{\scriptscriptstyle i}^{}z_{\!\scriptscriptstyle j}^{})
		(1 - z_{\scriptscriptstyle i}^{}z_{\!\scriptscriptstyle j}^{\scriptscriptstyle -1}) 
		\, \cdot \, \prod_{i = 1}^{r}  z_{\scriptscriptstyle i}^{r - {\scriptscriptstyle 1}} (1 - z_{\scriptscriptstyle i}^{\scriptscriptstyle 2})}
	\;\; =  \prod_{1 \le k \le l \le r} (1 -  z_{\scriptscriptstyle k}^{} z_{\scriptscriptstyle l}^{} )^{\scriptscriptstyle -1}
	\] 
	with 
	$
	z_{\scriptscriptstyle i}^{} : = a_{\scalebox{.95}{$\scriptscriptstyle \sigma(i)$}}^{\delta_{\scalebox{.75}{$\scriptscriptstyle \sigma(i)$}}}
	$ 
	for $i = 1, \ldots, r.$ 
\end{proof}

The second assertion in our theorem follows at once from this lemma by taking $m = 3,$ 
\[
h(\mathrm{z}) = 
\frac{\mathscr{G}_{\scriptscriptstyle 1, 2}\!\left(\mathrm{z}, \zeta_{a}\right)
\!\prod_{p} S_{\! p}^{\scriptscriptstyle \mathrm{reg}}\!\left(\mathrm{z}, \chi_{a}\right)}
{1  -  q^{\scriptscriptstyle 3\slash 4} 
	\zeta_{a}\,
	z_{\scriptscriptstyle 4}^{\, \scalebox{.95}{$\scriptscriptstyle -1$}} 
	\cdots\,  z_{r}^{\, \scalebox{.95}{$\scriptscriptstyle -1$}}u}
\] 
and $a_{\scriptscriptstyle i} = \xi_{\scriptscriptstyle i}$ for all $i.$ This completes the proof of the theorem. 
\end{proof} 

One should note that the assumption on 
$
\xi_{\scriptscriptstyle 1}, \ldots, \xi_{r}
$ 
(or $a_{\scalebox{.97}{$\scriptscriptstyle 1$}}, \ldots, a_{\scriptscriptstyle r}$ in the lemma) being on the unit circle, 
made solely to simplify the exposition, can be removed by analytic continuation.

\vskip10pt
\begin{cor} --- For $D \in \mathbb{N}$ and $r \ge 4,$ 
	the coefficient $Q_{\scriptscriptstyle 2}(D, q)$ is given by 
	\[
Q_{\scriptscriptstyle 2}(D, q)  
=  \scalebox{1.45}{$\scriptscriptstyle \frac{1}{2^{\scalebox{.8}{$\scriptscriptstyle 5$}} 3! (r - 3)!}$}
\, \cdot \sum_{\zeta_{a}^{\scalebox{.85}{$\scriptscriptstyle 2$}}  \, = \, \pm 1}
\zeta_{a}^{\scriptscriptstyle D}
\left\{\left(1 - q^{\scriptscriptstyle 1\slash 2}\right)^{- r}
\!\cdot \tilde{\mathscr{L}}_{\scriptscriptstyle 1}(D, \zeta_{a})
\, + \, \tilde{\mathscr{L}}_{\scriptscriptstyle 2}(D, \zeta_{a}) \right\}
\] 
where $\tilde{\mathscr{L}}_{\scriptscriptstyle 1, 2}(D, \zeta_{a})$ has the integral representation 
\begin{equation*}
\begin{split}
\tilde{\mathscr{L}}_{\scriptscriptstyle 1, 2}(D, \zeta_{a})  = 
\frac{(-1)^{r(r  +  {\scriptscriptstyle 1})\slash {\scriptscriptstyle 2}}}{(2\pi \sqrt{-1})^{^{r}}}\!
\oint  & \cdots \oint 
\frac{\mathscr{G}_{\scriptscriptstyle 1, 2}\!\left(\mathrm{z}, \zeta_{a}\right)
\prod_{p} S_{\! p}^{\scriptscriptstyle \mathrm{reg}}\!\left(\mathrm{z}, \chi_{a}\right)}
{z_{\scriptscriptstyle 4}^{\scriptscriptstyle D} 
	\cdots\,  z_{r}^{\scriptscriptstyle D}}\\ 
& \hskip-1pt \cdot \, \frac{\prod_{1 \, \le \, i  \, < \, j \, \le \, r} (z_{\scriptscriptstyle i}^{} - z_{\!\scriptscriptstyle j}^{})^{\mathrm{\bf{e}}_{\scalebox{.65}{$\scriptscriptstyle ij$}}}
	(1 - z_{\scriptscriptstyle i}^{} z_{\!\scriptscriptstyle j}^{}) \, \cdot \, \prod_{1 \, \le \, k  \, \le \, l \, \le \, 3}
	(1 - z_{\scriptscriptstyle k}^{} z_{\scriptscriptstyle l}^{})} 
{\prod_{i = 1}^{r} \left(1 - z_{\scriptscriptstyle i}^{}\right)^{{\scriptscriptstyle 2} r} 
\cdot\, \prod_{k = 1}^{3}\prod_{l = 4}^{r} (1 + z_{\scriptscriptstyle k}^{} z_{\scriptscriptstyle l}^{})   
	(z_{\scriptscriptstyle k}^{\scriptscriptstyle -1} \! + z_{\scriptscriptstyle k}^{\scriptscriptstyle -2} z_{\scriptscriptstyle l}^{})   
	\prod_{4 \, \le \, k  \, \le \, l \, \le \, r} (1 + z_{\scriptscriptstyle k}^{} z_{\scriptscriptstyle l}^{})} 
\frac{d \mathrm{z}}{\mathrm{z}^{\scriptscriptstyle r}}
\end{split}
\end{equation*} 
the integrals being taken over sufficiently small circles $|z_{\scriptscriptstyle i}^{} - 1| = \rho,$ $i = 1, \ldots, r.$ 
\end{cor}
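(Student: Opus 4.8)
The plan is to unwind the definition of $Q_{\scriptscriptstyle 2}(D, q)$ via the residue formula in the preceding theorem and then insert the explicit description of $\frak{S}_{\scriptscriptstyle 2}$ obtained above. First I would recall that, by the theorem immediately preceding this corollary,
\[
Q_{\scriptscriptstyle 2}(D, q)q^{\left(\!\scalebox{.9}{$\scriptscriptstyle \frac{1}{2}$} + \scalebox{.9}{$\scriptscriptstyle \frac{1}{4}$}\!\right)\scalebox{1.2}{$\scriptscriptstyle D$}} = \sum_{\zeta^{\scriptscriptstyle 4} = 1}\, \underset{\xi \, = \, q^{\scriptscriptstyle - 3\slash 4}\zeta}{\mathrm{Res}}\, \frak{S}_{\scriptscriptstyle 2,\,  {\scriptscriptstyle \zeta}}\!\left({\mathbf{\tfrac{1}{2}}}, \xi \right)\xi^{\scriptscriptstyle - D - 1},
\]
and by Proposition \ref{Analytic-continuation-frakSn} (with $n = 2$), $\frak{S}_{\scriptscriptstyle 2,\,  {\scriptscriptstyle \zeta}}({\mathbf{\tfrac{1}{2}}}, \xi)$ is the principal part of $\mathscr{W}({\mathbf{\tfrac{1}{2}}}, \xi)$ at the pole $\xi = q^{\scriptscriptstyle - 3\slash 4}\zeta$; summing over $\zeta$ and using $\frak{S}_{\scriptscriptstyle 2}(\mathrm{z}, a_{\scriptscriptstyle 2})$ as the total principal-part contribution at the level-$2$ roots. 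The key translation is to pass from the variables $z_{\scriptscriptstyle k}$ to the ``square-root'' variables $\xi_{\scriptscriptstyle k}$ via $z_{\scriptscriptstyle k} = q^{\scriptscriptstyle -1\slash 2}\xi_{\scriptscriptstyle k}^{\scriptscriptstyle 2}$ and specialize $\xi_{\scriptscriptstyle k} \to 1$ (equivalently $s_{\scriptscriptstyle k} \to \tfrac{1}{2}$), which by the Theorem above identifies $\frak{S}_{\scriptscriptstyle 2}\!\left(q^{\scriptscriptstyle -1\slash 2}{\bf \xi}^{\scriptscriptstyle 2}\!, z_{r + {\scriptscriptstyle 1}}, 1\right)$ with the symmetrized sum $\tfrac14\sum_{\zeta_{a}^{\scriptscriptstyle 2} = \pm 1}\{\cdots\}$, whose building blocks $\mathscr{L}_{\scriptscriptstyle 1, 2}$ are already expressed as multiple contour integrals in \eqref{eq: IR}.

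Next I would extract the coefficient of $\xi^{\scriptscriptstyle D}$ (here $z_{r + {\scriptscriptstyle 1}}$, renamed $u$ after scaling) from $\frak{S}_{\scriptscriptstyle 2}$. Recall that in $\mathscr{L}_{\scriptscriptstyle 1, 2}$ the inner function $\mathscr{K}_{\scriptscriptstyle 1, 2}(\mathbf{x}, u, \zeta_{a})$ carries the only $u$-dependence through the single factor $(1 - q^{\scriptscriptstyle 3\slash 4}\zeta_{a} x_{\scriptscriptstyle 4}^{\scriptscriptstyle -1}\cdots x_{r}^{\scriptscriptstyle -1} u)^{\scriptscriptstyle -1}$; expanding this geometric series and reading off the $u^{D}$-coefficient produces the weights $z_{\scriptscriptstyle 4}^{\scriptscriptstyle -D}\cdots z_{r}^{\scriptscriptstyle -D}$ (up to a power of $q^{\scriptscriptstyle 3\slash 4}\zeta_{a}$), together with an overall factor $\zeta_{a}^{\scriptscriptstyle D}$ coming from the relation $z_{r + {\scriptscriptstyle 1}} = u\prod_{j}\xi_{\!\scriptscriptstyle j}^{-1}$ and $\xi = q^{\scriptscriptstyle - 3\slash 4}\zeta_{a}$. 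The bookkeeping here is: the $\zeta_{a}^{\scriptscriptstyle D}$ prefactor, the residue extraction converting $\xi^{\scriptscriptstyle - D - 1}$ into the stated denominators $z_{\scriptscriptstyle 4}^{\scriptscriptstyle D}\cdots z_{r}^{\scriptscriptstyle D}$, and the reconciliation of the two powers of $q$ (the $q^{\scriptscriptstyle 3D\slash 4}$ released by the residue against $q^{\scriptscriptstyle 3D\slash 4n} = q^{\scriptscriptstyle 3D\slash 8}$ absorbed into $Q_{\scriptscriptstyle 2}(D, q)q^{\scriptscriptstyle 3D\slash 4}$), which pins the normalizing constant $\tfrac{1}{2^{\scriptscriptstyle 5} 3! (r - 3)!}$; note $2^{\scriptscriptstyle 5} = 2^{\scriptscriptstyle 3}\cdot 2^{\scriptscriptstyle 2}$ where $2^{\scriptscriptstyle 3}$ is the $\varepsilon$-sum normalization from the lemma of the Theorem and the extra $4 = 2^{\scriptscriptstyle 2}$ combines the $\tfrac14$ from $\frak{S}_{\scriptscriptstyle 2}(\cdots, 1)$ with $n^{\scriptscriptstyle -1} = \tfrac12$ and the $\tfrac12$ built into $2^{\scriptscriptstyle - l(w_{\scalebox{.85}{$\scriptscriptstyle \alpha$}})}\Gamma_{\!\! w_{\scalebox{.85}{$\scriptscriptstyle \alpha$}}}$. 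Then the factor $(1 - q^{\scriptscriptstyle 1\slash 2})^{-r}$ is exactly $\prod_{j = 1}^{r}(1 - q^{\scriptscriptstyle 1\slash 2}\xi_{\!\scriptscriptstyle j}^{\scriptscriptstyle 2})^{\scriptscriptstyle -1}$ evaluated at $\xi_{\!\scriptscriptstyle j} = 1$, split off from $\mathscr{L}_{\scriptscriptstyle 1}$ as in the Theorem, while the $\mathrm{sgn}(a_{\scriptscriptstyle 2})$-term drops out because here $a_{\scriptscriptstyle 2} = 1$.

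Finally I would substitute $\xi_{\!\scriptscriptstyle j} = 1$ into the integral representation \eqref{eq: IR}: the denominators $\prod_{i, j}(1 - z_{\scriptscriptstyle i}^{}\xi_{\scriptscriptstyle j}^{})(1 - z_{\scriptscriptstyle i}^{}\xi_{\scriptscriptstyle j}^{\scriptscriptstyle -1})$ collapse to $\prod_{i = 1}^{r}(1 - z_{\scriptscriptstyle i}^{})^{\scriptscriptstyle 2r}$, giving the integrand displayed in the corollary, and the remaining factors ($\mathscr{G}_{\scriptscriptstyle 1, 2}$, the regularized Euler product $\prod_{p} S_{\! p}^{\scriptscriptstyle \mathrm{reg}}$, the Vandermonde-type numerator with exponents $\mathrm{\bf{e}}_{\scalebox{.85}{$\scriptscriptstyle ij$}}$, and the hyperoctahedral measure $d\mathrm{z}\slash\mathrm{z}^{\scriptscriptstyle r}$) are untouched. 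The contours $|z_{\scriptscriptstyle i}^{} - 1| = \rho$ are legitimate because, by Proposition \ref{Regularization-Justification}, $\prod_{p} S_{\! p}^{\scriptscriptstyle \mathrm{reg}}(\mathrm{z}, \chi_{a})$ is holomorphic on the polydisk $|z_{\scriptscriptstyle i} - 1| \le \rho$, and the remaining factors are holomorphic there as well once $\rho$ is small (using the product expansion of $\overline{\mathrm{M}}_{w_{\!\scalebox{.85}{$\scriptscriptstyle \alpha'$}}}$ in \eqref{eq: Global-cocycle-formula} and \eqref{eq: Functions-phi-psi}). The main obstacle I anticipate is not analytic but combinatorial: keeping the constant and the $\zeta_{a}$-powers straight through the chain of normalizations — the $n^{\scriptscriptstyle -1}$ in $\frak{S}_{n}$, the $2^{\scriptscriptstyle - l(w_{\scalebox{.85}{$\scriptscriptstyle \alpha$}})}$, the $\tfrac14$ and $\tfrac{1}{2^{\scriptscriptstyle 3} 3!(r-3)!}$ from the symmetrization lemma, and the $q^{\scriptscriptstyle 3D\slash 4}$ versus $q^{\scriptscriptstyle 3D\slash 8}$ exponent matching — so that they assemble precisely into $\tfrac{1}{2^{\scriptscriptstyle 5} 3!(r-3)!}$ and the clean $\sum_{\zeta_{a}^{\scriptscriptstyle 2} = \pm 1}\zeta_{a}^{\scriptscriptstyle D}$ form stated; once that bookkeeping is verified the corollary follows immediately from the Theorem and \eqref{eq: IR}.
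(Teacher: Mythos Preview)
Your approach is the paper's: use the Theorem to write $\frak{S}_{2}(q^{-1/2}{\bf\xi}^{2},z_{r+1},1)$ as $\tfrac14\sum_{\zeta_{a}}\{\cdots\}$, insert the contour integral \eqref{eq: IR}, specialize $\xi_{j}\to 1$ so that $\prod_{i,j}(1-z_{i}\xi_{j})(1-z_{i}\xi_{j}^{-1})$ collapses to $\prod_{i}(1-z_{i})^{2r}$, and then pick off the $D$-th coefficient in $z_{r+1}$. The paper does this last step via
\[
\frac{1}{2\pi\sqrt{-1}}\oint_{|z|=q^{-2}}\frac{dz}{z^{D+1}(1-cz)}=c^{D},\qquad c=q^{3/4}\zeta_{a}\,z_{4}^{-1}\cdots z_{r}^{-1},
\]
which yields $q^{3D/4}\zeta_{a}^{D}z_{4}^{-D}\cdots z_{r}^{-D}$; the factor $q^{3D/4}$ cancels exactly the $q^{(1/2+1/4)D}$ in the definition of $Q_{2}$. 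Two bookkeeping slips in your write-up: there is no ``$q^{3D/4n}=q^{3D/8}$'' anywhere --- only the single power $q^{3D/4}$, which cancels directly --- and the constant is simply the $\tfrac14$ from the Theorem times the $\tfrac{1}{2^{3}3!(r-3)!}$ already built into the definition of $\mathscr{L}_{1,2}$, giving $\tfrac{1}{2^{5}3!(r-3)!}$; your attempted decomposition double-counts the $n^{-1}$, which is already absorbed into that $\tfrac14$.
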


\begin{proof} Consider the integral 
	\begin{equation} \label{eq: Int-rep-Q2} 
\frac{1}{2 \pi \sqrt{-1}}\; \oint\limits_{|z| \, = \, q^{\, \scriptscriptstyle - 2}} 
\frak{S}_{\scriptscriptstyle 2}
\!\left(q^{\scriptscriptstyle -1\slash 2}{\bf \xi}^{\scriptscriptstyle 2}\!, z, 1\right) 
\frac{d z}{{z}^{\scriptscriptstyle D + 1}}
\end{equation}
where, for ease of notation, we set 
$z : = z_{r + {\scriptscriptstyle 1}},$ 
and 
$
q^{\scriptscriptstyle -1\slash 2}{\bf \xi}^{\scriptscriptstyle 2} 
: = \left(q^{\scriptscriptstyle -1\slash 2}\xi_{\scriptscriptstyle 1}^{\scriptscriptstyle 2}, \ldots, 
q^{\scriptscriptstyle -1\slash 2}\xi_{r}^{\scriptscriptstyle 2}\right).
$ 
Using the identity 
\[
\frak{S}_{\scriptscriptstyle 2}
\!\left(q^{\scriptscriptstyle -1\slash 2}{\bf \xi}^{\scriptscriptstyle 2}\!, z, 1\right)  
=  \tfrac{1}{4}\!\sum_{\zeta_{a}^{\scalebox{.85}{$\scriptscriptstyle 2$}}  \, = \, \pm 1}
\left\{\mathscr{L}_{\scriptscriptstyle 1}({\bf \xi}, z, \zeta_{a})
\prod_{j = 1}^{r}\big(1 - q^{\scriptscriptstyle 1\slash 2}\xi_{\!\scriptscriptstyle j}^{\scriptscriptstyle 2}\big)^{\scriptscriptstyle -1}
+ \, \frac{1}{\xi_{\scriptscriptstyle 1} \cdots \, \xi_{r}}
\!\mathscr{L}_{\scriptscriptstyle 2}({\bf \xi}, z, \zeta_{a}) \right\}
\] 
and the definition of the functions $\mathscr{L}_{\scriptscriptstyle 1, 2}({\bf \xi}, z, \zeta_{a}),$ we can express 
\eqref{eq: Int-rep-Q2} via $\eqref{eq: IR}$ as a sum of two multiple integrals invol-\\ving the rational function 
\[ 
\left(1  -  q^{\scriptscriptstyle 3\slash 4} 
\zeta_{a}\,
z_{\scriptscriptstyle 4}^{\, \scalebox{.95}{$\scriptscriptstyle -1$}} 
\cdots\,  z_{r}^{\, \scalebox{.95}{$\scriptscriptstyle -1$}}
\, \xi_{\scriptscriptstyle 1}^{} \cdots \; \xi_{r}^{} 
\, z\right)^{\scriptscriptstyle -1} 
\cdot \, 
\prod_{i,  j = 1}^{r} \left(1 - z_{\scriptscriptstyle i}^{}\, \xi_{\scriptscriptstyle j}^{}\right)^{\scriptscriptstyle -1}
\!\left(1 - z_{\scriptscriptstyle i}^{}\, \xi_{\scriptscriptstyle j}^{\scriptscriptstyle -1}\right)^{\scriptscriptstyle -1}\!;
\] 
this function exhibits the dependence upon either of $\xi_{\scriptscriptstyle 1}^{}, \ldots, \xi_{r}^{}$ or $z$ in the integrands. \!Taking $\rho$ even smaller if necessary so that $q^{\, \scriptscriptstyle - 2} < q^{\, \scriptscriptstyle - 3\slash 4}(1 - \rho)^{r - {\scriptscriptstyle 3}}\!,$ and setting $\xi_{\scriptscriptstyle 1}^{}  = \cdots = \xi_{r}^{} = 1,$ our conclusion follows at once by integrating with respect to $z,$\linebreak 
and applying the simple formula 
\[
\frac{1}{2 \pi \sqrt{-1}}\; \oint\limits_{|z| \, = \, q^{\scriptscriptstyle - 2}} 
\frac{d z}{z^{\scriptscriptstyle D + 1}(1 - c z)} = c^{\scriptscriptstyle D}
\] 
with 
$ 
c = q^{\scriptscriptstyle 3\slash 4} \zeta_{a}\, z_{\scriptscriptstyle 4}^{\scriptscriptstyle -1} 
\cdots\,  z_{r}^{\scriptscriptstyle -1}.
$ 
(Note that $|c z| < 1$ when $|z| \le q^{\, \scriptscriptstyle - 2}$).
\end{proof}

We have the following:

\vskip10pt
\begin{prop} --- The functions
$\tilde{\mathscr{L}}_{\scriptscriptstyle 1, 2}(D, \zeta_{a})$ are
polynomials in $D$ of degree $(r - 3) (r + 10)\slash 2$ with leading
terms given by
\[
3\cdot 
2^{{\scriptscriptstyle 25} - {\scriptscriptstyle 7}r}
(r - 3)!\,
\frac{0!\, 1!\, 2!\, \cdots \, (r - 4)!}
{7!\,  9!\,  11!\,  \cdots \, (2r - 1)!}
\mathscr{G}_{\scriptscriptstyle 1, 2}\!\left({\bf 1}, \zeta_{a}\right)
\prod_{p} S_{\! p}^{\scriptscriptstyle \mathrm{reg}}\!\left({\bf 1}, \chi_{a}\right)
\, \cdot \; D^{(r - {\scriptscriptstyle 3}) (r + {\scriptscriptstyle 10})\slash
  {\scriptscriptstyle 2}} 
\]
where we set ${\bf 1} : = (1, 1, \ldots, 1).$
\end{prop}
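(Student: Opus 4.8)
The plan is to extract the leading-order asymptotics in $D$ from the integral representation
\[
\tilde{\mathscr{L}}_{\scriptscriptstyle 1, 2}(D, \zeta_{a}) =
\frac{(-1)^{r(r + {\scriptscriptstyle 1})\slash {\scriptscriptstyle 2}}}{(2\pi \sqrt{-1})^{^{r}}}\!
\oint \cdots \oint
\frac{\mathscr{G}_{\scriptscriptstyle 1, 2}\!\left(\mathrm{z}, \zeta_{a}\right)
\prod_{p} S_{\! p}^{\scriptscriptstyle \mathrm{reg}}\!\left(\mathrm{z}, \chi_{a}\right)}
{z_{\scriptscriptstyle 4}^{\scriptscriptstyle D} \cdots z_{r}^{\scriptscriptstyle D}}
\cdot \frac{\prod_{i < j} (z_{\scriptscriptstyle i} - z_{\!\scriptscriptstyle j})^{\mathrm{\bf{e}}_{\scalebox{.65}{$\scriptscriptstyle ij$}}}(1 - z_{\scriptscriptstyle i} z_{\!\scriptscriptstyle j}) \prod_{1 \le k \le l \le 3}(1 - z_{\scriptscriptstyle k} z_{\scriptscriptstyle l})}
{\prod_{i} (1 - z_{\scriptscriptstyle i})^{2r} \prod_{k \le 3}\prod_{l \ge 4}(1 + z_{\scriptscriptstyle k} z_{\scriptscriptstyle l})(z_{\scriptscriptstyle k}^{-1} + z_{\scriptscriptstyle k}^{-2} z_{\scriptscriptstyle l}) \prod_{4 \le k \le l \le r}(1 + z_{\scriptscriptstyle k} z_{\scriptscriptstyle l})}
\frac{d \mathrm{z}}{\mathrm{z}^{\scriptscriptstyle r}}
\]
established in the preceding Corollary. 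The variables $z_{\scriptscriptstyle 1}, z_{\scriptscriptstyle 2}, z_{\scriptscriptstyle 3}$ appear to no power $D$, so the $D$-dependence is carried entirely by the three-fold subintegral in $z_{\scriptscriptstyle 4}, \ldots, z_{r}$; the remaining factor $\mathscr{G}_{\scriptscriptstyle 1, 2}\!\left(\mathrm{z}, \zeta_{a}\right)\prod_{p} S_{\! p}^{\scriptscriptstyle \mathrm{reg}}\!\left(\mathrm{z}, \chi_{a}\right)$ is holomorphic near $\mathrm{z} = {\bf 1}$, so its only effect on the leading term is through its value at ${\bf 1}$. The first step is therefore to replace that analytic prefactor by its value $\mathscr{G}_{\scriptscriptstyle 1, 2}\!\left({\bf 1}, \zeta_{a}\right)\prod_{p} S_{\! p}^{\scriptscriptstyle \mathrm{reg}}\!\left({\bf 1}, \chi_{a}\right)$ plus an error that contributes to lower-degree polynomial terms in $D$, and reduce to a pure rational-function residue computation.

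Second, I would analyze the $z_{\scriptscriptstyle k}$-integral for $k \ge 4$: here the only singularity inside the circle $|z_{\scriptscriptstyle k} - 1| = \rho$ that grows with $D$ comes from the pole of order $2r$ at $z_{\scriptscriptstyle k} = 1$ (from $(1 - z_{\scriptscriptstyle k})^{2r}$) against the $z_{\scriptscriptstyle k}^{-D}$ monomial, so by Cauchy each such integration produces a polynomial in $D$ of degree $2r - 1$ after accounting for the $r-1$ extra factors $z_{\scriptscriptstyle k}$-degree from $\mathrm{z}^{r}$ (more precisely, the precise degree will emerge from matching $(1 - z_{\scriptscriptstyle k})^{-2r}z_{\scriptscriptstyle k}^{-D-r}$). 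Summing the degrees of the $r - 3$ such integrations, together with the vanishing orders forced by the Vandermonde-type factors $\prod_{i<j}(z_{\scriptscriptstyle i} - z_{\!\scriptscriptstyle j})^{\mathrm{\bf{e}}_{\scalebox{.65}{$\scriptscriptstyle ij$}}}$ (which cut down the effective degree), should give the claimed degree $(r-3)(r+10)/2$; I would verify this by a careful bookkeeping of the exponents — setting $m = r - 3$, the naive count is $\sum_{j=1}^{m}(2r - 1 - (\text{Vandermonde penalty at }z_{\scriptscriptstyle 3+j}))$, and the arithmetic sum should collapse to $m(m + 13)/2 = (r-3)(r+10)/2$.

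Third, I would isolate the leading coefficient. After the three residue computations at $z_{\scriptscriptstyle 4} = \cdots = z_{r} = 1$ (keeping only the top-degree-in-$D$ part, which amounts to extracting the coefficient of the highest power of $D$ from each iterated Cauchy integral of $(1-z)^{-2r}z^{-D-r}$ against a function analytic at $1$), what remains is a fixed three-fold contour integral in $z_{\scriptscriptstyle 1}, z_{\scriptscriptstyle 2}, z_{\scriptscriptstyle 3}$ of an explicit rational function — a Selberg–Mehta–type integral of the form appearing in random matrix theory. I expect this to evaluate, using the Weyl denominator / Cauchy-determinant identities underlying Lemma \ref{average-to-integral-vers1}, to a ratio of factorials; matching the combinatorial constant against the stated $3 \cdot 2^{25 - 7r}(r-3)!\,\frac{0!\,1!\,\cdots\,(r-4)!}{7!\,9!\,\cdots\,(2r-1)!}$ will require pinning down the normalization carefully, since the factors $\frac{1}{2^{5}3!(r-3)!}$ from $\mathscr{L}_{\scriptscriptstyle 1,2}$, the sign $(-1)^{r(r+1)/2}$, and the residue normalizations all enter. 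The main obstacle, and the step I expect to consume the most effort, is precisely this last evaluation: carrying out the $r$-fold contour integral of the rational kernel and showing that the product of factorials produced by the iterated residues at $z_{\scriptscriptstyle k} = 1$ ($k \ge 4$) combines with the three-fold Selberg-type integral over $z_{\scriptscriptstyle 1}, z_{\scriptscriptstyle 2}, z_{\scriptscriptstyle 3}$ to give exactly the asserted closed form. Everything else (the reduction to the rational kernel, the degree count, the holomorphy arguments) is routine but must be done with attention to exponents.
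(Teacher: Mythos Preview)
Your outline is essentially the same approach as the paper's: extract the Taylor coefficient at $\mathrm{z}=\mathbf{1}$, freeze the holomorphic prefactor at its value $\mathscr{G}_{1,2}(\mathbf{1},\zeta_a)\prod_p S_p^{\mathrm{reg}}(\mathbf{1},\chi_a)$, count the degree in $D$ via the Vandermonde-type vanishing, and evaluate the residual combinatorial integral. A few points will sharpen your execution and spare you a detour.

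First, the degree count in the paper is cleaner than a per-variable ``penalty'' sum: after substituting $x_i=z_i-1$, every monomial in $\prod_{4\le i<j\le r}(z_i-z_j)^2(1-z_iz_j)$ has total degree at least $3(r-3)(r-4)/2$ in $x_4,\ldots,x_r$, so the top power of $D$ is at most $(2r-1)(r-3)-\tfrac{3}{2}(r-3)(r-4)=(r-3)(r+10)/2$, with equality only from the lowest-degree piece of that product.

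Second, the leading-coefficient integral factors as a product of a \emph{three}-variable integral in $x_1,x_2,x_3$ and an $(r-3)$-variable integral in $x_4,\ldots,x_r$. The three-variable piece is a small rational-function residue that evaluates directly to $-48$; no Selberg machinery is needed. The $(r-3)$-variable piece, after keeping only lowest-order terms, becomes
\[
\frac{1}{(2\pi\sqrt{-1})^{r-3}}\oint\!\cdots\!\oint \frac{\Delta(x_4,\ldots,x_r)\,\Delta(x_4^2,\ldots,x_r^2)}{\prod_{i=4}^r x_i^{2r}}\, e^{-D\sum_{k\ge 4} x_k}\,dx_4\cdots dx_r,
\]
which is evaluated (as in Goulden--Huynh--Rishikesh--Rubinstein) by expanding both Vandermondes as determinants and reducing to $\det\bigl(\binom{2r+1-2j}{i-1}\bigr)_{1\le i,j\le r-3}=(-2)^{(r-3)(r-4)/2}$ via iterated column differences. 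This is more elementary and more explicit than invoking a Selberg--Mehta identity, and it produces the factorials $0!1!\cdots(r-4)!/7!9!\cdots(2r-1)!$ directly.
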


\begin{proof} \!The integral
$
\tilde{\mathscr{L}}_{\scriptscriptstyle 1, 2}(D, \zeta_{a})
$
is the coefficient of
$
\prod_{i = 1}^{r} \left(z_{\scriptscriptstyle i} - 1\right)^{{\scriptscriptstyle 2} r -
 {\scriptscriptstyle 1}}
 $ 
 in the power series expansion of
\[ 
\frac{(-1)^{r(r  +  {\scriptscriptstyle 1})\slash {\scriptscriptstyle 2}}\,
  \mathscr{G}_{\scriptscriptstyle 1, 2}\!\left(\mathrm{z}, \zeta_{a}\right)
  \prod_{p} S_{\! p}^{\scriptscriptstyle \mathrm{reg}}
  \!\left(\mathrm{z}, \chi_{a}\right)
\, \cdot \, \prod_{1 \, \le \, i  \, < \, j \, \le \, r} 
(z_{\scriptscriptstyle i}^{} - z_{\!\scriptscriptstyle
  j}^{})^{\mathrm{\bf{e}}_{\scalebox{.65}{$\scriptscriptstyle ij$}}}
(1 - z_{\scriptscriptstyle i}^{} z_{\!\scriptscriptstyle j}^{}) 
\, \cdot \, \prod_{1 \, \le \, k  \, \le \, l \, \le \, 3}
(1 - z_{\scriptscriptstyle k}^{} z_{\scriptscriptstyle l}^{})} 
{(z_{\scriptscriptstyle 4}^{\scriptscriptstyle D}  \cdots\, z_{r}^{\scriptscriptstyle D})
\, \mathrm{z}^{\scriptscriptstyle r}
\prod_{k = 1}^{3}\prod_{l = 4}^{r} 
(1 + z_{\scriptscriptstyle k}^{} z_{\scriptscriptstyle l}^{})   
(z_{\scriptscriptstyle k}^{\scriptscriptstyle -1} 
\! + z_{\scriptscriptstyle k}^{\scriptscriptstyle -2} z_{\scriptscriptstyle l}^{})   
\, \cdot \, \prod_{4 \, \le \, k  \, \le \, l \, \le \, r} 
(1 + z_{\scriptscriptstyle k}^{} z_{\scriptscriptstyle l}^{})} 
\]
around $z_{\scriptscriptstyle i} = 1,$ $i = 1, \ldots, r.$
Substituting $x_{\scriptscriptstyle i} = z_{\scriptscriptstyle i} - 1,$
and using the binomial expansion
\[
 (1 + x)^{\scriptscriptstyle - D}  = 1 - D x
 + \frac{D (D + 1)}{2} x^{\scriptscriptstyle 2}
 - \frac{D (D + 1)(D + 2)}{6} x^{\scriptscriptstyle 3} + \cdots
\]
it is clear that
$
\tilde{\mathscr{L}}_{\scriptscriptstyle 1, 2}(D, \zeta_{a})
$
is a polynomial in $D.$ To compute the highest power of $D$
in this polynomial, notice that every monomial in the expansion of
\[
\prod_{4 \, \le \, i  \, < \, j \, \le \, r} 
(z_{\scriptscriptstyle i}^{} - z_{\!\scriptscriptstyle
  j}^{})^{\scriptscriptstyle 2}
(1 - z_{\scriptscriptstyle i}^{} z_{\!\scriptscriptstyle j}^{})
= (-1)^{(r - {\scriptscriptstyle 3}) (r - {\scriptscriptstyle 4})\slash {\scriptscriptstyle 2}}
\; \cdot \prod_{4 \, \le \, i  \, < \, j \, \le \, r} 
(x_{\scriptscriptstyle i}^{} - x_{\!\scriptscriptstyle j}^{})^{\scriptscriptstyle 2}
(x_{\scriptscriptstyle i}^{} \, x_{\!\scriptscriptstyle j}^{}
+ x_{\scriptscriptstyle i}^{} + x_{\!\scriptscriptstyle j}^{})
\]
has degree at least $3 (r - 3) (r - 4)\slash 2.$
Thus the highest power of $D$ occurring in the polynomial is at most 
\[
(2 r - 1)(r - 3) - \frac{3 (r - 3) (r - 4)}{2}
= \frac{(r - 3)(r + 10)}{2} 
\]
and the total contribution to
$
D^{(r - {\scriptscriptstyle 3}) (r + {\scriptscriptstyle 10})\slash
  {\scriptscriptstyle 2}}
$
is given by 
\begin{equation*}
\begin{split}
\mathscr{G}_{\scriptscriptstyle 1, 2}\!\left({\bf 1}, \zeta_{a}\right)
\prod_{p} S_{\! p}^{\scriptscriptstyle \mathrm{reg}}\!\left({\bf 1}, \chi_{a}\right)
&\, \cdot 
\, \frac{1}{(2\pi \sqrt{-1})^{^{{\scriptscriptstyle 3}}}}\!
\oint \!\!\oint \!\!\oint \!\frac{(x_{\scriptscriptstyle 1} + 2)
  (x_{\scriptscriptstyle 2} + 2)
  (x_{\scriptscriptstyle 3} + 2)
  \prod_{1 \, \le \, i  \, < \, j \, \le \, 3}
  (x_{\scriptscriptstyle i} - x_{\!\scriptscriptstyle j})^{\scriptscriptstyle 2}
(x_{\scriptscriptstyle i}\, x_{\!\scriptscriptstyle j} +
x_{\scriptscriptstyle i}
+ x_{\!\scriptscriptstyle j})^{\scriptscriptstyle 2}}
{x_{\scriptscriptstyle 1}^{5}
  x_{\scriptscriptstyle 2}^{5}
  x_{\scriptscriptstyle 3}^{5}}\,
dx_{\scriptscriptstyle 1}
dx_{\scriptscriptstyle 2}
dx_{\scriptscriptstyle 3}\\
&\cdot \, \frac{(-1)^{\scriptscriptstyle r}\, 
2^{- (r - {\scriptscriptstyle 3}) (r + {\scriptscriptstyle 10})\slash
 {\scriptscriptstyle 2}}}{(2\pi \sqrt{-1})^{^{r {\scriptscriptstyle - 3}}}}
\!\oint  \!\cdots \!\oint  
\!\frac{\Delta(x_{\scriptscriptstyle 4}^{}, \ldots, x_{r}^{})
  \Delta(x_{\scriptscriptstyle 4}^{\scriptscriptstyle 2},
  \ldots, x_{r}^{\scriptscriptstyle 2})} 
{\prod_{i = 4}^{r} x_{\scriptscriptstyle i}^{{\scriptscriptstyle 2} r}}
      e^{\scalebox{1.}{$\scriptscriptstyle
 \scalebox{.85}{$\scriptscriptstyle -D$}\sum_{k = 4}^{r}$}\,
x_{\scalebox{.65}{$\scriptscriptstyle k$}}}
dx_{\scriptscriptstyle 4} \, \cdots \, dx_{r}.
\end{split}
\end{equation*}
Here
$
\Delta(y_{\scriptscriptstyle 1}, \ldots, y_{n})
$
is the Vandermonde determinant
\[
\Delta(y_{\scriptscriptstyle 1}, \ldots, y_{n})
= \mathrm{det}\,  (y_{\scriptscriptstyle i}^{\,\scriptscriptstyle j-1})_{1 \le i, j \le n} 
\;\; =\prod_{1 \le  i  <  j  \le n}
(y_{\!\scriptscriptstyle j} - y_{\scriptscriptstyle i}).
\]
The first integral equals $-48,$ and the second integral can be
computed as in \cite[Proposition~2.1]{GHRR}. Thus, by substituting
$
y_{\scriptscriptstyle k} = -D x_{\scriptscriptstyle k},
$
it follows, as in loc. cit., that
\begin{equation*}
  \begin{split}
& \frac{1}{(2\pi \sqrt{-1})^{^{r {\scriptscriptstyle - 3}}}}
\!\oint  \!\cdots \!\oint  
\!\frac{\Delta(x_{\scriptscriptstyle 4}^{}, \ldots, x_{r}^{})
  \Delta(x_{\scriptscriptstyle 4}^{\scriptscriptstyle 2},
  \ldots, x_{r}^{\scriptscriptstyle 2})} 
{\prod_{i = 4}^{r} x_{\scriptscriptstyle i}^{{\scriptscriptstyle 2} r}}
      e^{\scalebox{1.}{$\scriptscriptstyle
 \scalebox{.85}{$\scriptscriptstyle -D$}\sum_{k = 4}^{r}$}\,
x_{\scalebox{.65}{$\scriptscriptstyle k$}}}
dx_{\scriptscriptstyle 4} \, \cdots \, dx_{r} \\
& = (r - 3)!\,
\frac{0!\, 1!\, 2!\, \cdots \, (r - 4)!}
{7!\,  9!\,  11!\,  \cdots \, (2r - 1)!}
\,\mathrm{det}\,  \left(\binom{2r + 1 - 2j}{i - 1}\right)_{1 \le i, j \le r-3}
\cdot \; (- D)^{(r - {\scriptscriptstyle 3}) (r + {\scriptscriptstyle 10})\slash
  {\scriptscriptstyle 2}} 
\end{split}
\end{equation*}
where $\binom{a}{b}$ is the binomial coefficient. The determinant
equals
$
(-2)^{(r - {\scriptscriptstyle 3}) (r - {\scriptscriptstyle 4})\slash
  {\scriptscriptstyle 2}}\!,
$
which can be seen as follows. The first row is $1, 1, \ldots, 1,$ and
so, by replacing the $j$-th column ($j = 2, \ldots, r - 3$) by the
difference between the $(j-1)$-th and $j$-th columns, we are
reduced (after removing the first row and the first column)
to the computation of an $(r - 4) \times (r - 4)$ determinant
with the first row $2, 2, \ldots, 2.$ Applying the same procedure,
we are further reduced to the computation of an
$(r - 5) \times (r - 5)$ determinant with the first row
$4, 4, \ldots, 4.$ Continuing, it follows by applying the identity
\[
\binom{a + 2}{b} \, - \, \binom{a}{b}
\, = \, \binom{a + 1}{b - 1} \, + \, \binom{a}{b - 1}
\]
that at the $k$-th step, we get an $(r - k - 2) \times (r - k - 2)$
determinant with the first row
$
2^{\scriptscriptstyle k - 1}\!, 2^{\scriptscriptstyle k - 1}\!, \ldots,
2^{\scriptscriptstyle k - 1}\!.
$
Thus
\[
\mathrm{det}\,  \left(\binom{2r + 1 - 2j}{i - 1}\right)_{1 \le i, j \le r-3}
= \;\, \prod_{k = 1}^{r - 3} (- 1)^{r {\scriptscriptstyle - k - 3}}
2^{\scriptscriptstyle k - 1}
= (-2)^{(r - {\scriptscriptstyle 3}) (r - {\scriptscriptstyle 4})\slash
  {\scriptscriptstyle 2}}\!.
\]
From this, our last assertion follows immediately.
\end{proof}

Thus the leading coefficient of $Q_{\scriptscriptstyle 2}(D, q)$ is 
\begin{equation*}
\begin{split}
2^{{\scriptscriptstyle 19} - {\scriptscriptstyle 7}r}
\cdot \frac{0!\, 1!\, 2!\, \cdots \, (r - 4)!}
{7!\,  9!\,  11!\,  \cdots \, (2r - 1)!}
\!\sum_{\zeta_{a}^{\scalebox{.85}{$\scriptscriptstyle 2$}} \, =  \, \pm 1}
\!\zeta_{a}^{\scriptscriptstyle D}
\left\{(1, 1, 0)\,\overline{\mathrm{M}}_{w_{\!\scalebox{.85}{$\scriptscriptstyle \alpha'$}}}
\!\!\left(q^{\,\scriptscriptstyle -1\slash 2}\!,\,
q^{\,\scriptscriptstyle -1\slash 2}\!,\,
q^{\,\scriptscriptstyle -1\slash 2}\!,\,
q^{\scriptscriptstyle - 3\slash 4}\zeta_{a}^{\scriptscriptstyle -1}; q\right)
\,^{\scalebox{1.2}{$\scriptscriptstyle t$}}\!(1,  \mathrm{sgn}(a),  1)
L\big(\tfrac{1}{2}, \chi_{a}\big)^{\!\scriptscriptstyle 7}
\prod_{p} S_{\! p}^{\scriptscriptstyle \mathrm{reg}}\!\left({\bf 1}, \chi_{a}\right)\right\}
\end{split}
\end{equation*} 
where, by \eqref{eq: Global-cocycle-formula} and \eqref{eq: Functions-phi-psi}, 
\begin{equation*}
\begin{split}
& (1, 1, 0)\,\overline{\mathrm{M}}_{w_{\!\scalebox{.85}{$\scriptscriptstyle \alpha'$}}}
\!\!\left(q^{\,\scriptscriptstyle -1\slash 2}\!,\,
q^{\,\scriptscriptstyle -1\slash 2}\!,\,
q^{\,\scriptscriptstyle -1\slash 2}\!,\,
q^{\scriptscriptstyle - 3\slash 4}\zeta_{a}^{\scriptscriptstyle -1}; q\right)
\,^{\scalebox{1.2}{$\scriptscriptstyle t$}}\!(1,  \mathrm{sgn}(a),  1)\\
&= \begin{cases} 
1 \, +  \, q^{\,\scriptscriptstyle 1\slash 4} \, + \, 10\, q^{\,\scriptscriptstyle 1\slash 2} 
\, + \, 7 q^{\,\scriptscriptstyle 3\slash 4} \, + \, 20\, q \, + \, 7 q^{\,\scriptscriptstyle 5\slash 4} 
\, + \, 10\, q^{\,\scriptscriptstyle 3\slash 2} \, + \, q^{\,\scriptscriptstyle 7\slash 4} \, + \, 
q^{\scriptscriptstyle 2}  &\mbox{if $\zeta_{a} = \mathrm{sgn}(a) = 1$} \\
1 \, -  \, q^{\,\scriptscriptstyle 1\slash 4} \, + \, 10\, q^{\,\scriptscriptstyle 1\slash 2} 
\, - \, 7 q^{\,\scriptscriptstyle 3\slash 4} \, + \, 20\, q \, - \, 7 q^{\,\scriptscriptstyle 5\slash 4} 
\, + \, 10\, q^{\,\scriptscriptstyle 3\slash 2} \, - \, q^{\,\scriptscriptstyle 7\slash 4} \, + \, 
q^{\scriptscriptstyle 2} &\mbox{if $\zeta_{a} = - 1$ and $\mathrm{sgn}(a) = 1$} \\
1 \, -  \, i q^{\,\scriptscriptstyle 1\slash 4} \, - \, 4\, q^{\,\scriptscriptstyle 1\slash 2} 
\, + \, 7i q^{\,\scriptscriptstyle 3\slash 4} \, + \, 6\, q \, - \, 7i q^{\,\scriptscriptstyle 5\slash 4} 
\, - \, 4\, q^{\,\scriptscriptstyle 3\slash 2} \, + \, i q^{\,\scriptscriptstyle 7\slash 4} \, + \, 
q^{\scriptscriptstyle 2} &\mbox{if $\zeta_{a} = i$ and $\mathrm{sgn}(a) = - 1$}\\
1 \, +  \, i q^{\,\scriptscriptstyle 1\slash 4} \, - \, 4\, q^{\,\scriptscriptstyle 1\slash 2} 
\, - \, 7i q^{\,\scriptscriptstyle 3\slash 4} \, + \, 6\, q \, + \, 7i q^{\,\scriptscriptstyle 5\slash 4} 
\, - \, 4\, q^{\,\scriptscriptstyle 3\slash 2} \, - \, i q^{\,\scriptscriptstyle 7\slash 4} \, + \, 
q^{\scriptscriptstyle 2} &\mbox{if $\zeta_{a} = - i$ and $\mathrm{sgn}(a) = - 1,$}
\end{cases} 
\end{split}
\end{equation*} 
and, by \eqref{eq: Explicit-Lwp-n=2-r=3} -- \eqref{eq: Local-factor-residue-r=3}, 
\eqref{eq: Sp_w-alpha} and \eqref{eq: Sp-regularized}, 
$
S_{\! p}^{\scriptscriptstyle \mathrm{reg}}\!\left({\bf 1}, \chi_{a}\right)  
= P_{r}\left(\chi_{a}(p)\slash \sqrt{|p|}\right) 
$ 
with 
\begin{equation*} 
\begin{split} 
P_{r}(t) = &\, \left(1 -  t\right)^{\!(r^{\scalebox{.85}{$\scriptscriptstyle 2$}} + 7r - 14)\slash 2}    
\!\left(1 +  t \right)^{\!(r^{\scalebox{.85}{$\scriptscriptstyle 2$}}  + 7r - 28)\slash 2}\\
&\cdot \, \left[\left(t^{} +  t^{2} \right)
\left(t^{} + 6 t^{2} + t^{3}\right) \, + \, 
\tfrac{1}{2}\left(1 +  t^{} \right)^{4 - r}  \! + \, 
\tfrac{1}{2}\left(1  -  t^{}\right)^{- r} 
\left(1 + 10\, t^{} + 20\, t^{2} + 10\, t^{3} + t^{4} \right) \right].  
\end{split}
\end{equation*} 
Note that 
\[
P_{r}(t) =  1 - 14 (r - 2) t^{3} - 
\frac{r^{4} + 12 r^{3} + 59 r^{2} - 696 r + 1164}{12} t^{4} + O\big(t^{5}\big)
\] 
which implies the absolute convergence of the product 
$
\prod_{p} S_{\! p}^{\scriptscriptstyle \mathrm{reg}}\!\left({\bf 1}, \chi_{a}\right).
$

\vskip1pt

%


\end{document}